\numberwithin{itemcounter}{subsection}
\theoremstyle{plain}
\newtheorem*{thmA}{Theorem A}
\newtheorem*{thmB}{Corollary B}
\newtheorem*{thmC}{Theorem C}
\newtheorem{theorem}{Theorem}[section]
\newtheorem{lemma}[theorem]{Lemma}
\newtheorem{definition-lemma}[theorem]{Definition-Lemma}
\newtheorem{proposition}[theorem]{Proposition}
\newtheorem{corollary}[theorem]{Corollary}
\theoremstyle{definition}
\newtheorem{definition}[theorem]{Definition}
\theoremstyle{remark}
\newtheorem{remark}[theorem]{Remark}
\newtheorem{example}[theorem]{Example}
\numberwithin{equation}{section}
\def\bbC{\mathbb{C}}
\def\bbF{\mathbb{F}}
\def\bbH{\mathbb{H}}
\def\bbN{\mathbb{N}}
\def\bbO{\mathbb{O}}
\def\bbQ{\mathbb{Q}}
\def\bbR{\mathbb{R}}
\def\bbZ{\mathbb{Z}}
\def\scrA{\mathscr{A}}
\def\scrC{\mathscr{C}}
\def\scrH{\mathscr{H}}
\def\scrI{{I}}
\def\scrP{\mathscr{P}}
\def\frakg{\mathfrak{g}}
\def\fraks{\mathfrak{s}}
\def\frakl{\mathfrak{l}}
\def\frakU{\mathfrak{U}}
\def\calC{\mathcal{C}}
\def\calE{\mathcal{E}}
\def\calF{\mathcal{F}}
\def\calH{\mathcal{H}}
\def\calJ{\mathcal{J}}
\def\calL{\mathcal{L}}
\def\calO{\mathcal{O}}
\def\calS{\mathcal{S}}
\def\calV{\mathcal{V}}
\def\frakg{\mathfrak{g}}
\def\frakl{\mathfrak{l}}
\def\bfL{\mathbf{L}}
\def\bfB{\mathbf{B}}
\def\bfF{\mathbf{F}}
\def\bfG{\mathbf{G}}
\def\bfL{\mathbf{L}}
\def\bfO{\mathbf{O}}
\def\bfP{\mathbf{P}}
\def\bfT{\mathbf{T}}
\def\bfU{\mathbf{U}}
\def\bfV{\mathbf{V}}
\def\geqs{\geqslant}
\def\leqs{\leqslant}
\def\simto{\overset{\sim}\to}
\def\bfSp{{\mathbf{Sp}}}
\def\bfSO{{\mathbf{SO}}}
\def\O{\operatorname{O}\nolimits}
\def\GL{\operatorname{GL}\nolimits}
\def\GU{\operatorname{GU}\nolimits}
\def\SO{\operatorname{SO}\nolimits}
\def\Sp{\operatorname{Sp}\nolimits}
\def\CSp{\operatorname{CSp}\nolimits}
\def\CO{\operatorname{CO}\nolimits}
\def\Irr{{\operatorname{Irr}\nolimits}}
\def\sp{{\operatorname{sp}\nolimits}}
\def\GL{\operatorname{GL}\nolimits}
\def\GU{\operatorname{GU}\nolimits}
\def\diag{\operatorname{diag}\nolimits}
\def\X{\operatorname{X}\nolimits}
\def\Z{\operatorname{Z}\nolimits}
\def\up{{\operatorname{\,up}\nolimits}}
\def\Hom{\operatorname{Hom}\nolimits}
\def\End{\operatorname{End}\nolimits}
\def\Res{\operatorname{Res}\nolimits}
\def\res{\operatorname{res}\nolimits}
\def\Ind{\operatorname{Ind}\nolimits}
\def\id{\operatorname{id}\nolimits}
\def\wt{\operatorname{wt}\nolimits}
\def\mod{\operatorname{-mod}\nolimits}
\def\Heis{\mathcal{H}eis}
\def\RX{\mathrm{RX}}
\def\LX{\mathrm{LX}}
\def\Lcross{\mathrm{LT}}
\def\MixLcross{\mathrm{LH}}
\def\Rcross{\mathrm{RT}}
\def\MixRcross{\mathrm{RH}}
\def\Proj{\mathrm{Proj}}
\def\up{\uparrow}
\def\down{\downarrow}
\def\Heis{\mathcal{H}eis}
\def\Hom{\operatorname{Hom}}
\def\wt{{\operatorname{wt}}}
\def\End{{\operatorname{End}}}
\def\Z{{\mathbb Z}}
\def\dt{{\color{white}\bullet}\!\!\!\circ}
\newcommand{\sgn}{{\operatorname{det}}}
\newcommand{\sign}{{\operatorname{sgn}}}
\newcommand*{\K}{\mathcal{K}}
\newcommand*{\tuple}[1]{\boldsymbol{#1}}
\newcommand{\unit}{\mathds{1}}
\def\anticlock{\begin{tikzpicture}[baseline=-.9mm]
\filldraw[white] (0,0) circle (1.72mm);
\draw[-] (0,-0.18) to[out=180,in=-102] (-.178,0.02);
\draw[-] (-0.18,0) to[out=90,in=180] (0,0.18);
\draw[-] (0.18,0) to[out=-90,in=0] (0,-0.18);
\draw[<-] (0,0.18) to[out=0,in=90] (0.18,0);
\end{tikzpicture}\,}
\def\clock{\begin{tikzpicture}[baseline=-.9mm]
\filldraw[white] (0,0) circle (1.72mm);
\draw[-] (0,-0.18) to[out=180,in=-90] (-.18,0);
\draw[->] (-0.18,0) to[out=90,in=180] (0,0.18);
\draw[-] (-0.02,0.178) to[out=12,in=90] (0.18,0);
\draw[-] (0.18,0) to[out=-90,in=0] (0,-0.18);
\end{tikzpicture}\,}
\newcommand{\darkg}{\color{green!70!black}}
\tikzset{darkg/.style={green!70!black}}
\def\dott{{\color{white}\bullet}\!\!\!\circ}
\def\dott{{\color{white}\bullet}\!\!\!\circ}
\definecolor{darkblue}{HTML}{000000}
\def\red#1{{\color{red} #1}}
\def\blue#1{{\color{blue} #1}}
\def\up{\uparrow}
\def\down{\downarrow}
\begin{document}
\title{Categorical action for finite classical groups and its applications: characteristic 0}

\author{Pengcheng Li, Peng Shan and Jiping Zhang}
	\address{Yau Mathematical Sciences Center, Tsinghua University, Beijing 100084, China}
\email{pcli@tsinghua.edu.cn}

\address{Yau Mathematical Sciences Center, Department of Mathematical Sciences, Tsinghua University, Beijing 100084, China}
\email{pengshan@tsinghua.edu.cn}

\address{School of Mathematical Sciences, Peking University,
  Beijing 100871, China}
\email{jzhang@pku.edu.cn}

\thanks{}

\keywords{Categorical action, double quantum Heisenberg categorification, Kac-Moody categorification, finite classical groups, theta correspondence}

\subjclass[2010]{20C15, 20C33}

\begin{abstract} In this paper, we construct a categorical double quantum Heisenberg
action on the representation category of finite classical groups $\O_{2n+1}(q)$, $\Sp_{2n}(q)$ and $\O^{\pm}_{2n}(q)$ with $q$ odd.
Over a field of characteristic zero or characteristic $\ell$ with $\ell\nmid q(q-1)$,
we deduce a categorical action of a Kac-Moody algebra
$\mathfrak{s}\mathfrak{l}'_{I_+}\oplus\mathfrak{s}\mathfrak{l}'_{I_-}$
on the representation category of finite classical groups.
  We show that the colored weight  functions $\bbO^+(u)(-)$, $\bbO^-(v)(-)$
    and uniform projection can distinguish all  irreducible characters of finite classical
     groups. In particular,  the colored weight functions are  complete invariants of
     quadratic unipotent characters. We also show that using the theta correspondence
     and extra symmetries of categorical double quantum Heisenberg action,
      the Kac-Moody action on the Grothendieck group of the whole category
      can be determined explicitly.

\end{abstract}

\maketitle

\pagestyle{myheadings}
\markboth{Categorical actions for finite classical groups}{P. Li, P. Shan and J. Zhang}
\section*{Introduction}\label{intro}

\subsection{Overview}

Let $\bfG$ be a connected reductive algebraic group defined over a finite field $\bbF_q$. The finite
group $G=\bfG(\bbF_q)$ of its $\bbF_q$-rational points is a finite reductive group.
When the center of $\bfG$ is connected, Lusztig in his  book \cite{Lu84}
classified all irreducible characters of finite reductive groups by using
 Deligne--Lusztig theory, Kazhdan--Lusztig polynomials,
perverse sheaves, cell theory for Hecke algebras, etc. Lusztig \cite{Lu88} extended this
 to the disconnected center case via the so-called regular embedding from $G$ to a larger group
 $\widetilde{G}$ with connected center. The classification is given in terms of
 restriction from irreducible characters of $\widetilde{G}$ to $G$, which is multiplicity-free. However,
 there is no natural way to distinguish all the components of this restriction.
So the classification of irreducible characters of the finite reductive group with disconnected center is not complete in some sense.   In this paper,
 we focus on the representation theory of finite classical groups $\O_{2n+1}(q),$
 $\Sp_{2n}(q)$ and $\O^{\pm}_{2n}(q).$  The connected components of the latter two groups have disconnected center. One of the aims of this article is to give a complete classification of the irreducible characters of those groups by using  categorification methods.
\smallskip

Categorification has played a prominent role in the recent development of representation theory.
The pioneering work of Chuang--Rouquier \cite{CR} introduced the notion of an $\fraks\frakl_2$-categorification (a.k.a. a categorical $\fraks\frakl_2$-action), and solved Brou\'e's abelian defect group conjecture  \cite{broue1990} for symmetric groups $S_n$ and finite general linear groups $\GL_n(q)$ using this new tool. Later, Khovanov and Lauda \cite{KL09} and Rouquier \cite{R08} independently extended this notion to arbitrary symmetrizable Kac-Moody algebra $\mathfrak{g}$ by introducing a remarkable new $2$-category $\frakU(\frakg)$ in terms of generators and relations. Their $2$-categories were originally defined in different forms, it was shown by Brundan \cite{B16} that they are actually equivalent. However, it is difficult to construct direct actions of these $2$-categories on representation categories, as the invertibility of certain $2$-morphisms is impossible to check in general. But, when $\frakg$ is of type $A$ or affine type $A$, Rouquier (based on previous work of Chuang--Rouquier \cite{CR}) showed that one can avoid such difficulties using a control from the Grothendieck group \cite[Theorem 5.23]{R08}.
In particular, the construction of \cite{CR} really yields a categorical action of $\fraks\frakl'_I$ on the unipotent blocks of $\GL_n(q)$.

\smallskip

For the finite classical groups $\GU_n(q),$ $\SO_{2n+1}(q)$ and $\Sp_{2n}(q),$
Dudas, Varagnolo and Vasserot \cite{DVV,DVV2} constructed an $\fraks\frakl'_I$-categorification on unipotent blocks using a similar approach and proved that
Brou\'e's conjecture is true
for the unipotent blocks of these groups at linear primes.
Inspired by the reduction theorem of Bonnaf\'{e}-Dat-Rouquier \cite[Theorem 7.7]{BDR17},
Liu and two of the authors \cite{LLZ} constructed a categorical action of a Kac-Moody algebra $\fraks\frakl'_I\oplus \fraks\frakl'_{I'}$ on isolated blocks of $\SO_{2n+1}(q)$ which generalizes the previous works \cite{CR, DVV, DVV2} and confirmed
Brou\'e's conjecture for all the blocks of $\SO_{2n+1}(q)$ at linear primes.

\smallskip

However, all these works rely on
the result of Fong-Srinivasan \cite{FS82,FS89} about
the block theory of classical groups with connected center. Due to the nonconnectedness of the center, the block distribution of characters of $\Sp_{2n}(q)$ and $\O_{2n}^{\pm}(q)$ is not completely determined. This is an essential difficulty to generalize the above construction to these groups, since one needs to know the coincidence of the weight space decomposition and the block decomposition in order to get the desired categorical action from Rouquier's approach.

\smallskip
We overcome the above difficulty using a new method of categorical type $A$ or affine type $A$ Kac-Moody algebras via Heisenberg categories. This method is due to Brundan, Savage and Webster \cite{BSW2}. The Heisenberg category $\Heis_m(z)$ of central charge $m\in \bbZ$ is a monoidal category constructed from degenerate affine Hecke algebras or affine Hecke algebras  and it comes in two forms, degenerate or quantum, depending on the parameter $z$. It is called degenerate if $z=0$ and quantum if $z\neq 0$. They are introduced respectively by Brundan \cite{B18} and Brundan, Savage and Webster \cite{BSW1}, generalizing previous works of Khovanov \cite{K} in the degenerate case $m=-1$, Licata and Savage \cite{LS} in the quantum case $m=-1$ and Mackaay and Savage \cite{MS18} in the degenerate case $m<0$.

\smallskip

In our situation, to construct the desired Kac-Moody action on representations of $\O_{2n+1}(q),$ $\Sp_{2n}(q)$ and $\O_{2n}^{\pm}(q)$, we consider
a ``double version'' of the quantum Heisenberg  category, which is a symmetric product of two quantum  Heisenberg categories. We prove that this category acts on the category of representations of these groups, and using similar ideas as in \cite{BSW2}, we deduce a categorical action of Kac-Moody algebra $\fraks\frakl'_I\oplus \fraks\frakl'_{I'}$, which is a ``double version'' of a $\fraks\frakl'_I$-categorification and has been observed in the previous work \cite{LLZ} by Liu and two of the authors. Note that our construction is valid over any coefficient ring $R$ for representations of $G$. Moreover, it applies also to all the previous situations as well, and provides a uniform way to construct categorical $\fraks\frakl'_I\oplus \fraks\frakl'_{I'}$-actions on the whole category of representations of  finite classical groups $\O_{2n+1}(q),$
 $\Sp_{2n}(q)$ and $\O^{\pm}_{2n}(q)$.

\smallskip

Furthermore, the categorical double quantum Heisenberg action gives rise to some new invariants  $\bbO^+(u)(-)$ and $\bbO^-(v)(-)$ on the representation category. In this paper, we use them together with the uniform projection to give a complete classification of irreducible modules of
finite classical groups $\Sp_{2n}(q)$ and $\O^{\pm}_{2n}(q)$ in characteristic zero, hence completing the classification of Lusztig.
\smallskip

Now, let us discuss the main content of the paper.

\smallskip

\subsection{Categorical actions}

Throughout this paper,
we always assume that $\ell$ is a prime, $K$ is a field of characteristic 0, $\calO$ is a complete discrete valuation ring and $k$ is a field of characteristic $\ell$. Let $R=K,$ $\calO $ or $k$ such that $q(q-1)$ is invertible in $R$ and $q^{1/2},\sqrt{-1}\in R$.
Let $V$ be a finite-dimensional symplectic or orthogonal space over the field $\bbF_q$ of
odd characteristic $p$.
Let $G_n$ be one of the families of finite classical groups among $\O_{2n+1}(q),\Sp_{2n}(q)$, $\O_{2n}^{+}(q)$ and $\O_{2n}^{-}(q)$ such that $G_n$ is an isometry group of symplectic or orthogonal space $V$.
Using the tower of natural inclusions of groups
$$\cdots \subset G_{n-1}\subset G_n \subset  G_{n+1} \subset \cdots,$$
one can form the abelian categories
$$RG_\bullet\mod:=\bigoplus_{n\in \bbN^*}RG_n\mod$$
of representations
of all the  groups $G_n$ with $n$ running over
 $\bbN^*=\mathbb{Z}_{> 0}$ in the
 $\O_{2n}^{-}(q)$ case  and  $\bbN^*=\mathbb{Z}_{\geqslant 0}$ in all the other cases.
Based on the construction of the representation datum by Liu and two of the authors in \cite{LLZ}, we proved the following theorem.

\begin{thmA}
Let $R$ be one of $K$, $\mathcal O$ and $k$, then there is a strict $R$-linear monoidal functor
$$\Psi:\,\Heis_{-2}(z^+,t^+)\odot \Heis_{-2}(z^-,t^-)\to \mathcal{E}nd(RG_\bullet\mod).$$
\end{thmA}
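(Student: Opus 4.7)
The plan is to build the functor $\Psi$ by specifying its values on the generating $1$- and $2$-morphisms of the source $2$-category and then verifying that all defining relations of $\Heis_{-2}(z^{\pm},t^{\pm})$ hold in $\mathcal{E}nd(RG_\bullet\mod)$, together with the commutation between the two colors required by the symmetric product $\odot$. The raw input is the representation datum of \cite{LLZ}: for each sign $\epsilon\in\{+,-\}$ I would take the generating $1$-morphism $\uparrow^{\epsilon}$ to the Harish-Chandra (parabolic) induction functor $E^{\epsilon}_n:RG_n\mod\to RG_{n+1}\mod$ from a Siegel-type parabolic with Levi $\GL_1(q)\times G_n$, twisted by one of the two characters of $\GL_1(q)$ singled out by the condition $\sqrt{-1}\in R$ (this produces exactly the two colors); dually $\downarrow^{\epsilon}$ is sent to its two-sided adjoint $F^{\epsilon}_{n+1}$. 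The generating dot $x^{\epsilon}$ on $\uparrow^{\epsilon}$ and the generating crossing $\tau^{\epsilon}$ on $\uparrow^{\epsilon}\uparrow^{\epsilon}$ are defined from the affine Hecke algebra action on $(E^{\epsilon})^n$ constructed in \cite{LLZ}, normalised so as to satisfy the quantum parameters $(z^{\epsilon},t^{\epsilon})$. Units and counits of the biadjunctions come from Frobenius reciprocity for Harish-Chandra induction.

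Next I would check the relations one family at a time, following the template developed in \cite{BSW1,BSW2}. The Hecke relations on the positive strand (braid, quadratic, dot-crossing) reduce to identities already verified in \cite{LLZ} for the affine Hecke action on powers of $E^{\epsilon}$. The adjunction relations (the zig-zag/snake identities) follow from unit/counit of the biadjunction and its explicit realisation through the Mackey filtration of $F^{\epsilon}E^{\epsilon}$ versus $E^{\epsilon}F^{\epsilon}$. The heart of the argument is the inversion relation of the quantum Heisenberg category of central charge $-2$: it requires that a certain $2$-morphism built from leftward crossings, cups and dotted bubbles is invertible as a map $F^{\epsilon}E^{\epsilon}\to E^{\epsilon}F^{\epsilon}\oplus\mathrm{Id}^{\oplus ?}$ in the correct direction, with the $-2$ shift reflecting the fact that, upon computing $F^{\epsilon}E^{\epsilon}-E^{\epsilon}F^{\epsilon}$ via the two-orbit Mackey formula, the diagonal piece contributes two copies (the Levi's $\GL_1$ factor and its dual pair up in the symplectic/orthogonal form). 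This computation is the main obstacle: one has to match Lusztig induction on a disconnected-centre group with the monoidal relations of $\Heis_{-2}$, and the book-keeping of bubbles and fake bubbles corresponding to the parameters $(z^{\epsilon},t^{\epsilon})$ must be done carefully.

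Having established the two separate functors $\Psi^{\pm}:\Heis_{-2}(z^{\pm},t^{\pm})\to\mathcal{E}nd(RG_\bullet\mod)$, I would then verify that they combine into a functor from the symmetric tensor product $\Heis_{-2}(z^{+},t^{+})\odot\Heis_{-2}(z^{-},t^{-})$. Concretely this amounts to showing that the $+$ and $-$ generators commute in $\mathcal{E}nd(RG_\bullet\mod)$: the induction functors $E^{+}$ and $E^{-}$ commute up to canonical isomorphism because they are induced from disjoint Harish-Chandra Levis inside a bigger parabolic with Levi $\GL_1(q)\times\GL_1(q)\times G_n$, and a transitivity-of-parabolic-induction argument gives the required natural isomorphism $E^{+}E^{-}\simeq E^{-}E^{+}$. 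The corresponding crossings between $\uparrow^{+}$ and $\uparrow^{-}$, the mixed cups and caps, and their relations are then forced by the mixed affine Hecke relations (which in the $+/-$ case degenerate because different colors commute without any Hecke correction, since they correspond to non-conjugate characters of the $\GL_1$ factors).

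Finally, strictness of $\Psi$ is automatic once it is defined on generators, since $\mathcal{E}nd(RG_\bullet\mod)$ is a strict $2$-category of endofunctors; monoidality is then a matter of checking the defining relations of the symmetric product on the generating objects, which reduces to the two commutations already established. The main obstacle throughout is the verification of the inversion relation at central charge $-2$ with the correct parameters $(z^{\epsilon},t^{\epsilon})$, because this is the only relation that genuinely sees the global geometry of the classical groups rather than the local Hecke-theoretic input; everything else is a formal consequence of the representation datum of \cite{LLZ} together with standard Harish-Chandra theory.
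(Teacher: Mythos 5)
Your plan matches the paper's strategy in broad outline: take the representation datum of \cite{LLZ} (parabolic induction/restriction through the Siegel maximal parabolic twisted by the trivial and Legendre characters, the affine Hecke action, Frobenius-reciprocity adjunctions), send the generators of each colored copy of $\Heis_{-2}$ to these data, and verify the defining relations including the Mackey isomorphism $E^\epsilon F^\epsilon \cong F^\epsilon E^\epsilon \oplus \unit^{\oplus 2}$. The mixed-color part is also in the right spirit: the two inductions commute via a transitivity/Mackey argument, and the mixed crossings come for free with no extra identity terms.

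However there is a genuine gap that you do not address: the quantum Heisenberg category $\Heis_k(z,t)$, in the presentation used here (after \cite{BSW1}), requires the generating dot to be \emph{invertible}, not merely to satisfy the affine Hecke relations. This is a separate axiom from the inversion (Mackey) relation on sideways crossings, and verifying it on the target is not a formal consequence of the representation datum of \cite{LLZ}. Your proposal speaks only of the invertibility of the sideways map $F^\epsilon E^\epsilon \to E^\epsilon F^\epsilon \oplus \unit^{\oplus 2}$ and treats the dot as simply ``the affine Hecke action, suitably normalised.'' But invertibility of the natural transformation $X^\epsilon$ (right multiplication by a normalised lift of $x_{r+1}$) requires a substantive argument. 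The paper (Proposition \ref{prop:invertible}) proceeds in three steps: reduce from $\calO$ to the residue field; reduce by exactness and an eigenvalue-shift argument (using the affine Hecke relations and \cite[Lemma 4.7]{Go}) from arbitrary irreducibles to $F^\epsilon$-cuspidal irreducibles; and on a cuspidal irreducible $M$ use the $2$-dimensionality of $\End(F^\epsilon(M))$ together with the explicit bubble values at central charge $-2$ (which are $-1/(t^\epsilon z^\epsilon)$, $0$, $t^\epsilon/z^\epsilon$) to extract a quadratic relation $(X^\epsilon(M))^2 = \gamma\,X^\epsilon(M) - (t^\epsilon)^2\,\id$ with nonzero constant term $-(t^\epsilon)^2$, hence invertibility. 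This last step genuinely uses the bubble computations you did not mention and is the place where the ``global geometry'' enters as much as the Mackey relation. Without an argument of this kind, your $\Psi$ is only a functor from a category where the dot is not inverted, which is not the quantum Heisenberg category.

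A smaller imprecision: you write that $E^+$ and $E^-$ commute because they are ``induced from disjoint Harish-Chandra Levis.'' In fact they are induced from the \emph{same} Levi $L_{n,1}\cong G_n\times\GL_1(q)$ with two different characters of the $\GL_1(q)$ factor; the commutation follows from a Mackey-type argument (the paper's mixed Mackey relations), not from disjointness of Levis. This does not derail your plan but the reasoning as stated is not correct.
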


 Here $\Heis_{-2}(z^+,t^+)\odot \Heis_{-2}(z^-,t^-)$ is a symmetric product of two quantum  Heisenberg categories $\Heis_{-2}(z^+,t^+)$ and $\Heis_{-2}(z^-,t^-)$, which is called  \emph{double quantum Heisenberg category} in this article (see \S \ref{sec:doublequantumtoKac}), and
$\mathcal{E}nd( RG_\bullet\mod)$ is the strict monoidal category consisting of
endofunctors and natural transformations of $RG_\bullet\mod$. The parameters $z^\pm$ and $t^\pm$ are defined at the beginning of \S \ref{Chap:doubleHeisonfiniteclass}.
\smallskip

In \cite{HL},  Huang, Shen and the first author proved that the categorical action of a
symmetric product
$\Heis_{-2}(z^+,t^+)\odot \Heis_{-2}(z^-,t^-)$ over a field
 can be strengthened to a categorical action of a Kac-Moody algebra $\frakg$ with disjoint union quivers
 of type $A$  by generalizing the corresponding results in \cite{BSW2,LLZ}, where $\frakg$  is uniquely determined by this categorical action (see \S \ref{sec:doublequantumtoKac}).
 Using Lusztig's Jordan decomposition and Howlett--Lehrer theory, we can determine the associated  Kac-Moody  algebra $\frakg=\fraks\frakl'_{I_+}\oplus \fraks\frakl'_{I_-}$ for the categorical double quantum Heisenberg action in Theorem A, which is a direct sum of two Kac-Moody  algebras of type $A$, where $$I_+=q^\bbZ\sqcup -q^\bbZ~~\text{and}~~ I_-=q^\bbZ\sqcup -q^\bbZ$$ as subsets of $R$ (see Proposition \ref{Prop:I+-}).
 The construction in \cite{HL} endows $RG\mod$ with a structure of $\frakU(\frakg)$-categorification, so we have the following corollary.

\begin{thmB}
Let $R=K$ or $k$, then there is a strict $R$-linear monoidal functor
$$\Psi':\,\frakU(\fraks\frakl'_{I_+}\oplus \fraks\frakl'_{I_-})\to \mathcal{E}nd(RG_\bullet\mod).$$

\end{thmB}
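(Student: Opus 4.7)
The plan is to deduce Corollary B directly from Theorem A by invoking the strengthening result of Huang, Shen and the first author in \cite{HL}. Theorem A produces a strict $R$-linear monoidal functor $\Psi$ from the double quantum Heisenberg category $\Heis_{-2}(z^+,t^+)\odot \Heis_{-2}(z^-,t^-)$ to $\mathcal{E}nd(RG_\bullet\mod)$. The result of \cite{HL}, generalizing \cite{BSW2,LLZ}, asserts that whenever such a categorical symmetric-product Heisenberg action is given on an abelian category over a field $R$, it promotes canonically to a categorical Kac-Moody action, where the Kac-Moody algebra is determined by the spectra of the ``dot'' endomorphisms of the two Heisenberg factors acting on the Grothendieck group.

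First I would invoke this general theorem to obtain a strict $R$-linear monoidal functor from $\frakU(\frakg)$ into $\mathcal{E}nd(RG_\bullet\mod)$, where $\frakg = \fraks\frakl'_{I_+} \oplus \fraks\frakl'_{I_-}$ and $I_\pm$ are the spectra of the two dot generators, viewed as subsets of $R$. The next step is the explicit determination of the sets $I_+$ and $I_-$. By computing the action of these dot endomorphisms on Harish-Chandra series---they act as Jucys-Murphy-type operators coming from Harish-Chandra induction along $\GL$-type Levi subgroups---and then invoking Lusztig's Jordan decomposition together with Howlett-Lehrer theory, the eigenvalues on irreducible characters are of the form $\pm q^i$ for $i\in\bbZ$. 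This identification is carried out in Proposition \ref{Prop:I+-}, yielding $I_+ = I_- = q^\bbZ\sqcup -q^\bbZ$ as subsets of $R$.

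The main obstacle is twofold. First, one must verify that the two Heisenberg factors interact compatibly through the symmetric product structure so that the resulting Kac-Moody algebra genuinely splits as a direct sum of two copies of type $A$; this is exactly what the generalization in \cite{HL} of the Brundan-Savage-Webster bridge ensures. Second, the spectral computation has to be uniform in the coefficient ring $R$, which is where the running hypothesis $\ell\nmid q(q-1)$ becomes essential: it guarantees that $q^\bbZ$ and $-q^\bbZ$ remain disjoint inside $R$, preventing any collision of eigenvalues that would fuse the two type $A$ quivers into a larger connected quiver. Once these two points are settled, the functor $\Psi'$ is obtained as the composition of $\Psi$ from Theorem A with the upgrading procedure of \cite{HL}, completing the proof.
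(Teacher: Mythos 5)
Your route is exactly the paper's: Theorem A composed with the Heisenberg-to-Kac--Moody upgrading of \cite{HL} (Theorem \ref{Thm:HL}) yields Theorem \ref{cor:kacmoody}, and the spectra $I_\pm$ are pinned down in Proposition \ref{Prop:I+-} by reducing to modules that are both $F^+$- and $F^-$-cuspidal and then invoking Lusztig's Jordan decomposition and the Howlett--Lehrer quadratic relation. One small inaccuracy in a side remark: $\ell\nmid q(q-1)$ does \emph{not} force $q^{\bbZ}$ and $-q^{\bbZ}$ to be disjoint in $k$ (if the order of $q$ modulo $\ell$ is even then $-1\in q^{\bbZ}$), and the paper instead handles $R=k$ by reducing the characteristic-zero determination of $I_\pm$ modulo $\ell$; this does not affect the existence of $\Psi'$, which is all the statement asserts.
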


In fact, the above construction also applies to $\GL_n(q)$ and $\GU_n(q)$ by a similar argument.
In particular, for $R=k$, we get this categorical action without using the basic set properties for unipotent or quadratic unipotent modules of finite reductive groups \cite{FS82, Ge93,GH91} and the classification of blocks of finite classical groups by Fong and Srinivasan \cite{FS82, FS89}.

\smallskip

The Kac-Moody categorification is also helpful for other problems in modular representation theory of finite classical groups, for example,  modular Harish-Chandra theory (see \cite{DVV,N}) and  calculating the decomposition numbers (see \cite{DN,DN2}).
\smallskip

\subsection{Applications to classifications of irreducible modules in characteristic zero}
One of the advantages of double quantum Heisenberg category is that there are two central
elements $\bbO^+(u)\in \End(\unit)((u^{-1}))$ and $\bbO^-(v)\in \End(\unit)
((v^{-1}))$, where $\unit$ is the identity functor, and $u$ and $v$ are two indeterminates (see \S\ref{sec:KactoHeis}). Let $R=K$ or $k$ be an algebraically closed field. For each irreducible module
$L\in RG_\bullet\mod$, we consider the action of $\bbO^+(u)$ and $\bbO^-(v)$ on $\unit (L)=L$. By  Schur's lemma, $\bbO^+(u)(L)\in R((u^{-1}))$ and $\bbO^-(v)(L)\in R((v^{-1})).$ So we get the following functions, and  we call them \emph{colored weight functions}
$$\bbO^+(u)(-):\, \Irr(RG_\bullet\mod)\to R((u^{-1})),\quad \bbO^-(v)(-):\, \Irr(RG_\bullet\mod)\to R((v^{-1})).$$
They contain all the information on the weight space decomposition of the Kac-Moody
categorification.

From now on, let $R=K$ be an algebraically closed field of characteristic 0.    Let $\Irr(G)$ denote the set of irreducible modules of $KG$, or equivalently, the irreducible characters of $G.$ Let $\calV(G)$ be the space of  class function on $G$,
and let $\calV(G)^\sharp$ denote the subspace spanned by Deligne-Lusztig virtual characters (see \S \ref{sub:uniformprojection}).
For $f\in\calV(G)$ the orthogonal projection of $f$ onto $\calV(G)^\sharp$
is denoted by $f^\sharp$ and called the \emph{uniform projection}.
A class function $f\in\calV(G)$ is called \emph{uniform} if $f^\sharp=f$. In fact, for $\GL_n(q)$ and $\GU_n(q)$, since all irreducible characters are uniform, we can distinguish irreducible modules of these two groups by using the uniform projection. However, this is not true for $\O_{2n+1}(q),$
 $\Sp_{2n}(q)$ and $\O^{\pm}_{2n}(q),$ see \cite{P4} for a detailed discussion.
Using extra symmetries constructed in \S\ref{Chap:extrasymmeties},
we can show that colored weight functions together with the uniform projection can distinguish all
irreducible modules which are both  $F^+$-cuspidal and  $F^-$-cuspidal (see \S\ref{sec:coloredweight}) and
then all irreducible modules (see \S\ref{sec:completeinvriants}).
In fact, we have the following theorem.

\begin{thmC} Let $G_n=\O_{2n+1}(q)$, $\Sp_{2n}(q)$ or $\O_{2n}^\pm(q)$ and $\rho,\rho'\in \Irr(G_n).$
\begin{itemize}
\item[$(a)$]
We have $\rho=\rho'$ if and only if the following hold:
\begin{itemize}
\item[$(1)$] $\rho^\sharp=\rho'^\sharp,$
\item[$(2)$] $\mathbb{O}^+(u)(\rho)=\mathbb{O}^+(u)(\rho')$  and  $\mathbb{O}^-(v)(\rho)=\mathbb{O}^-(v)(\rho').$
    \end{itemize}
\item[$(b)$] If $\rho$ and $\rho'$ are quadratic unipotent, then $\rho=\rho'$ if and only if
$\mathbb{O}^+(u)(\rho)=\mathbb{O}^+(u)(\rho')$  and  $\mathbb{O}^-(v)(\rho)=\mathbb{O}^-(v)(\rho').$

\item[$(c)$] If $\rho$ and $\rho'$ are unipotent, then $\rho=\rho'$ if and only if
$\mathbb{O}^+(u)(\rho)=\mathbb{O}^+(u)(\rho').$

\end{itemize}
\end{thmC}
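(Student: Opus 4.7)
The plan is to establish the three parts in the order (c), (b), (a), each bootstrapping on the previous, following the roadmap suggested by the authors in~\S\ref{sec:coloredweight} and~\S\ref{sec:completeinvriants}. The uniform projection will isolate a geometric Lusztig series, Jordan decomposition will reduce comparison within a series to a comparison of quadratic unipotent characters of a smaller classical group, and the colored weight functions will do the separation there.

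For part~(c), restricted to the unipotent blocks, the $\Heis_{-2}(z^-,t^-)$ factor of the double quantum Heisenberg action acts trivially (the $-1$ Lusztig series being absent for unipotent characters), so only $\bbO^+(u)$ can carry information. By Corollary~B, the surviving factor promotes to a Kac--Moody categorification of $\fraks\frakl'_{I_+}$ on the unipotent blocks, and $\bbO^+(u)$ then acts on an irreducible unipotent module by a scalar power series in $u^{-1}$ encoding the multiset of residues (contents) of the symbol that parametrises it. Since Lusztig's classification identifies unipotent characters with symbols, and a symbol of a fixed rank is reconstructible from its content multiset, I would conclude that $\bbO^+(u)$ separates unipotent characters.

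For part~(b), the Lusztig--Howlett--Lehrer Jordan decomposition at the semisimple elements $\pm 1$ attaches to each quadratic unipotent $\rho$ an ordered pair $(\chi^+,\chi^-)$ of unipotent characters of the centralisers $G_n^+$ and $G_n^-$. The key calculation would be to show that the double quantum Heisenberg action on quadratic unipotent blocks factors through this decomposition in such a way that $\bbO^+(u)(\rho)$ equals the colored weight function of $\chi^+$ computed inside $G_n^+$, and analogously for $\bbO^-(v)(\rho)$ and $\chi^-$. Part~(c) applied to each factor then separates the pair. The extra symmetries of~\S\ref{Chap:extrasymmeties}, which intertwine the $\pm$ roles of the two Heisenberg factors, are precisely what is needed to absorb the non-canonicity produced by the disconnected center and thereby guarantee that the colored weight functions are genuine invariants.

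For part~(a), the uniform projection $\rho^\sharp$, viewed in $\calV(G_n)^\sharp$, decomposes as a rational combination of Deligne--Lusztig characters whose Lusztig-series support is a single geometric series $\calE(G_n,(s))$; hence $\rho^\sharp=\rho'^\sharp$ forces $\rho$ and $\rho'$ to lie in the same series. By the Bonnaf\'e--Dat--Rouquier reduction cited in the introduction, that series is in bijection with the quadratic unipotent characters of the smaller classical group $C_{G_n^*}(s)^F$, and the double quantum Heisenberg action on $G_n$ transports along this bijection so that $\bbO^+(u)$ and $\bbO^-(v)$ descend to the colored weight functions of the smaller group. Part~(b) applied there finishes the argument. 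I expect part~(b) to be the main obstacle, since it is where the disconnected center genuinely bites: one must verify the compatibility of the Heisenberg action with the Jordan decomposition at $\pm 1$ up to the outer automorphisms responsible for the non-canonicity in Lusztig's classification, and match these with the extra symmetries of the double quantum Heisenberg category.
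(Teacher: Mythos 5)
Your overall skeleton (uniform projection to localize, Jordan decomposition to reduce to unipotent data, colored weight functions to separate) matches the paper's strategy in spirit, but part (a) has a genuine gap. You use hypothesis (1) only to conclude that $\rho$ and $\rho'$ lie in the same Lusztig series $\calE(G_n,(s))$. That is far too weak. The colored weight functions see only the symbols $\Lambda_\pm$ attached to the $\pm1$-eigenspace components of the Jordan decomposition; the component $\rho^{(\star)}\in\calE(G^{(\star)},1)$ living on the general linear/unitary factors enters $\bbO^{\pm}$ only through a sign $\eta$ depending on the parities $|\lambda_\Gamma|$ (Theorem \ref{Thm:weightfunctions}). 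Concretely, for $s$ with a factor $C_\Gamma(s)\cong\GL_2(q^{d_\Gamma})$, the characters $\rho_{\Lambda_+,\Lambda_-,(2)}$ and $\rho_{\Lambda_+,\Lambda_-,(1,1)}$ are distinct, lie in the same series, and have identical $\bbO^+(u)$ and $\bbO^-(v)$; your argument cannot separate them. The paper avoids this by invoking Pan's theorem (Theorem \ref{Thm:Panuniformproj}): $\rho^\sharp=\rho'^\sharp$ forces $\rho'$ to be one of at most four explicit twists of $\rho$ by $\sgn$ and the diagonal automorphism, all of which share the same $\rho^{(\star)}$. Only then do the transformation rules of Proposition \ref{Prop:extrasymm} (e.g.\ $\bbO^-(v)(\rho^c)=\bbO^-(-v)(\rho)$) together with the disjointness of the pole/zero sets $q^{\bbZ}$ and $-q^{\bbZ}$ and the injectivity of the residue functions (Corollary \ref{cor:injective}) resolve the remaining finite ambiguity. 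Since all irreducible characters of $\GL_m$ and $\GU_m$ are uniform, the $\star$-component is genuinely carried by $\rho^\sharp$ and by nothing else; you must use the full strength of condition (1), not just the series it determines.

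Two smaller points. In part (c) the separating mechanism is not the "multiset of contents of the symbol": $\bbO^+(u)$ records the addable and removable box contents of the charged bipartition as a ratio of linear factors, and one recovers the charged partition from these via the interlacing-sequence bijection (Lemma \ref{Lem:interlacingseq}); your phrasing would need this substitution to become a proof. In part (b) the compatibility claim that $\bbO^{\pm}(\rho)$ is computed from the Jordan components is exactly Theorem \ref{TheoremC} of the paper, proved via compatibility of the Jordan decomposition with Harish--Chandra induction and the Fock-space decomposition of the Grothendieck group; it is not a consequence of the Bonnaf\'e--Dat--Rouquier reduction, which plays no role in this proof.
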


 Theorem C implies that using uniform projection and the
colored weight functions $\mathbb{O}^+(u)(-)$ and $\mathbb{O}^-(v)(-)$,
we can give a complete characterisation for irreducible modules  of finite classical groups $\O_{2n+1}(q),$
 $\Sp_{2n}(q)$ and $\O^{\pm}_{2n}(q)$ in characteristic zero.  Moreover, the colored weight functions $\mathbb{O}^+(u)(-)$ and $\mathbb{O}^-(v)(-)$ are complete invariants of quadratic unipotent modules
 and the colored weight function $\mathbb{O}^+(u)(-)$
 is a complete invariant of unipotent modules.
 \smallskip

 In order to prove Theorem C, we try to compare the Grothendieck group of $KG_\bullet\mod$ with a direct sum of tensor products of Fock space representations of the Kac-Moody algebra $\fraks\frakl'_{I_+}\oplus\fraks\frakl'_{I_-}.$ Since the action is integrable, it is enough to determine the highest weight vectors which are given by the classes of irreducible modules which are both $F^+$-cuspidal and $F^-$-cuspidal. Using the categorical Kac-Moody action and the extra symmetries, we can determine these highest weights up to some signatures. See Theorem \ref{TheoremC} for more details.
\smallskip

\subsection{Theta correspondence}
In the last part of the paper, we study an interesting connection between theta correspondence and the categorical Kac-Moody actions on finite classical groups.
\smallskip

Let us introduce the theta correspondence over finite fields now.
Let $(G,G')$ be a reductive dual pair over a finite field $\bbF_q$ of odd characteristic.
Restricting the  Weil character (\cite{Ger}) to $(G,G')$ with respect to
a nontrivial additive character $\psi$ of $\bbF_q$,
we have the \emph{Weil character} $\omega^\psi_{G,G'}$,
which has the non-negative integral decomposition
\[
\omega^\psi_{G,G'}=\sum_{\rho\in\Irr(G), \rho'\in \Irr(G')}m_{\rho,\rho'}\rho\otimes\rho'.
\]
We say that $(\rho,\rho')$ \emph{occurs} in the \emph{theta correspondence} (or \emph{Howe correspondence}) if $m_{\rho,\rho'}\neq 0$,
i.e., there is a relation
\begin{align*}
\Theta^\psi_{G,G'}=\{\,(\rho,\rho')\in\Irr(G)\times\Irr(G')\mid m_{\rho,\rho'}\neq 0\,\}
\end{align*}
between $\Irr(G)$ and $\Irr(G')$.
The finite theta correspondence has been studied in \cite{H, S, AM, AMR, P1, P2, P3, P4, P5, LW, MQZ}.
\smallskip

We use the theta correspondence to remove the ambiguity of signatures in Theorem \ref{TheoremC}, hence we obtain a complete description of the action of Kac-Moody algebra $\fraks\frakl'_{I_+}\oplus \fraks\frakl'_{I_-}$ on the Grothendieck group of $KG_\bullet\mod$ (see \S \ref{chp:thetaforclassical}).
The main idea is the following.
First, we note that theta cuspidal (as defined in \cite[\S 1.8]{MQZ}) is equivalent to being $F^+$-cuspidal or $F^-$-cuspidal;
 see Lemma \ref{Lem:thetacuspidal}. Then
 extra symmetries constructed in \S \ref{Chap:extrasymmeties} reduce this problem to theta cuspidal modules.
By the work of Pan \cite{P3}, together with Adams and Moy \cite{AM}, the theta correspondence for the first occurrence of theta cuspidal modules can be determined up to a twist of the sign character.
Finally, the signatures in colored weight functions of theta cuspidal modules can be obtained using a formula from the work of  Ma, Qiu and Zou \cite{MQZ}.

\smallskip

This provides a new approach, which is  of characteristic 0,  to determine the Kac-Moody action on the Grothendieck group of representation of finite classical groups, see  Theorem \ref{Thm:weightfunctions} and Theorem \ref{thm:final}.
 Note that all the previous works, such as
\cite{DVV, DVV2, LLZ} for unipotent or quadratic unipotent modules, use a different approach which relies on the Brauer tree theory of finite classical groups \cite{FS90} in positive characteristic.

\smallskip

It's worth mentioning that the categorical Kac-Moody action and the theta correspondence for finite classical groups are deeply connected. In fact, by \cite{DVV2}, the $\fraks\frakl'_{I_+}$-action on the Grothendieck group of unipotent modules of finite symplectic  groups was known, then one can recover the symplectic-even orthogonal theta correspondence for unipotent modules by \cite[Theorem 1.3]{MQZ}, see Remark \ref{connections}.

\smallskip

\subsection{Future work}
In our next paper \cite{LPZ}, we will use the tools developed here to study modular representations of $\Sp_{2n}(q)$ and $\O^{\pm}_{2n}(q)$. More precisely, using the reduction modulo $\ell$ of colored weight functions $\bbO^+(u)(-)$ and $\bbO^-(v)(-)$, we will complete the determination of the $\ell$-block distribution of characters of $\Sp_{2n}(q)$ and $\O^{\pm}_{2n}(q)$. Furthermore, based on the Kac-Moody categorification in Theorem B, we will prove that Brou\'e's conjecture is also true
for the remaining finite classical groups $\Sp_{2n}(q)$ and $\O^{\pm}_{2n}(q)$ at linear primes.

\subsection{Organization of the paper}
The paper is organized as follows.
 In Section \ref{Chap:pre}, we collect basic concepts and notations, which
 include quantum Heisenberg category,
the symmetric product of monoidal categories, double
quantum Heisenberg category, etc.
In Section \ref{Chap:doubleHeisonfiniteclass}, we construct a categorical double
quantum Heisenberg action on  $\bigoplus_{n\in \bbN^*}RG_n\mod$
for groups $G_n=\O_{2n+1}(q)$, $\Sp_{2n}(q)$ and $\O_{2n}^{\pm}(q)$ with $R\in (K,\calO,k)$ which is Theorem A
 and  a categorical Kac-Moody action  on $\bigoplus_{n\in \bbN^*}RG_n\mod$
 with $R=K$ or $k$.
In Section  \ref{Chap:extrasymmeties}, we study the
extra symmetries of the above categorical double
quantum Heisenberg action and categorical Kac-Moody action.
In Section \ref{Chap:completeinv}, we recall some basic facts on the classifications of irreducible characters of finite classical groups by Lusztig and prove Theorem B.  We also show that colored weight functions $\bbO^+(u)(-)$, $\bbO^-(v)(-)$
and uniform projection can distinguish all irreducible modules, and prove Theorem C.
In Section \ref{chp:thetaforclassical}, we introduce the theta correspondence and
 then we show that using theta correspondence we can determine the  Kac-Moody action on the Grothendieck group $[KG_\bullet\mod]$.
 \smallskip

While we were writing this paper, Sergio David C\'{i}a Luvecce informed us that he
also constructed a Kac-Moody categorification for the unipotent blocks of $\O^\pm_{2n}(q)$
by a different method.

\section{Preliminaries}\label{Chap:pre}

In this section, we collect some preliminaries about
quantum Heisenberg categories, symmetric product of monoidal categories and double quantum Heisenberg categories.
\smallskip
\subsection{Algebras and categories\label{sec:rings-cat}}
An \emph{$R$-category} $\scrC$ is an additive category enriched over the tensor
category of $R$-modules. 
As usual, we write $EF$ or $E\circ F$ for a composition of functors $E$ and $F$,
and $\psi\circ\phi$ for a composition of
morphisms of functors (or natural transformations) $\psi$ and $\phi$.
Also, we denote by $\unit=\unit_\scrC$ the identity functor on $\scrC$,
and by  $1_F$ or $\id_F$  the identity element in the endomorphism
ring $\End(F)$.
\smallskip

Let $\scrC$ be an abelian $R$-category.
We denote by $[\scrC]$ the complexified Grothendieck group of $\scrC$,
 by $[M]$ the  class of  an object $M$ of $\scrC$ in $[\scrC]$,
 and by $[F]$ the linear map on $[\scrC]$ induced by an exact endofunctor $F$ of $\scrC$.
\smallskip

If the Hom spaces of $\scrC$ are finitely generated over $R$, then
the category $\scrC$ is called Hom-finite. In this case, we set $\scrH(M)=
\End_\scrC(M)^\text{op}$ for an object $M\in\scrC$,
so that $\scrH(M)$ is an $R$-algebra which is finitely generated as an $R$-module.
An $R$-category $\scrC$ is a
{\em locally finite abelian category} if
$\scrC$ is abelian, all objects are of finite length, and Hom-finite.
\smallskip

Now let $A$ be an $R$-algebra that is free and finitely generated over $R$.
Let $\scrC=A\mod$. We write $\Irr(\scrC)$ or  $\Irr(A)$ for
the set of isomorphism classes of simple objects of $\scrC$.
%
%

\smallskip
We will use the string calculus for strict monoidal categories
and strict 2-categories
as explained in \cite[Chapter 2]{TV}.
Let $\scrA$ be a strict $R$-linear monoidal category. A (strict)
module category over $\scrA$
is a $R$-linear category $\scrC$ together with a strict $R$-linear monoidal functor
$\Phi:\scrA \rightarrow \mathcal{E}nd(\scrC)$, where
$\mathcal{E}nd(\scrC)$ denotes the strict $R$-linear monoidal
category with objects that are $R$-linear endofunctors of $\scrC$ and morphisms
that are natural transformations.
We usually suppress the monoidal functor $\Phi$, using the same notation
$f:E \rightarrow F$ both for
a morphism in $\scrA$ and for
the natural transformation
between endofunctors of $\scrC$ that is its image under $\Phi$.
The evaluation $f_V:EV \rightarrow FV$
of this natural transformation
on an object $V \in \scrC$ will be represented diagrammatically by drawing a
 line labeled by
$V$ on the right-hand side of the usual string diagram
for $f$:
$$
\mathord{
\begin{tikzpicture}[baseline = 0]
	\draw[-,darkg,thick] (0.08,-.3) to (0.08,.3);
	\draw[-] (-.4,-.3) to (-.4,-.14);
	\draw[-] (-.4,.3) to (-.4,.14);
      \draw (-.4,0) circle (4pt);
   \node at (-.4,0) {$\scriptstyle{f}$};
   \node at (-.4,-.47) {$\scriptstyle{E}$};
   \node at (.08,-.47) {$\darkg\scriptstyle{V}$};
   \node at (-.4,.47) {$\scriptstyle{F}$};
\end{tikzpicture}
}\:.
$$
This line represents the identity endomorphism of the object $V$.
\smallskip

\subsection{Quantum Heisenberg category}\label{sec:quantumHeis}\hfill\\

We recall one of the definitions of the quantum Heisenberg category in \cite{BSW1}. Let $q^{1/2}\in R$, $t\in R,$
 $z=q^{1/2}-q^{-1/2}$ and $k\in \bbZ.$
\begin{definition}[\cite{BSW1}] The {\em quantum Heisenberg category} $\Heis_k(z,t)$ is the
strict $R$-linear monoidal category
generated by objects
$F=\up$ and $E=\down$
and the following
morphisms:
\begin{align}\label{qHgens}
\mathord{
\begin{tikzpicture}[baseline = 0]
	\draw[->] (0.08,-.3) to (0.08,.4);
      \node at (0.08,0.05) {$\dott$};
\end{tikzpicture}
}
&:F\rightarrow F,
&\mathord{
\begin{tikzpicture}[baseline = 1mm]
	\draw[<-] (0.4,0.4) to[out=-90, in=0] (0.1,0);
	\draw[-] (0.1,0) to[out = 180, in = -90] (-0.2,0.4);
\end{tikzpicture}
}&:\unit\rightarrow E\otimes F
\:,
&\mathord{
\begin{tikzpicture}[baseline = 1mm]
	\draw[<-] (0.4,0) to[out=90, in=0] (0.1,0.4);
	\draw[-] (0.1,0.4) to[out = 180, in = 90] (-0.2,0);
\end{tikzpicture}
}&:F\otimes E\rightarrow\unit\:,\\
\mathord{
\begin{tikzpicture}[baseline = 0]
	\draw[->] (0.28,-.3) to (-0.28,.4);
	\draw[-,white,line width=4pt] (-0.28,-.3) to (0.28,.4);
	\draw[->] (-0.28,-.3) to (0.28,.4);
\end{tikzpicture}
}&:F\otimes F \rightarrow F\otimes F
\:,&
\mathord{
\begin{tikzpicture}[baseline = 1mm]
	\draw[-] (0.4,0.4) to[out=-90, in=0] (0.1,0);
	\draw[->] (0.1,0) to[out = 180, in = -90] (-0.2,0.4);
\end{tikzpicture}
}&:\unit\rightarrow F\otimes E
\:,
&
\mathord{
\begin{tikzpicture}[baseline = 1mm]
	\draw[-] (0.4,0) to[out=90, in=0] (0.1,0.4);
	\draw[->] (0.1,0.4) to[out = 180, in = 90] (-0.2,0);
\end{tikzpicture}
}&:E \otimes F\rightarrow\unit\label{mango}.
\end{align}
The generators $\begin{tikzpicture}[baseline = -1mm]
	\draw[->] (0.08,-.2) to (0.08,.2);
      \node at (0.08,0) {$\dott$};
\end{tikzpicture}$ and
$\begin{tikzpicture}[baseline = -1mm]
	\draw[->] (0.2,-.2) to (-0.2,.2);
	\draw[-,white,line width=4pt] (-0.2,-.2) to (0.2,.2);
	\draw[->] (-0.2,-.2) to (0.2,.2);
\end{tikzpicture}$ are required to be invertible.
The invertibility of the dot
means that now it makes sense to label dots by an arbitrary integer,
rather than just by $n \in \bbN$.
We denote the inverse of $\begin{tikzpicture}[baseline = -1mm]
	\draw[->] (0.2,-.2) to (-0.2,.2);
	\draw[-,white,line width=4pt] (-0.2,-.2) to (0.2,.2);
	\draw[->] (-0.2,-.2) to (0.2,.2);
\end{tikzpicture}$
by
\begin{align*}
\mathord{
\begin{tikzpicture}[baseline = -.5mm]
	\draw[->] (-0.28,-.3) to (0.28,.4);
	\draw[-,line width=4pt,white] (0.28,-.3) to (-0.28,.4);
	\draw[->] (0.28,-.3) to (-0.28,.4);
\end{tikzpicture}
}&:F\otimes F\rightarrow F \otimes F
\:.
\end{align*}
We also introduce the sideways crossings, both positive and negative,
\begin{align*}
\mathord{
\begin{tikzpicture}[baseline = -.5mm]
	\draw[->] (-0.28,-.3) to (0.28,.4);
	\draw[line width=4pt,white,-] (0.28,-.3) to (-0.28,.4);
	\draw[<-] (0.28,-.3) to (-0.28,.4);
\end{tikzpicture}
}&:=
\mathord{
\begin{tikzpicture}[baseline = 0]
	\draw[->] (0.3,-.5) to (-0.3,.5);
	\draw[line width=4pt,-,white] (-0.2,-.2) to (0.2,.3);
	\draw[-] (-0.2,-.2) to (0.2,.3);
        \draw[-] (0.2,.3) to[out=50,in=180] (0.5,.5);
        \draw[->] (0.5,.5) to[out=0,in=90] (0.8,-.5);
        \draw[-] (-0.2,-.2) to[out=230,in=0] (-0.6,-.5);
        \draw[-] (-0.6,-.5) to[out=180,in=-90] (-0.85,.5);
\end{tikzpicture}
}\:,
&\mathord{
\begin{tikzpicture}[baseline = -.5mm]
	\draw[<-] (0.28,-.3) to (-0.28,.4);
	\draw[line width=4pt,white,-] (-0.28,-.3) to (0.28,.4);
	\draw[->] (-0.28,-.3) to (0.28,.4);
\end{tikzpicture}
}&:=
\mathord{
\begin{tikzpicture}[baseline = 0]
	\draw[-] (-0.2,-.2) to (0.2,.3);
	\draw[-,line width=4pt,white] (0.3,-.5) to (-0.3,.5);
	\draw[->] (0.3,-.5) to (-0.3,.5);
        \draw[-] (0.2,.3) to[out=50,in=180] (0.5,.5);
        \draw[->] (0.5,.5) to[out=0,in=90] (0.8,-.5);
        \draw[-] (-0.2,-.2) to[out=230,in=0] (-0.6,-.5);
        \draw[-] (-0.6,-.5) to[out=180,in=-90] (-0.85,.5);
\end{tikzpicture}
}\:,\\
\mathord{
\begin{tikzpicture}[baseline = -.5mm]
	\draw[<-] (-0.28,-.3) to (0.28,.4);
	\draw[line width=4pt,white,-] (0.28,-.3) to (-0.28,.4);
	\draw[->] (0.28,-.3) to (-0.28,.4);
\end{tikzpicture}
}&:=
\mathord{
\begin{tikzpicture}[baseline = 0]
	\draw[-] (-0.2,.2) to (0.2,-.3);
	\draw[-,line width=4pt,white] (0.3,.5) to (-0.3,-.5);
	\draw[<-] (0.3,.5) to (-0.3,-.5);
        \draw[-] (0.2,-.3) to[out=130,in=180] (0.5,-.5);
        \draw[-] (0.5,-.5) to[out=0,in=270] (0.8,.5);
        \draw[-] (-0.2,.2) to[out=130,in=0] (-0.5,.5);
        \draw[->] (-0.5,.5) to[out=180,in=-270] (-0.8,-.5);
\end{tikzpicture}
}\:,&
\mathord{
\begin{tikzpicture}[baseline = -.5mm]
	\draw[->] (0.28,-.3) to (-0.28,.4);
	\draw[line width=4pt,white,-] (-0.28,-.3) to (0.28,.4);
	\draw[<-] (-0.28,-.3) to (0.28,.4);
\end{tikzpicture}
}&:=
\mathord{
\begin{tikzpicture}[baseline = 0]
	\draw[<-] (0.3,.5) to (-0.3,-.5);
	\draw[-,line width=4pt,white] (-0.2,.2) to (0.2,-.3);
	\draw[-] (-0.2,.2) to (0.2,-.3);
        \draw[-] (0.2,-.3) to[out=130,in=180] (0.5,-.5);
        \draw[-] (0.5,-.5) to[out=0,in=270] (0.8,.5);
        \draw[-] (-0.2,.2) to[out=130,in=0] (-0.5,.5);
        \draw[->] (-0.5,.5) to[out=180,in=-270] (-0.8,-.5);
\end{tikzpicture}
}\:,
\end{align*}
and the $(+)$-bubbles
\begin{align*}
\mathord{
\begin{tikzpicture}[baseline = 1.25mm]
  \draw[->] (0.2,0.2) to[out=90,in=0] (0,.4);
  \draw[-] (0,0.4) to[out=180,in=90] (-.2,0.2);
\draw[-] (-.2,0.2) to[out=-90,in=180] (0,0);
  \draw[-] (0,0) to[out=0,in=-90] (0.2,0.2);
   \node at (0.5,0.2) {$\scriptstyle{n-k}$};
   \node at (0,0.2) {$+$};
\end{tikzpicture}}
&:=
\left\{\begin{array}{ll}
\mathord{
\begin{tikzpicture}[baseline = 1.25mm]
  \draw[->] (0.2,0.2) to[out=90,in=0] (0,.4);
  \draw[-] (0,0.4) to[out=180,in=90] (-.2,0.2);
\draw[-] (-.2,0.2) to[out=-90,in=180] (0,0);
  \draw[-] (0,0) to[out=0,in=-90] (0.2,0.2);
   \node at (0.2,0.2) {$\dott$};
   \node at (0.6,0.2) {$\scriptstyle{n-k}$};
\end{tikzpicture}}
\hspace{51.8mm}&\text{if $k<n$,}\\
t^{n+1} z^{n-1}\det\left(
\!\mathord{
\begin{tikzpicture}[baseline = 1.25mm]
  \draw[<-] (0,0.4) to[out=180,in=90] (-.2,0.2);
  \draw[-] (0.2,0.2) to[out=90,in=0] (0,.4);
 \draw[-] (-.2,0.2) to[out=-90,in=180] (0,0);
  \draw[-] (0,0) to[out=0,in=-90] (0.2,0.2);
   \node at (-0.2,0.2) {$\dott$};
   \node at (-0.95,0.2) {$\scriptstyle{r-s+k+1}$};
\end{tikzpicture}
}\,
\right)_{r,s=1,\dots,n},&\text{if $k \geq n > 0$,}\\
tz^{-1} 1_\unit&\text{if $k \geq n=0$,}\\
0&\text{if $k \geq n < 0$,}
\end{array}\right.\\
\mathord{
\begin{tikzpicture}[baseline = 1.25mm]
  \draw[<-] (0,0.4) to[out=180,in=90] (-.2,0.2);
  \draw[-] (0.2,0.2) to[out=90,in=0] (0,.4);
 \draw[-] (-.2,0.2) to[out=-90,in=180] (0,0);
  \draw[-] (0,0) to[out=0,in=-90] (0.2,0.2);
   \node at (0,0.2) {$+$};
   \node at (-0.48,0.2) {$\scriptstyle{n+k}\,\,\,$};
\end{tikzpicture}
}&:=
\left\{
\begin{array}{ll}
\mathord{
\begin{tikzpicture}[baseline = 1.25mm]
  \draw[<-] (0,0.4) to[out=180,in=90] (-.2,0.2);
  \draw[-] (0.2,0.2) to[out=90,in=0] (0,.4);
 \draw[-] (-.2,0.2) to[out=-90,in=180] (0,0);
  \draw[-] (0,0) to[out=0,in=-90] (0.2,0.2);
   \node at (-0.2,0.2) {$\dott$};
   \node at (-0.6,0.2) {$\scriptstyle{n+k}\,\,$};
\end{tikzpicture}
}&\text{if $-k < n$,}\\
(-1)^{n+1} t^{-n-1} z^{n-1}\det\left(\:\mathord{
\begin{tikzpicture}[baseline = 1.25mm]
  \draw[->] (0.2,0.2) to[out=90,in=0] (0,.4);
  \draw[-] (0,0.4) to[out=180,in=90] (-.2,.2);
\draw[-] (-.2,0.2) to[out=-90,in=180] (0,0);
  \draw[-] (0,0) to[out=0,in=-90] (0.2,0.2);
   \node at (0.2,0.2) {$\dott$};
   \node at (0.95,0.2) {$\scriptstyle{r-s-k+1}$};
\end{tikzpicture}
}\right)_{r,s=1,\dots,n}&\text{if $-k\geq n > 0$,}\\
-t^{-1}z^{-1} 1_\unit
&\text{if $-k \geq n=0$,}\\
0&\text{if $-k\geq n < 0$.}
\end{array}\right.
\end{align*}
 Here $\sgn$ denotes the determinant of the matrix.
The other defining relations are as follows:\begin{itemize}

\item affine Hecke relations
\begin{align}\label{AHH1}
\mathord{
\begin{tikzpicture}[baseline = -.5mm]
	\draw[->,thin] (0.28,-.3) to (-0.28,.4);
      \node at (0.165,-0.15) {$\dott$};
	\draw[line width=4pt,white,-] (-0.28,-.3) to (0.28,.4);
	\draw[thin,->] (-0.28,-.3) to (0.28,.4);
\end{tikzpicture}
}\,&=\mathord{
\begin{tikzpicture}[baseline = -.5mm]
	\draw[thin,->] (-0.28,-.3) to (0.28,.4);
	\draw[-,line width=4pt,white] (0.28,-.3) to (-0.28,.4);
	\draw[->,thin] (0.28,-.3) to (-0.28,.4);
      \node at (-0.14,0.23) {$\dott$};
\end{tikzpicture}
}
\:,&
\mathord{
\begin{tikzpicture}[baseline = -.5mm]
	\draw[thin,->] (-0.28,-.3) to (0.28,.4);
      \node at (-0.16,-0.15) {$\dott$};
	\draw[-,line width=4pt,white] (0.28,-.3) to (-0.28,.4);
	\draw[->,thin] (0.28,-.3) to (-0.28,.4);
\end{tikzpicture}
}\,=&
\mathord{
\begin{tikzpicture}[baseline = -.5mm]
	\draw[->,thin] (0.28,-.3) to (-0.28,.4);
	\draw[line width=4pt,white,-] (-0.28,-.3) to (0.28,.4);
	\draw[thin,->] (-0.28,-.3) to (0.28,.4);
      \node at (0.145,0.23) {$\dott$};
\end{tikzpicture}
}\:,&
\mathord{
\begin{tikzpicture}[baseline = -.5mm]
	\draw[->,thin] (0.28,-.3) to (-0.28,.4);
	\draw[line width=4pt,white,-] (-0.28,-.3) to (0.28,.4);
	\draw[thin,->] (-0.28,-.3) to (0.28,.4);
\end{tikzpicture}
}-\mathord{
\begin{tikzpicture}[baseline = -.5mm]
	\draw[thin,->] (-0.28,-.3) to (0.28,.4);
	\draw[line width=4pt,white,-] (0.28,-.3) to (-0.28,.4);
	\draw[->,thin] (0.28,-.3) to (-0.28,.4);
\end{tikzpicture}
}=&
z\:\mathord{
\begin{tikzpicture}[baseline = -.5mm]
	\draw[->,thin] (0.18,-.3) to (0.18,.4);
	\draw[->,thin] (-0.18,-.3) to (-0.18,.4);
\end{tikzpicture}
}\:,\end{align}

\begin{align}\label{AHH2}
\mathord{
\begin{tikzpicture}[baseline = -1mm]
	\draw[-,thin] (0.28,-.6) to[out=90,in=-90] (-0.28,0);
	\draw[->,thin] (-0.28,0) to[out=90,in=-90] (0.28,.6);
	\draw[-,line width=4pt,white] (-0.28,-.6) to[out=90,in=-90] (0.28,0);
	\draw[-,thin] (-0.28,-.6) to[out=90,in=-90] (0.28,0);
	\draw[-,line width=4pt,white] (0.28,0) to[out=90,in=-90] (-0.28,.6);
	\draw[->,thin] (0.28,0) to[out=90,in=-90] (-0.28,.6);
\end{tikzpicture}
}\,=
\mathord{
\begin{tikzpicture}[baseline = -1mm]
	\draw[->,thin] (0.18,-.6) to (0.18,.6);
	\draw[->,thin] (-0.18,-.6) to (-0.18,.6);
\end{tikzpicture}
}
=&
\mathord{
\begin{tikzpicture}[baseline = -1mm]
	\draw[->,thin] (0.28,0) to[out=90,in=-90] (-0.28,.6);
	\draw[-,line width=4pt,white] (-0.28,0) to[out=90,in=-90] (0.28,.6);
	\draw[->,thin] (-0.28,0) to[out=90,in=-90] (0.28,.6);
	\draw[-,thin] (-0.28,-.6) to[out=90,in=-90] (0.28,0);
	\draw[-,line width=4pt,white] (0.28,-.6) to[out=90,in=-90] (-0.28,0);
	\draw[-,thin] (0.28,-.6) to[out=90,in=-90] (-0.28,0);
\end{tikzpicture}
}\:,&
\mathord{
\begin{tikzpicture}[baseline = -1mm]
	\draw[->,thin] (0.45,-.6) to (-0.45,.6);
        \draw[-,thin] (0,-.6) to[out=90,in=-90] (-.45,0);
        \draw[-,line width=4pt,white] (-0.45,0) to[out=90,in=-90] (0,0.6);
        \draw[->,thin] (-0.45,0) to[out=90,in=-90] (0,0.6);
	\draw[-,line width=4pt,white] (0.45,.6) to (-0.45,-.6);
	\draw[<-,thin] (0.45,.6) to (-0.45,-.6);
\end{tikzpicture}
}
=&
\mathord{
\begin{tikzpicture}[baseline = -1mm]
	\draw[->,thin] (0.45,-.6) to (-0.45,.6);
        \draw[-,line width=4pt,white] (0,-.6) to[out=90,in=-90] (.45,0);
        \draw[-,thin] (0,-.6) to[out=90,in=-90] (.45,0);
        \draw[->,thin] (0.45,0) to[out=90,in=-90] (0,0.6);
	\draw[-,line width=4pt,white] (0.45,.6) to (-0.45,-.6);
	\draw[<-,thin] (0.45,.6) to (-0.45,-.6);
\end{tikzpicture}\,,
}\end{align}

\item biadjoint relations

\begin{align}\label{adj}
\mathord{
\begin{tikzpicture}[baseline = -.8mm]
  \draw[->] (0.3,0) to (0.3,.4);
	\draw[-] (0.3,0) to[out=-90, in=0] (0.1,-0.4);
	\draw[-] (0.1,-0.4) to[out = 180, in = -90] (-0.1,0);
	\draw[-] (-0.1,0) to[out=90, in=0] (-0.3,0.4);
	\draw[-] (-0.3,0.4) to[out = 180, in =90] (-0.5,0);
  \draw[-] (-0.5,0) to (-0.5,-.4);
\end{tikzpicture}
}
=
\mathord{\begin{tikzpicture}[baseline=-.8mm]
  \draw[->] (0,-0.4) to (0,.4);
\end{tikzpicture}
}\:,\quad\quad
\mathord{
\begin{tikzpicture}[baseline = -.8mm]
  \draw[->] (0.3,0) to (0.3,-.4);
	\draw[-] (0.3,0) to[out=90, in=0] (0.1,0.4);
	\draw[-] (0.1,0.4) to[out = 180, in = 90] (-0.1,0);
	\draw[-] (-0.1,0) to[out=-90, in=0] (-0.3,-0.4);
	\draw[-] (-0.3,-0.4) to[out = 180, in =-90] (-0.5,0);
  \draw[-] (-0.5,0) to (-0.5,.4);
\end{tikzpicture}
}
=
\mathord{\begin{tikzpicture}[baseline=-.8mm]
  \draw[<-] (0,-0.4) to (0,.4);
\end{tikzpicture}
}
\:,\quad\quad
\mathord{
\begin{tikzpicture}[baseline = -.8mm]
  \draw[-] (0.3,0) to (0.3,-.4);
	\draw[-] (0.3,0) to[out=90, in=0] (0.1,0.4);
	\draw[-] (0.1,0.4) to[out = 180, in = 90] (-0.1,0);
	\draw[-] (-0.1,0) to[out=-90, in=0] (-0.3,-0.4);
	\draw[-] (-0.3,-0.4) to[out = 180, in =-90] (-0.5,0);
  \draw[->] (-0.5,0) to (-0.5,.4);
\end{tikzpicture}
}
=
\mathord{\begin{tikzpicture}[baseline=-.8mm]
  \draw[->] (0,-0.4) to (0,.4);
\end{tikzpicture}
}\:,\quad\quad
\mathord{
\begin{tikzpicture}[baseline = -.8mm]
  \draw[-] (0.3,0) to (0.3,.4);
	\draw[-] (0.3,0) to[out=-90, in=0] (0.1,-0.4);
	\draw[-] (0.1,-0.4) to[out = 180, in = -90] (-0.1,0);
	\draw[-] (-0.1,0) to[out=90, in=0] (-0.3,0.4);
	\draw[-] (-0.3,0.4) to[out = 180, in =90] (-0.5,0);
  \draw[->] (-0.5,0) to (-0.5,-.4);
\end{tikzpicture}
}
=
\mathord{\begin{tikzpicture}[baseline=-.8mm]
  \draw[<-] (0,-0.4) to (0,.4);
\end{tikzpicture}\:,
}\end{align}

\item bubble relations
\begin{align}
\mathord{
\begin{tikzpicture}[baseline = 1.25mm]
  \draw[<-] (0,0.4) to[out=180,in=90] (-.2,0.2);
  \draw[-] (0.2,0.2) to[out=90,in=0] (0,.4);
 \draw[-] (-.2,0.2) to[out=-90,in=180] (0,0);
  \draw[-] (0,0) to[out=0,in=-90] (0.2,0.2);
   \node at (-0.2,0.2) {$\dott$};
   \node at (-0.6,0.2) {$\scriptstyle{n+k}$};
\end{tikzpicture}
}&=
{\textstyle\frac{\delta_{n,-k} t - \delta_{n,0} t^{-1}}{z}}
1_\unit
\:\text{if $-k \leq n\leq 0$,}
&
\mathord{
\begin{tikzpicture}[baseline = 1.25mm]
  \draw[->] (0.2,0.2) to[out=90,in=0] (0,.4);
  \draw[-] (0,0.4) to[out=180,in=90] (-.2,0.2);
\draw[-] (-.2,0.2) to[out=-90,in=180] (0,0);
  \draw[-] (0,0) to[out=0,in=-90] (0.2,0.2);
   \node at (0.2,0.2) {$\dott$};
   \node at (0.6,0.2) {$\scriptstyle{n-k}$};
\end{tikzpicture}
}\!&=
{\textstyle\frac{\delta_{n,0} t - \delta_{n,k} t^{-1}}{z}}
1_\unit
\:\:\:\text{if $k \leq n \leq 0$,}
\end{align}

\item Mackey formula relations

\begin{align}
\mathord{
\begin{tikzpicture}[baseline = -.9mm]
	\draw[<-] (-0.28,-.6) to[out=90,in=-90] (0.28,0);
	\draw[-,white,line width=4pt] (0.28,-.6) to[out=90,in=-90] (-0.28,0);
	\draw[-] (0.28,-.6) to[out=90,in=-90] (-0.28,0);
	\draw[->] (-0.28,0) to[out=90,in=-90] (0.28,.6);
	\draw[-,line width=4pt,white] (0.28,0) to[out=90,in=-90] (-0.28,.6);
	\draw[-] (0.28,0) to[out=90,in=-90] (-0.28,.6);
\end{tikzpicture}
}
&=
\mathord{
\begin{tikzpicture}[baseline = -.9mm]
	\draw[->] (0.08,-.6) to (0.08,.6);
	\draw[<-] (-0.28,-.6) to (-0.28,.6);
\end{tikzpicture}
}
+tz
\mathord{
\begin{tikzpicture}[baseline=-.9mm]
	\draw[<-] (0.3,0.6) to[out=-90, in=0] (0,.1);
	\draw[-] (0,.1) to[out = 180, in = -90] (-0.3,0.6);
	\draw[-] (0.3,-.6) to[out=90, in=0] (0,-0.1);
	\draw[->] (0,-0.1) to[out = 180, in = 90] (-0.3,-.6);
\end{tikzpicture}}
\!+z^2\!\sum_{r,s > 0}
\!\!\!\mathord{
\begin{tikzpicture}[baseline = 1mm]
  \draw[<-] (0,0.4) to[out=180,in=90] (-.2,0.2);
  \draw[-] (0.2,0.2) to[out=90,in=0] (0,.4);
 \draw[-] (-.2,0.2) to[out=-90,in=180] (0,0);
  \draw[-] (0,0) to[out=0,in=-90] (0.2,0.2);
   \node at (0,0.2) {$+$};
   \node at (-.57,0.2) {$\scriptstyle{-r-s}$};
\end{tikzpicture}
}
\mathord{
\begin{tikzpicture}[baseline=-.9mm]
	\draw[<-] (0.3,0.6) to[out=-90, in=0] (0,.1);
	\draw[-] (0,.1) to[out = 180, in = -90] (-0.3,0.6);
      \node at (0.44,-0.3) {$\scriptstyle{s}$};
	\draw[-] (0.3,-.6) to[out=90, in=0] (0,-0.1);
	\draw[->] (0,-0.1) to[out = 180, in = 90] (-0.3,-.6);
   \node at (0.27,0.3) {$\dott$};
      \node at (0.27,-0.3) {$\dott$};
   \node at (.43,.3) {$\scriptstyle{r}$};
\end{tikzpicture}}\:,&
\mathord{
\begin{tikzpicture}[baseline = -.9mm]
	\draw[->] (0.28,0) to[out=90,in=-90] (-0.28,.6);
	\draw[<-] (0.28,-.6) to[out=90,in=-90] (-0.28,0);
	\draw[-,line width=4pt,white] (-0.28,0) to[out=90,in=-90] (0.28,.6);
	\draw[-] (-0.28,0) to[out=90,in=-90] (0.28,.6);
	\draw[-,line width=4pt,white] (-0.28,-.6) to[out=90,in=-90] (0.28,0);
	\draw[-] (-0.28,-.6) to[out=90,in=-90] (0.28,0);
\end{tikzpicture}
}
&=\mathord{
\begin{tikzpicture}[baseline = -0.9mm]
	\draw[<-] (0.08,-.6) to (0.08,.6);
	\draw[->] (-0.28,-.6) to (-0.28,.6);
\end{tikzpicture}
}
-t^{-1}z
\mathord{
\begin{tikzpicture}[baseline=-0.9mm]
	\draw[-] (0.3,0.6) to[out=-90, in=0] (0,0.1);
	\draw[->] (0,0.1) to[out = 180, in = -90] (-0.3,0.6);
	\draw[<-] (0.3,-.6) to[out=90, in=0] (0,-0.1);
	\draw[-] (0,-0.1) to[out = 180, in = 90] (-0.3,-.6);
\end{tikzpicture}}
\!+z^2\!\sum_{r,s > 0}\!\!\!
\mathord{
\begin{tikzpicture}[baseline=-0.9mm]
	\draw[-] (0.3,0.6) to[out=-90, in=0] (0,0.1);
	\draw[->] (0,0.1) to[out = 180, in = -90] (-0.3,0.6);
      \node at (-0.4,0.3) {$\scriptstyle{r}$};
      \node at (-0.25,0.3) {$\dott$};
	\draw[<-] (0.3,-.6) to[out=90, in=0] (0,-0.1);
	\draw[-] (0,-0.1) to[out = 180, in = 90] (-0.3,-.6);
   \node at (-0.27,-0.4) {$\dott$};
   \node at (-.45,-.35) {$\scriptstyle{s}$};
\end{tikzpicture}}
\mathord{
\begin{tikzpicture}[baseline = 1mm]
  \draw[->] (0.2,0.2) to[out=90,in=0] (0,.4);
  \draw[-] (0,0.4) to[out=180,in=90] (-.2,0.2);
\draw[-] (-.2,0.2) to[out=-90,in=180] (0,0);
  \draw[-] (0,0) to[out=0,in=-90] (0.2,0.2);
   \node at (0,0.2) {$+$};
   \node at (.55,0.2) {$\scriptstyle{-r-s}$};
\end{tikzpicture}
}.\label{limb}\end{align}

\item
curl relations
\begin{align}\label{saturday}
\mathord{
\begin{tikzpicture}[baseline = -0.5mm]
	\draw[<-] (0,0.6) to (0,0.3);
	\draw[-] (0.3,-0.2) to [out=0,in=-90](.5,0);
	\draw[-] (0.5,0) to [out=90,in=0](.3,0.2);
	\draw[-] (0.3,.2) to [out=180,in=90](0,-0.3);
	\draw[-] (0,-0.3) to (0,-0.6);
	\draw[-,line width=4pt,white] (0,0.3) to [out=-90,in=180] (.3,-0.2);
	\draw[-] (0,0.3) to [out=-90,in=180] (.3,-0.2);
\end{tikzpicture}
}&=\delta_{k,0}
t^{-1}\:\mathord{
\begin{tikzpicture}[baseline = -0.5mm]
	\draw[<-] (0,0.6) to (0,-0.6);
\end{tikzpicture}
}
\:\:\text{if $k \geq 0$,}
&\mathord{
\begin{tikzpicture}[baseline = -0.5mm]
	\draw[<-] (0,0.6) to (0,0.3);
	\draw[-] (-0.3,-0.2) to [out=180,in=-90](-.5,0);
	\draw[-] (-0.5,0) to [out=90,in=180](-.3,0.2);
	\draw[-] (-0.3,.2) to [out=0,in=90](0,-0.3);
	\draw[-] (0,-0.3) to (0,-0.6);
	\draw[-,line width=4pt,white] (0,0.3) to [out=-90,in=0] (-.3,-0.2);
	\draw[-] (0,0.3) to [out=-90,in=0] (-.3,-0.2);
\end{tikzpicture}
}&=
\delta_{k,0}
t\:\mathord{
\begin{tikzpicture}[baseline = -0.5mm]
	\draw[<-] (0,0.6) to (0,-0.6);
\end{tikzpicture}
}
\:\:\quad\text{if $k \leq 0$}.\end{align}

\end{itemize}

Moreover, the quantum Heisenberg category is strictly
pivotal, i.e.,
the following relations hold:
\begin{align*}
\mathord{
\begin{tikzpicture}[baseline = -.5mm]
	\draw[<-] (0.08,-.3) to (0.08,.4);
      \node at (0.08,0.05) {$\dott$};
\end{tikzpicture}
}:=
\mathord{
\begin{tikzpicture}[baseline = -.5mm]
  \draw[->] (0.3,0) to (0.3,-.4);
	\draw[-] (0.3,0) to[out=90, in=0] (0.1,0.4);
	\draw[-] (0.1,0.4) to[out = 180, in = 90] (-0.1,0);
	\draw[-] (-0.1,0) to[out=-90, in=0] (-0.3,-0.4);
	\draw[-] (-0.3,-0.4) to[out = 180, in =-90] (-0.5,0);
  \draw[-] (-0.5,0) to (-0.5,.4);
   \node at (-0.1,0) {$\dott$};
\end{tikzpicture}
}&=\:
\mathord{
\begin{tikzpicture}[baseline = -.5mm]
  \draw[-] (0.3,0) to (0.3,.4);
	\draw[-] (0.3,0) to[out=-90, in=0] (0.1,-0.4);
	\draw[-] (0.1,-0.4) to[out = 180, in = -90] (-0.1,0);
	\draw[-] (-0.1,0) to[out=90, in=0] (-0.3,0.4);
	\draw[-] (-0.3,0.4) to[out = 180, in =90] (-0.5,0);
  \draw[->] (-0.5,0) to (-0.5,-.4);
   \node at (-0.1,0) {$\dott$};
\end{tikzpicture}
}\:,
&\mathord{
\begin{tikzpicture}[baseline = 0]
	\draw[<-] (0.28,-.3) to (-0.28,.4);\draw[line width=4pt,white,-] (-0.28,-.3) to (0.28,.4);
	\draw[<-] (-0.28,-.3) to (0.28,.4);
\end{tikzpicture}
}
:=
\mathord{
\begin{tikzpicture}[baseline = 0]
	\draw[-] (-0.2,-.2) to (0.2,.3);
        \draw[-] (0.2,.3) to[out=50,in=180] (0.5,.5);
        \draw[->] (0.5,.5) to[out=0,in=90] (0.9,-.5);
        \draw[-] (-0.2,-.2) to[out=230,in=0] (-0.6,-.5);
        \draw[-] (-0.6,-.5) to[out=180,in=-90] (-0.9,.5);
        \draw[-,white,line width=4pt] (0.3,-.5) to (-0.3,.5);
        \draw[<-] (0.3,-.5) to (-0.3,.5);
\end{tikzpicture}
}&=
\mathord{
\begin{tikzpicture}[baseline = 0]
        \draw[->] (0.3,.5) to (-0.3,-.5);
        \draw[-,white,line width=4pt] (-0.2,.2) to (0.2,-.3);
	    \draw[-] (-0.2,.2) to (0.2,-.3);
        \draw[-] (0.2,-.3) to[out=130,in=180] (0.5,-.5);
        \draw[-] (0.5,-.5) to[out=0,in=270] (0.9,.5);
        \draw[-] (-0.2,.2) to[out=130,in=0] (-0.6,.5);
        \draw[->] (-0.6,.5) to[out=180,in=-270] (-0.9,-.5);
\end{tikzpicture}
}\:.
\end{align*}
\end{definition}

\medskip

\subsection{From Heisenberg categorification to Kac-Moody categorification}\label{sec:KactoHeis}\hfill\\

In this section, we will recall some notations and results in \cite{BSW2}.


\begin{definition}A {\em categorical quantum Heisenberg action} on a locally finite category $\mathcal{C}$ is the data
of a strict monoidal functor $\Heis_k(z,t) \rightarrow \mathcal{E}nd( \mathcal{C})$, where
$\mathcal{E}nd( \mathcal{C})$ is the strict monoidal category consisting of
endofunctors and natural transformations.
\end{definition}

In this section, we assume that $R$ is a field.
Now we assume that there exists a categorical quantum Heisenberg action
 on a locally finite category $\mathcal{C}.$
Let $L\in \calC$ be an irreducible object. We can define $m_L(u), n_L(u) \in R[u]$ to
be the (monic) {\em minimal polynomials} of
the endomorphisms
$\mathord{\begin{tikzpicture}[baseline = -1mm]
 	\draw[->] (0.08,-.2) to (0.08,.2);
     \node at (0.08,0) {$\dott$};
 	\draw[-,darkg,thick] (0.38,.2) to (0.38,-.2);
     \node at (0.55,0) {$\darkg\scriptstyle{L}$};
\end{tikzpicture}
}$ and
$\mathord{\begin{tikzpicture}[baseline = -1mm]
 	\draw[<-] (0.08,-.2) to (0.08,.2);
     \node at (0.08,0.02) {$\dott$};
 	\draw[-,darkg,thick] (0.38,.2) to (0.38,-.2);
     \node at (0.55,0) {$\darkg\scriptstyle{L}$};
\end{tikzpicture}
}$, respectively. We use the notation
$\mathord{
\begin{tikzpicture}[baseline = -1.5]
	\draw[-] (0.08,-.15) to (0.08,.3);
	\node at (0.08,0.08) {$\dt$};
	\node at (0.32,.08) {$\scriptstyle x^n$};
\end{tikzpicture}
}$ instead of
$\mathord{
\begin{tikzpicture}[baseline = -1.5]
	\draw[-] (0.08,-.15) to (0.08,.3);
	\node at (0.08,0.08) {$\dt$};
	\node at (0.25,.08) {$\scriptstyle n$};
\end{tikzpicture}
}$ to denote a dot of multiplicity $n$ and
represent linear combinations of monomials
by labelling dots by {polynomials} in $x$ too.
Then
there are {injective} homomorphisms
\begin{align}\label{CRT1}
R[u] / (m_L(u)) &\hookrightarrow \End_{\calC} (EL),
&
R[u] / (n_L(u)) &\hookrightarrow \End_{\calC} (FL),\\
p(u) &
\mapsto \mathord{
\begin{tikzpicture}[baseline = -1mm]
 	\draw[->] (0.08,-.3) to (0.08,.4);
     \node at (0.08,0.05) {$\dott$};
     \node at (-0.3,0.05) {$\scriptstyle p(x)$};
 	\draw[-,darkg,thick] (0.45,.4) to (0.45,-.3);
     \node at (0.45,-.45) {$\darkg\scriptstyle{L}$};
\end{tikzpicture}
},
&p(u) &\mapsto \mathord{
\begin{tikzpicture}[baseline = -1mm]
 	\draw[<-] (0.08,-.3) to (0.08,.4);
     \node at (0.08,0.1) {$\dott$};
     \node at (-0.3,0.1) {$\scriptstyle p(x)$};
 	\draw[-,darkg,thick] (0.45,.4) to (0.45,-.3);
     \node at (0.45,-.45) {$\darkg\scriptstyle{L}$};
\end{tikzpicture}
}.\notag
\end{align}
We assume that all roots of $m_L(u)$ and $n_L(u)$ lie in $R.$
Also, let $\epsilon_i(L)$ and $\phi_i(L)$ denote the multiplicities
of $i \in R$ as a root of the polynomials $m_L(u)$ and $n_L(u)$,
respectively.
By the Chinese remainder theorem, we have
\begin{align}\label{CRTdef}
R[u] / (m_L(u)) &\cong \bigoplus_{i \in R} R[u] \big/ \big((u-i)^{\epsilon_i(L)}\big),&
R[u] / (n_L(u)) &\cong \bigoplus_{i \in R} R[u] \big/
                   \big((u-i)^{\phi_i(L)}\big).
\end{align}
We define $E_i$ and $F_i$ to be the direct
summands of $E$ and $F$ such that $E_i L$ and $F_i L$ are the generalized $i$-eigenspaces of $\mathord{\begin{tikzpicture}
 	\draw[<-] (0.08,-.2) to (0.08,.2);
     \node at (0.08,0.02) {$\dott$};\draw[-,darkg,thick] (0.38,.2) to (0.38,-.2);
     \node at (0.55,0) {$\darkg\scriptstyle{L}$};
\end{tikzpicture}}$ and
$\mathord{\begin{tikzpicture}
 	\draw[->] (0.08,-.2) to (0.08,.2);
     \node at (0.08,0.02) {$\dott$};\draw[-,darkg,thick] (0.38,.2) to (0.38,-.2);
     \node at (0.55,0) {$\darkg\scriptstyle{L}$};
\end{tikzpicture}}$ for any  $L\in \calC$, respectively.
\smallskip

Now we define the {\em spectrum} $I$ of $\calC$
to be the union of the sets of roots of the minimal polynomials
$m_L(u)$ for all irreducible $L \in \calC$.
 By the exactness of $E_i$,
the spectrum $I$ is the set of all $i \in R$ such that
$E_i$ is a non-zero endofunctor of $\mathcal{C}$.
By adjunction, it follows that $I$ is the set of all $i \in R$
such that the endofunctor $F_i$ is non-zero, hence, $I$ could also
be defined as
the union of the sets of roots of the polynomials
$n_L(u)$ for all irreducible $L \in \mathcal{C}$.
The discussion above shows that
\begin{align}
E &=
\bigoplus_{i \in I} E_i,&
F &= \bigoplus_{i \in I} F_i,
\end{align}
where each of the endofunctors
$E_i$ and $F_i$ are non-zero.
\smallskip

It is convenient to
work with all of the bubbles at once in terms of the
generating function. We define the following
generating functions:
\begin{align}\label{summer1}
\anticlock(u) &:= t^{-1}z \sum_{r\in\bbZ}
\mathord{
\begin{tikzpicture}[baseline = 1.25mm]
  \draw[-] (0,0.4) to[out=180,in=90] (-.2,0.2);
  \draw[->] (0.2,0.2) to[out=90,in=0] (0,.4);
 \draw[-] (-.2,0.2) to[out=-90,in=180] (0,0);
  \draw[-] (0,0) to[out=0,in=-90] (0.2,0.2);
   \node at (0,.21) {$+$};
   \node at (0.33,0.2) {$\scriptstyle{r}$};
\end{tikzpicture}
}\: u^{-r}
\in u^k 1_\unit + u^{k-1} \End_{\Heis_k(z,t)}(\unit)[[
u^{-1} ]]
,\\\label{summer2}
\clock(u)&:= -tz \sum_{r\in\Z}
\mathord{
\begin{tikzpicture}[baseline = 1.25mm]
  \draw[<-] (0,0.4) to[out=180,in=90] (-.2,0.2);
  \draw[-] (0.2,0.2) to[out=90,in=0] (0,.4);
 \draw[-] (-.2,0.2) to[out=-90,in=180] (0,0);
  \draw[-] (0,0) to[out=0,in=-90] (0.2,0.2);
   \node at (0,.21) {$+$};
   \node at (-0.33,0.2) {$\scriptstyle{r}$};
\end{tikzpicture}
} \:u^{-r} \in u^{-k}1_\unit +
u^{-k-1}\End_{\Heis_k(z,t)}(\unit)[[
u^{-1} ]].
\end{align}
Then they satisfy \begin{equation}
\anticlock(u)\; \clock(u)
= 1_\unit.
\end{equation}
 The evaluation $\bbO(u)(L): \unit(L) \to \unit(L)$ of this natural transformation $\bbO(u)\in \calE nd(\unit)((u^{-1}))$
on an irreducible object $L\in \calC$ is

\begin{align}\label{jjj}
\bbO(u)(L)&:=
\mathord{
\begin{tikzpicture}[baseline = -1mm]
     \node at (0.08,0) {$\scriptstyle\clock(u)$};
 	\draw[-,darkg,thick] (0.68,.2) to (0.68,-.22);
     \node at (0.68,-.37) {$\darkg\scriptstyle{L}$};
\end{tikzpicture}
}=\left(
\mathord{
\begin{tikzpicture}[baseline = -1mm]
     \node at (0.08,0) {$\scriptstyle\anticlock(u)$};
 	\draw[-,darkg,thick] (0.68,.2) to (0.68,-.22);
     \node at (0.68,-.37) {$\darkg\scriptstyle{L}$};
\end{tikzpicture}
}\right)^{-1}\in \End_{\calC}(L)((u^{-1})).
\end{align}
For an irreducible object $L \in \calC$,
it was showed in \cite[Lemma 4.17]{BSW2} that
$$
\bbO(u)(L) = m_L(u)/n_L(u).
$$
Moreover, the constant terms of
the polynomials $m_L(u)$ and $n_L(u)$ satisfy $t^2 = m_L(0) / n_L(0)$.
\smallskip

It was shown in \cite[Lemma 4.6]{BSW2},
 $i \in I$ if and only if $qi \in I$.
Let $1\neq q \in R^\times$ and  $\scrI$ be a (possibly infinite) subset of $R^\times$.
To the pair $(\scrI,q)$ we  associate a (not necessarily connected)
quiver $\scrI(q)$ (also denoted by $\scrI$) as follows:
\begin{itemize}[leftmargin=8mm]
  \item  the vertex set is $\scrI$, and
  \item there is an arrow $i\to i\cdot q$
if and only if $i, i\cdot q\in \scrI$.
\end{itemize}
Now assume that $\scrI$ is stable under the multiplication by $q$ and $q^{-1}$.
If $q$ is a primitive $e$-th root of unity
then the quiver $\scrI(q)$ is the disjoint union of cyclic quivers  of type $A_{e-1}^{(1)}$,
 while if  $q$ is not a root of unity then  $\scrI(q)$
is the disjoint union of quivers of type $A_\infty$.

\smallskip

The quiver $\scrI(q)$ defines a symmetric generalized Cartan matrix
$A = (a_{ij})_{i,j\in \scrI}$ with

$$
\begin{cases}
a_{ii}= 2 &   \\
a_{ij} =-1 & \text{if}~i \rightarrow j~\text{or}~j \rightarrow i\\
a_{ij}=0 & \text{ otherwise}.
\end{cases}
$$ To this Cartan matrix one can
associate the  Kac-Moody algebra $\fraks\frakl_{\scrI}'$ over $\bbC$, which
has Chevalley generators $e_i,f_i$ for $i\in \scrI$, subject to the usual
relations.
For $i\in\scrI$, let $\alpha_i,$ $\alpha^\vee_i$ be the simple root and coroot
corresponding to $e_i$ and let $\Lambda_i$ be the $i$-th fundamental weight. Recall that
 $\X^\vee = \bigoplus\limits_{i\in \scrI} \bbZ \alpha_i^\vee$ and
$\X=\bigoplus\limits_{i\in \scrI}\bbZ \Lambda_i.$
\smallskip

For an irreducible object $L \in \mathcal{C}$, let
\begin{equation}
\wt(L) := \sum_{i \in I} (\phi_i(L)-\epsilon_i(L)) \Lambda_i \in \X.
\end{equation}
Then for $\lambda \in \X$ we let $\mathcal{C}_\lambda$ be the Serre
subcategory of $\mathcal{C}$ consisting of the objects $V$
such that every irreducible subquotient $L$ of $V$
satisfies $\wt(L) = \lambda$.
The point of this definition is
that irreducible objects $K, L \in \mathcal{C}$ with $\wt(K) \neq \wt(L)$
have different central characters.
Using also the general theory of blocks,
it follows that
\begin{equation}
    \textstyle
    \mathcal{C} =
        \bigoplus_{\lambda \in X} \mathcal{C}_\lambda  .
\end{equation}
We call it  the {\em weight space}
decomposition of $\mathcal{C}$.
\begin{theorem}[\cite{BSW2}]
Let $R=K$ or $k.$
Associated with $\mathcal{C}$,
there is a $2$-representation
$\Phi:\mathfrak{U}(\fraks\frakl'_{I})\rightarrow
\mathfrak{Cat}_R$
defined on objects by
$\lambda\mapsto \mathcal{C}_\lambda$,
on generating
1-morphisms by
$E_i 1_\lambda\mapsto E_i|_{\mathcal{C}_\lambda}$
and $F_i 1_\lambda\mapsto F_i|_{\mathcal{C}_\lambda}$ for $i\in  I.$
\end{theorem}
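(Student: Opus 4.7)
The plan is to construct $\Phi$ directly from the quantum Heisenberg action and then verify the defining relations of $\mathfrak{U}(\fraks\frakl'_I)$ piece by piece. On objects I would set $\Phi(\lambda) = \mathcal{C}_\lambda$, using the weight space decomposition $\mathcal{C} = \bigoplus_\lambda \mathcal{C}_\lambda$ that has already been established. The $1$-morphisms $E_i 1_\lambda$ and $F_i 1_\lambda$ will be sent to the restrictions of $E_i$ and $F_i$ to $\mathcal{C}_\lambda$, where $E_i, F_i$ are the direct summands of $E, F$ defined as generalized $i$-eigenspaces of the dot endomorphism via \eqref{CRT1} and \eqref{CRTdef}; their existence and exactness come from the Chinese remainder theorem applied to the minimal polynomials $m_L(u), n_L(u)$.

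Next I would realize the generators of the positive half of $\mathfrak{U}(\fraks\frakl'_I)$ inside the Heisenberg category. The dot $x_i$ on $E_i$ is the nilpotent $x - i\cdot\id$ obtained by restricting the Heisenberg dot to the $i$-eigenspace. The KLR crossing $\tau_{ij}\colon E_i E_j \to E_j E_i$ arises from the Heisenberg crossing after projecting onto joint eigenspace components, and the caps and cups of the biadjunctions $(E_i, F_i)$ come from the Heisenberg biadjunctions \eqref{adj}; the pivotal structure then supplies the $F$-side generators. The relations to verify split into three groups: (a) the KLR quadratic and braid relations for $E_i$-strands should follow from the affine Hecke relations \eqref{AHH1}--\eqref{AHH2} after restriction to joint eigenspaces, with the quiver $I(q)$ emerging because the dot endomorphism commuted through the crossing picks up the multiplicative shift by $q$; (b) the bubble and curl relations \eqref{saturday}, together with the generating-function identity $\anticlock(u)\clock(u) = 1_\unit$, should recover the unit morphisms and the infinite Grassmannian bubble relations of $\mathfrak{U}(\fraks\frakl'_I)$; (c) the commutator identity for $E_i F_j$ and $F_j E_i$ on $\mathcal{C}_\lambda$ must match the expected $\fraks\frakl_2$-categorification pattern with the correct multiplicities.

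The main obstacle will be this commutator identity, which is Rouquier's non-trivial invertibility requirement for an $\fraks\frakl_2$-categorification. For $i=j$, I would need to exhibit an isomorphism between $E_i F_i|_{\mathcal{C}_\lambda}$ and $F_i E_i|_{\mathcal{C}_\lambda}$ extended by $\alpha_i^\vee(\lambda)$ copies of the identity (or the opposite decomposition when $\alpha_i^\vee(\lambda) < 0$), and for $i \neq j$ the two compositions should be canonically isomorphic. The natural candidate map comes from the Mackey relation \eqref{limb}, but its right-hand side is an infinite sum involving negative bubbles. The key technical step is to repackage those bubbles through the generating functions \eqref{summer1}--\eqref{summer2}, project to the $(i,j)$-eigenspaces, and argue that the sum truncates against the nilpotency of $x - i$ and $x - j$. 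The identity $\bbO(u)(L) = m_L(u)/n_L(u)$ from \cite[Lemma 4.17]{BSW2} is the crucial input here: the order of vanishing or pole of $\bbO(u)(L)$ at $u = i$ equals $\phi_i(L) - \epsilon_i(L) = \alpha_i^\vee(\wt(L))$, producing precisely the prescribed multiplicity of identity summands.

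Once the commutator identity is in hand, the remaining Serre relations and mixed triple crossing relations of $\mathfrak{U}(\fraks\frakl'_I)$ follow from standard KLR-theoretic manipulations applied to the eigenspace decomposition of $\End(E_i E_j E_k)$ and of the positive/negative bubble algebras. The assignment on $2$-morphisms is then forced by the presentation of $\mathfrak{U}(\fraks\frakl'_I)$ in terms of generators and relations, yielding the desired $2$-representation $\Phi$.
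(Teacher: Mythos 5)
The paper does not actually prove this statement---it is imported verbatim from \cite[Theorem 4.11]{BSW2}, with the reader referred there for details---so the only meaningful comparison is with the proof in that reference, and your outline reproduces its strategy faithfully: the eigenfunctor decomposition of $E$ and $F$ via the Chinese remainder theorem, KLR generators manufactured from the affine Hecke data (where, to be precise, the crossings $\tau_{ij}$ are not the bare projections of the Heisenberg crossing but must be corrected by intertwiner-type power series in the dots for the quadratic and braid relations to hold), and the inversion relation obtained by restricting the Mackey relation to eigenspaces, with the number of identity summands read off from the order of zero or pole of $\bbO(u)(L)=m_L(u)/n_L(u)$ at $u=i$, which equals $\phi_i(L)-\epsilon_i(L)=\alpha_i^\vee(\wt(L))$. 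This is essentially the same approach as the cited proof, so there is nothing to add beyond that one caveat about the definition of the KLR crossings.
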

We refer readers to \cite[Theorem 4.11]{BSW2} for a more detailed and explicit explanation.
\medskip


\subsection{Double quantum Heisenberg category}\label{sec:doubleHeiscat}\hfill\\

In this section, we will consider the symmetric product of two quantum Heisenberg categories. 

\smallskip

Now let  us  introduce the concept of the symmetric product of two monoidal categories (see for example \cite{BSW3}).
Given a strict $R$-linear monoidal categories $\mathcal C$ and $\mathcal D$,  we can form their free product $\mathcal C\circledast \mathcal D$ as a strict $R$-linear monoidal category. This can be defined by the following universal property: the category of $R$-linear monoidal functors $\mathcal C\circledast \mathcal D\to \mathcal{B}$ for any other strict $R$-linear monoidal category $\mathcal{B}$ is the same as the category of pairs of $R$-linear monoidal functors $\mathcal C\to \mathcal{B}$ and $\mathcal D\to \mathcal{B}$.  When $\mathcal C$ and $\mathcal D$ are themselves defined by generators and relations, the free product of $\mathcal C$ and $\mathcal D$ may be constructed simply as the strict $R$-linear monoidal category defined by taking the disjoint union of the given generators and relations of $\mathcal C$ and $\mathcal D$.
The {\em symmetric product}
$\mathcal C \odot \mathcal D$ is the strict $R$-linear monoidal
category obtained from $\mathcal C \circledast \mathcal D$ by
adjoining isomorphisms $\sigma_{X,Y}:X \otimes Y \stackrel{\sim}{\rightarrow} Y \otimes
X$
such that $\sigma_{Y,X} = \sigma_{X,Y}^{-1}$
for each pair of objects $X \in \mathcal C$ and $Y \in \mathcal D$,
subject also to the relations
\begin{align*}
\sigma_{X_1 \otimes X_2, Y} &= (\sigma_{X_1,Y} \otimes 1_{X_2}) \circ
  (1_{X_1} \otimes \sigma_{X_2,Y}),&
\sigma_{X_2,Y} \circ (f \otimes 1_Y)  &= (1_Y \otimes f) \circ
                                        \sigma_{X_1,Y},\\
\sigma_{X, Y_1 \otimes Y_2} &= (1_{Y_1} \otimes \sigma_{X,Y_2}) \circ
(\sigma_{X, Y_1} \otimes 1_{Y_2}),&
\sigma_{X,Y_2} \circ (1_X \otimes g) &= (g \otimes 1_X)\circ \sigma_{X,Y_1}
\end{align*}
for all $X, X_1,X_2 \in\mathcal C, Y, Y_1,Y_2 \in \mathcal D$ and
$f\in \Hom_{\mathcal C}(X_1,X_2), g \in \Hom_{\mathcal
  D}(Y_1,Y_2)$.
  Often, the actions of a symmetric product of distinct monoidal categories on a given category demonstrate mutual commutativity.
    \smallskip

Let $\Heis_{k^+}(z^+,t^+)\odot \Heis_{k^-}(z^-,t^-)$ be the symmetric product
of $\Heis_{k^+}(z^+,t^+)$ and $\Heis_{k^-}(z^-,t^-)$.
Diagrammatically, it is convenient to use
different colors,
denoting the
symmetric product  $\red{\Heis_{k^+}(z^+,t^+)}\odot \blue{\Heis_{k^-}(z^-,t^-)}$ and using the color
 red (resp., blue) for objects   and morphisms in $\red{\Heis_{k^+}(z^+,t^+)}$ (resp., $\blue{\Heis_{k^-}(z^-,t^-)}$). For convenience, we call $\red{\Heis_{k^+}(z^+,t^+)}\odot \blue{\Heis_{k^-}(z^-,t^-)}$ \emph{double quantum Heisenberg category}.
Morphisms may then be represented by linear combinations of string diagrams
colored both blue and red.
In fact, the double quantum Heisenberg category $\red{\Heis_{k^+}(z^+,t^+)}\odot \blue{\Heis_{k^-}(z^-,t^-)}$ also has a definition by generators and relations as the following.

\begin{theorem}[\cite{HL}]\label{DoubleHeisenberg}
The  double quantum Heisenberg category $\red{\Heis_{k^+}(z^+,t^+)}\odot \blue{\Heis_{k^-}(z^-,t^-)}$ is equivalent to the strict $R$-linear monoidal category
generated by the objects
$$F^+=\red{\up},\quad E^+=\red{\down},\quad F^-=\blue{\up},\quad E^-=\blue{\down},$$
and
morphisms $\begin{tikzpicture}[baseline = -1mm]
	\draw[->,red] (0.08,-.2) to (0.08,.2);
      \node at (0.08,0) {$\dott$};
\end{tikzpicture}$,
$\begin{tikzpicture}[baseline = -1mm]
	\draw[->,red] (0.2,-.2) to (-0.2,.2);
	\draw[-,white,line width=4pt] (-0.2,-.2) to (0.2,.2);
	\draw[->,red] (-0.2,-.2) to (0.2,.2);
\end{tikzpicture}\:,$
$\begin{tikzpicture}[baseline = .75mm]
	\draw[<-,red] (0.3,0.3) to[out=-90, in=0] (0.1,0);
	\draw[-,red] (0.1,0) to[out = 180, in = -90] (-0.1,0.3);
\end{tikzpicture}\:,$
$\begin{tikzpicture}[baseline = .75mm]
	\draw[<-,red] (0.3,0) to[out=90, in=0] (0.1,0.3);
	\draw[-,red] (0.1,0.3) to[out = 180, in = 90] (-0.1,0);
\end{tikzpicture}\:,$
$\:\begin{tikzpicture}[baseline = .75mm]
	\draw[-,red] (0.3,0.3) to[out=-90, in=0] (0.1,0);
	\draw[->,red] (0.1,0) to[out = 180, in = -90] (-0.1,0.3);
\end{tikzpicture}\:,$
$\:\begin{tikzpicture}[baseline = .75mm]
	\draw[-,red] (0.3,0) to[out=90, in=0] (0.1,0.3);
	\draw[->,red] (0.1,0.3) to[out = 180, in = 90] (-0.1,0);
\end{tikzpicture}\:,$
$\begin{tikzpicture}[baseline = -1mm]
	\draw[->,blue] (0.08,-.2) to (0.08,.2);
      \node at (0.08,0) {$\dott$};
\end{tikzpicture}$,
$\begin{tikzpicture}[baseline = -1mm]
	\draw[->,blue] (0.2,-.2) to (-0.2,.2);
	\draw[-,white,line width=4pt] (-0.2,-.2) to (0.2,.2);
	\draw[->,blue] (-0.2,-.2) to (0.2,.2);
\end{tikzpicture}\:$,
$\begin{tikzpicture}[baseline = .75mm]
	\draw[<-,blue] (0.3,0.3) to[out=-90, in=0] (0.1,0);
	\draw[-,blue] (0.1,0) to[out = 180, in = -90] (-0.1,0.3);
\end{tikzpicture}\:,$
$\begin{tikzpicture}[baseline = .75mm]
	\draw[<-,blue] (0.3,0) to[out=90, in=0] (0.1,0.3);
	\draw[-,blue] (0.1,0.3) to[out = 180, in = 90] (-0.1,0);
\end{tikzpicture}\:,$
$\:\begin{tikzpicture}[baseline = .75mm]
	\draw[-,blue] (0.3,0.3) to[out=-90, in=0] (0.1,0);
	\draw[->,blue] (0.1,0) to[out = 180, in = -90] (-0.1,0.3);
\end{tikzpicture}\:,$
$\:\begin{tikzpicture}[baseline = .75mm]
	\draw[-,blue] (0.3,0) to[out=90, in=0] (0.1,0.3);
	\draw[->,blue] (0.1,0.3) to[out = 180, in = 90] (-0.1,0);
\end{tikzpicture}\:,$
$\begin{tikzpicture}[baseline = -1mm]
	\draw[->,red] (0.2,-.2) to (-0.2,.2);
	\draw[->,blue] (-0.2,-.2) to (0.2,.2);
\end{tikzpicture}\:$,
$\begin{tikzpicture}[baseline = -1mm]
	\draw[->,blue] (0.2,-.2) to (-0.2,.2);
	\draw[->,red] (-0.2,-.2) to (0.2,.2);
\end{tikzpicture}\:$
subject to the certain relations as follows.
We require that
$\begin{tikzpicture}[baseline = -1mm]
	\draw[->,red] (0.08,-.2) to (0.08,.2);
      \node at (0.08,0) {$\dott$};
\end{tikzpicture}$,
$\begin{tikzpicture}[baseline = -1mm]
	\draw[->,red] (0.2,-.2) to (-0.2,.2);
	\draw[-,white,line width=4pt] (-0.2,-.2) to (0.2,.2);
	\draw[->,red] (-0.2,-.2) to (0.2,.2);
\end{tikzpicture}\:$,
$\begin{tikzpicture}[baseline = .75mm]
	\draw[<-,red] (0.3,0.3) to[out=-90, in=0] (0.1,0);
	\draw[-,red] (0.1,0) to[out = 180, in = -90] (-0.1,0.3);
\end{tikzpicture}\:,$
$\begin{tikzpicture}[baseline = .75mm]
	\draw[<-,red] (0.3,0) to[out=90, in=0] (0.1,0.3);
	\draw[-,red] (0.1,0.3) to[out = 180, in = 90] (-0.1,0);
\end{tikzpicture}\:,$$\:\begin{tikzpicture}[baseline = .75mm]
	\draw[-,red] (0.3,0.3) to[out=-90, in=0] (0.1,0);
	\draw[->,red] (0.1,0) to[out = 180, in = -90] (-0.1,0.3);
\end{tikzpicture}\: $ and
$\:\begin{tikzpicture}[baseline = .75mm]
	\draw[-,red] (0.3,0) to[out=90, in=0] (0.1,0.3);
	\draw[->,red] (0.1,0.3) to[out = 180, in = 90] (-0.1,0);
\end{tikzpicture}\:$ satisfy all relations for $\red{\Heis_{k^+}(z^+,t^-)}$ and that
$\begin{tikzpicture}[baseline = -1mm]
	\draw[->,blue] (0.08,-.2) to (0.08,.2);
      \node at (0.08,0) {$\dott$};
\end{tikzpicture}$,
$\begin{tikzpicture}[baseline = -1mm]
	\draw[->,blue] (0.2,-.2) to (-0.2,.2);
	\draw[-,white,line width=4pt] (-0.2,-.2) to (0.2,.2);
	\draw[->,blue] (-0.2,-.2) to (0.2,.2);
\end{tikzpicture}\:$,
$\begin{tikzpicture}[baseline = .75mm]
	\draw[<-,blue] (0.3,0.3) to[out=-90, in=0] (0.1,0);
	\draw[-,blue] (0.1,0) to[out = 180, in = -90] (-0.1,0.3);
\end{tikzpicture}\:,$
$\begin{tikzpicture}[baseline = .75mm]
	\draw[<-,blue] (0.3,0) to[out=90, in=0] (0.1,0.3);
	\draw[-,blue] (0.1,0.3) to[out = 180, in = 90] (-0.1,0);
\end{tikzpicture}\:,$
$\:\begin{tikzpicture}[baseline = .75mm]
	\draw[-,blue] (0.3,0.3) to[out=-90, in=0] (0.1,0);
	\draw[->,blue] (0.1,0) to[out = 180, in = -90] (-0.1,0.3);
\end{tikzpicture}\:$ and
$\:\begin{tikzpicture}[baseline = .75mm]
	\draw[-,blue] (0.3,0) to[out=90, in=0] (0.1,0.3);
	\draw[->,blue] (0.1,0.3) to[out = 180, in = 90] (-0.1,0);
\end{tikzpicture}\:$ satisfy  all relations for $\blue{\Heis_{k^-}(z^-,t^-)}.$
In addition, we also have mixed affine-Hecke-type relations
\begin{align}\label{Mixaff1}
\mathord{
\begin{tikzpicture}[baseline = -1mm]
	\draw[<-,thin,red] (-0.45,.6) to (0.45,-.6);
	\draw[-,line width=4pt,white] (-0.45,-.6) to (0.45,.6);
	\draw[->,thin,red] (-0.45,-.6) to (0.45,.6);
        \draw[-,thin,blue] (0,-.6) to[out=90,in=-90] (-.45,0);
       \draw[->,thin,blue] (-0.45,0) to[out=90,in=-90] (0,0.6);
\end{tikzpicture}
}
=&
\mathord{
\begin{tikzpicture}[baseline = -1mm]
	\draw[<-,thin,red] (-0.45,.6) to (0.45,-.6);
	\draw[-,line width=4pt,white] (-0.45,-.6) to (0.45,.6);
	\draw[->,thin,red] (-0.45,-.6) to (0.45,.6);
        \draw[-,thin,blue] (0,-.6) to[out=90,in=-90] (.45,0);
        \draw[->,thin,blue] (0.45,0) to[out=90,in=-90] (0,0.6);
\end{tikzpicture}
}\:,&
\mathord{
\begin{tikzpicture}[baseline = -1mm]
\draw[->,thin,blue] (0.45,-.6) to (-0.45,.6);
\draw[-,line width=4pt,white] (-0.45,-.6) to (0.45,.6);
\draw[<-,thin,blue] (0.45,.6) to (-0.45,-.6);
        \draw[-,thin,red] (0,-.6) to[out=90,in=-90] (-.45,0);
       \draw[->,thin,red] (-0.45,0) to[out=90,in=-90] (0,0.6);
\end{tikzpicture}
}
=&
\mathord{
\begin{tikzpicture}[baseline = -1mm]
\draw[->,thin,blue] (0.45,-.6) to (-0.45,.6);
\draw[-,line width=4pt,white] (-0.45,-.6) to (0.45,.6);
\draw[<-,thin,blue] (0.45,.6) to (-0.45,-.6);
\draw[-,thin,red] (0,-.6) to[out=90,in=-90] (.45,0);
        \draw[->,thin,red] (0.45,0) to[out=90,in=-90] (0,0.6);
\end{tikzpicture}
}\:,
\end{align}
\begin{align}\label{Mixaff2}
\mathord{
\begin{tikzpicture}[baseline = -1mm]
	\draw[-,thin,red] (0.28,-.6) to[out=90,in=-90] (-0.28,0);
	\draw[->,thin,red] (-0.28,0) to[out=90,in=-90] (0.28,.6);
	\draw[-,thin,blue] (-0.28,-.6) to[out=90,in=-90] (0.28,0);
	\draw[->,thin,blue] (0.28,0) to[out=90,in=-90] (-0.28,.6);
\end{tikzpicture}
}\,=&
\mathord{
\begin{tikzpicture}[baseline = -1mm]
	\draw[->,thin,red] (0.18,-.6) to (0.18,.6);
	\draw[->,thin,blue] (-0.18,-.6) to (-0.18,.6);
\end{tikzpicture}
}\:,&
\mathord{
\begin{tikzpicture}[baseline = -1mm]
	\draw[->,thin,red] (0.28,0) to[out=90,in=-90] (-0.28,.6);
	\draw[->,thin,blue] (-0.28,0) to[out=90,in=-90] (0.28,.6);
	\draw[-,thin,red] (-0.28,-.6) to[out=90,in=-90] (0.28,0);
	\draw[-,thin,blue] (0.28,-.6) to[out=90,in=-90] (-0.28,0);
\end{tikzpicture}
}\,=&
\mathord{
\begin{tikzpicture}[baseline = -1mm]
	\draw[->,thin,blue] (0.18,-.6) to (0.18,.6);
	\draw[->,thin,red] (-0.18,-.6) to (-0.18,.6);
\end{tikzpicture}
}\:,\end{align}
\begin{align}\label{Mixaff3}
\mathord{
\begin{tikzpicture}[baseline = -.5mm]
	\draw[->,thin,blue] (0.28,-.3) to (-0.28,.4);
      \node at (0.165,-0.15) {$\dott$};
	\draw[thin,red,->] (-0.28,-.3) to (0.28,.4);
\end{tikzpicture}
}\,=&
\mathord{
\begin{tikzpicture}[baseline = -.5mm]
	\draw[thin,red,->] (-0.28,-.3) to (0.28,.4);
	\draw[->,thin,blue] (0.28,-.3) to (-0.28,.4);
      \node at (-0.14,0.23) {$\dott$};
\end{tikzpicture}
}
\:,&
\mathord{
\begin{tikzpicture}[baseline = -.5mm]
	\draw[thin,red,->] (-0.28,-.3) to (0.28,.4);
      \node at (-0.16,-0.15) {$\dott$};
	\draw[->,thin,blue] (0.28,-.3) to (-0.28,.4);
\end{tikzpicture}
}\,=&
\mathord{
\begin{tikzpicture}[baseline = -.5mm]
	\draw[->,thin,blue] (0.28,-.3) to (-0.28,.4);
	\draw[thin,red,->] (-0.28,-.3) to (0.28,.4);
      \node at (0.145,0.23) {$\dott$};
\end{tikzpicture}
}\:.\end{align}
We also introduce the sideways mixed crossings
\begin{align*}
\mathord{
\begin{tikzpicture}[baseline = -.5mm]
	\draw[->,red] (-0.28,-.3) to (0.28,.4);
	\draw[<-,blue] (0.28,-.3) to (-0.28,.4);
\end{tikzpicture}
}&:=
\mathord{
\begin{tikzpicture}[baseline = 0]
	\draw[->,red] (0.3,-.5) to (-0.3,.5);
	\draw[-,blue] (-0.2,-.2) to (0.2,.3);
        \draw[-,blue] (0.2,.3) to[out=50,in=180] (0.5,.5);
        \draw[->,blue] (0.5,.5) to[out=0,in=90] (0.8,-.5);
        \draw[-,blue] (-0.2,-.2) to[out=230,in=0] (-0.6,-.5);
        \draw[-,blue] (-0.6,-.5) to[out=180,in=-90] (-0.85,.5);
\end{tikzpicture}
}\:,
&\mathord{
\begin{tikzpicture}[baseline = -.5mm]
	\draw[<-,red] (0.28,-.3) to (-0.28,.4);
	\draw[->,blue] (-0.28,-.3) to (0.28,.4);
\end{tikzpicture}
}&:=
\mathord{
\begin{tikzpicture}[baseline = 0]
	\draw[-,red] (-0.2,-.2) to (0.2,.3);
	\draw[->,blue] (0.3,-.5) to (-0.3,.5);
        \draw[-,red] (0.2,.3) to[out=50,in=180] (0.5,.5);
        \draw[->,red] (0.5,.5) to[out=0,in=90] (0.8,-.5);
        \draw[-,red] (-0.2,-.2) to[out=230,in=0] (-0.6,-.5);
        \draw[-,red] (-0.6,-.5) to[out=180,in=-90] (-0.85,.5);
\end{tikzpicture}
}\:,\\
\mathord{
\begin{tikzpicture}[baseline = -.5mm]
	\draw[<-,red] (-0.28,-.3) to (0.28,.4);
	\draw[->,blue] (0.28,-.3) to (-0.28,.4);
\end{tikzpicture}
}&:=
\mathord{
\begin{tikzpicture}[baseline = 0]
	\draw[-,red] (-0.2,.2) to (0.2,-.3);
	\draw[<-,blue] (0.3,.5) to (-0.3,-.5);
        \draw[-,red] (0.2,-.3) to[out=130,in=180] (0.5,-.5);
        \draw[-,red] (0.5,-.5) to[out=0,in=270] (0.8,.5);
        \draw[-,red] (-0.2,.2) to[out=130,in=0] (-0.5,.5);
        \draw[->,red] (-0.5,.5) to[out=180,in=-270] (-0.8,-.5);
\end{tikzpicture}
}\:,&
\mathord{
\begin{tikzpicture}[baseline = -.5mm]
	\draw[->,red] (0.28,-.3) to (-0.28,.4);
	\draw[<-,blue] (-0.28,-.3) to (0.28,.4);
\end{tikzpicture}
}&:=
\mathord{
\begin{tikzpicture}[baseline = 0]
	\draw[<-,red] (0.3,.5) to (-0.3,-.5);
	\draw[-,blue] (-0.2,.2) to (0.2,-.3);
        \draw[-,blue] (0.2,-.3) to[out=130,in=180] (0.5,-.5);
        \draw[-,blue] (0.5,-.5) to[out=0,in=270] (0.8,.5);
        \draw[-,blue] (-0.2,.2) to[out=130,in=0] (-0.5,.5);
        \draw[->,blue] (-0.5,.5) to[out=180,in=-270] (-0.8,-.5);
\end{tikzpicture}
}\:,
\end{align*} then we require mixed Mackey formula relations \begin{align}\label{Mixamackey1}
\mathord{
\begin{tikzpicture}[baseline = -1mm]
	\draw[-,thin,red] (0.28,-.6) to[out=90,in=-90] (-0.28,0);
	\draw[->,thin,red] (-0.28,0) to[out=90,in=-90] (0.28,.6);
	\draw[<-,thin,blue] (-0.28,-.6) to[out=90,in=-90] (0.28,0);
	\draw[-,thin,blue] (0.28,0) to[out=90,in=-90] (-0.28,.6);
\end{tikzpicture}
}\,=&
\mathord{
\begin{tikzpicture}[baseline = -1mm]
	\draw[->,thin,red] (0.18,-.6) to (0.18,.6);
	\draw[<-,thin,blue] (-0.18,-.6) to (-0.18,.6);
\end{tikzpicture}
}\:,&
\mathord{
\begin{tikzpicture}[baseline = -1mm]
	\draw[->,thin,red] (0.28,0) to[out=90,in=-90] (-0.28,.6);
	\draw[-,thin,blue] (-0.28,0) to[out=90,in=-90] (0.28,.6);
	\draw[-,thin,red] (-0.28,-.6) to[out=90,in=-90] (0.28,0);
	\draw[<-,thin,blue] (0.28,-.6) to[out=90,in=-90] (-0.28,0);
\end{tikzpicture}
}\,=&
\mathord{
\begin{tikzpicture}[baseline = -1mm]
	\draw[<-,thin,blue] (0.18,-.6) to (0.18,.6);
	\draw[->,thin,red] (-0.18,-.6) to (-0.18,.6);
\end{tikzpicture}
}\:,\end{align}
\begin{align}\label{Mixamackey2}
\mathord{
\begin{tikzpicture}[baseline = -1mm]
	\draw[-,thin,blue] (0.28,-.6) to[out=90,in=-90] (-0.28,0);
	\draw[->,thin,blue] (-0.28,0) to[out=90,in=-90] (0.28,.6);
	\draw[<-,thin,red] (-0.28,-.6) to[out=90,in=-90] (0.28,0);
	\draw[-,thin,red] (0.28,0) to[out=90,in=-90] (-0.28,.6);
\end{tikzpicture}
}\,=&
\mathord{
\begin{tikzpicture}[baseline = -1mm]
	\draw[->,thin,blue] (0.18,-.6) to (0.18,.6);
	\draw[<-,thin,red] (-0.18,-.6) to (-0.18,.6);
\end{tikzpicture}
}\:,&
\mathord{
\begin{tikzpicture}[baseline = -1mm]
	\draw[->,thin,blue] (0.28,0) to[out=90,in=-90] (-0.28,.6);
	\draw[-,thin,red] (-0.28,0) to[out=90,in=-90] (0.28,.6);
	\draw[-,thin,blue] (-0.28,-.6) to[out=90,in=-90] (0.28,0);
	\draw[<-,thin,red] (0.28,-.6) to[out=90,in=-90] (-0.28,0);
\end{tikzpicture}
}\,=&
\mathord{
\begin{tikzpicture}[baseline = -1mm]
	\draw[<-,thin,red] (0.18,-.6) to (0.18,.6);
	\draw[->,thin,blue] (-0.18,-.6) to (-0.18,.6);
\end{tikzpicture}
}\:.\end{align}
\end{theorem}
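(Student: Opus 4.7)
The plan is to invoke the universal property of the symmetric product and exhibit the claimed presentation as an isomorphic monoidal category by constructing mutually inverse strict monoidal functors in both directions. Let $\mathcal{P}$ denote the category defined by the generators and relations listed in the theorem, and let $\mathcal{S} = \red{\Heis_{k^+}(z^+,t^+)}\odot \blue{\Heis_{k^-}(z^-,t^-)}$ be the symmetric product.

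First, I would construct a functor $\Phi:\mathcal{P}\to \mathcal{S}$ by sending each monochrome generator to the corresponding morphism in the appropriate colored Heisenberg factor, and sending the two mixed crossings $\begin{tikzpicture}[baseline = -1mm]
	\draw[->,red] (0.2,-.2) to (-0.2,.2);
	\draw[->,blue] (-0.2,-.2) to (0.2,.2);
\end{tikzpicture}$ and $\begin{tikzpicture}[baseline = -1mm]
	\draw[->,blue] (0.2,-.2) to (-0.2,.2);
	\draw[->,red] (-0.2,-.2) to (0.2,.2);
\end{tikzpicture}$ to the symmetry isomorphisms $\sigma_{F^-,F^+}$ and $\sigma_{F^+,F^-}$ respectively. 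To verify that $\Phi$ is well-defined I must check that the mixed relations \eqref{Mixaff1}, \eqref{Mixaff2}, \eqref{Mixaff3}, \eqref{Mixamackey1}, \eqref{Mixamackey2} hold in $\mathcal{S}$. Relation \eqref{Mixaff2} is immediate from $\sigma_{Y,X}=\sigma_{X,Y}^{-1}$; relations \eqref{Mixaff1} and \eqref{Mixaff3} follow from naturality of $\sigma$ with respect to the red crossing and the red dot, which are instances of the axiom $\sigma_{X_2,Y}\circ (f\otimes 1_Y)=(1_Y\otimes f)\circ \sigma_{X_1,Y}$. The mixed Mackey relations \eqref{Mixamackey1} and \eqref{Mixamackey2} are obtained by rewriting the sideways mixed crossings via their definitions in terms of cups, caps, and the upright mixed crossings, and then applying naturality of $\sigma$ with respect to the red (resp.\ blue) cups and caps, together with the biadjoint relations \eqref{adj}.

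Next, I would construct a functor $\Psi:\mathcal{S}\to \mathcal{P}$. By the universal property of the free product $\red{\Heis_{k^+}}\circledast \blue{\Heis_{k^-}}$, there is an evident functor sending all monochrome generators in $\mathcal{P}$ to themselves. To extend this to $\mathcal{S}$ I define symmetry isomorphisms $\sigma_{X,Y}$ for every pair of generating objects $X\in\{F^+,E^+\}$, $Y\in\{F^-,E^-\}$: for $(F^+,F^-)$ and $(F^-,F^+)$ take the two generating mixed crossings, and for the remaining four pairs take the sideways mixed crossings introduced just before \eqref{Mixamackey1}. The hexagon-type axiom $\sigma_{X_1\otimes X_2,Y}=(\sigma_{X_1,Y}\otimes 1)\circ(1\otimes \sigma_{X_2,Y})$ is extended inductively from generating objects to general tensor products, and the naturality axioms on generating morphisms correspond exactly to the mixed relations just verified. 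The crucial compatibilities for the sideways mixed crossings — e.g.\ that $\sigma_{E^+,F^-}$ so defined is inverse to $\sigma_{F^-,E^+}$, and that $\sigma$'s coming from dualities satisfy the naturality of $\sigma$ with respect to the generators of each monochrome factor — follow from \eqref{Mixamackey1}, \eqref{Mixamackey2}, together with a standard diagrammatic manipulation using the pivotal structure of each Heisenberg category.

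Finally, one checks that $\Phi$ and $\Psi$ are mutually inverse, which reduces to verifying the identities on the generating morphisms; this is immediate by the very definitions of $\Phi$ and $\Psi$. The main obstacle is the verification that the sideways mixed crossings used to define $\sigma_{E^+,F^-}$, $\sigma_{F^+,E^-}$, and $\sigma_{E^+,E^-}$ in $\mathcal{P}$ are genuine two-sided inverses and are natural with respect to the monochrome cups, caps, dots and crossings. Here one must massage diagrams using the pivotal (biadjoint) relations \eqref{adj} and the monochrome affine Hecke relations \eqref{AHH1}-\eqref{AHH2} to reduce every sideways symmetry-axiom check to an upright one covered by \eqref{Mixaff1}-\eqref{Mixaff3}, or to an instance of the mixed Mackey relations \eqref{Mixamackey1}-\eqref{Mixamackey2}. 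Once this is done, the theorem follows.
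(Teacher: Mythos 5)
The paper offers no proof of this statement: Theorem \ref{DoubleHeisenberg} is imported verbatim from the preprint \cite{HL}, so there is no argument in the text to measure yours against. Your two-functor strategy via the universal properties of $\circledast$ and $\odot$ is the standard template for such presentation theorems (it is the route taken for the degenerate analogue in \cite{BSW3}), and the direction $\mathcal P\to\mathcal S$ is essentially complete as you describe it: each listed mixed relation is an instance of the hexagon axioms, of $\sigma_{Y,X}=\sigma_{X,Y}^{-1}$, or of naturality of $\sigma$ applied to a generating morphism of one factor, possibly after rewriting the sideways crossings through cups and caps.

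The gap is in the reverse direction, and it is not a small one, because it is the entire content of the theorem. To define $\Psi:\mathcal S\to\mathcal P$ you must produce in $\mathcal P$ isomorphisms $\sigma_{X,Y}$ for \emph{all} objects $X$ of the red factor and $Y$ of the blue factor, natural with respect to \emph{all} morphisms of each factor, whereas the presentation only hands you two upright mixed crossings and five families of relations. Concretely, you must (i) prove that the sideways mixed crossings are two-sided inverses of one another and that the two a priori different rotations of the upright mixed crossing (left versus right mates) agree, so that $\sigma_{E^+,F^-}$, $\sigma_{F^+,E^-}$ and the never-discussed $\sigma_{E^+,E^-}$ are well defined and consistent with the hexagon axiom on tensor products such as $E^+\otimes F^+$; and (ii) derive naturality of these $\sigma$'s with respect to the red and blue cups and caps --- i.e.\ that a cup or cap of one color can be pulled through a strand of the other color --- since none of \eqref{Mixaff1}--\eqref{Mixamackey2} mentions cups or caps except implicitly through the definitions of the sideways crossings. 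These derivations from the five listed families together with the monochrome relations are precisely what the theorem asserts; declaring that they follow by ``standard diagrammatic manipulation'' leaves the proof where it started. As written your text is a correct plan, not a proof.
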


\medskip

\subsection{From Double quantum Heisenberg   to $\mathfrak{s}\mathfrak{l}'_{I_+}\oplus\mathfrak{s}\mathfrak{l}'_{I_-}$-categorification}
\label{sec:doublequantumtoKac}\hfill\\

Let $R$ be a field and $\calC$ is a locally finite $R$-linear category. Similarly, a {\em categorical double quantum Heisenberg action} on a locally finite category $\mathcal{C}$ is the data
of a strict monoidal functor $\red{\Heis_{k^+}(z^+,t^+)}\odot \blue{\Heis_{k^-}(z^-,t^-)} \rightarrow \mathcal{E}nd( \mathcal{C})$.
Now we assume that there exists a double  quantum Heisenberg action
 on a locally finite category $\mathcal{C}.$
Similarly as in \S\ref{sec:KactoHeis}, for any irreducible object $L\in \calC,$
we can define $m^+_L(u) \in R[u]$ (resp. $ n^+_L(u)\in R[u]$, $m^-_L(v)\in R[v]$ and $n^-_L(v) \in R[v]$) to
be the monic {\em minimal polynomials} of
the endomorphisms
$\mathord{\begin{tikzpicture}[baseline = -1mm]
 	\draw[->,red] (0.08,-.2) to (0.08,.2);
     \node at (0.08,0) {$\dott$};
 	\draw[-,darkg,thick] (0.38,.2) to (0.38,-.2);
     \node at (0.55,0) {$\darkg\scriptstyle{L}$};
\end{tikzpicture}
}$ (resp.
$\mathord{\begin{tikzpicture}[baseline = -1mm]
 	\draw[<-,red] (0.08,-.2) to (0.08,.2);
     \node at (0.08,0.02) {$\dott$};
 	\draw[-,darkg,thick] (0.38,.2) to (0.38,-.2);
     \node at (0.55,0) {$\darkg\scriptstyle{L}$};
\end{tikzpicture}
},$  $\mathord{\begin{tikzpicture}[baseline = -1mm]
 	\draw[->,blue] (0.08,-.2) to (0.08,.2);
     \node at (0.08,0) {$\dott$};
 	\draw[-,darkg,thick] (0.38,.2) to (0.38,-.2);
     \node at (0.55,0) {$\darkg\scriptstyle{L}$};
\end{tikzpicture}
}$ and
$\mathord{\begin{tikzpicture}[baseline = -1mm]
 	\draw[<-,blue] (0.08,-.2) to (0.08,.2);
     \node at (0.08,0.02) {$\dott$};
 	\draw[-,darkg,thick] (0.38,.2) to (0.38,-.2);
     \node at (0.55,0) {$\darkg\scriptstyle{L}$};
\end{tikzpicture}
}$).
\smallskip

We define the set $I_+$ (resp. $I_-$)
to be the union of the sets of roots of the minimal polynomials
$m^+_L(u)$ (resp. $m^-_L(v)$) for all irreducible objects $L \in \calC$.
 By adjunction, it follows that $I_+$ (resp. $I_-$)  could also
be defined as
the union of the sets of roots of the polynomials
$n^+_L(u)$ (resp., $n^-_L(v)$) for all irreducible objects $L \in \mathcal{C}$.
Then we have the following decomposition:
\begin{align}
E^+ &=
\bigoplus_{i \in I_+} E^+_i,&
F^+ &= \bigoplus_{i \in I_+} F^+_i,&
E^- &=
\bigoplus_{i' \in I_-} E^-_{i'},&
F^- &= \bigoplus_{i' \in I_-} F^-_{i'},
\end{align}
such that $\mathord{\begin{tikzpicture}[baseline = -1mm]
 	\draw[<-,red] (0.08,-.2) to (0.08,.2);
     \node at (0.08,0.02) {$\dott$};
\end{tikzpicture}}-i$, $\mathord{\begin{tikzpicture}[baseline = -1mm]
 	\draw[->,red] (0.08,-.2) to (0.08,.2);
     \node at (0.08,0.02) {$\dott$};
\end{tikzpicture}}-i$, $\mathord{\begin{tikzpicture}[baseline = -1mm]
 	\draw[<-,blue] (0.08,-.2) to (0.08,.2);
     \node at (0.08,0.02) {$\dott$};
\end{tikzpicture}}-i'$
, $\mathord{\begin{tikzpicture}[baseline = -1mm]
 	\draw[->,blue] (0.08,-.2) to (0.08,.2);
     \node at (0.08,0.02) {$\dott$};
\end{tikzpicture}}-i'$ are  locally nilpotent on $E^+_i$, $F^+_i$,  $E^-_{i'}$, $F^-_{i'}$, respectively.
Also, let $\epsilon^+_i(L)$ (resp. $\phi^+_i(L)$, $\epsilon^-_{i'}(L)$ and $\phi^-_{i'}(L)$) denote the multiplicities
of $i \in I_+$ (resp. $i' \in I_-$) as a root of the polynomials $m^+_L(u)$ (resp. $n^+_L(u)$, $m^-_L(v)$ and $n^-_L(v)$).

\smallskip
For simplicity of notation, we write $\red\calH\blue\calH:=\red{\Heis_{k^+}(z^+,t^+)}\odot \blue{\Heis_{k^-}(z^-,t^-)}.$ We will use red color (resp. blue color) to denote the same shape diagram in $\red{\Heis_{k^+}(z^+,t^+)}$ (resp. $\blue{\Heis_{k^-}(z^-,t^-)}$).
For example, we define
\begin{align}
\red\clock(u)&:= -t^+z^+ \sum_{r\in\Z}
\mathord{
\begin{tikzpicture}[baseline = 1.25mm]
  \draw[<-,red] (0,0.4) to[out=180,in=90] (-.2,0.2);
  \draw[-,red] (0.2,0.2) to[out=90,in=0] (0,.4);
 \draw[-,red] (-.2,0.2) to[out=-90,in=180] (0,0);
  \draw[-,red] (0,0) to[out=0,in=-90] (0.2,0.2);
   \node at (0,.21) {$+$};
   \node at (-0.33,0.2) {$\scriptstyle{r}$};
\end{tikzpicture}
} \:u^{-r} \in u^{-k_+}1_\unit +
u^{-k_+-1}\End_{\red\calH\blue\calH}(\unit)[[
u^{-1} ]],
\\
\blue\clock(v)&:= -t^-z^- \sum_{r\in\Z}
\mathord{
\begin{tikzpicture}[baseline = 1.25mm]
  \draw[<-,blue] (0,0.4) to[out=180,in=90] (-.2,0.2);
  \draw[-,blue] (0.2,0.2) to[out=90,in=0] (0,.4);
 \draw[-,blue] (-.2,0.2) to[out=-90,in=180] (0,0);
  \draw[-,blue] (0,0) to[out=0,in=-90] (0.2,0.2);
   \node at (0,.21) {$+$};
   \node at (-0.33,0.2) {$\scriptstyle{r}$};
\end{tikzpicture}
} \:v^{-r} \in v^{-k_-}1_\unit +
v^{-k_--1}\End_{\red\calH\blue\calH}(\unit)[[
v^{-1} ]],
\end{align}
and the others are similar.
Then they satisfy \begin{equation}
\red\anticlock(u)\; \red\clock(u)
= 1_\unit,\quad \blue\anticlock(v)\; \blue\clock(v)
= 1_\unit.
\end{equation} For each irreducible object
$L\in \calC$, we consider the action of $\bbO^+(u)$ and $\bbO^-(v)$ on $\unit (L)=L$, then by Schur lemma
\begin{align}
\bbO^+(u)(L)&:=
\mathord{
\begin{tikzpicture}[baseline = -1mm]
     \node at (0.08,0) {$\scriptstyle\red\clock(u)$};
 	\draw[-,darkg,thick] (0.68,.2) to (0.68,-.22);
     \node at (0.68,-.37) {$\darkg\scriptstyle{L}$};
\end{tikzpicture}
},&
\quad
\bbO^-(v)(L)&:=
\mathord{
\begin{tikzpicture}[baseline = -1mm]
     \node at (0.08,0) {$\scriptstyle\blue\clock(v)$};
 	\draw[-,darkg,thick] (0.68,.2) to (0.68,-.22);
     \node at (0.68,-.37) {$\darkg\scriptstyle{L}$};
\end{tikzpicture}
}
\end{align}
define  functions from $\Irr(\calC)$ to $R((u^{-1}))$ and $R((v^{-1}))$, respectively.
We also have the following
\begin{align}
\bbO^+(u)(L) = m^+_L(u)/n^+_L(u)~ \text{and}~ \bbO^-(v)(L) = m^-_L(v)/n^-_L(v).
\end{align}

Let $I_+(q)$ and $I_-(q)$ be the quivers  defined as in \S \ref{sec:KactoHeis},
 and write
$\K_q:=I_+(q)\sqcup I_-(q)$ for the quiver that is the disjoint union of $I_+(q)$ and $I_-(q)$.
Notice that there is no arrow between $I_+(q)$ and $I_-(q)$. In particular,
$\K_q$ is not connected. We may denote by
$\fraks\frakl'_{\K_q}=\fraks\frakl'_{I_+}\oplus\fraks\frakl'_{I_-}$ the corresponding Kac-Moody algebra which is a direct sum of $\fraks\frakl'_{I_+}$ and $\fraks\frakl'_{I_-}.$
We call the Kac-Moody algebra $\fraks\frakl'_{\K_q}=\fraks\frakl'_{I_+}\oplus\fraks\frakl'_{I_-}$ is \emph{associated to the categorical action of $\red{\Heis_{k^+}(z^+,t^+)}\odot\blue{ \Heis_{k^-}(z^-,t^-)}$ on  $\calC$}.
\smallskip

Then the  weight lattice of $\fraks\frakl'_{\K_q}$ is $X=X_+ \times X_-$,
and fundamental weights are  $\{\Lambda^+_i:=\Lambda_i\times 0\:|\:i \in I_+\}\sqcup \{\Lambda^-_{i'}:=0\times \Lambda_{i'}\:|\:i' \in I_-\}$, etc.
For an irreducible object $L \in \mathcal{C}$, let
\begin{equation}\label{chickens}
\wt(L) := \sum_{i \in I_+} (\phi^+_i(L)-\epsilon^+_i(L)) \Lambda^+_i +\sum_{i' \in I_-} (\phi^-_{i'}(L)-\epsilon^-_{i'}(L)) \Lambda^-_{i'} \in X.
\end{equation}
Then for $\lambda \in X$ we let $\mathcal{C}_\lambda$ be the Serre
subcategory of $\mathcal{C}$ consisting of the objects $V$
such that every irreducible subquotient $L$ of $V$
satisfies $\wt(L) = \lambda$.
The point of this definition is
that irreducible objects $L, M \in \mathcal{C}$ with $\wt(L) \neq \wt(M)$
have different central characters.
Using also the general theory of blocks,
it is shown in \cite{HL} that
\begin{equation}\label{one}
    \textstyle
    \mathcal{C} =
        \bigoplus_{\lambda \in X} \mathcal{C}_\lambda  .
\end{equation}
We call it the {\em weight space}
decomposition of $\mathcal{C}$.

\begin{theorem}[\cite{HL}]\label{Thm:HL}Let $R$ be a field.
Associated to $\mathcal{C}$,
there is a $2$-representation
$\Phi:\mathfrak{U}(\fraks\frakl'_{ K_q})\rightarrow
\mathfrak{Cat}_R$
defined on objects by
$\lambda\mapsto \mathcal{C}_\lambda$,
on generating
1-morphisms by
$E_i 1_\lambda\mapsto E^+_i|_{\mathcal{C}_\lambda}$
and $F_i 1_\lambda\mapsto F^+_i|_{\mathcal{C}_\lambda}$ for $i\in  I_+,$ and $E_j 1_\lambda\mapsto E^-_j|_{\mathcal{C}_\lambda}$
and $F_j 1_\lambda\mapsto F^-_j|_{\mathcal{C}_\lambda}$ for $j\in I_-.$
\end{theorem}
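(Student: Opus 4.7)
The plan is to reduce the statement to two applications of the single-factor Kac--Moody categorification theorem of Brundan--Savage--Webster (\cite[Theorem 4.11]{BSW2}, recalled in \S\ref{sec:KactoHeis}) and then to promote the two commuting actions to a single $2$-representation of $\mathfrak{U}(\mathfrak{sl}'_{I_+}\oplus\mathfrak{sl}'_{I_-})$. Starting from the strict monoidal functor $\red{\Heis_{k^+}(z^+,t^+)}\odot\blue{\Heis_{k^-}(z^-,t^-)}\to\mathcal{E}nd(\mathcal{C})$, I first restrict along the two canonical inclusions of the factors. By \cite[Theorem 4.11]{BSW2} applied to $\red{\Heis_{k^+}(z^+,t^+)}\to\mathcal{E}nd(\mathcal{C})$ (respectively to the blue factor), the eigenspace decompositions built from the minimal polynomials $m_L^+(u),n_L^+(u)$ (respectively $m_L^-(v),n_L^-(v)$) already yield a $2$-representation of $\mathfrak{U}(\mathfrak{sl}'_{I_+})$ (respectively $\mathfrak{U}(\mathfrak{sl}'_{I_-})$) on $\mathcal{C}$ with $1$-morphisms $E_i^+,F_i^+$ (respectively $E_j^-,F_j^-$).

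Next I would show that these two $2$-representations commute strictly, so that they assemble into a single $2$-representation of the direct sum Kac--Moody $2$-category. The key input is the symmetric-product structure: the mixed crossings \eqref{Mixaff2}, the mixed Mackey relations \eqref{Mixamackey1}--\eqref{Mixamackey2}, and the dot-slide relation \eqref{Mixaff3} force, for each pair of colors, the four natural transformations $\red{E}\blue{E}\to\blue{E}\red{E}$, $\red{F}\blue{F}\to\blue{F}\red{F}$, $\red{E}\blue{F}\to\blue{F}\red{E}$, $\red{F}\blue{E}\to\blue{E}\red{F}$ (and their sideways variants) to be isomorphisms commuting with both the red and blue dots. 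Restricting these natural transformations to the simultaneous generalized eigenspaces of the red and blue dots produces the required isomorphisms $E_i^+E_j^-\cong E_j^-E_i^+$, $F_i^+F_j^-\cong F_j^-F_i^+$, $E_i^+F_j^-\cong F_j^-E_i^+$, $F_i^+E_j^-\cong E_j^-F_i^+$ for every $i\in I_+,j\in I_-$. Combined with the single-factor $2$-representation axioms, this gives exactly the extra relations needed to extend from $\mathfrak{U}(\mathfrak{sl}'_{I_+})$ and $\mathfrak{U}(\mathfrak{sl}'_{I_-})$ to their direct sum, because $\mathfrak{U}(\mathfrak{sl}'_{I_+}\oplus\mathfrak{sl}'_{I_-})$ is presented as the $2$-category whose $1$-morphisms involving disjoint colors $2$-commute freely (the Cartan matrix being block-diagonal with no edges between $I_+$ and $I_-$).

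Then I would check that the weight decomposition \eqref{one} of $\mathcal{C}$ along $X=X_+\times X_-$ is compatible with both families of functors. Here the multiplicities $\epsilon_i^\pm(L),\phi_i^\pm(L)$ in \eqref{chickens} are attached to distinct sets of central endomorphisms coming from the two colored versions of the generating functions $\red\clock(u)$ and $\blue\clock(v)$; these commute with each other as central elements of $\mathrm{End}(\unit)$, so the generalized eigenspace decomposition of a simple object for one color refines compatibly with that for the other. On an object lying in $\mathcal{C}_\lambda$ with $\lambda=\lambda_++\lambda_-$, the functor $E_i^+$ (resp.\ $F_i^+$) behaves as the single-factor theorem prescribes in the $X_+$ coordinate and preserves $X_-$ (and symmetrically for the blue functors), so $\lambda\mapsto\mathcal{C}_\lambda$, $E_i1_\lambda\mapsto E_i^+|_{\mathcal{C}_\lambda}$, $F_i1_\lambda\mapsto F_i^+|_{\mathcal{C}_\lambda}$ and their blue analogues define a well-posed strict $R$-linear $2$-functor from $\mathfrak{U}(\mathfrak{sl}'_{K_q})$ to $\mathfrak{Cat}_R$.

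The main obstacle I anticipate is the verification that the symmetric-product isomorphisms, after passing to colored generalized eigenspaces, actually satisfy the strict commutation on the nose with the KLR-type $2$-morphisms supplied by the single-factor BSW2 construction (dots, colored crossings, cups, caps). Concretely, one must show that the mixed crossing $\sigma$ intertwines the $\mathfrak{sl}'_{I_+}$-data and the $\mathfrak{sl}'_{I_-}$-data coherently, which amounts to verifying a list of $2$-morphism identities built from \eqref{Mixaff1}--\eqref{Mixaff3} and \eqref{Mixamackey1}--\eqref{Mixamackey2} together with the pivotal structure. This is the technical heart of the argument; once this coherence is established, the Chinese remainder decomposition \eqref{CRTdef} on each color and the Rouquier-type control from the single-factor theorem deliver the $2$-representation claimed.
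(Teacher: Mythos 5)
This theorem is cited from \cite{HL} and the present paper gives no proof of it: the text surrounding the statement simply records that the weight-space decomposition \eqref{one} ``is shown in \cite{HL}'' and states the result as Theorem~\ref{Thm:HL}. So there is no in-paper proof to compare against, only the external reference. That said, your plan is a plausible reconstruction of what \cite{HL} must do, and the paper's own remark that \cite{HL} ``generalizes the corresponding results in \cite{BSW2,LLZ}'' is consistent with your strategy of invoking \cite[Theorem~4.11]{BSW2} color-by-color and then gluing via the symmetric-product structure.

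Two gaps in the plan deserve attention. First, you treat the gluing of the two single-color $2$-representations as primarily a matter of showing the mixed crossings satisfy coherence, but before that one must establish that the red functors $E_i^+,F_i^+$ actually preserve the \emph{blue} weight (and symmetrically), i.e.\ that each color's functors are endofunctors of each of the other color's weight blocks. This is what makes the simultaneous decomposition $\mathcal{C}=\bigoplus_{\lambda\in X_+\times X_-}\mathcal{C}_\lambda$ well behaved; it follows from the mixed Mackey relations \eqref{Mixamackey1}--\eqref{Mixamackey2} (which show $\red{E}$ commutes with $\blue{E}$ and $\blue{F}$ up to iso) together with the fact that the blue bubbles $\blue\clock(v)$ are central and hence pass through red strings, but you should say this explicitly rather than fold it into ``behaves as the single-factor theorem prescribes.'' Second, your description of $\mathfrak{U}(\mathfrak{sl}'_{I_+}\oplus\mathfrak{sl}'_{I_-})$ as the $2$-category ``whose $1$-morphisms involving disjoint colors $2$-commute freely'' is too loose: in the Khovanov--Lauda/Rouquier presentation with $a_{ij}=0$, the cross-color generators must satisfy $\psi_{ji}\psi_{ij}=1$, the dot-passing identities, and the mixed braid relations with a third strand (of either color). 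Your list of isomorphisms between compositions is necessary but not yet the full set of $2$-morphism identities; in particular the mixed-braid checks, which come from \eqref{Mixaff1}--\eqref{Mixaff2} after passing to colored eigenfunctors, are part of the ``technical heart'' you correctly flag but should be enumerated.
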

\medskip

\section{Double quantum Heisenberg categorification for finite classical groups}\label{Chap:doubleHeisonfiniteclass}\hfill\\

Throughout this section, we always assume that $q$ is a power of an odd prime, $\ell$ is a prime with $\ell\nmid q(q-1)$, $K$ is a field of characteristic 0, $\calO$ is a complete discrete valuation ring and $k$ is a field of characteristic $\ell$. Let $R=K,$ $\calO $ or $k$.
We fix a square root $q^{1/2}$ of $q$ and a square root $\sqrt{-1}$ of $-1$,
and we assume that $q^{1/2}$ and $\sqrt{-1}$ lie in $R$.
Let $\bfG_n$ be one of $\mathbf{O}_{2n+1}$,
 $\bfSp_{2n}$ or $\mathbf{O}_{2n}$,
so that $G_n$ is one of  $\O_{2n+1}(q)$, $\Sp_{2n}(q)$ or $\O_{2n}^{\pm}(q)$.
As is conventional, we set $\bfG_0=G_0=\{1\}$,  except in the $\bfO_1$ case where
$\bfG_0=G_0=\{\pm 1\}.$ Let
 $\bbN^*=\mathbb{Z}_{> 0}$ in the
 $\O_{2n}^{-}(q)$ case  and  $\bbN^*=\mathbb{Z}_{\geqslant 0}$ in all the other cases.
Define $$RG_\bullet\mod:=\bigoplus_{n\in \bbN^*}R G _n\mod.$$
We also fix the following constants
\begin{align}\label{constant}z^+&=z^-=q^{1/2}-q^{-1/2},\\t^+&=\begin{cases}\label{t+}
  \sqrt{-1}q^{-1/2} &
    \text{if $G_n = \O_{2n+1}(q)$ or $\Sp_{2n}(q)$ }; \\
\sqrt{-1} & \text{if $G_n=\O^{\pm}_{2n}(q)$},
\end{cases}\\
t^-&=\begin{cases}\label{t-}
  \sqrt{-1}q^{-1/2} &
    \text{if $G_n = \O_{2n+1}(q)$};  \\
\sqrt{-1} & \text{if $G_n=\Sp_{2n}(q)$ or $\O^{\pm}_{2n}(q)$}.
\end{cases}
\end{align}
\smallskip

\subsection{Categorical double quantum Heisenberg action}\hfill\\

In this section, we will construct  a
strict monoidal functor from a double  quantum Heisenberg category to $\calE nd(RG_\bullet\mod)$.
\smallskip

\subsubsection{Construction of classical groups} \label{sec:construction}\hfill\\
For $n\geqs 1$,
let $J_n$  be the $n\times n$ matrix with entry $1$ in $(i,n-i+1)$ for $1\leqs i\leqs n$
and zero elsewhere, and denote $\widetilde J_{2n}=\left(\begin{array}{cc}
  0  & J_n \\
-J_n & 0   \\
\end{array}\right)$.

\smallskip

(1) The odd-dimensional (full) orthogonal group $\bfO_{2n+1}$
is
  $$\bfO_{2n+1} = \O_{2n+1}(\overline\bbF_q)= \{g \in \bfG\bfL_{2n+1} \, \mid
  \, {}^tg  J_{2n+1} g =   J_{2n+1}\}.$$
  The corresponding finite groups of fixed points under $F$
  are  the \emph{finite  orthogonal group} $\O_{2n+1}(q)$,
where $F$ is the Frobenius endomorphism of $\bfO_{2n+1}$ induced  by
  the standard Frobenius map $F_q$ on $\bfG\bfL_{2n+1}$
  raising all entries of a matrix to their $q$-th powers.

\vspace{1ex}

(2)
The symplectic group $\bfSp_{2n}$ is
  $$ \bfSp_{2n} = \Sp_{2n}(\overline\bbF_q)= \{g \in \bfG\bfL_{2n} \, \mid \, {}^tg \widetilde J_{2n} g =  \widetilde J_{2n}\},$$
  and the \emph{finite symplectic group} $\Sp_{2n}(q)$ is  the finite group $(\bfSp_{2n})^F$ of fixed points
of $\bfSp_{2n}$ under the Frobenius endomorphism $F$ of $\bfSp_{2n}$
induced by the standard Frobenius map $F_q$ on $\bfG\bfL_{2n}$.

\vspace{1ex}

(3) The even-dimensional (full) orthogonal group $\bfO_{2n}$ is
  $$\bfO_{2n}= \mathrm{O}_{2n}(\overline\bbF_q) = \{g \in \bfG\bfL_{2n} \, \mid
  \, {}^tg  J_{2n} g =   J_{2n}\}.$$

\begin{itemize}[leftmargin=8mm]
  \item[$(3a)$] The \emph{finite orthogonal group}  $\O^{+}_{2n}(q) $ is the group $(\bfO_{2n})^F$ of fixed points
of $\bfO_{2n}$ under the Frobenius endomorphism $F$ of $\bfO_{2n}$
   induced by the standard Frobenius map $F_q$ on $\bfG\bfL_{2n}$.
  \item[$(3b)$] The \emph{finite non-split  orthogonal group}  $\O^{-}_{2n}(q)$ is
 the group $(\bfO_{2n})^{F'}$ of fixed points  of $\bfO_{2n}$ under $F'$
  where $F':=F_q\circ \sigma$ and  $\sigma(g)=sgs^{-1}$
for $g\in \bfG\bfL_{2n}$ with  $s=\ \diag(\id_{n-1},\left(\begin{array}{cc} 0&1 \\ 1& 0 \\ \end{array}\right),\id_{{n-1}})\in \bfG\bfL_{2n}$ the permutation matrix
interchanging the $n$-th and $(n+1)$-th rows.
\end{itemize}
The groups $\mathbf{SO}_{2n+1} = \mathbf{O}_{2n+1} \cap \mathbf{SL}_{2n+1}$ and $\mathbf{SO}_{2n} = \mathbf{O}_{2n+1} \cap \mathbf{SL}_{2n+1}$ are known as special orthogonal groups, while their corresponding finite counterparts are referred to as finite special orthogonal groups, denoted $\SO_{2n+1}(q)$ and $\SO^\pm_{2n}(q)$.
\smallskip

\subsubsection{Functors}\label{subsec:functors}

In this subsection, we will recall the construction of functors defined in \cite[\S4]{LLZ}. For the sake of simplicity, we have simplified the subscripts for the definitions of functors in \cite[\S4]{LLZ}.
\smallskip

We first define the embedding of certain (possibly disconnected) Levi subgroups into $\bfG_{n}$.
Given $r,m, m_1,m_2,\dots,m_t\in\bbN$ such that $m=\sum_im_i$ and $n=r+m$,
we always embed the group $\bfG_{r}\times\bfG\bfL_{m_1}\times\dots\times
\bfG\bfL_{m_t}$  into $\bfG_{n}$ via the map:
\begin{align*}
\bfG_{r}\times\bfG\bfL_{m_1}\times\dots\times \bfG\bfL_{m_t} &\hookrightarrow
\bfG_{n}\\
B \times A_1\times\dots\times A_t&\mapsto {\rm diag}(A_t,\ldots,A_1,B,A_1',\ldots,A_t')
\end{align*}
where
$A_i'=J_{m_i}A_i^{-tr}J_{m_i}$.
The image of the above map, which is a Levi subgroup of $\bfG_{n}$, will be denoted by
$\bfL_{r,m_1,\dots,m_t}$. The corresponding Levi subgroup of $G_{n}$
will be denoted by $L_{r,m_1,\dots,m_t}$.
In particular, the  finite group corresponding to $\bfL_{r,1^m}$
  is  $L_{r,1^m}\simeq G_r \times \GL_1(q)^m$.  
\smallskip

The subgroup of  upper-triangular matrices $\bfB_{r+1}$ in $\bfG_{r+1}$ is a  Borel subgroup.
Let $\bfP_{r}$ be the  parabolic subgroup  of $\bfG_{r+1}$ containing $\bfB_{r+1}$ with the Levi complement $\bfL_{r,1}.$
We write $P_r=\bfP_r^F$ (or $\bfP_r^{F'}$ in the case of $\O_{2r}^{-}(q)$).
Let $\bfV_r$ be the unipotent radical of $\bfP_{r}$ and $V_r = \bfV_r^F$ (or $\bfV_r^{F'}$ in the case of $\O_{2r}^{-}(q)$),
and $U_{r}\subset G_{r+1}$ be the subgroup given by
\begin{align}\label{U}
  U_{r}=\bbF^\times_{q}\ltimes V_{r}=
  \begin{pmatrix}*&*&\cdots&\cdots&*\\
	&1&&(0)&\vdots\\
	&&\ddots&&\vdots\\
	&(0)&&1&*\\
	&&&&*
  \end{pmatrix},
\end{align}
where we identify $\bbF^\times_{q}$ with $\{{\rm diag}(t,1,\ldots,1,t^{-1})\in U_{r}\mid t\in \bbF^\times_{q}\}.$ So we have $P_r=G_r\ltimes U_r$ and $P_r=L_{r,1}\ltimes V_r.$

\begin{remark}\label{Vr}
More precisely, $\bfV_r$ consists of all matrices of the form
 $$
  \left(\begin{array}{ccc} 1&-v_1^{tr}J_{2r+1} & -\frac{ v_1^{tr}J_{2r+1}v_1}{2} \\ & \id_{\bfG_r} & v_1\\ &&1 \\ \end{array}\right),
 \: \left(\begin{array}{ccc}1&v_2^{tr}\widetilde{J}_{2r} & z \\ & \id_{\bfG_r} & v_2\\ &&1 \\ \end{array}\right),
 \:\left(\begin{array}{ccc} 1&-v^{tr}J_{2r} & -\frac{v_3^{tr}J_{2r}v_3}{2}\\ & \id_{\bfG_r} & v_3\\ &&1\\ \end{array}\right)$$
 where $z\in \overline{\bbF}_{q},$ $v_1\in \overline{\bbF}_{q}^{2r+1}$ and $v_2 ,v_3\in \overline{\bbF}_{q}^{2r},$  and $\bfG_r=\bfO_{2r+1}, \bfSp_{2r}$ or  $\bfO_{2r}$, respectively.
In particular, the order $|V_r|$ of $V_r=\bfV_r^F$ (or $\bfV_r^{F'}$ in the case of $\O_{2r}^{-}(q)$) is equal to $q^{2r+1}, q^{2r+1}$ or $ q^{2r}$ when $G_r=\O_{2r+1}(q), \Sp_{2r}(q)$ or  $\O_{2r}^{\pm}(q)$, respectively.
\end{remark}

Since $\bbF^\times_{q}$ is a cyclic group of  order $q-1$ and $q$ is odd,
we denote by $\zeta$  the unique character of order 2 (Legendre symbol for $\bbF^\times_{q}$) and view it as a character of $U_r$ via the surjection $U_r\twoheadrightarrow \bbF^\times_{q}$.
Now we define two idempotents of $R P_r:$
$$e^+_r:=e_{U_r}=\frac{1}{|U_r|}\sum_{u\in U_r}u,$$  and $$e^-_r:=\frac{1}{|U_r|}\sum_{u\in U_r}\zeta(u^{-1})u.$$

\smallskip

The  idempotents defined above satisfy the following:
\begin{itemize}
\item for any $u\in U_r, ue^+_r=e^+_r u=e^+_r$ and $ue^-_r=e^-_r u=\zeta(u) e^-_r;$
\item for any $g\in G_r, ge^+_r=e^+_r g= e^+_r$ and $ge^-_r=e^-_r g= e^-_r.$
\end{itemize}
\smallskip

From now on, we identify  $e^\pm_r$ with its image in $R G _n$
under the embedding $\iota_{r,n}: G_r\to G_n.$
The $(R G _{r+1},R G _{r})$-bimodules $R G _{r+1}\cdot e^+_r$ and
  $R G _{r+1}\cdot e^-_r$ are both left $R G _{r+1}$-projective
  and right $R G _{r}$-projective, so we can define
the following bi-adjoint functors $(F_{r,r+1}^+,E_{r+1,r}^+)$ and $(F_{r,r+1}^-,E_{r+1,r}^-)$(see \cite[\S4]{LLZ}, see also \cite[\S 6.2]{DVV2}):
$$\begin{array}{rcrclcl}
 F^+_{r,r+1} &=&R G _{r+1}\cdot  e^+_r\otimes_{R G _r} -    &:&\,R G _r\mod&\to &R G _{r+1}\mod, \\
   E^+_{r+1,r}&=& e^+_r \cdot   R G _{r+1}\otimes_{R G _{r+1}}-   &:&\,R G _{r+1}\mod&\to& R G _r\mod,\\
  F^-_{r,r+1}&=&R G _{r+1}\cdot  e^-_r  \otimes_{R G _r}-     &:&\,R G _r\mod &\to& R G _{r+1}\mod,\\
E^-_{r+1,r}  &=& e^-_r \cdot  R G _{r+1}\otimes_{R G _{r+1}}-   &:&\,R G _{r+1}\mod&\to &R G _r\mod.
\end{array}
$$

For $M\in R G _r\mod$, we have
  \begin{align}\label{formulaofF}
  	F^+_{r,r+1}(M)\cong R_{L_{r,1}}^{G_{r+1}}(M\boxtimes R _1)~\mbox{and}~F^-_{r,r+1}(M)\cong R_{L_{r,1}}^{G_{r+1}}(M\boxtimes R _\zeta),
  	\end{align}
   where $R_{L_{r,1}}^{G_{r+1}}$ is the Harish-Chandra induction functor, $R _1$ and $R _\zeta$ are the $R \bbF^\times_{q}$-modules
   affording the trivial character  of $\bbF^\times_{q}$ and
   the Legendre symbol $\zeta$, respectively.
\smallskip

Now we set
$$F^+=\bigoplus_{r\in\bbN^*}F^+_{r,r+1},\qquad
E^+=\bigoplus_{r\in\bbN^*}E^+_{r+1,r},$$
$$F^-=\bigoplus_{r\in\bbN^*}F^-_{r,r+1},\qquad
E^-=\bigoplus_{r\in\bbN^*}E^-_{r+1,r}.$$

Then these functors define endo-functors of category $RG_\bullet\mod.$
Diagrammatically, it is convenient to use the color red for the functors $E^+$, $F^+$ and use the color blue for $E^-$, $F^-.$
\smallskip

\subsubsection{Adjunctions}\label{sec:adjunction}

Since $(E^+,F^+)$ and $(E^-,F^-)$ are bi-adjoint pairs, there exist four adjunction maps for each pair. Note that they are unique up to a scalar.
Now we give explicit constructions of these adjunction maps by using the corresponding bimodule maps as follows.
\begin{itemize}
\item[$(a)$]
For an element $a\in G_{r+1}$, we set $$\Proj^\pm_{P_r}(e^\pm_r a e^\pm_r )=\begin{cases}
 e^\pm_r a  e^\pm_r \,\,&\text{if}\,\,a\in P_{r},\\
0\,\,&\text{if} \,\,a\notin P_{r}.
\end{cases}$$
Extending by $R $-linearity, this defines  $(R G _{r},R G _{r})$-bimodule
maps $$\varepsilon^\pm_L: e^\pm_r\cdot  R G _{r+1}\cdot  e^\pm_r \to  e^\pm_r\cdot
 P_{r} \cdot e^\pm_r \cong R G _r.$$
The second  $(R G _r, R G_r )$-bimodules isomorphism $ e^\pm_r \cdot RP_{r} \cdot e^\pm_r \cong  R G_r $ is determined by sending $ e^\pm_r $ to $1\in  R G_r .$

\item[$(b)$]

Let $G_{r}=\coprod\limits_{i=1}^{s}P_{r-1}g_i$ be a decomposition of $G_{r}
$ into left $P_{r-1}$ cosets. The elements
$$Z^\pm_r=\sum\limits_{i=1}^s(g_i^{-1}e^\pm_{r-1}\otimes e^\pm_{r-1}g_i)\in R G _{r}\cdot e^\pm_{r-1} \otimes_{R G _{r-1}} e^\pm_{r-1}\cdot   R G_r $$
do not depend on the choice of representatives $\{g_i\}_{i=1}^{s}$ and satisfies $aZ^\pm_r=Z^\pm_ra$ for any $a\in G_{r}$.
\smallskip

 Then we define $( R G_r , R G_r )$-bimodule maps as follows:
 $$\eta^\pm_L: R G_r \to  R G_r \cdot  e^\pm_{r-1} \otimes_{R G _{r-1}} e^\pm_{r-1}\cdot  R G_r ,\quad a\mapsto aZ^\pm_r=Z^\pm_ra$$
for any $a\in  R G_r .$

\item[$(c)$]

We define by $\varepsilon^\pm_R$ the following $( R G_r , R G_r )$-bimodule maps:
 $$\varepsilon^\pm_R: R G_r \cdot e^\pm_{r-1}\otimes_{ R G_{r-1} } e^\pm_{r-1}\cdot   R G_r \to  R G_r ,\quad g  e^\pm_{r-1} \otimes  e^\pm_{r-1} h\mapsto -t^\pm z^\pm g e^\pm_{r-1} h,$$
for any $g,h\in  R G_r $. Note that $t^\pm$ and $z^\pm$ are constants defined in (\ref{constant}), (\ref{t+}) and (\ref{t-}).
\item[$(d)$] Finally we define by $\eta^\pm_R$  the following $( R G_r , R G_r )$-bimodule maps:
$$\eta^\pm_R:  R G_r \to  e^\pm_r\cdot  R G _{r+1} \cdot  e^\pm_r ,\quad g\mapsto  -(t^\pm z^\pm)^{-1}ge^\pm_r $$
for all $g\in  R G_r .$
\end{itemize}
\smallskip

\subsubsection{Lifts of simple reflections} \label{sub:lift-refl}

Let $\mathbf{G}_n$ be one of $\mathbf{O}_{2n+1}$, $\bfSp_{2n}$ or $\bfO_{2n}$.
Let $\bfT$ be the split maximal torus consisting of diagonal matrices of the connected component $\mathbf{G}_n^\circ$
of $\mathbf{G}_n$.  The  group $W_n:=N_{\mathbf{G}_{n}}(\bfT)/C_{\mathbf{G}_{n}}(\bfT)$ is a Weyl group of
type $B_n.$
The numbering of the simple reflections of $W_n$ will be taken
the following convention:
\begin{align}\label{B}
\begin{split}
\begin{tikzpicture}[scale=.4]
\draw[thick] (0 cm,0) circle (.3cm);
\node [below] at (0 cm,-.5cm) {$s_1$};
\draw[thick] (0.3 cm,-0.15cm) -- +(1.9 cm,0);
\draw[thick] (0.3 cm,0.15cm) -- +(1.9 cm,0);
\draw[thick] (2.5 cm,0) circle (.3cm);
\node [below] at (2.5 cm,-.5cm) {$s_2$};
\draw[thick] (2.8 cm,0) -- +(1.9 cm,0);
\draw[thick] (5 cm,0) circle (.3cm);
\draw[thick] (5.3 cm,0) -- +(1.9 cm,0);
\draw[thick] (7.5 cm,0) circle (.3cm);
\draw[dashed,thick] (7.8 cm,0) -- +(4.4 cm,0);
\draw[thick] (12.5 cm,0) circle (.3cm);
\node [below] at (12.5 cm,-.5cm) {$s_{n-1}$};
\draw[thick] (12.8 cm,0) -- +(1.9 cm,0);
\draw[thick] (15 cm,0) circle (.3cm);
\node [below] at (14.8 cm,-.5cm) {$s_n$};
\end{tikzpicture}
\end{split}
\end{align}

When $\bfG_n=\bfO_{2n+1},$ we have
$\bfT = \{{\rm diag}(t_n,
\ldots,t_1,1,t_1^{-1},\ldots,t_n^{-1})\mid t_i\in \overline{\bbF}_q^{\times}~\text{for~all}\\  1\leqs i\leqs n\}.$ It is easy to see that  $\widehat{\bfT}=C_{\bfG_n}(\bfT) = \{{\rm diag}(t_n,
\ldots,t_1,\pm1,t_1^{-1},\ldots,t_n^{-1})\mid t_i\in \overline{\bbF}_q^{\times}~\text{for~all}~1\leqs i\leqs n\}$ and  $W_n\cong N_{\bfG^{\circ}_{n}}(\bfT)/\bfT$ is  a Weyl group of
type $B_n$ on which $F$ acts trivially.
For $i \neq 1$, the simple
reflection $s_i$ can be lifted to $N_{\bfG_n}(\bfT)$ as the signed permutation matrix
$\dot{s}_i=\ \diag(\id_{n-i},\left(\begin{array}{cc} 0&1 \\ -1& 0 \\ \end{array}\right),\id_{\bfG_{i-2}},\left(\begin{array}{cc} 0&  -1 \\1& 0 \\ \end{array}\right),\id_{n-i})$ and $s_1$
is lifted to be the signed permutation matrix
\begin{equation*}
\dot{s}_1=\ \begin{blockarray}{(ccc|ccc|ccc)}
         \BAmulticolumn{3}{c|}{\multirow{2}{*}{$\id_{n-1}$}}&&&&&&\\
         &&&&&&&&&\\
         \cline{1-9}
        &&&0&0&-1&&&&\\
        &&&0&1&0&&&&\\
        &&&-1&0&0&&&&\\
        \cline{1-9}
        &&&&&&\BAmulticolumn{3}{c}{\multirow{2}{*}{$\id_{n-1}$}}\\
        &&&&&&&&&\\
    \end{blockarray}\,\, .
\end{equation*}

When $\bfG_n=\bfSp_{2n}$, we have $\bfT = \{ \mathrm{diag}(t_n,
\ldots,t_1,t_1^{-1},\ldots,t_n^{-1})\mid t_i\in \overline{\bbF}_q^{\times}$ for all $1\leqs i\leqs n\}$.
Similarly,
the simple reflection $s_i (i>1)$ can be lifted to $N_{\bfG_n}(\bfT)$ as the signed permutation matrix
$\dot{s}_i=\ \diag(\id_{n-i},\left(\begin{array}{cc} 0&1 \\ -1& 0 \\ \end{array}\right),\id_{\bfG_{i-2}},\left(\begin{array}{cc} 0&  -1 \\1& 0 \\ \end{array}\right),\id_{n-i})$ and $s_1$ as the signed permutation matrix
\begin{equation*}
\dot{s}_1=\ \begin{blockarray}{(cccc|cc|cccc)}
         \BAmulticolumn{4}{c|}{\multirow{2}{*}{$\id_{n-1}$}}&&&&&\\
         &&&&&&&&&\\
         \cline{1-10}
        &&&&0&-1&&&&\\
        &&&&1&0&&&&\\
        \cline{1-10}
        &&&&&&\BAmulticolumn{3}{c}{\multirow{2}{*}{$\id_{n-1}$}}\\
        &&&&&&&&&
    \end{blockarray}\ .
\end{equation*}

When $\bfG_{n}=\bfO_{2n}$, we have $\bfT = \{ \mathrm{diag}(t_n,
\ldots,t_1,t_1^{-1},\ldots,t_n^{-1})\mid t_i\in \overline{\bbF}_q^{\times}$ for all $1\leqs i\leqs n\}$.

For $i \neq 1$, the simple
reflection $s_i$ can be lifted to $N_{\bfG_n}(\bfT)$ as the signed permutation matrix
$\dot{s}_i=\ \diag(\id_{n-i},\left(\begin{array}{cc} 0&1 \\ -1& 0 \\ \end{array}\right),\id_{\bfG_{i-2}},\left(\begin{array}{cc} 0&  -1 \\1& 0 \\ \end{array}\right),\id_{n-i})$ and $s_1$ as the signed permutation matrix
\begin{equation*}
\dot{s}_1=\ \begin{blockarray}{(cccc|cc|cccc)}
         \BAmulticolumn{4}{c|}{\multirow{2}{*}{$\id_{n-1}$}}&&&&&\\
         &&&&&&&&&\\
         \cline{1-10}
        &&&&0&-1&&&&\\
        &&&&-1&0&&&&\\
        \cline{1-10}
        &&&&&&\BAmulticolumn{3}{c}{\multirow{2}{*}{$\id_{n-1}$}}\\
        &&&&&&&&&
    \end{blockarray}\ .
\end{equation*}

For the next section,
we now define
$x_1:=s_1$ and
 $$x_{k}:=s_{k}s_{k-1} \cdots s_2 x_1 s_2^{-1} \cdots s^{-1}_{k-1} s^{-1}_{k}\,\,\text{for}\,\, 2\leqs k\leqs n.$$
Correspondingly, we define their lifts by $\dot{x}_1=\dot{s}_1$ and  $$\dot{x}_{k}:=\dot{s}_{k}\dot{s}_{k-1} \cdots \dot{s}_2 \dot{x}_1 \dot{s}_2^{-1} \cdots \dot{s}^{-1}_{k-1} \dot{s}^{-1}_{k}\,\,\text{for}\,\, 2\leqs k\leqs n.$$
\smallskip

\subsubsection{Natural transformations} \label{sub:naturaltransf}
In this subsection, we will recall the construction of some natural transformations appearing in \cite[\S 4.2.2]{LLZ}, see also \cite[\S 6.2]{DVV2}.
\smallskip

An endomorphism of $F^\pm_{r,r+1}$  can be represented by an $(R G _{r+1}, R G_r )$-bimodule endomorphism of $ R G _{r+1} \cdot e^\pm_r$ , or equivalently, by right multiplication by an element of $ e^\pm_r\cdot
R G _{r+1} \cdot  e^\pm_r $  centralizing $G_r$.
It is easy to show that $e_{r+1}^{\epsilon}e_{r}^{\epsilon'}=e_{r}^{\epsilon'}e_{r+1}^{\epsilon}$ for any $\epsilon, \epsilon'\in \{\pm\},$ so $e_{r+1}^{\epsilon}e_{r}^{\epsilon'}$ is an idempotent.
The functors $F^+_{r+1,r+2}F^+_{r,r+1} ,$  $ F^-_{r+1,r+2}F^-_{r,r+1},$ $ F^+_{r+1,r+2}F^-_{r,r+1}$ and $ F^-_{r+1,r+2}F^+_{r,r+1} $
 are
represented by the $( R G_{r+2} , R G_r )$-bimodules
$$ R G _{r+2}\cdot e^+_{r+1}e^+_r ,\ \   R G_{r+2} \cdot e^-_{r+1} e^-_r ,\ \   R G_{r+2} \cdot e^+_{r+1}e^-_r ,\ \   R G_{r+2} \cdot e^-_{r+1}e^+_r ,$$
 respectively. An endomorphism of those functors can be represented  by right multiplication by an element of
 $$e^+_{r+1}e^+_r\cdot  R G_{r+2} \cdot  e^+_{r+1}e^+_r  ,\ \  e^-_{r+1}e^-_r \cdot  R G_{r+2}  \cdot e^-_{r+1}e^-_r, $$$$e^+_{r+1}e^-_r  \cdot   R G_{r+2} \cdot  e^+_{r+1} e^-_r, \ \ e^-_{r+1}e^+_r\cdot  R G_{r+2} \cdot  e^-_{r+1} e^+_r$$
centralizing $ R G_r $, respectively.
Let $\dot{x}_{r+1}$ and $\dot{s}_{r+2}$ be as defined in \S \ref{sub:lift-refl}.
It is easy to see that both $\dot{x}_{r+1}\in G_{r+1}$ and  $\dot{s}_{r+2}\in G_{r+2}$ centralize $G_r$.
In fact,
$\dot{x}_{r+1}$
 is exactly the matrix
  $$
  \left(\begin{array}{ccc} & & -1 \\ & \id_{G_r} & \\ -1 \\ \end{array}\right),
 \quad \left(\begin{array}{ccc} & & -1 \\ & \id_{G_r} & \\\, 1 \,\\ \end{array}\right)
  \quad\text{or} \quad \left(\begin{array}{ccc} & & -1 \\ & \id_{G_r} & \\ -1 \\ \end{array}\right)$$
when $G_n=\O_{2n+1}(q), \Sp_{2n}(q)$ or  $\O_{2n}^{\pm}(q)$, respectively.

Thus, the right multiplication by the elements
\begin{align*}
& X^+_{r+1}= q^r  e^+_r  \dot{x}_{r+1} \,  e^+_r ,\qquad
X^-_{r+1}= \beta q^r  e^-_r  \dot{x}_{r+1} \,  e^-_r ,
\end{align*}
define  natural transformations of the functors $F^+_{r}$ and $F^-_{r}$, respectively,
where \begin{align}\beta=\begin{cases}\label{for:normalization}
  \sqrt{\zeta(-1)}q^{-1/2}
    \quad\text{if $G_n =\Sp_{2n}(q)$}; \\
\zeta(2) \quad \text{if $G_n=\O_{2n+1}(q)$ or $\O^{\pm}_{2n}(q)$}
\end{cases}
\end{align}
which is a normalization.
Similarly, the right multiplication by the elements
\begin{align*}
& T^+_{r+2}= q^{1/2}    e^+_{r+1}e^+_r  \dot{s}_{r+2}\,  e^+_{r+1}e^+_r \,,\ \ H_{r+2}=q^{1/2} e^+_{r+1} e^-_r \dot{s}_{r+2}\, e^-_{r+1}e^+_r
 \,,\\
&T^-_{r+2}=q^{1/2} e^-_{r+1}e^-_r  \dot{s}_{r+2}\, e^-_{r+1}e^-_r \,,\ \ H'_{r+2}=q^{1/2}   e^-_{r+1} e^+_r \dot{s}_{r+2}\, e^+_{r+1}e^-_r \,,
\end{align*}
define natural transformations in 
\begin{align*}
&\End(F^+_{r+1,r+2}F^+_{r,r+1})\,,\Hom(F^+_{r+1,r+2}F^-_{r,r+1},F^-_{r+1,r+2}F^+_{r,r+1})\,,\\ &\End(F^-_{r+1,r+2}F^-_{r,r+1} )\,, \Hom(F^-_{r+1,r+2} F^+_{r,r+1},F^+_{r+1,r+2}F^-_{r,r+1})
\,,
\end{align*}
respectively.

\begin{remark}
Note that in this article, we use  slightly different notations and choose a different normalization with \cite{LLZ} to define these natural transformations.
\end{remark}
We set
$$
X^+=\bigoplus_{r\in \bbN^*} X^+_{r+1} \,,\qquad
X^-=\bigoplus_{r\in \bbN^*}X^-_{r+1}\,,\qquad T^+=\bigoplus_{r\in \bbN^*}T^+_{r+2}\,
,$$
$$
T^-=\bigoplus_{r\in \bbN^*}T^-_{r+2}\,,\qquad
H=\bigoplus_{r\in \bbN^*}H_{r+2}\,,\qquad
H'=\bigoplus_{r\in \bbN^*}H'_{r+2}\,.$$

\smallskip

\subsubsection{Categorical double quantum Heisenberg action}

The following is the main theorem of this section.
\begin{theorem}\label{Thm:doubleheisenberg}
There is a unique strict $R$-linear monoidal functor
$$
\Psi:\red{\Heis_{-2}(z^+,t^+)}\odot\blue{ \Heis_{-2}(z^-,t^-)} \rightarrow \mathcal{E}nd
\left(R G_\bullet \mod\right)
$$ such that $\Psi$
sends the generating objects $\red{\up},\red{\down},\blue{\up}$ and $\blue{\down}$ to endo-functors $F^+,E^+,F^-$ and $E^-$ of $ RG_\bullet\mod $, respectively, i.e.,
$$\Psi(\red{\up})=F^+,\quad\Psi(\red{\down})=E^+,\quad\Psi(\blue{\up})=F^-,\quad\Psi(\blue{\down})=E^-,$$
and $\Psi$ sends the generating morphisms  to the corresponding natural transformations
in  $ RG_\bullet\mod $ as follows:
\begin{itemize}
\item
$\Psi(\begin{tikzpicture}[baseline = -1mm]
	\draw[->,red] (0.08,-.2) to (0.08,.2);
      \node at (0.08,0) {$\dott$};
\end{tikzpicture})=X^+\::\;F^+ \Rightarrow F^+\,,\quad\Psi(
\begin{tikzpicture}[baseline = -1mm]
	\draw[->,red] (0.2,-.2) to (-0.2,.2);
	\draw[-,white,line width=4pt] (-0.2,-.2) to (0.2,.2);
	\draw[->,red] (-0.2,-.2) to (0.2,.2);
\end{tikzpicture})=T^+\::\; (F^+)^2 \Rightarrow (F^+)^2\,,$
\item
$\Psi(\begin{tikzpicture}[baseline = .75mm]
	\draw[<-,red] (0.3,0) to[out=90, in=0] (0.1,0.3);
	\draw[-,red] (0.1,0.3) to[out = 180, in = 90] (-0.1,0);
\end{tikzpicture})=\varepsilon^+_R\::\;F^+\circ  E^+ \Rightarrow \unit
\,,\quad\Psi(\begin{tikzpicture}[baseline = .75mm]
	\draw[<-,red] (0.3,0.3) to[out=-90, in=0] (0.1,0);
	\draw[-,red] (0.1,0) to[out = 180, in = -90] (-0.1,0.3);
\end{tikzpicture})=\eta^+_R\::\;\unit\Rightarrow E^+ \circ F^+\,,$
\item
$\Psi(\begin{tikzpicture}[baseline = .75mm]
	\draw[-,red](0.1,0.3) to[out=0, in=90](0.3,0);
	\draw[<-,red] (-0.1,0)to[out =90 , in = 180](0.1,0.3);
\end{tikzpicture})=\varepsilon^+_L\::\;E^+\circ  F^+ \Rightarrow \unit
\,,\quad\Psi(\begin{tikzpicture}[baseline = .75mm]
	\draw[-,red] (0.1,0)to[out=0, in=-90](0.3,0.3);
	\draw[<-,red] (-0.1,0.3)to[out =-90, in =180] (0.1,0);
\end{tikzpicture})=\eta^+_L\::\;\unit\Rightarrow F^+\circ  E^+\,,$
\item
$\Psi(\begin{tikzpicture}[baseline = -1mm]
	\draw[->,blue] (0.08,-.2) to (0.08,.2);
      \node at (0.08,0) {$\dott$};
\end{tikzpicture})=X^-\::\;F^- \Rightarrow F^-\,,\quad \Psi(\begin{tikzpicture}[baseline = -1mm]
	\draw[->,blue] (0.2,-.2) to (-0.2,.2);
	\draw[-,white,line width=4pt] (-0.2,-.2) to (0.2,.2);
	\draw[->,blue] (-0.2,-.2) to (0.2,.2);
\end{tikzpicture})=T^-\::\; (F^-)^2 \Rightarrow (F^-)^2\,,$
\item
$\Psi(\begin{tikzpicture}[baseline = .75mm]
	\draw[<-,blue] (0.3,0) to[out=90, in=0] (0.1,0.3);
	\draw[-,blue] (0.1,0.3) to[out = 180, in = 90] (-0.1,0);
\end{tikzpicture})=\varepsilon^-_R\::\;F^-\circ  E^- \Rightarrow \unit
\,,\quad\Psi(\begin{tikzpicture}[baseline = .75mm]
	\draw[<-,blue] (0.3,0.3) to[out=-90, in=0] (0.1,0);
	\draw[-,blue] (0.1,0) to[out = 180, in = -90] (-0.1,0.3);
\end{tikzpicture})=\eta^-_R\::\;\unit\Rightarrow E^- \circ  F^-\,,$
\item
$\Psi(\begin{tikzpicture}[baseline = .75mm]
	\draw[-,blue](0.1,0.3) to[out=0, in=90](0.3,0);
	\draw[<-,blue] (-0.1,0)to[out =90 , in = 180](0.1,0.3);
\end{tikzpicture})=\varepsilon^-_L\::\;E^-\circ  F^- \Rightarrow \unit
\,,\quad\Psi(\begin{tikzpicture}[baseline = .75mm]
	\draw[-,blue] (0.1,0)to[out=0, in=-90](0.3,0.3);
	\draw[<-,blue] (-0.1,0.3)to[out =-90, in =180] (0.1,0);
\end{tikzpicture})=\eta^-_L\::\;\unit\Rightarrow F^-\circ  E^-\,,$
\item
$\Psi(\begin{tikzpicture}[baseline = -1mm]
	\draw[->,blue] (0.2,-.2) to (-0.2,.2);
	\draw[->,red] (-0.2,-.2) to (0.2,.2);
\end{tikzpicture})=H\::\;F^+\circ  F^- \Rightarrow F^-\circ  F^+,\quad
\Psi(
\begin{tikzpicture}[baseline = -1mm]
	\draw[->,red] (0.2,-.2) to (-0.2,.2);
	\draw[->,blue] (-0.2,-.2) to (0.2,.2);
\end{tikzpicture})=H'\::\; F^-\circ  F^+ \Rightarrow F^+\circ  F^-.$
\end{itemize}
\end{theorem}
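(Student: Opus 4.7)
The strategy is to invoke the generators-and-relations presentation of $\red{\Heis_{-2}(z^+,t^+)}\odot\blue{\Heis_{-2}(z^-,t^-)}$ given in Theorem \ref{DoubleHeisenberg}. Since a strict $R$-linear monoidal functor is determined by its values on generators, uniqueness is automatic; the content is well-definedness. The relations to verify split naturally into three groups: the red-only relations of $\red{\Heis_{-2}(z^+,t^+)}$, the blue-only relations of $\blue{\Heis_{-2}(z^-,t^-)}$, and the six families of mixed relations \eqref{Mixaff1}--\eqref{Mixamackey2}.

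For the red-only relations, I would argue that the tuple $(F^+,E^+,X^+,T^+,\varepsilon^+_L,\eta^+_L,\varepsilon^+_R,\eta^+_R)$ furnishes a strict monoidal functor $\red{\Heis_{-2}(z^+,t^+)}\to\mathcal{E}nd(RG_\bullet\mod)$; this is essentially the content of \cite{LLZ} and \cite{DVV2}, adapted to the present parameters (\ref{constant})--(\ref{t+}) and the normalization $\beta$ of (\ref{for:normalization}). The key computations are: (i) the bimodule description $RG_{r+1}\cdot e^+_r$ realizes $F^+_{r,r+1}$ and identifies its endomorphism ring (centralizing $G_r$) with the cyclotomic quotient of the affine Hecke algebra generated by the images of $\dot{x}_{r+1}$ and $\dot{s}_{r+2}$; (ii) the Mackey formula for Harish-Chandra induction on $P_{r+1}\backslash G_{r+2}/P_{r+1}$ yields relation \eqref{limb} with the stated $tz$-correction term; (iii) the explicit form of $\dot{x}_{r+1}$ as a diagonal-anti-diagonal involution computes the curl and bubble values consistently with $k=-2$. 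The blue-only relations follow by an identical argument after replacing $e^+_r$ with its $\zeta$-twisted variant $e^-_r$; the central-charge value is again $-2$ since $|V_r|$ and the $\zeta$-weighted character sum over $\mathbb{F}_q^\times$ produce the same polynomial normalizations.

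For the mixed relations, the essential observation is that $e^+_r$ and $e^-_{r+1}$, viewed in $RG_{r+2}$, commute: indeed $e^+_r\in RP_r\subset RG_{r+1}$ and $e^-_{r+1}\in RP_{r+1}\subset RG_{r+2}$ involve unipotent radicals and toric factors of Levis that intertwine, so one checks $e^+_r e^-_{r+1}=e^-_{r+1} e^+_r$ directly; this commutation produces the bimodule isomorphism realizing $H$ and $H'$ by right multiplication by $q^{1/2}\dot{s}_{r+2}$, and yields \eqref{Mixaff2} and the pivotal mixed sideways crossings. The dot-slide identities \eqref{Mixaff3} follow since $\dot{x}_{r+1}$ centralizes the idempotent of the opposite color. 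The crucial mixed Mackey relations \eqref{Mixamackey1}--\eqref{Mixamackey2} amount to showing that no correction terms appear in the Mackey decomposition of $e^{\mp}_{r+1}e^{\pm}_r\cdot RG_{r+2}\cdot e^{\pm}_{r+1}e^{\mp}_r$ centralizing $RG_r$; this should be verified by computing $P_{r+1}\backslash G_{r+2}/P_{r+1}$ and observing that the twist by $\zeta$ on one factor and the trivial character on the other makes the nontrivial double cosets contribute zero (equivalently, there are no arrows between $I_+$ and $I_-$ in the quiver $\K_q$).

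The main obstacle will be the verification of the mixed Mackey relations: one has to control the interaction of the two orthogonal idempotents across the full double-coset sum and confirm the vanishing of every potential correction term, rather than merely a nonzero cancellation. A secondary but important technical point will be tracking the various normalization constants $t^\pm$, $z^\pm$, $\beta$, and $q^{1/2}$ through the adjunctions and the pivotal structure so that the bubble relations and curl relations are satisfied on the nose rather than up to rescaling; getting the choice of $t^\pm$ consistent with the value $|V_r|$ of the relevant unipotent radical in each of the four group families $\O_{2n+1}(q)$, $\Sp_{2n}(q)$, $\O^+_{2n}(q)$, $\O^-_{2n}(q)$ is what dictates the case distinctions in (\ref{t+})--(\ref{t-}).
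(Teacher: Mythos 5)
Your overall strategy matches the paper's: reduce to checking the generators-and-relations presentation of Theorem~\ref{DoubleHeisenberg}, splitting the verification into red-only, blue-only, and mixed families. Your intuition about the mixed Mackey relations is also correct — the orthogonality $e^+_r e^-_r = 0$ does indeed kill the contributions from the trivial and $\dot{x}_{r+1}$ double cosets, which is exactly why \eqref{Mixamackey1} and \eqref{Mixamackey2} have no bubble correction terms, in contrast to \eqref{MMackey1}--\eqref{MMackey2}. Your identification of the normalization constants as a key technical point is likewise sound.

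However, there is a genuine gap: you have omitted entirely the verification that $X^+$ and $X^-$ are \emph{invertible}. Recall from the definition in \S\ref{sec:quantumHeis} that the dot generator of $\Heis_k(z,t)$ is required to be invertible; this is not a relation in the usual sense but a condition that must be checked independently, and it cannot be deduced from the affine Hecke, Mackey, curl, or bubble relations. (The invertibility of $T^\pm$ is automatic from the quadratic Hecke relation $T - T^{-1} = z$, but no such formula exists for the dot.) The paper explicitly flags this as ``the most interesting'' part of the verification, and it occupies Proposition~\ref{Prop:dim2} and Proposition~\ref{prop:invertible}. The argument there is not a formal manipulation: it requires (i) localizing from $\mathcal{O}$ to the residue field and the fraction field; (ii) inducting from $F^\epsilon$-cuspidal modules to general ones by the eigenvalue-shift property of the affine Hecke algebra action (so if $X^\epsilon(N)$ is invertible then eigenvalues of $X^\epsilon$ on $(F^\epsilon)^2 N$ lie in $q^{\pm 1}$ times those on $F^\epsilon N$); and (iii) for an irreducible $F^\epsilon$-cuspidal $M$, computing $\dim \End(F^\epsilon M) = 2$ via the Mackey formula and then using the bubble values $\bigl(\begin{smallmatrix}\circ\\ \circ\end{smallmatrix}\bigr)_0 = -1/(t^\epsilon z^\epsilon)$ and $\bigl(\begin{smallmatrix}\circ\\ \circ\end{smallmatrix}\bigr)_2 = t^\epsilon/z^\epsilon$ to show that $X^\epsilon(M)$ satisfies $X^\epsilon(M)^2 = \gamma X^\epsilon(M) - (t^\epsilon)^2 \id$ with $(t^\epsilon)^2 \neq 0$, forcing invertibility. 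Without this step the functor $\Psi$ is not well-defined as a functor out of $\Heis_{-2}(z^\pm,t^\pm)$, because the source category's morphism spaces already contain the inverse dot. You should state and prove this invertibility before concluding.

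A secondary point: you attribute the red-only and blue-only verifications to ``essentially the content of LLZ and DVV2,'' but those references construct a Kac--Moody categorification on \emph{unipotent blocks} relying on Fong--Srinivasan block theory, which is unavailable for $\Sp_{2n}(q)$ and $\O^\pm_{2n}(q)$. The Heisenberg-category recasting (working on the whole category $RG_\bullet\mod$ over an arbitrary coefficient ring, with no block-theoretic input) is what makes the present theorem new, so while the bimodule calculations do follow the pattern of \cite{LLZ}, citing them as ``essentially the content'' understates the conceptual change; the paper's presentation via Propositions~\ref{prop:12relations}, \ref{prop:adj}, \ref{mkd}, \ref{lcross}, \ref{Lem:Rcrossing}, \ref{lrdots}, \ref{prop:Mackey}, \ref{prop:curl} does carry out those calculations in full precisely because a citation would not suffice here.
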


\smallskip
To prove this theorem, we need to check all relations in Theorem \ref{DoubleHeisenberg}. We will check all of them in the next section.
In fact when $k^\pm=-2$, the bubble relations have simpler forms as the following:
\begin{align}\label{redbubble}
\mathord{\begin{tikzpicture}[baseline = -1mm]
  \draw[-,thin,red] (0,0.2) to[out=180,in=90] (-.2,0);
  \draw[->,thin,red] (0.2,0) to[out=90,in=0] (0,.2);
 \draw[-,thin,red] (-.2,0) to[out=-90,in=180] (0,-0.2);
  \draw[-,thin,red] (0,-0.2) to[out=0,in=-90] (0.2,0);
      \node at (0.2,0) {$\dott$};
      \node at (0.4,0) {$\color{darkblue}\scriptstyle a$};
\end{tikzpicture}
}=-\frac{1}{t^+z^+},0,\frac{t^+}{z^+},~\text{ when $a=0,1,2,$ respectively;}\\
\mathord{\begin{tikzpicture}[baseline = -1mm]
  \draw[-,thin,blue] (0,0.2) to[out=180,in=90] (-.2,0);
  \draw[->,thin,blue] (0.2,0) to[out=90,in=0] (0,.2);
 \draw[-,thin,blue] (-.2,0) to[out=-90,in=180] (0,-0.2);
  \draw[-,thin,blue] (0,-0.2) to[out=0,in=-90] (0.2,0);
      \node at (0.2,0) {$\dott$};
      \node at (0.4,0) {$\color{darkblue}\scriptstyle a$};
\end{tikzpicture}
}=-\frac{1}{t^-z^-},0,\frac{t^-}{z^-},~\text{ when $a=0,1,2$, respectively,}\label{bluebubble}
\end{align}
Mackey formula relations for $k^\pm=-2$ have simpler forms as the following (see \cite[(4.14)]{BSW1}):
\begin{align}\label{MMackey1}
\mathord{
\begin{tikzpicture}[baseline = -.9mm]
	\draw[<-,red] (-0.28,-.6) to[out=90,in=-90] (0.28,0);
	\draw[-,white,line width=4pt] (0.28,-.6) to[out=90,in=-90] (-0.28,0);
	\draw[-,red] (0.28,-.6) to[out=90,in=-90] (-0.28,0);
	\draw[->,red] (-0.28,0) to[out=90,in=-90] (0.28,.6);
	\draw[-,line width=4pt,white] (0.28,0) to[out=90,in=-90] (-0.28,.6);
	\draw[-,red] (0.28,0) to[out=90,in=-90] (-0.28,.6);
\end{tikzpicture}
}
&=
\mathord{
\begin{tikzpicture}[baseline = -.9mm]
	\draw[->,red] (0.08,-.6) to (0.08,.6);
	\draw[<-,red] (-0.28,-.6) to (-0.28,.6);
\end{tikzpicture}
}
+t^+z^+
\mathord{
\begin{tikzpicture}[baseline=-.9mm]
	\draw[<-,red] (0.3,0.6) to[out=-90, in=0] (0,.1);
	\draw[-,red] (0,.1) to[out = 180, in = -90] (-0.3,0.6);
	\draw[-,red] (0.3,-.6) to[out=90, in=0] (0,-0.1);
	\draw[->,red] (0,-0.1) to[out = 180, in = 90] (-0.3,-.6);
\end{tikzpicture}}
\!-\frac{z^+}{t^+}
\mathord{
\begin{tikzpicture}[baseline=-.9mm]
	\draw[<-,red] (0.3,0.6) to[out=-90, in=0] (0,.1);
	\draw[-,red] (0,.1) to[out = 180, in = -90] (-0.3,0.6);
      \node at (0.44,-0.3) {$$};
	\draw[-,red] (0.3,-.6) to[out=90, in=0] (0,-0.1);
	\draw[->,red] (0,-0.1) to[out = 180, in = 90] (-0.3,-.6);
   \node at (0.27,0.3) {$\dott$};
      \node at (0.27,-0.3) {$\dott$};
   \node at (.43,.3) {$$};
\end{tikzpicture}}\:,&
\mathord{
\begin{tikzpicture}[baseline = 0mm]
	\draw[-,thin,red] (-0.28,0) to[out=90,in=-90] (0.28,.6);
	\draw[-,thin,red] (-0.28,-.6) to[out=90,in=-90] (0.28,0);
	\draw[-,line width=4pt,white] (0.28,0) to[out=90,in=-90] (-0.28,.6);
	\draw[-,line width=4pt,white] (0.28,-.6) to[out=90,in=-90] (-0.28,0);
	\draw[->,thin,red] (0.28,0) to[out=90,in=-90] (-0.28,.6);
	\draw[<-,thin,red] (0.28,-.6) to[out=90,in=-90] (-0.28,0);
\end{tikzpicture}
}
=\mathord{
\begin{tikzpicture}[baseline = 0]
	\draw[<-,thin,red] (0.08,-.6) to (0.08,.6);
	\draw[->,thin,red] (-0.28,-.6) to (-0.28,.6);
\end{tikzpicture}
}
\,;
\end{align}

\begin{align}\label{MMackey2}
\mathord{
\begin{tikzpicture}[baseline = -.9mm]
	\draw[<-,blue] (-0.28,-.6) to[out=90,in=-90] (0.28,0);
	\draw[-,white,line width=4pt] (0.28,-.6) to[out=90,in=-90] (-0.28,0);
	\draw[-,blue] (0.28,-.6) to[out=90,in=-90] (-0.28,0);
	\draw[->,blue] (-0.28,0) to[out=90,in=-90] (0.28,.6);
	\draw[-,line width=4pt,white] (0.28,0) to[out=90,in=-90] (-0.28,.6);
	\draw[-,blue] (0.28,0) to[out=90,in=-90] (-0.28,.6);
\end{tikzpicture}
}
&=
\mathord{
\begin{tikzpicture}[baseline = -.9mm]
	\draw[->,blue] (0.08,-.6) to (0.08,.6);
	\draw[<-,blue] (-0.28,-.6) to (-0.28,.6);
\end{tikzpicture}
}
+t^-z^-
\mathord{
\begin{tikzpicture}[baseline=-.9mm]
	\draw[<-,blue] (0.3,0.6) to[out=-90, in=0] (0,.1);
	\draw[-,blue] (0,.1) to[out = 180, in = -90] (-0.3,0.6);
	\draw[-,blue] (0.3,-.6) to[out=90, in=0] (0,-0.1);
	\draw[->,blue] (0,-0.1) to[out = 180, in = 90] (-0.3,-.6);
\end{tikzpicture}}
\!-\frac{z^-}{t^-}
\mathord{
\begin{tikzpicture}[baseline=-.9mm]
	\draw[<-,blue] (0.3,0.6) to[out=-90, in=0] (0,.1);
	\draw[-,blue] (0,.1) to[out = 180, in = -90] (-0.3,0.6);
      \node at (0.44,-0.3) {$$};
	\draw[-,blue] (0.3,-.6) to[out=90, in=0] (0,-0.1);
	\draw[->,blue] (0,-0.1) to[out = 180, in = 90] (-0.3,-.6);
   \node at (0.27,0.3) {$\dott$};
      \node at (0.27,-0.3) {$\dott$};
   \node at (.43,.3) {$$};
\end{tikzpicture}}\:,&
\mathord{
\begin{tikzpicture}[baseline = 0mm]
	\draw[-,thin,blue] (-0.28,0) to[out=90,in=-90] (0.28,.6);
	\draw[-,thin,blue] (-0.28,-.6) to[out=90,in=-90] (0.28,0);
	\draw[-,line width=4pt,white] (0.28,0) to[out=90,in=-90] (-0.28,.6);
	\draw[-,line width=4pt,white] (0.28,-.6) to[out=90,in=-90] (-0.28,0);
	\draw[->,thin,blue] (0.28,0) to[out=90,in=-90] (-0.28,.6);
	\draw[<-,thin,blue] (0.28,-.6) to[out=90,in=-90] (-0.28,0);
\end{tikzpicture}
}
=\mathord{
\begin{tikzpicture}[baseline = 0]
	\draw[<-,thin,blue] (0.08,-.6) to (0.08,.6);
	\draw[->,thin,blue] (-0.28,-.6) to (-0.28,.6);
\end{tikzpicture}
}
\,,
\end{align}
and curl relations have simpler forms:
 \begin{align}\label{k=2curl}
 \mathord{
\begin{tikzpicture}[baseline = -0.5mm]
	\draw[<-,red] (0,0.6) to (0,0.3);
	\draw[-,red] (-0.3,-0.2) to [out=180,in=-90](-.5,0);
	\draw[-,red] (-0.5,0) to [out=90,in=180](-.3,0.2);
	\draw[-,red] (-0.3,.2) to [out=0,in=90](0,-0.3);
	\draw[-,red] (0,-0.3) to (0,-0.6);
	\draw[-,line width=4pt,white] (0,0.3) to [out=-90,in=0] (-.3,-0.2);
	\draw[-,red] (0,0.3) to [out=-90,in=0] (-.3,-0.2);
\end{tikzpicture}
}=
0 ~\text{ and }~\mathord{
\begin{tikzpicture}[baseline = -0.5mm]
	\draw[<-,blue] (0,0.6) to (0,0.3);
	\draw[-,blue] (-0.3,-0.2) to [out=180,in=-90](-.5,0);
	\draw[-,blue] (-0.5,0) to [out=90,in=180](-.3,0.2);
	\draw[-,blue] (-0.3,.2) to [out=0,in=90](0,-0.3);
	\draw[-,blue] (0,-0.3) to (0,-0.6);
	\draw[-,line width=4pt,white] (0,0.3) to [out=-90,in=0] (-.3,-0.2);
	\draw[-, blue] (0,0.3) to [out=-90,in=0] (-.3,-0.2);
\end{tikzpicture}
}=0.
\end{align}

\subsubsection{Categorical Kac-Moody action}
Let $\scrI_+$ (resp. $\scrI_-$) be the set of the generalized eigenvalues of $X^+$ on $F^+$ (resp. $X^-$ on $F^-$).
 We have the decompositions
$$E^+=\bigoplus_{i\in I_+} E^+_i,\ \ \ \
F^+=\bigoplus_{i\in I_+} F^+_i,\ \ \ \
E^-=\bigoplus_{i'\in I_-} E^-_{i'},\ \ \ \
F^-=\bigoplus_{i'\in I_-} F^-_{i'}$$
such that $X^+-i$ is locally nilpotent on $E^+_i$ and $F^+_i$ and $X^--i'$ is locally nilpotent on $E^-_{i'}$ and $F^-_{i'}$,
 respectively. As a direct consequence of Theorem \ref{Thm:doubleheisenberg} and
 Theorem \ref{Thm:HL}, we have the following theorem. The set $I_+$ and $I_-$ will be determined in Proposition \ref{Prop:I+-}.
 \begin{theorem}\label{cor:kacmoody}
 Let $R=K$ or $k$ and assume  $\ell\nmid q(q-1).$
 Then there exists a $\frakU(\fraks\frakl'_{I_+}\oplus\fraks\frakl'_{I_-})$-categorification on $RG_\bullet\mod$, i.e., there is a strict $R$-linear monoidal functor
$$\Psi':\,\frakU(\fraks\frakl'_{I_+}\oplus \fraks\frakl'_{I_-})\to \calE nd(RG_\bullet\mod).$$
 \end{theorem}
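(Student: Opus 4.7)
The plan is to obtain Theorem \ref{cor:kacmoody} as a direct consequence of two results already in hand, namely Theorem \ref{Thm:doubleheisenberg} (Theorem A), which supplies a categorical double quantum Heisenberg action on $RG_\bullet\mod$, and Theorem \ref{Thm:HL}, which upgrades any such action on a locally finite $R$-linear category to a $\frakU(\fraks\frakl'_{\K_q})$-categorification. No new categorical computation is needed; the work is in confirming that the hypotheses of Theorem \ref{Thm:HL} are met and then repackaging its output as the monoidal functor $\Psi'$.

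First I would verify that $RG_\bullet\mod$ is a locally finite $R$-linear abelian category. Since $R=K$ or $k$ is a field and each $G_n$ is a finite group, $RG_n\mod$ is Hom-finite abelian with every object of finite length; the direct sum $\bigoplus_{n\in\bbN^*} RG_n\mod$ inherits these properties. Next, Theorem \ref{Thm:doubleheisenberg} provides the strict $R$-linear monoidal functor
$$\Psi:\red{\Heis_{-2}(z^+,t^+)}\odot\blue{\Heis_{-2}(z^-,t^-)} \to \mathcal{E}nd(RG_\bullet\mod),$$
which is precisely a categorical double quantum Heisenberg action with $k^\pm=-2$. The endofunctors $X^+, X^-$ act on $F^+, F^-$ with generalized eigenvalues lying in $R$ (in fact, as will be shown in Proposition \ref{Prop:I+-}, in $q^{\bbZ}\sqcup -q^{\bbZ}$), which makes sense in $R$ thanks to the standing hypothesis $q^{1/2},\sqrt{-1}\in R$ and $q(q-1)\in R^\times$. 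This gives the spectral decompositions $F^\pm=\bigoplus_{i\in I_\pm} F^\pm_i$ and $E^\pm=\bigoplus_{i\in I_\pm} E^\pm_i$ stated before the theorem, so all data required by Theorem \ref{Thm:HL} are in place.

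Finally, with $\K_q=I_+(q)\sqcup I_-(q)$ and the associated Kac-Moody algebra $\fraks\frakl'_{\K_q}=\fraks\frakl'_{I_+}\oplus\fraks\frakl'_{I_-}$, Theorem \ref{Thm:HL} produces a 2-representation $\Phi:\frakU(\fraks\frakl'_{\K_q})\to\mathfrak{Cat}_R$ whose 1-morphism data is given on generators by $E_i1_\lambda\mapsto E^+_i|_{\mathcal{C}_\lambda}$, $F_i1_\lambda\mapsto F^+_i|_{\mathcal{C}_\lambda}$ for $i\in I_+$ and $E_j1_\lambda\mapsto E^-_j|_{\mathcal{C}_\lambda}$, $F_j1_\lambda\mapsto F^-_j|_{\mathcal{C}_\lambda}$ for $j\in I_-$. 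Combined with the weight-space decomposition $RG_\bullet\mod=\bigoplus_{\lambda\in X}\mathcal{C}_\lambda$ of \eqref{one}, this assembles into the required strict $R$-linear monoidal functor $\Psi':\frakU(\fraks\frakl'_{I_+}\oplus\fraks\frakl'_{I_-})\to\mathcal{E}nd(RG_\bullet\mod)$.

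The argument is therefore a one-line invocation, and there is no genuine obstacle beyond checking the conventions: the only points to double-check are (a) that the hypothesis $\ell\nmid q(q-1)$ ensures the idempotents $e^\pm_r$ and the scalars $t^\pm, z^\pm$ used to define the adjunction units and counits remain nonzero and invertible over $R$, so that the functor $\Psi$ of Theorem A lands in the correct target, and (b) that the weight-space decomposition is genuinely a decomposition of $RG_\bullet\mod$ as an $R$-linear category; the latter follows from the block decomposition of $RG_n\mod$ together with the general-theory argument summarized in \cite{HL}.
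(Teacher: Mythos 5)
Your proposal matches the paper's own argument: the paper derives Theorem \ref{cor:kacmoody} as a direct consequence of Theorem \ref{Thm:doubleheisenberg} together with Theorem \ref{Thm:HL}, exactly as you do. Your additional verification of the hypotheses (local finiteness of $RG_\bullet\mod$, the spectral decompositions of $E^\pm$, $F^\pm$, and the weight-space decomposition from \cite{HL}) is consistent with the conventions set up in \S\ref{sec:doublequantumtoKac} and adds nothing problematic.
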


\medskip

\subsection{The proof of the main theorem} \label{sub:proofofthm1}\hfill\\

In this section, we start to check all the relations in Theorem \ref{Thm:doubleheisenberg}. Most relations are checked by explicit computations and the most interesting one is to check the invertibility of $X^+$ and $X^-$, which is given in \S \ref{sub:invetible}.
\smallskip

\subsubsection{Double cosets decomposition}\label{sub:doublecosets}

\begin{lemma}\label{doublecoset}
\begin{itemize}
\item[$(a)$] The set $D_{r,r}$ of the distinguished representatives for double coset $W_{r}\backslash W_{r+1}/W_r$ is $\{1,x_{r+1},s_{r+1}\}.$
\item[$(b)$] The double coset $P_{r}\backslash G_{r+1}/P_{r}$ are exactly $\{P_{r},P_{r}\dot{x}_{r+1} P_{r},P_{r} \dot{s}_{r+1} P_{r}\}.$
    \item[$(c)$]  $G_r\cap {^{s_{r+1}}P}_{r}=P_{r-1}.$
    \item[$(d)$] $^{s_{r+1}}U_{r-1}\leq  U_r.$
\end{itemize}
\end{lemma}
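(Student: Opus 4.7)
The plan is as follows. Part (a) is a combinatorial identification of double cosets; (b) follows from (a) via the Bruhat decomposition; (c) is a geometric argument about stabilizers of isotropic lines; and (d) is a direct matrix verification.

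For (a), I would identify $W_{r+1}$ with the group of signed permutations of $\{1,\ldots,r+1\}$ in the standard way, so that $s_1$ acts as a sign change on position $1$ and $s_i$ for $i>1$ swaps positions $i-1$ and $i$. In this description $W_r$ is generated by $s_1,\ldots,s_r$, hence coincides with the stabilizer of the signed symbol $r+1$, and the coset space $W_{r+1}/W_r$ is naturally identified with $\{\pm 1,\pm 2,\ldots,\pm(r+1)\}$ via $w W_r \mapsto w(r+1)$. An inductive unwinding of the definition of $x_{r+1}$ shows that $x_{r+1}$ acts as the sign change on position $r+1$. The left $W_r$-action on the coset space then has exactly three orbits, namely $\{r+1\}$, $\{-(r+1)\}$, and $\{\pm 1,\ldots,\pm r\}$, with representatives obtained from $1$, $x_{r+1}$, and $s_{r+1}$ respectively. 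Part (b) is then immediate from (a) together with the Bruhat decomposition relative to the standard parabolic $P_r$, namely $G_{r+1} = \bigsqcup_{w\in D_{r,r}} P_r\,\dot w\,P_r$.

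For (c), the parabolic $P_r$ is the stabilizer in $G_{r+1}$ of the isotropic line $\ell:=\langle e_1\rangle$, where $e_1$ denotes the first standard basis vector in the natural module of $G_{r+1}$. Inspection of the explicit lift $\dot s_{r+1}$ in \S\ref{sub:lift-refl} shows that $\dot s_{r+1}\cdot e_1 = \pm e_2$, so that ${^{s_{r+1}}P_r}$ is the stabilizer of the line $\langle e_2\rangle$. The embedding of \S\ref{sec:construction} realizes $G_r\hookrightarrow G_{r+1}$ as the subgroup fixing $e_1$ and its dual basis vector $e_{-1}$, and acting faithfully on the middle block. Hence $G_r\cap {^{s_{r+1}}P_r}$ equals the stabilizer of $\langle e_2\rangle$ inside $G_r$; under the internal indexing of $G_r$, the vector $e_2$ is the \emph{first} isotropic basis vector, so its stabilizer in $G_r$ is by definition the standard parabolic $P_{r-1}$.

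Part (d) is a direct computation. Embedded in $G_{r+1}$, the subgroup $U_{r-1}\subset G_r$ consists of matrices of the form (\ref{U}) (with indices shifted) inserted in the middle block, with trivial action on $e_1$ and $e_{-1}$. Conjugation by $\dot s_{r+1}$ swaps the pair $(e_1,e_{-1})$ with $(e_2,e_{-2})$ up to signs and leaves the remaining middle block pointwise fixed, so it translates the ``outermost'' block of $U_{r-1}$ in $G_r$ to the outermost block of $G_{r+1}$. A direct block computation then confirms that the resulting matrices have the upper-triangular shape of (\ref{U}) and hence lie in $U_r$ (in fact they form the subgroup of $U_r$ fixing $e_2$ and $e_{-2}$ pointwise). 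The only real obstacle is bookkeeping: the exact signs depend on whether $G_n$ is $\mathbf{O}_{2n+1}$, $\bfSp_{2n}$, or $\mathbf{O}_{2n}$, but the shape of $\dot s_{r+1}$ recorded in \S\ref{sub:lift-refl} is uniform enough that the orthogonal case implies the others by analogous manipulation.
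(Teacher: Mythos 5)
Your proposal is correct and takes essentially the same approach the paper intends: the paper's own proof of Lemma \ref{doublecoset} is the one-line statement ``They are easy to check by direct computations,'' and your argument is a clean spelling-out of exactly those computations — the orbit count on $W_{r+1}/W_r \cong \{\pm 1,\ldots,\pm(r+1)\}$ for (a), Bruhat decomposition for (b), the line-stabilizer description of $P_r$ for (c), and the block conjugation for (d). The only minor omission is that you do not verify that $x_{r+1}$ is the \emph{minimal}-length element of its double coset (you only show the three elements lie in distinct double cosets); this is easy since $x_{r+1}$ centralizes $W_r$, so $W_r x_{r+1} W_r = W_r x_{r+1}$ and the length formula $\ell(w x_{r+1})=\ell(w)+\ell(x_{r+1})$ for $w\in W_r$ gives minimality, but it is worth a sentence.
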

\begin{proof}
They are easy to check by  direct computations.
\end{proof}

\begin{lemma}\label{Lem:T}
We have $$e^+_r \dot{s}_{r+1}
 e^+_r = e^+_r  e^+_{r-1} \dot{s}_{r+1} e^+_r  e^+_{r-1}\,,\quad  e^-_r \dot{s}_{r+1}
 e^-_r = e^-_r  e^-_{r-1} \dot{s}_{r+1} e^-_r  e^-_{r-1}\,,$$
$$e^+_r \dot{s}_{r+1}
 e^-_r = e^+_r  e^-_{r-1} \dot{s}_{r+1} e^-_r  e^+_{r-1}\,,\quad e^-_r \dot{s}_{r+1}
 e^+_r = e^-_r  e^+_{r-1} \dot{s}_{r+1} e^+_r  e^-_{r-1}.$$
\end{lemma}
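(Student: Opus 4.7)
The plan is to establish all four identities uniformly by analyzing how $e^\epsilon_r\dot{s}_{r+1}e^{\epsilon'}_r$ transforms under left and right multiplication by $U_{r-1}$, and then inserting the appropriate projector on each side.

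The preparatory observation is that $U_{r-1}\subseteq G_r$ commutes with each idempotent $e^\epsilon_r$. Indeed, $G_r$ is the large factor of the Levi $L_{r,1}=G_r\times\GL_1$ of $P_r$, so any $v\in U_{r-1}$ centralizes the $\bbF_q^\times$-component of $U_r=\bbF_q^\times\ltimes V_r$ and normalizes $V_r$; hence conjugation by $v$ preserves $U_r$ setwise together with its character $\zeta$, and averaging over $U_r$ yields $v\,e^\epsilon_r=e^\epsilon_r\,v$ for every sign $\epsilon$. In particular, $e^\epsilon_r$ and $e^{\epsilon'}_{r-1}$ commute for all $\epsilon,\epsilon'\in\{\pm\}$.

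The main step uses Lemma \ref{doublecoset}(d), namely ${}^{\dot{s}_{r+1}}U_{r-1}\leq U_r$, extended to ${}^{\dot{s}_{r+1}^{-1}}U_{r-1}\leq U_r$ via the fact that $\dot{s}_{r+1}^2\in\bfT$ normalizes $U_r$. The key additional observation, read off from the explicit matrix form of $\dot{s}_{r+1}$ in \S\ref{sub:lift-refl}, is that conjugation by $\dot{s}_{r+1}^{\pm1}$ carries the $\bbF_q^\times$-torus of $U_{r-1}$ (living at the outermost slot of the embedded $G_r$) onto that of $U_r$, at worst up to the involution $t\mapsto t^{-1}$. Since $\zeta$ has order two, this ensures $\zeta({}^{\dot{s}_{r+1}^{\pm1}}v)=\zeta(v)$ for every $v\in U_{r-1}$. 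Combining these two ingredients, a direct calculation shows that for $v\in U_{r-1}$ one has
$$v\cdot e^\epsilon_r\dot{s}_{r+1}e^{\epsilon'}_r = \epsilon'(v)\,e^\epsilon_r\dot{s}_{r+1}e^{\epsilon'}_r\quad\text{and}\quad e^\epsilon_r\dot{s}_{r+1}e^{\epsilon'}_r\cdot v = \epsilon(v)\,e^\epsilon_r\dot{s}_{r+1}e^{\epsilon'}_r,$$
where $\epsilon(v)$ denotes $1$ or $\zeta(v)$ according as $\epsilon=+$ or $\epsilon=-$.

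The lemma then follows immediately: $e^\epsilon_r\dot{s}_{r+1}e^{\epsilon'}_r$ sits in the $\epsilon'$-isotypic component for the left $U_{r-1}$-action and in the $\epsilon$-isotypic component on the right, so it is fixed by left multiplication by the projector $e^{\epsilon'}_{r-1}$ and by right multiplication by $e^{\epsilon}_{r-1}$. Commuting $e^{\epsilon'}_{r-1}$ across $e^\epsilon_r$ using the preparatory step rewrites this as $e^\epsilon_r\dot{s}_{r+1}e^{\epsilon'}_r=e^\epsilon_r e^{\epsilon'}_{r-1}\dot{s}_{r+1}e^{\epsilon'}_r e^\epsilon_{r-1}$, which covers all four cases $(\epsilon,\epsilon')\in\{\pm\}^2$ at once. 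The only delicate point is the character-preservation statement, which a priori requires separate verification for each of $\O_{2n+1}(q)$, $\Sp_{2n}(q)$ and $\O^\pm_{2n}(q)$; however, the only differences in the matrix form of $\dot{s}_{r+1}$ between these families are the signs of a few off-diagonal entries, which are invisible to the order-two character $\zeta$, so the argument is genuinely uniform.
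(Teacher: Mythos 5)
Your proof is correct and follows essentially the same route as the paper's: both arguments rest on Lemma \ref{doublecoset}(d) (that ${}^{s_{r+1}}U_{r-1}\leq U_r$), on the compatibility of $\zeta$ with this conjugation, and on the commutation of $e^{\epsilon'}_{r-1}$ with $e^\epsilon_r$; the paper absorbs the conjugated idempotents ${}^{s_{r+1}}e^{\pm}_{r-1}$ into $e^\pm_r$ one at a time starting from the right-hand side, while you equivalently observe that $e^\epsilon_r\dot s_{r+1}e^{\epsilon'}_r$ lies in the correct $U_{r-1}$-isotypic components so the projectors act as the identity. Your explicit verification that conjugation by $\dot s_{r+1}$ carries the character $\zeta$ of $U_{r-1}$ to that of $U_r$ is a point the paper uses implicitly, and it is worth having spelled out.
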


\begin{proof}We only prove $e^+_r \dot{s}_{r+1}
 e^-_r = e^+_r  e^-_{r-1} \dot{s}_{r+1} e^-_r  e^+_{r-1}$. The others are similar.
By definition we have $ e^+_r =\frac{1}{|U_r|}\sum\limits_{g\in U_{r}}g$ and $e^-_r =\frac{1}{|U_r|}\sum\limits_{g\in U_{r}}\zeta(g^{-1})g.$
On the other hand, by Lemma \ref{doublecoset}$(d)$, we have $^{s_{r+1}}U_{r-1}\subset U_{r}$, so $^{s_{r+1}}{ e^\pm_{r-1} }\cdot  e^\pm_r = e^\pm_r\cdot ^{s_{r+1}}{ e^\pm_{r-1} } = e^\pm_r .$
By moving $e^-_{r-1}$ to the right, we get
\begin{align*}
 e^+_r  e^-_{r-1} \dot{s}_{r+1} e^-_r  e^+_{r-1}
=& e^+_r \dot{s}_{r+1}{^{{s}_{r+1}} e^-_{r-1} } e^-_r  e^+_{r-1} \\
=& e^+_r \dot{s}_{r+1} e^-_r  e^+_{r-1}
\end{align*}
 since $^{s_{r+1}}{ e^-_{r-1} }\cdot e^-_r= e^-_r.$
By moving $e^+_{r-1}$ to the left, we get
\begin{align*}e^+_r \dot{s}_{r+1} e^-_r  e^+_{r-1}
 =&e^+_r (^{{s}_{r+1}} e^+_{r-1} \dot{s}_{r+1}) e^-_r \\
=& e^+_r \dot{s}_{r+1} e^-_r
\end{align*}
where the last equation is deduced from $e^+_r\cdot (^{s_{r+1}}{ e^+_{r-1} }) = e^+_r $. So the lemma follows.
\end{proof}

\begin{proposition}\label{mkd}
\begin{itemize}
\item[$(a)$] As an $( R G_r , R G_r )$-bimodule, $ e^\pm_r \cdot  R G_{r+1}
\cdot e^\pm_r $ is generated by $ e^\pm_r \,, X^\pm_{r+1} $ and $T^\pm_{r+1}.$
Moreover, We have the following $( R G_r , R G_r )$-bimodule isomorphism
\begin{align} e^\pm_r\cdot  R G_{r+1} \cdot e^\pm_r \cong  R G_r \cdot e^\pm_r \oplus  R G_r  \cdot X^\pm_{r+1}  \oplus  R G_r \cdot T^\pm_{r+1}\cdot  R G_r .
\end{align}
\item[$(b)$]As  $( R G_r , R G_r )$-bimodules, $ e^+_r \cdot
R G_{r+1} \cdot  e^-_r $ is generated by $H_{r+1}$ and $ e^-_r
\cdot  R G_{r+1}  \cdot e^+_r $ is generated by $H'_{r+1},$
equivalently, we have the following $( R G_r , R G_r )$-bimodule isomorphisms
\begin{align} e^+_r\cdot   R G_{r+1} \cdot  e^-_r \cong   R G_r \cdot H_{r+1}\cdot  R G_r
\end{align}
and
\begin{align} e^-_r \cdot  R G_{r+1} \cdot  e^+_r \cong   R G_r \cdot H'_{r+1}\cdot  R G_r .\end{align}
\end{itemize}
\end{proposition}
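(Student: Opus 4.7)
The plan is to derive both parts from the double coset decomposition of $G_{r+1}$ modulo $P_r$ provided by Lemma~\ref{doublecoset}(b):
$$G_{r+1} = P_r \sqcup P_r\dot{x}_{r+1}P_r \sqcup P_r\dot{s}_{r+1}P_r.$$
This yields an $(RP_r, RP_r)$-bimodule decomposition $RG_{r+1} = \bigoplus_{w \in \{1, x_{r+1}, s_{r+1}\}} R(P_r\dot{w}P_r)$. Since each $e^\pm_r$ lies in $RP_r$ and commutes with $G_r$, multiplying on the left by $e^a_r$ and on the right by $e^b_r$ (for any $a, b \in \{+, -\}$) preserves directness and produces an $(RG_r, RG_r)$-bimodule decomposition $e^a_r \cdot RG_{r+1} \cdot e^b_r = \bigoplus_{w} e^a_r \cdot R(P_r\dot{w}P_r) \cdot e^b_r$. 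Writing $P_r = G_r U_r$ and using $g e^a_r = e^a_r g$ for $g \in G_r$ together with $u e^a_r = \chi_a(u) e^a_r$ for $u \in U_r$ (where $\chi_+ = 1$, $\chi_- = \zeta$), a direct collapse of arbitrary products $e^a_r (g_1 u_1) \dot{w} (g_2 u_2) e^b_r$ shows that each summand is either zero or a cyclic $(RG_r, RG_r)$-bimodule with generator $e^a_r \dot{w} e^b_r$.

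For part~(a), take $a = b$. For $w = 1$ the generator is $e^\pm_r$, giving $RG_r \cdot e^\pm_r$. For $w = x_{r+1}$, the matrix form in \S\ref{sub:lift-refl} shows that $\dot{x}_{r+1}$ centralizes $G_r$, and by the definition of $X^\pm_{r+1}$ in \S\ref{sub:naturaltransf} the generator $e^\pm_r \dot{x}_{r+1} e^\pm_r$ equals $q^{-r} X^+_{r+1}$ (resp.\ $q^{-r}\beta^{-1} X^-_{r+1}$), so this summand is $RG_r \cdot X^\pm_{r+1}$. For $w = s_{r+1}$, Lemma~\ref{Lem:T} rewrites the generator as $e^\pm_r e^\pm_{r-1} \dot{s}_{r+1} e^\pm_r e^\pm_{r-1} = q^{-1/2} T^\pm_{r+1}$, and since $\dot{s}_{r+1}$ does not commute with all of $G_r$ this summand is the genuinely two-sided bimodule $RG_r \cdot T^\pm_{r+1} \cdot RG_r$. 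Assembling the three pieces proves~(a).

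For part~(b), take $a \ne b$. The $w = s_{r+1}$ piece is handled identically: Lemma~\ref{Lem:T} gives $e^+_r \dot{s}_{r+1} e^-_r = q^{-1/2} H_{r+1}$ (and symmetrically $e^-_r \dot{s}_{r+1} e^+_r = q^{-1/2} H'_{r+1}$), which is nonzero and generates $RG_r \cdot H_{r+1} \cdot RG_r$. The $w = 1$ piece vanishes at once because $e^+_r e^-_r = 0$: the characters $1$ and $\zeta$ on $U_r/V_r \cong \mathbb{F}_q^\times$ are orthogonal. The main obstacle, and most delicate step, is the vanishing of the $w = x_{r+1}$ summand, i.e., showing $e^+_r \dot{x}_{r+1} e^-_r = 0$. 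I plan to dispatch this by a Mackey analysis: the explicit matrix form of $\dot{x}_{r+1}$ in \S\ref{sub:lift-refl} shows that conjugation by $\dot{x}_{r+1}$ sends $V_r$ to the opposite unipotent subgroup $V_r^{\mathrm{op}}$ and acts on the torus part $T_0 := \mathbb{F}_q^\times \subset U_r$ by $t \mapsto t^{-1}$. Consequently $U_r \cap {}^{x_{r+1}}U_r = T_0$, and on $T_0$ the character $\chi_+ = 1$ and the conjugated character ${}^{x_{r+1}}\chi_- = \zeta$ (the equality because $\zeta$ has order two) are distinct, forcing the Mackey integral that computes $e^+_r \dot{x}_{r+1} e^-_r$ to vanish. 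Collecting the surviving $w = s_{r+1}$ contribution completes~(b).
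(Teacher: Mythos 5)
Your proof is correct and follows essentially the same route as the paper: decompose $RG_{r+1}$ along the three double cosets of Lemma~\ref{doublecoset}(b), identify the generator of each resulting bimodule summand via Lemma~\ref{Lem:T} and the definitions of $X^\pm_{r+1}$, $T^\pm_{r+1}$, $H_{r+1}$, $H'_{r+1}$, and kill the $1$- and $x_{r+1}$-summands in the mixed case. The only point of divergence is the vanishing of $e^+_r\dot x_{r+1}e^-_r$: where the paper simply writes $e^+_r\dot x_{r+1}e^-_r=\dot x_{r+1}e^+_re^-_r=0$, you give the torus-character argument (conjugation by $\dot x_{r+1}$ preserves $\mathbb{F}_q^\times\subset U_r$ while the characters $1$ and $\zeta$ remain distinct there), which is in fact the more careful justification since $\dot x_{r+1}$ does not normalize $U_r$.
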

\begin{proof}

 $(a)$ By Lemma \ref{doublecoset}$(b)$, we have $$G_{r+1}=P_{r}\sqcup P_{r}\dot{x}_{r+1}P_{r}\sqcup P_{r}\dot{s}_{r+1}P_{r}.$$
It is obviously $(L_{r,1},L_{r,1})$-equivariant. So we have the following $(R G_r,R G_r)$-bimodule decomposition $$ e^\pm_r\cdot   R G_{r+1} \cdot  e^\pm_r = e^\pm_r\cdot  RP_{r} \cdot e^\pm_r \oplus  e^\pm_r\cdot  R(P_{r}\dot{x}_{r+1}P_{r})\cdot  e^\pm_r \oplus e^\pm_r\cdot  R(P_{r}\dot{s}_{r+1}P_{r}) \cdot e^\pm_r .$$

Since $ e^\pm_r $ commutes with $G_r$ and $\dot{x}_{r+1}$ commutes with $G_r$, we have $$ e^\pm_r \cdot RP_{r}\cdot  e^\pm_r =RP_{r}\cdot  e^\pm_r = e^\pm_r\cdot  RP_{r}= R G_r \cdot  e^\pm_r $$ and $$ e^\pm_r\cdot  R(P_{r}\dot{x}_{r+1}P_{r})\cdot  e^\pm_r = R G_r  \cdot  e^\pm_r \dot{x}_{r+1}  e^\pm_r = R G_r  \cdot (c e^\pm_r \dot{x}_{r+1}  e^\pm_r )= R G_r \cdot  X^\pm_{r+1}$$
where $c $ is any non-zero constant.
\smallskip

Now we prove $ e^\pm_r \cdot R(P_{r}\dot{s}_{r+1}P_{r})\cdot  e^\pm_r =
 R G_r \cdot T^\pm_{r+1}\cdot  R G_r .$
Note that  $$ e^\pm_r\cdot  R(P_{r}\dot{s}_{r+1}P_{r})\cdot  e^\pm_r = R G_r  \cdot e^\pm_r \dot{s}_{r+1}
 e^\pm_r \cdot  R G_r , $$
which is equal to $ R G_r  \cdot e^\pm_r e^\pm_{r-1}\dot{s}_{r+1}
 e^\pm_re^\pm_{r-1} \cdot  R G_r = R G_r \cdot T^\pm_{r+1}\cdot  R G_r$ by Lemma \ref{Lem:T}.
 \smallskip

$(b)$ Similarly, we have the following $(G_r,G_r)$-bimodule decomposition $$ e^+_r \cdot  R G_{r+1} \cdot  e^-_r = e^+_r\cdot  RP_{r}\cdot  e^-_r \oplus e^+_r\cdot  R(P_{r}\dot{x}_{r+1}P_{r})\cdot  e^-_r \oplus e^+_r\cdot  R(P_{r}\dot{s}_{r+1}P_{r})\cdot  e^-_r .$$

Note that $ e^+_r $ commutes with $G_r$ and $\dot{x}_{r+1}$ commutes with $G_r$, so we have
$$e^+_r\cdot  R P_{r}\cdot  e^-_r =R P_r\cdot  e^+_r  e^-_r =0$$
where the last equation follows from the orthogonality of $ e^+_r $ and $ e^-_r .$
Similarly, we have  $$ e^+_r \cdot R (P_{r}\dot{x}_{r+1}P_{r})\cdot e^-_r = R G_r  \cdot e^+_r \dot{x}_{r+1} e^-_r = R G_r \cdot \dot{x}_{r+1}  e^+_r  e^-_r =0.$$
So we have
$ e^+_r \cdot  R G_{r+1} \cdot  e^-_r =  e^+_r\cdot  R(P_{r}\dot{s}_{r+1}P_{r})\cdot  e^-_r ,$ and by a similar argument as in $(a)$, we get
$$ e^+_r\cdot  R(P_{r}\dot{s}_{r+1}P_{r})\cdot  e^-_r= R G_r \cdot  e^+_r   \dot{s}_{r+1} e^-_r   \cdot  R G_r
= R G_r \cdot H_{r+1}\cdot  R G_r \,.
$$

The same proof works for $e^-_r \cdot  R G_{r+1} \cdot  e^+_r \cong   R G_r \cdot H'_{r+1}\cdot  R G_r.$
\end{proof}

\begin{remark}Note that the maps $\varepsilon^\pm_L:  e^\pm_r  \cdot R G_{r+1}\cdot   e^\pm_r \to   R G_{r+1} $ defined in \S 1.1.2 coincide with the projections to the first
direct summand.
\end{remark}
\smallskip

\subsubsection{Affine-Hecke-type relations}

\begin{proposition}\label{prop:12relations}

The natural transformations  $X^+=\begin{tikzpicture}[baseline = -1mm]
	\draw[->,red] (0.08,-.2) to (0.08,.2);
      \node at (0.08,0) {$\dott$};
\end{tikzpicture}$,
$T^+=\begin{tikzpicture}[baseline = -1mm]
	\draw[->,red] (0.2,-.2) to (-0.2,.2);
	\draw[-,white,line width=4pt] (-0.2,-.2) to (0.2,.2);
	\draw[->,red] (-0.2,-.2) to (0.2,.2);
\end{tikzpicture}\:,$
$X^-=\begin{tikzpicture}[baseline = -1mm]
	\draw[->,blue] (0.08,-.2) to (0.08,.2);
      \node at (0.08,0) {$\dott$};
\end{tikzpicture}$,
$T^-=\begin{tikzpicture}[baseline = -1mm]
	\draw[->,blue] (0.2,-.2) to (-0.2,.2);
	\draw[-,white,line width=4pt] (-0.2,-.2) to (0.2,.2);
	\draw[->,blue] (-0.2,-.2) to (0.2,.2);
\end{tikzpicture}\:$,
$H=\begin{tikzpicture}[baseline = -1mm]
	\draw[->,blue] (0.2,-.2) to (-0.2,.2);
	\draw[->,red] (-0.2,-.2) to (0.2,.2);
\end{tikzpicture}\:$,
$H'=\begin{tikzpicture}[baseline = -1mm]
	\draw[->,red] (0.2,-.2) to (-0.2,.2);
	\draw[->,blue] (-0.2,-.2) to (0.2,.2);
\end{tikzpicture}\:$
satisfy affine Hecke relations $(\ref{AHH1})$ and $(\ref{AHH2})$ for both red color and blue color and mixed affine-Hecke-type relations $(\ref{Mixaff1})$--$(\ref{Mixaff3})$.

\end{proposition}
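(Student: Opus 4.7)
The plan is to exploit the explicit bimodule realization of each natural transformation. Every one of $X^\pm, T^\pm, H, H'$ is defined as right multiplication by a concrete element in a bimodule of the form $e^{\epsilon_1}_{r+\star} e^{\epsilon_2}_r \cdot RG_{r+\star+1} \cdot e^{\epsilon_3}_{r+\star} e^{\epsilon_4}_r$, so each claimed relation reduces to an identity of such elements in an appropriate group algebra sandwiched by the idempotents $e^\pm_r, e^\pm_{r+1}$. The key structural inputs are: the braid relation $\dot{s}_{r+1}\dot{s}_{r+2}\dot{s}_{r+1} = \dot{s}_{r+2}\dot{s}_{r+1}\dot{s}_{r+2}$ in $N_{\mathbf{G}_{r+2}}(\mathbf{T})$; the recursion $\dot{s}_{r+2}\dot{x}_{r+1}\dot{s}_{r+2}^{-1} = \dot{x}_{r+2}$ built into the definition of $\dot{x}_i$ in \S\ref{sub:lift-refl}; the fact that $\dot{x}_{r+1}$ centralizes $G_r$ (hence commutes with $e^\pm_r$); and the double coset decomposition of Lemma~\ref{doublecoset} together with its bimodule refinement in Proposition~\ref{mkd}.

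For the same-color affine Hecke relations \eqref{AHH1}--\eqref{AHH2}, the computation is formally identical to the verification of the single quantum Heisenberg categorification in \cite{LLZ} and \cite{DVV2}. Our choice of normalizations ($q^r$ in $X^\pm$, $q^{1/2}$ in $T^\pm$, and the extra factor $\beta$ in $X^-$) precisely matches the parameters $z^\pm = q^{1/2}-q^{-1/2}$ and $t^\pm$ fixed in \eqref{constant}--\eqref{t-}. The Iwahori--Hecke quadratic relation $e^+_r \dot{s}_{r+1} e^+_r \cdot e^+_r \dot{s}_{r+1} e^+_r = q\,e^+_r + (q-1)\,e^+_r \dot{s}_{r+1} e^+_r$ (and its $e^-_r$ counterpart) follows from Proposition~\ref{mkd} and yields the quadratic part of \eqref{AHH1}; the braid and dot-sliding parts follow directly from the inputs listed above, applied one color at a time.

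For the mixed relations \eqref{Mixaff1} and \eqref{Mixaff3}, the same template applies but with alternating $e^+/e^-$ idempotents: relation \eqref{Mixaff3} reduces to $\dot{s}_{r+2}\dot{x}_{r+1}\dot{s}_{r+2}^{-1} = \dot{x}_{r+2}$ combined with the centralization of $G_r$ by $\dot{x}_{r+1}$, while \eqref{Mixaff1} propagates the Weyl-group braid identity through a sequence of alternating idempotent patterns, using Lemma~\ref{Lem:T}-type juggling to move the correct $e^\pm_{r+1}$ factors into place. The main obstacle is relation \eqref{Mixaff2}, which asserts that the two sideways mixed crossings compose to the identity. This must be extracted from a mixed version of the Mackey analysis underlying \eqref{MMackey1}--\eqref{MMackey2}; the crucial simplification distinguishing the mixed from the monochromatic case is the orthogonality $e^+_r e^-_r = 0$, so the would-be Mackey correction terms arising from the $P_r$ and $P_r\dot{x}_{r+1}P_r$ strata of Lemma~\ref{doublecoset} all contain such a product (once $e^\pm_r$ and $e^\mp_r$ are juxtaposed by the bimodule multiplication) and therefore vanish, leaving only the contribution of the $P_r\dot{s}_{r+1}P_r$ stratum, which matches the identity on the right-hand side after the scalar factors $q^{1/2}$ in $H$ and $H'$ combine to cancel the $q$ coming from the lift $\dot{s}_{r+2}^2$ computation.
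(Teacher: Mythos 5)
Your overall plan — reducing each relation to an identity of bimodule elements, using the lifts $\dot s_i,\dot x_i$, the braid relation, the recursion $\dot s_{r+2}\dot x_{r+1}\dot s_{r+2}^{-1}=\dot x_{r+2}$, the centralization of $G_r$ by $\dot x_{r+1}$, and the double coset analysis — is exactly the approach the paper takes; its entire written proof of this proposition is the citation ``See the proof of \cite[Proposition~4.2]{LLZ}.'' What you sketch for \eqref{AHH1}--\eqref{AHH2}, \eqref{Mixaff1}, and \eqref{Mixaff3} is consistent with that verification once the normalizations $q^r$, $q^{1/2}$, and $\beta$ are tracked.

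Your discussion of \eqref{Mixaff2}, however, misdescribes both the relation and the mechanism. Relation \eqref{Mixaff2} asserts that the \emph{upward} crossings $H\colon F^+F^-\Rightarrow F^-F^+$ and $H'\colon F^-F^+\Rightarrow F^+F^-$ are mutually inverse; you call them ``sideways mixed crossings,'' which are instead the subject of \eqref{Mixamackey1}--\eqref{Mixamackey2}, proved later in Proposition~\ref{prop:Mackey} and not part of this statement. Because both strands carry $F$'s, the correct comparison is not a Mackey ($EF$ versus $FE$) analysis but the Iwahori--Hecke quadratic relation $T^\epsilon_{r+2}T^\epsilon_{r+2}=z^\epsilon T^\epsilon_{r+2}+e^\epsilon_{r+1}e^\epsilon_r$, with \eqref{Mixaff2} as the degenerate case in which the linear term vanishes. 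The strata $P_r$ and $P_r\dot x_{r+1}P_r$ of Lemma~\ref{doublecoset} also sit one level too low: they describe $P_r\backslash G_{r+1}/P_r$, whereas $H'_{r+2}H_{r+2}$ lives in $e^-_{r+1}e^+_r\,RG_{r+2}\,e^-_{r+1}e^+_r$. The right input is the level-$(r{+}2)$ shift of Lemma~\ref{Lem:T}, giving $e^-_{r+1}\dot s_{r+2}e^-_{r+1}=e^-_{r+1}e^-_r\dot s_{r+2}e^-_{r+1}e^-_r$, so that, commuting the $e^+_r$ to the outside,
\[
e^-_{r+1}e^+_r\,\dot s_{r+2}\,e^-_{r+1}e^+_r
=e^+_r\bigl(e^-_{r+1}e^-_r\,\dot s_{r+2}\,e^-_{r+1}e^-_r\bigr)e^+_r=0
\]
by $e^+_re^-_r=0$; this, not a vanishing of lower strata, is how orthogonality kills the $\dot s_{r+2}$-linear term and shows $H'_{r+2}H_{r+2}$ is a scalar multiple of $e^-_{r+1}e^+_r$. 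Finally, the $q$ cancelled by the $q^{1/2}$ normalizations of $H,H'$ comes from the Bruhat/double-coset volume, not from $\dot s_{r+2}^2$, which is a diagonal element of the Levi contributing at most a sign through $\zeta$.
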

\begin{proof} See the proof of \cite[Proposition 4.2]{LLZ}.
\end{proof}
\smallskip

\subsubsection{Adjunctions}
\begin{proposition}
\label{prop:adj}The  maps $\varepsilon^\pm_R$, $\eta^\pm_R$, $\varepsilon^\pm_L$, $\eta^\pm_L$ defined above make $(E^\pm,F^\pm)$ into a biadjoint pair, i.e., the natural transformations satisfy the  relations $(\ref{adj})$ for both red color and blue color.
\end{proposition}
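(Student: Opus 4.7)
The plan is to translate each of the four zig-zag identities in \eqref{adj} into an identity of $(RG_r, RG_r)$- or $(RG_{r+1}, RG_{r+1})$-bimodule homomorphisms, and then check each by direct computation using the explicit bimodule maps from \S\ref{sec:adjunction}. Under the correspondence between functors and bimodules, $F^\pm_{r,r+1}$ and $E^\pm_{r+1,r}$ are represented by $RG_{r+1}\cdot e^\pm_r$ and $e^\pm_r\cdot RG_{r+1}$ respectively, so the composites $E^\pm F^\pm$ and $F^\pm E^\pm$ are represented by $e^\pm_r\cdot RG_{r+1}\cdot e^\pm_r$ and $RG_{r+1}\cdot e^\pm_r\otimes_{RG_r} e^\pm_r\cdot RG_{r+1}$, and the zig-zag identities become explicit identities between morphisms of these bimodules.

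First, I would dispose of the easier pair $(\eta_R^\pm,\varepsilon_R^\pm)$. Since $\eta_R^\pm$ is the map $g\mapsto -(t^\pm z^\pm)^{-1} g e^\pm_r$ and $\varepsilon_R^\pm$ scales by $-t^\pm z^\pm$ after tensor contraction, the two relevant zig-zag compositions reduce to verifying that $(-t^\pm z^\pm)\cdot (-(t^\pm z^\pm)^{-1}) = 1$ together with $e^\pm_r\cdot e^\pm_r = e^\pm_r$ and $g e^\pm_r = g$ for $g\in RG_{r+1}\cdot e^\pm_r$. Thus the zig-zag identities involving the right unit/counit are immediate.

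The core of the proof is the pair $(\eta_L^\pm,\varepsilon_L^\pm)$. Starting from $g e^\pm_r\in RG_{r+1}\cdot e^\pm_r$, inserting $Z^\pm_{r+1} = \sum_i h_i^{-1} e^\pm_r \otimes e^\pm_r h_i$ (summing over $P_r$-coset representatives $\{h_i\}$ of $G_{r+1}$) via $\eta_L^\pm\otimes\id_F$ and then applying $\id_F\otimes\varepsilon_L^\pm$ requires projecting the middle factor to $RG_r$ via $\Proj^\pm_{P_r}$. By the double coset decomposition $G_{r+1} = P_r\sqcup P_r\dot{x}_{r+1}P_r\sqcup P_r\dot{s}_{r+1}P_r$ of Lemma \ref{doublecoset}, only the representatives in $P_r$ survive the projection, and careful bookkeeping with the commutation relations among $e^\pm_r$, $e^\pm_{r-1}$ of Lemma \ref{Lem:T} shows that the surviving contribution collapses to $g e^\pm_r$ itself. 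The dual zig-zag for $E^\pm$ is verified by the symmetric argument using right cosets.

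The main obstacle will be the bookkeeping in the second zig-zag: one must carefully track how the coset representatives interact with the idempotents under $\Proj^\pm_{P_r}$, especially to see that the contributions from $P_r\dot{x}_{r+1}P_r$ and $P_r\dot{s}_{r+1}P_r$ vanish after projection while the contribution from $P_r$ yields exactly the identity. Once this is set up correctly using the bimodule decomposition of Proposition \ref{mkd}, the four identities of \eqref{adj} follow without further difficulty, completing the verification that $(E^\pm,F^\pm)$ is a biadjoint pair with the prescribed units and counits.
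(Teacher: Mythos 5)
Your proposal is correct, but it proves the proposition by a different route than the paper does. The paper's proof is a one-line reduction: it observes that $F^\pm$ and $E^\pm$ are special cases of the parabolic induction and restriction functors of Licata--Savage and cites \cite[Proposition 7.5]{LS} for the biadjointness. You instead verify the four zig-zag identities of \eqref{adj} by direct bimodule computation, which is a legitimate and self-contained alternative: the right-hand pair $(\eta_R^\pm,\varepsilon_R^\pm)$ reduces to the cancellation $(-t^\pm z^\pm)\cdot(-(t^\pm z^\pm)^{-1})=1$ together with $(e_r^\pm)^2=e_r^\pm$, and for the left-hand pair the insertion of $Z^\pm_{r+1}=\sum_i g_i^{-1}e_r^\pm\otimes e_r^\pm g_i$ followed by $\Proj^\pm_{P_r}$ leaves only the unique coset representative lying in $P_r$, whose contribution collapses to the identity because $e_r^\pm$ commutes with $G_r$ and $ue_r^\pm=\zeta_\epsilon(u)e_r^\pm$ for $u\in U_r$. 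Two small remarks on your write-up: you do not actually need Lemma \ref{Lem:T} or Proposition \ref{mkd} here (the vanishing of the terms with $g_i\notin P_r$ follows directly from the fact that $e_r^\pm g_i e_r^\pm$ is supported in $U_rg_iU_r$, which misses $P_r$ unless $g_i\in P_r$), and the "dual zig-zag" for $E^\pm$ is not really a right-coset variant but uses the same element $Z^\pm$ together with its centrality $aZ^\pm=Z^\pm a$. Your approach has the advantage of checking the specific units and counits of \S\ref{sec:adjunction} (with their normalizations) explicitly, which is exactly what the later computations in Lemmas \ref{lcross}, \ref{Lem:Rcrossing} and \ref{lrdots} rely on, whereas the paper's citation buys brevity at the cost of an implicit matching of its maps with those of \cite{LS}.
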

\begin{proof}
Note that the functors $F^+,F^-$ are  special cases of parabolic induction functors and  the functors $E^+,E^-$ are  special cases of parabolic restriction functors defined in \cite[\S 7]{LS}. The proof is deduced from  \cite[Proposition 7.5]{LS}.
\end{proof}
\smallskip

\subsubsection{Rightward crossings}
Now we introduce the rightward crossings:
\begin{align*}
\Rcross^+:=\mathord{
\begin{tikzpicture}[baseline = -.5mm]
	\draw[thin,red,->] (-0.28,-.3) to (0.28,.4);
	\draw[line width=4pt,white,-] (0.28,-.3) to (-0.28,.4);
	\draw[<-,thin,red] (0.28,-.3) to (-0.28,.4);
\end{tikzpicture}
}:=
\mathord{
\begin{tikzpicture}[baseline = 0]
	\draw[->,thin,red] (0.3,-.5) to (-0.3,.5);
	\draw[line width=4pt,-,white] (-0.2,-.2) to (0.2,.3);
	\draw[-,thin,red] (-0.2,-.2) to (0.2,.3);
        \draw[-,thin,red] (0.2,.3) to[out=50,in=180] (0.5,.5);
        \draw[->,thin,red] (0.5,.5) to[out=0,in=90] (0.8,-.5);
        \draw[-,thin,red] (-0.2,-.2) to[out=230,in=0] (-0.6,-.5);
        \draw[-,thin,red] (-0.6,-.5) to[out=180,in=-90] (-0.85,.5);
\end{tikzpicture}
}\:\quad\text{ and}\quad
&
\Rcross^-:=\mathord{
\begin{tikzpicture}[baseline = -.5mm]
	\draw[thin,blue,->] (-0.28,-.3) to (0.28,.4);
	\draw[line width=4pt,white,-] (0.28,-.3) to (-0.28,.4);
	\draw[<-,thin,blue] (0.28,-.3) to (-0.28,.4);
\end{tikzpicture}
}:=
\mathord{
\begin{tikzpicture}[baseline = 0]
	\draw[->,thin,blue] (0.3,-.5) to (-0.3,.5);
	\draw[line width=4pt,-,white] (-0.2,-.2) to (0.2,.3);
	\draw[-,thin,blue] (-0.2,-.2) to (0.2,.3);
        \draw[-,thin,blue] (0.2,.3) to[out=50,in=180] (0.5,.5);
        \draw[->,thin,blue] (0.5,.5) to[out=0,in=90] (0.8,-.5);
        \draw[-,thin,blue] (-0.2,-.2) to[out=230,in=0] (-0.6,-.5);
        \draw[-,thin,blue] (-0.6,-.5) to[out=180,in=-90] (-0.85,.5);
\end{tikzpicture}
}\:.
\end{align*}
By the definition, $\Rcross^\pm=(1_{E^\pm}\otimes1_{F^\pm}\otimes\varepsilon^\pm_R)\circ(1_{E^\pm}\otimes T^\pm\otimes 1_{E^\pm})\circ(\eta^\pm_R\otimes 1_{F^\pm}\otimes 1_{E^\pm}).$
We also define the rightward mixed crossings:
\begin{align*}
\MixRcross:=\mathord{
\begin{tikzpicture}[baseline = -.5mm]
	\draw[thin,blue,->] (-0.28,-.3) to (0.28,.4);
	\draw[<-,thin,red] (0.28,-.3) to (-0.28,.4);
\end{tikzpicture}
}:=
\mathord{
\begin{tikzpicture}[baseline = 0]
	\draw[->,thin,blue] (0.3,-.5) to (-0.3,.5);
	\draw[-,thin,red] (-0.2,-.2) to (0.2,.3);
        \draw[-,thin,red] (0.2,.3) to[out=50,in=180] (0.5,.5);
        \draw[->,thin,red] (0.5,.5) to[out=0,in=90] (0.8,-.5);
        \draw[-,thin,red] (-0.2,-.2) to[out=230,in=0] (-0.6,-.5);
        \draw[-,thin,red] (-0.6,-.5) to[out=180,in=-90] (-0.85,.5);
\end{tikzpicture}
}\:
 \quad\text{ and}\quad
&
\MixRcross':=\mathord{
\begin{tikzpicture}[baseline = -.5mm]
	\draw[thin,red,->] (-0.28,-.3) to (0.28,.4);
	\draw[<-,thin,blue] (0.28,-.3) to (-0.28,.4);
\end{tikzpicture}
}:=
\mathord{
\begin{tikzpicture}[baseline = 0]
	\draw[->,thin,red] (0.3,-.5) to (-0.3,.5);
	\draw[-,thin,blue] (-0.2,-.2) to (0.2,.3);
        \draw[-,thin,blue] (0.2,.3) to[out=50,in=180] (0.5,.5);
        \draw[->,thin,blue] (0.5,.5) to[out=0,in=90] (0.8,-.5);
        \draw[-,thin,blue] (-0.2,-.2) to[out=230,in=0] (-0.6,-.5);
        \draw[-,thin,blue] (-0.6,-.5) to[out=180,in=-90] (-0.85,.5);
\end{tikzpicture}
}\:.
\end{align*}
By the definition, $\MixRcross=(1_{E^+}\otimes1_{F^-}\otimes \varepsilon^+_R)\circ(1_{E^+}\otimes H\otimes 1_{E^+})\circ(\eta^+_R\otimes 1_{F^-}\otimes 1_{E^+})$
and  $\MixRcross'=(1_{E^-}\otimes1_{F^+}\otimes \varepsilon^-_R)\circ(1_{E^-}\otimes H'\otimes 1_{E^-})\circ(\eta^-_R\otimes 1_{F^+}\otimes 1_{E^-})$.
\begin{lemma}\label{lcross}
\begin{itemize}
\item[$(a)$]
 The natural transformations $\Rcross^\pm: F^\pm_{r-1,r}E^\pm_{r,r-1}\Rightarrow E^\pm_{r+1,r}F^\pm_{r,r+1}$ are given by  the $( R G_r , R G_r )$-bimodule map $$ R G_r  \cdot e^\pm_{r-1} \otimes _{ R G_{r-1} } e^\pm_{r-1}\cdot  R G_r \to   e^\pm_r \cdot R G_{r+1} \cdot e^\pm_r ,$$ $$a e^\pm_{r-1} \otimes  e^\pm_{r-1} b\mapsto  e^\pm_r a T_{r+1}b e^\pm_r $$
 for any $a,b\in G_r.$

 \item[$(b)$]
  $\MixRcross: F^+_{r-1,r}E^-_{r,r-1}\Rightarrow E^-_{r+1,r}F^+_{r,r+1}$ is given by  the $( R G_r , R G_r )$-bimodule map $$ R G_r \cdot e^+_{r-1} \otimes _{ R G_{r-1} } e^-_{r-1} \cdot R G_r \to  e^-_r\cdot  R G_{r+1} \cdot e^+_r ,$$ $$a e^+_{r-1} \otimes  e^-_{r-1} b\mapsto  e^-_r a H_{r+1}b e^+_r $$
and $\MixRcross': F^-_{r-1,r}E^+_{r, r-1}\Rightarrow E^+_{r+1,r}F^-_{r,r+1} $ is given by the $( R G_r , R G_r )$-bimodule map $$ R G_r \cdot e^-_{r-1} \otimes _{ R G_{r-1} } e^+_{r-1} \cdot  R G_r \to  e^+_r \cdot R G_{r+1} \cdot e^-_r $$$$a e^-_{r-1} \otimes  e^+_{r-1} b\mapsto  e^+_r a H'_{r+1}b e^-_r $$
 for any $a,b\in G_r.$
 \end{itemize}
\end{lemma}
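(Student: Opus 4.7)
The lemma is proved by direct computation, unwinding the definitions. For part $(a)$, by construction $\Rcross^\pm$ is the composite
\[
(1_{E^\pm}\otimes 1_{F^\pm}\otimes \varepsilon^\pm_R) \circ (1_{E^\pm}\otimes T^\pm \otimes 1_{E^\pm}) \circ (\eta^\pm_R \otimes 1_{F^\pm}\otimes 1_{E^\pm}),
\]
which is a natural transformation $F^\pm E^\pm \Rightarrow E^\pm F^\pm$. Under the dictionary between natural transformations of these induction/restriction functors and bimodule maps established in \S\ref{subsec:functors}, I will translate each factor into its bimodule description using the explicit formulas for $\eta^\pm_R$, $\varepsilon^\pm_R$ given in \S\ref{sec:adjunction} and for $T^\pm_{r+1}$ given in \S\ref{sub:naturaltransf}. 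Starting with an element $ae^\pm_{r-1} \otimes e^\pm_{r-1} b$ in the bimodule $RG_r \cdot e^\pm_{r-1} \otimes_{RG_{r-1}} e^\pm_{r-1} \cdot RG_r$ representing $F^\pm E^\pm$: the leftmost factor $\eta^\pm_R$ contributes the scalar $-(t^\pm z^\pm)^{-1}$ and inserts the idempotent $e^\pm_r$ on the outside; the middle factor $T^\pm$ acts by right multiplication by $T^\pm_{r+1}$; the rightmost factor $\varepsilon^\pm_R$ contributes the reciprocal scalar $-t^\pm z^\pm$ and collapses the inner tensor product, landing in $e^\pm_r \cdot RG_{r+1} \cdot e^\pm_r$. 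The two scalars cancel exactly, producing the claimed image $e^\pm_r a T^\pm_{r+1} b e^\pm_r$.

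For part $(b)$, the computation is formally identical, with $H_{r+1}$ (resp., $H'_{r+1}$) replacing $T^\pm_{r+1}$ in the middle step, and with the appropriately-colored adjunction unit and counit at the endpoints coming from the bi-adjoint pair of the corresponding color. The only additional step is to check that the $e^\pm_r$ idempotents introduced by the adjunction maps combine properly with those appearing in the explicit definition of $H_{r+1}$ and $H'_{r+1}$ from \S\ref{sub:naturaltransf}; this is immediate from the commutativity of $e^+_s$ and $e^-_t$ (they are idempotents built from different characters of $U_s$ and $U_t$) together with Lemma \ref{Lem:T}.

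The main obstacle is purely bookkeeping: keeping track of which idempotent $e^\pm_s$ appears in which tensor factor and confirming the cancellation of the scalars. The specific normalization of the adjunction maps in \S\ref{sec:adjunction}, involving the factors $\mp (t^\pm z^\pm)^{\pm 1}$, was chosen precisely so that these scalars cancel here, yielding the clean formula for the rightward crossings. Once the compositions are written out in bimodule form, the computation is entirely mechanical.
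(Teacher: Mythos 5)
Your proposal is correct and follows essentially the same route as the paper: the paper likewise unwinds $\Rcross^\pm$ (resp.\ $\MixRcross$, $\MixRcross'$) into the composite of the three bimodule maps, tracks the generator $e^\pm_{r-1}\otimes e^\pm_{r-1}$ through them, and observes that the scalar $-(t^\pm z^\pm)^{-1}$ from $\eta^\pm_R$ cancels against the scalar $-t^\pm z^\pm$ from $\varepsilon^\pm_R$, leaving $T^\pm_{r+1}$ (the paper then dismisses part $(b)$ as "similar," just as you do). The only cosmetic difference is that you track a general element $ae^\pm_{r-1}\otimes e^\pm_{r-1}b$ while the paper tracks only the bimodule generator and invokes bimodule-linearity.
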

\begin{proof}

We only prove $(a)$ and the proof of $(b)$ is similar. As an $(R G_{r}, R G_r)$-bimodule, the module $ R G_r  \cdot e^\pm_{r-1} \otimes _{ R G_{r-1} } e^\pm_{r-1}\cdot  R G_r $ is generated by the unique  element $ e^\pm_{r-1} \otimes  e^\pm_{r-1} $, so we only need to compute the image of it. Note that $\Rcross^\pm_r$ is the composition of the following bimodule maps:
\begin{align*}
& R G_r\cdot  e^\pm_{r-1} \otimes _{ R G_{r-1} } e^\pm_{r-1} \cdot R G_r \cong  R G_r  \otimes_{ R G_r } R G_r \cdot  e^\pm_{r-1} \otimes _{ R G_{r-1} } e^\pm_{r-1} \cdot  R G_r \\&\stackrel{\eta^\pm_R\otimes 1\otimes 1}{\longrightarrow} e^\pm_r\cdot
 R G_{r+1} \otimes_{ R G_{r+1} } R G_{r+1}\cdot  e^\pm_r\otimes_{ R G_r }
 R G_r \cdot e^-_{r-1} \otimes _{ R G_{r-1} } e^\pm_{r-1}\cdot  R G_r  \\&\stackrel{ 1\otimes T^\pm\otimes 1}{\longrightarrow} e^\pm_r\cdot
 R G_{r+1} \otimes_{ R G_{r+1} } R G_{r+1}\cdot  e^\pm_r\otimes_{ R G_r }
 R G_r \cdot e^\pm_{r-1} \otimes _{ R G_{r-1} } e^\pm_{r-1}\cdot  R G_r   \\&\stackrel{ 1\otimes1\otimes \varepsilon^\pm_R}{\longrightarrow} e^\pm_r \cdot R G_{r+1} \otimes_{ R G_{r+1} } R G_{r+1} \cdot e^\pm_r \otimes_{ R G_r } R G_r
\cong  e^\pm_r\cdot  R G_{r+1} \cdot e^\pm_r .
\end{align*}
We compute the image of $ e^\pm_{r-1} \otimes  e^\pm_{r-1} $ as follows:
\begin{align*}
& e^\pm_{r-1} \otimes  e^\pm_{r-1}  \xrightarrow{\thicksim} 1\otimes   e^\pm_{r-1} \otimes e^\pm_{r-1}\\
&\mapsto -(t^\pm z^\pm)^{-1} e^\pm_r \otimes  e^\pm_r\otimes e^\pm_{r-1} \otimes  e^\pm_{r-1} \\
&\mapsto-(t^\pm z^\pm)^{-1}
 e^\pm_r\otimes T^\pm_{r+1}\otimes e^\pm_{r-1} \otimes  e^\pm_{r-1} \\
&\mapsto
  e^\pm_r \otimes  T^\pm_{r+1}\otimes  e^\pm_{r-1} \xrightarrow{\thicksim}   e^\pm_r  T^\pm_{r+1} e^\pm_{r-1} =T^\pm_{r+1}.
\end{align*}
\end{proof}
\smallskip
\subsubsection{Leftward crossings}
Now we introduce the leftward crossings:
\begin{align*}
\Lcross^+:=\mathord{
\begin{tikzpicture}[baseline = -.5mm]
	\draw[<-,red] (-0.28,-.3) to (0.28,.4);
	\draw[line width=4pt,white,-] (0.28,-.3) to (-0.28,.4);
	\draw[->,red] (0.28,-.3) to (-0.28,.4);
\end{tikzpicture}
}:=
\mathord{
\begin{tikzpicture}[baseline = 0]
	\draw[-,red] (-0.2,.2) to (0.2,-.3);
	\draw[-,line width=4pt,white] (0.3,.5) to (-0.3,-.5);
	\draw[<-,red] (0.3,.5) to (-0.3,-.5);
        \draw[-,red] (0.2,-.3) to[out=130,in=180] (0.5,-.5);
        \draw[-,red] (0.5,-.5) to[out=0,in=270] (0.8,.5);
        \draw[-,red] (-0.2,.2) to[out=130,in=0] (-0.5,.5);
        \draw[->,red] (-0.5,.5) to[out=180,in=-270] (-0.8,-.5);
\end{tikzpicture}
}\:
\quad\text{ and}\quad
&\Lcross^-:=\mathord{
\begin{tikzpicture}[baseline = -.5mm]
	\draw[<-,blue] (-0.28,-.3) to (0.28,.4);
	\draw[line width=4pt,white,-] (0.28,-.3) to (-0.28,.4);
	\draw[->,blue] (0.28,-.3) to (-0.28,.4);
\end{tikzpicture}
}:=
\mathord{
\begin{tikzpicture}[baseline = 0]
	\draw[-,blue] (-0.2,.2) to (0.2,-.3);
	\draw[-,line width=4pt,white] (0.3,.5) to (-0.3,-.5);
	\draw[<-,blue] (0.3,.5) to (-0.3,-.5);
        \draw[-,blue] (0.2,-.3) to[out=130,in=180] (0.5,-.5);
        \draw[-,blue] (0.5,-.5) to[out=0,in=270] (0.8,.5);
        \draw[-,blue] (-0.2,.2) to[out=130,in=0] (-0.5,.5);
        \draw[->,blue] (-0.5,.5) to[out=180,in=-270] (-0.8,-.5);
\end{tikzpicture}
}\:.
\end{align*}
By the definition, $\Lcross^\pm=(\varepsilon^\pm_L\otimes 1_{F^\pm}\otimes 1_{E^\pm} )\circ(1_{E^\pm}\otimes T^\pm \otimes 1_{E^\pm})\circ(1_{E^\pm}\otimes 1_{F^\pm}\otimes\eta^\pm_L).$
We also define the leftward mixed crossings:
$$
\MixLcross:=\mathord{
\begin{tikzpicture}[baseline = 0]
	\draw[->,red] (0.28,-.3) to (-0.28,.4);
	\draw[<-,blue] (-0.28,-.3) to (0.28,.4);
\end{tikzpicture}
}
:=
\mathord{
\begin{tikzpicture}[baseline = 0]
	\draw[<-, red] (0.3,.5) to (-0.3,-.5);
	\draw[-,blue] (-0.2,.2) to (0.2,-.3);
        \draw[-,blue] (0.2,-.3) to[out=130,in=180] (0.5,-.5);
        \draw[-,blue] (0.5,-.5) to[out=0,in=270] (0.9,.5);
        \draw[-,blue] (-0.2,.2) to[out=130,in=0] (-0.6,.5);
        \draw[->,blue] (-0.6,.5) to[out=180,in=-270] (-0.9,-.5);
\end{tikzpicture}
}\quad\text{and}\quad
\MixLcross':=\mathord{
\begin{tikzpicture}[baseline = 0]
	\draw[->,blue] (0.28,-.3) to (-0.28,.4);
	\draw[<-,red] (-0.28,-.3) to (0.28,.4);
\end{tikzpicture}
}
:=
\mathord{
\begin{tikzpicture}[baseline = 0]
	\draw[<-, blue] (0.3,.5) to (-0.3,-.5);
	\draw[-,red] (-0.2,.2) to (0.2,-.3);
        \draw[-,red] (0.2,-.3) to[out=130,in=180] (0.5,-.5);
        \draw[-,red] (0.5,-.5) to[out=0,in=270] (0.9,.5);
        \draw[-,red] (-0.2,.2) to[out=130,in=0] (-0.6,.5);
        \draw[->,red] (-0.6,.5) to[out=180,in=-270] (-0.9,-.5);
\end{tikzpicture}
}\,.$$
By the definition, $\MixLcross=(\varepsilon^-_L\otimes 1_{F^+}\otimes 1_{E^-} )\circ(1_{E^-}\otimes H \otimes 1_{E^-})\circ(1_{E^-}\otimes 1_{F^+}\otimes\eta^-_L)$ and $\MixLcross'=(\varepsilon^+_L\otimes 1_{F^-}\otimes 1_{E^+} )\circ(1_{E^+}\otimes H' \otimes 1_{E^+})\circ(1_{E^+}\otimes 1_{F^-}\otimes\eta^+_L).$

\begin{lemma}\label{Lem:Rcrossing}
\begin{itemize}
\item[$(a)$]
 The natural transformation $\Lcross^\pm:E^\pm_{r+1,r}F^\pm_{r,r+1}\Rightarrow F^\pm_{r-1,r}
 E^\pm_{r,r-1}$ is given by the $( R G_r , R G_r )$-bimodule map $$ e^\pm_r \cdot  R G_{r+1} \cdot e^\pm_r \to  R G_r  \cdot e^\pm_{r-1} \otimes _{ R G_{r-1} } e^\pm_{r-1}\cdot  R G_r $$
 $$ e^\pm_r \mapsto 0,$$
$$   X^\pm_{r+1}  \mapsto 0,$$
$$T^\pm_{r+1}  \mapsto  e^\pm_{r-1} \otimes  e^\pm_{r-1} .$$

\item[$(b)$]
 The natural transformation $\MixLcross:E^-_{r,r-1}F^+_{r,r+1}\Rightarrow F^+_{r-1,r}E^-_{r, r-1}$ is given by the $( R G_r , R G_r )$-bimodule map $$ e^-_r\cdot   R G_{r+1}  \cdot e^+_r \to  R G_r \cdot  e^+_{r-1} \otimes _{ R G_{r-1} } e^-_{r-1}\cdot   R G_r ,$$
$$H'_{r+1}\mapsto  e^+_{r-1} \otimes  e^-_{r-1},$$
and the natural transformation  $\MixLcross':E^+_{r+1,r}F^-_{r,r+1}\Rightarrow F^-_{r,r-1}E^+_{r-1,r} $ is given by the $( R G_r , R G_r )$-bimodule map $$ e^+_r \cdot  R G_{r+1} \cdot  e^-_r \to  R G_r  \cdot e^-_{r-1} \otimes _{ R G_{r-1} } e^+_{r-1}\cdot   R G_r ,$$
$$H_{r+1}\mapsto  e^-_{r-1} \otimes  e^+_{r-1} .$$
\end{itemize}
\end{lemma}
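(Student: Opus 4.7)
\smallskip

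\noindent\textbf{Proof plan.} The plan is to use Proposition \ref{mkd} to reduce to a small set of bimodule generators, then unwind the composition formula defining the leftward crossings. By Proposition \ref{mkd}(a), the $(RG_r,RG_r)$-bimodule $e^\pm_r\cdot RG_{r+1}\cdot e^\pm_r$ is generated as $RG_r\cdot e^\pm_r\oplus RG_r\cdot X^\pm_{r+1}\oplus RG_r\cdot T^\pm_{r+1}\cdot RG_r$, corresponding to the three double cosets in Lemma \ref{doublecoset}(b); by Proposition \ref{mkd}(b), the mixed bimodules $e^\pm_r\cdot RG_{r+1}\cdot e^\mp_r$ are cyclically generated by $H_{r+1}$ or $H'_{r+1}$. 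Since $\Lcross^\pm$, $\MixLcross$, $\MixLcross'$ are all $(RG_r,RG_r)$-bimodule maps by construction, it suffices to compute each of them on these explicit generators.

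For part $(a)$, I will trace the bimodule map through the three-step composition $\Lcross^\pm=(\varepsilon^\pm_L\otimes 1\otimes 1)\circ(1\otimes T^\pm\otimes 1)\circ(1\otimes 1\otimes\eta^\pm_L)$. The first step sends $a\mapsto\sum_i a\otimes g_i^{-1}e^\pm_{r-1}\otimes e^\pm_{r-1}g_i$ via the coset sum $Z^\pm_r$ of \S\ref{sec:adjunction}(b). Using the splitting $T^\pm_{r+1}=q^{1/2}e^\pm_re^\pm_{r-1}\dot{s}_{r+1}e^\pm_r\otimes_{RG_r}e^\pm_{r-1}$ of the middle bimodule $RG_{r+1}e^\pm_re^\pm_{r-1}$, step two amounts to right multiplication by $T^\pm_{r+1}$ on the merged $F^\pm F^\pm$-middle. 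Step three then applies $\varepsilon^\pm_L$, which kills everything supported off the trivial double coset $P_r$.

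The casework is then driven by where each generator's image lands under the double coset decomposition $G_{r+1}=P_r\sqcup P_r\dot{x}_{r+1}P_r\sqcup P_r\dot{s}_{r+1}P_r$. For $a=e^\pm_r$, the relevant element reduces to $q^{1/2}g_i^{-1}e^\pm_re^\pm_{r-1}\dot{s}_{r+1}e^\pm_r$, supported entirely on $R(P_r\dot{s}_{r+1}P_r)$, so $\varepsilon^\pm_L$ kills it. For $a=X^\pm_{r+1}$, since $\dot{x}_{r+1}$ centralizes $G_r$, the element takes the form $q^{1/2}c_1g_i^{-1}e^\pm_r\dot{x}_{r+1}e^\pm_re^\pm_{r-1}\dot{s}_{r+1}e^\pm_r$, supported on $R(P_r\dot{x}_{r+1}P_r\cdot P_r\dot{s}_{r+1}P_r)$; a short Weyl-group argument, using that by definition $x_{r+1}=s_{r+1}\cdots s_2 s_1 s_2^{-1}\cdots s_{r+1}^{-1}$ involves $s_1$ essentially, shows $x_{r+1}ws_{r+1}\ne 1$ for every $w\in W_r$, and hence this product of cosets avoids $P_r$, forcing the image to vanish again. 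The essential case is $a=T^\pm_{r+1}$: here one is projecting the element $T^\pm_{r+1}g_i^{-1}T^\pm_{r+1}$, which contains $\dot{s}_{r+1}\cdot(\text{element of }P_r)\cdot\dot{s}_{r+1}$, and since $\dot{s}_{r+1}^2\in T\subset P_r$ a genuine $P_r$-component survives. Summing over the distinguished coset representatives $g_i$ in the definition of $Z^\pm_r$ and tracking the normalization constants ($q^{1/2}$ from each $T^\pm_{r+1}$, $-(t^\pm z^\pm)^{-1}$ and $-t^\pm z^\pm$ from $\eta^\pm_R$ and $\varepsilon^\pm_R$ used implicitly), the result collapses to $e^\pm_{r-1}\otimes e^\pm_{r-1}$.

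Part $(b)$ goes through by the identical strategy applied to the composition formulas defining $\MixLcross$ and $\MixLcross'$, except that Proposition \ref{mkd}(b) shows the mixed bimodule has only the $\dot{s}_{r+1}$-double-coset part: the $P_r$ and $P_r\dot{x}_{r+1}P_r$ summands vanish by orthogonality of $e^+_r$ and $e^-_r$. Thus only the single generator $H_{r+1}$ or $H'_{r+1}$ needs to be treated, and its computation is structurally identical to the $T^\pm_{r+1}$ case, using $\dot{s}_{r+1}^2\in P_r$ to extract the trivial-coset contribution. The main obstacle in all of this is the $T^\pm_{r+1}$ case (and its mixed analogs), where the constants conspire delicately: the factor $q$ from $T^\pm_{r+1}\cdot T^\pm_{r+1}$, the sign and scalar data of the biadjunction, and the size of the coset sum $Z^\pm_r$ all must cancel precisely so that the final answer is the clean idempotent tensor $e^\pm_{r-1}\otimes e^\pm_{r-1}$ with no spurious scalar.
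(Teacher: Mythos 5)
Your overall strategy coincides with the paper's: unwind $\Lcross^\pm=(\varepsilon^\pm_L\otimes 1\otimes 1)\circ(1\otimes T^\pm\otimes 1)\circ(1\otimes 1\otimes\eta^\pm_L)$ on the generators supplied by Proposition \ref{mkd}, and decide case by case which double coset the resulting element $yg_i^{-1}T^\pm_{r+1}$ lands in. The $e^\pm_r$ case is fine, and your double-coset argument for $X^\pm_{r+1}$ (the product $P_r\dot{x}_{r+1}P_r\cdot P_r\dot{s}_{r+1}P_r$ misses $P_r$ because $\dot x_{r+1}\notin P_r\dot s_{r+1}P_r$) is a valid alternative to the paper's computation, which instead rewrites $X^\pm_{r+1}hT^\pm_{r+1}$ via the affine Hecke relations; just note that the condition you need is $x_{r+1}\notin W_rs_{r+1}W_r$, i.e.\ distinctness of the double cosets in Lemma \ref{doublecoset}, not merely $x_{r+1}ws_{r+1}\neq 1$ for $w\in W_r$.

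The gap is in the $T^\pm_{r+1}$ case (and hence in the mixed cases of part $(b)$). First, ``since $\dot s_{r+1}^2\in P_r$ a genuine $P_r$-component survives'' does not isolate that component: the actual mechanism is the quadratic relation $T^\pm_{r+1}T^\pm_{r+1}=z^\pm T^\pm_{r+1}+e^\pm_re^\pm_{r-1}$ together with $T^\pm_{r+1}hT^\pm_{r+1}=({}^{s_{r+1}}h)\,T^\pm_{r+1}T^\pm_{r+1}$, whose first summand lies in $R(P_r\dot s_{r+1}P_r)$ and dies under $\Proj^\pm_{P_r}$, while the second lies in $RP_r$ exactly when ${}^{s_{r+1}}g_i^{-1}\in P_r$. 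You never say which terms of the sum $Z^\pm_r$ survive: this requires Lemma \ref{doublecoset}$(c)$, $G_r\cap{}^{s_{r+1}}P_r=P_{r-1}$, which forces $g_i^{-1}\in P_{r-1}$, hence exactly one surviving coset representative, whose unipotent part $v$ is then absorbed by the idempotents ($v^{-1}e^\pm_{r-1}=\zeta_\epsilon(v)e^\pm_{r-1}$, ${}^{s_{r+1}}v\,e^\pm_r=\zeta_\epsilon(v)e^\pm_r$). Second, your accounting of the constants is wrong: the leftward crossing is built from $\eta^\pm_L$ and $\varepsilon^\pm_L$, which carry no scalars (the coset sum $Z^\pm_r$ and the projection $\Proj^\pm_{P_r}$), so the factors $-(t^\pm z^\pm)^{-1}$ and $-t^\pm z^\pm$ attached to $\eta^\pm_R$ and $\varepsilon^\pm_R$ do not enter ``implicitly'' or otherwise, and there is no delicate cancellation of $t^\pm$'s to arrange. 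The clean answer $e^\pm_{r-1}\otimes e^\pm_{r-1}$ comes from the scalar-free term $e^\pm_re^\pm_{r-1}$ in the quadratic relation and from the coset selection, not from a conspiracy of biadjunction constants.
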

\begin{proof}Let $G_{r}=\coprod\limits_{i=1}^{s}P_{r-1}g_i$ be a decomposition of $G_{r}
$ into left $P_{r-1}$ cosets.

$(a)$
$\Rcross^\pm$ is the composition of the following bimodule maps:
\begin{align*}& e^\pm_r \cdot R G_{r+1} \cdot e^\pm_r \cong e^\pm_r \cdot R G_{r+1} \cdot e^\pm_r\otimes_{ R G_r }  R G_r \\
&\stackrel{1\otimes 1\otimes \eta^\pm_L}{\longrightarrow} e^\pm_r\cdot  R G_{r+1} \cdot e^\pm_r\otimes_{ R G_r }  R G_r \cdot e^\pm_{r-1}
\otimes_{ R G_{r-1} } e^\pm_{r-1}\cdot  R G_r \\&\stackrel{ 1\otimes T^\pm\otimes 1}{\longrightarrow}e^\pm_r \cdot R G_{r+1} \cdot e^\pm_r\otimes_{ R G_r }  R G_r \cdot e^\pm_{r-1}
\otimes_{ R G_{r-1} } e^\pm_{r-1}\cdot  R G_r \\
 &\stackrel{ \varepsilon^\pm_L\otimes 1\otimes 1}{\longrightarrow}  R G_r \otimes_{ R G_r } R G_r  \cdot e^\pm_{r-1} \otimes _{ R G_{r-1} }e^\pm_{r-1} \cdot  R G_r
 \cong  R G_r  \cdot e^\pm_{r-1} \otimes _{ R G_{r-1} } e^\pm_{r-1} \cdot  R G_r .
\end{align*}

We compute the image of $ y\in  e^\pm_r \cdot R G_{r+1} \cdot e^\pm_r$:
\begin{align*}& y  \xrightarrow{\thicksim} y\otimes 1\\
&\mapsto y \otimes Z^\pm_r = y\otimes (\sum\limits_{i=1}^sg_i^{-1} e^\pm_{r-1} \otimes e^\pm_{r-1} g_i)\\
& \mapsto
y \otimes\sum\limits_{i=1}^s(g_i^{-1}T^\pm_{r+1} \otimes e^\pm_{r-1} g_i )\\&= \sum\limits_{i=1}^s(y g_i^{-1}T^\pm_{r+1} \otimes e^\pm_{r-1} g_i )
\\
&\mapsto\sum\limits_{i=1}^s(\Proj^{\pm}_{P_r}(y g^{-1}_i T^\pm_{r+1} ) \otimes  e^\pm_{r-1} g_i)\,.
\end{align*}

Let $y=e^\pm_r.$ For any $g_i\in  G_r,$ $g^{-1}_i\dot{s}_{r+1}\notin P_r$, we have $\Proj^\pm_{P_r}( e^\pm_r g^{-1}_i\dot{s}_{r+1} e^\pm_r )=0$.
Hence $ e^\pm_r \mapsto 0.$
\smallskip

 Let $y=X^\pm_{r+1}.$
For any $h\in G_r,$
we have
\begin{align*}
&X^\pm_{r+1}h T^\pm_{r+1}=hX^\pm_{r+1} T^\pm_{r+1}\\
&=hT^\pm_{r+1}X^\pm_{r}T^\pm_{r+1}T^\pm_{r+1}\quad\text{(by Prop.\ref{prop:12relations} $(\ref{AHH1})$ and $(\ref{AHH2})$)}\\
&=hT^\pm_{r+1}X^\pm_{r}(z^\pm T^\pm_{r+1}+e^\pm_re^\pm_{r-1})\quad\text{(by Prop.\ref{prop:12relations} $(\ref{AHH1})$)}\\
&=z^\pm hX^\pm_{r+1}+hT^\pm_{r+1}X^\pm_{r}\quad\text{(by Prop.\ref{prop:12relations} $(\ref{AHH1})$ and $(\ref{AHH2})$)}\\
&=z^\pm hX^\pm_{r+1}+q^{r-1-1/2}h e^\pm_r
\dot{s}_{r+1}\dot{x}_{r} e^\pm_r \notin RP_r.
\end{align*}
So
$\Proj^\pm_{G_r}(X^\pm_{r+1}g^{-1}_iT^\pm_{r+1})=0$ for all $g_i,$ and hence
$X^\pm_{r+1}\mapsto 0.$

\smallskip

 Let $y=T^\pm_{r+1}.$
For any $h\in G_r,$ we have
\begin{align*}&T^\pm_{r+1}hT^\pm_{r+1}\\
&=q e^\pm_r \dot{s}_{r+1} e^\pm_r h e^\pm_r
\dot{s}_{r+1} e^\pm_r\quad\text{(by Lemma \ref{Lem:T})}\\
&=q{(^{s_{r+1}}h)} e^\pm_r \dot{s}_{r+1} e^\pm_r  e^\pm_r \dot{s}_{r+1} e^\pm_r \\
&={^{{s}_{r+1}}h}T^\pm_{r+1}T^\pm_{r+1}\\
&={^{s_{r+1}}h}(z^\pm T^\pm_{r+1}+ e^\pm_r  e^\pm_{r-1} )\quad\text{(by Prop.\ref{prop:12relations} $(\ref{AHH1})$ and $(\ref{AHH2})$)}\\
&=z^\pm e^\pm_r (^{s_{r+1}}h)T^\pm_{r+1}+(^{s_{r+1}}h) e^\pm_r  e^\pm_{r-1} \\
&=q^{\frac{1}{2}}z^{\pm} e^\pm_r \dot{s}_{r+1}h e^\pm_r +(^{s_{r+1}}h) e^\pm_r
e^\pm_{r-1}\quad\text{(by Lemma \ref{Lem:T})}.
\end{align*}
 Note that $q^{\frac{1}{2}}z^\pm e^\pm_r \dot{s}_{r+1}h e^\pm_r \notin RP_r,$ hence
$$\Proj^\pm_{P_r}(T^\pm_{r+1}g^{-1}_iT^\pm_{r+1})=\begin{cases}
 (^{s_{r+1}}g_i)^{-1} e^\pm_r  e^\pm_{r-1} \,\,&\text{if}\,\, (^{s_{r+1}}g_i)^{-1}\in P_r;\\
0\,\,&\text{if}\,\,  (^{s_{r+1}}g_i)^{-1}\notin P_r.
\end{cases}$$
However, by Lemma \ref{doublecoset} (c), $G_r\cap {^{s_{r+1}}P_r}=P_{r-1}.$ So if $(^{s_{r+1}}g_i)^{-1}\in P_r$, we must have $g^{-1}_i\in P_{r-1}$ and we can write $$g^{-1}_i= v\times l\in P_{r-1}=U_{r-1}\rtimes G_{r-1}$$ where $l\in G_r$ and $v\in U_{r-1}.$
Since $v\in U_{r-1}$, by Lemma \ref{doublecoset} (d), we have ${^{s_{r+1}}v}\in U_{r}.$ It is easy to show that ${^{s_{r+1}}v} e^+_r = e^+_r ,$  ${^{s_{r+1}}v} e^-_r =\zeta(v) e^-_r ,$ $v^{-1} e^+_{r-1} = e^+_{r-1} $ and $v^{-1} e^-_{r-1} =\zeta(v) e^-_{r-1} .$
There is only one nonzero term since $g^{-1}_i\in P_{r-1}$ and the nonzero term is exactly
\begin{align*}& \Proj^{\pm}_{P_r}((^{s_{r+1}}g_i)^{-1} e^\pm_r  e^\pm_{r-1} ) \otimes e^\pm_{r-1} g_i   \\&=   \Proj^{\pm}_{P_r}({{^{s_{r+1}}v}l} e^\pm_r e^\pm_{r-1} )\otimes e^\pm_{r-1} l^{-1} v^{-1}\\&= e^\pm_{r-1} l\otimes l^{-1} e^\pm_{r-1} = e^\pm_{r-1} \otimes   e^\pm_{r-1} .
\end{align*}

 $(b)$ We only prove for $\rm{LH}$. Note that $\rm{LH}$ is the composition of the following bimodule maps:
\begin{align*}& e^-_r \cdot R G_{r+1} \cdot e^+_r \cong e^-_r \cdot R G_{r+1} \cdot e^+_r\otimes_{ R G_r }  R G_r \\
&\stackrel{1\otimes 1\otimes \eta^-_L}{\longrightarrow} e^-_r\cdot  R G_{r+1} \cdot e^+_r\otimes_{ R G_r }  R G_r \cdot e^-_{r-1}
\otimes_{ R G_{r-1} } e^-_{r-1}\cdot  R G_r \\&\stackrel{ 1\otimes H\otimes 1}{\longrightarrow}e^-_r \cdot R G_{r+1} \cdot e^-_r\otimes_{ R G_r }  R G_r \cdot e^+_{r-1}
\otimes_{ R G_{r-1} } e^-_{r-1}\cdot  R G_r \\
 &\stackrel{ \varepsilon^-_L\otimes 1\otimes 1}{\longrightarrow}  R G_r \otimes_{ R G_r } R G_r  \cdot e^+_{r-1} \otimes _{ R G_{r-1} }e^-_{r-1} \cdot  R G_r
 \cong  R G_r  \cdot e^+_{r-1} \otimes _{ R G_{r-1} } e^-_{r-1} \cdot  R G_r .
\end{align*}

 By Proposition \ref{mkd}, as an $(R G_{r}, R G_r)$-bimodule, the bimodule $  e^-_r\cdot   R G_{r+1}  \cdot e^+_r $ is generated by the unique  element $H'_{r+1}$, so we only need to compute the image of $H'_{r+1}$. Similarly to $(a)$, we have
$$H'_{r+1}\mapsto\sum\limits_{i=1}^s(\Proj^+_{P_r}(H'_{r+1}g^{-1}_iH_{r+1})
\otimes e^-_{r-1}g_i )$$
For any $h\in G_r,$ we have
\begin{align*}&H'_{r+1}hH_{r+1}\\
&=q e^-_r \dot{s}_{r+1} e^+_r h e^+_r
\dot{s}_{r+1} e^-_r\quad\text{(by Lemma \ref{Lem:T})} \\
&=q{(^{s_{r+1}}h)} e^-_r \dot{s}_{r+1} e^+_r  e^+_r \dot{s}_{r+1} e^-_r \\
&={^{{s}_{r+1}}h}H'_{r+1}H_{r+1}\\
&={^{s_{r+1}}h} e^+_r  e^-_{r-1} \quad\text{(by Prop. \ref{prop:12relations} (\ref{Mixaff2}))}.
\end{align*}
Hence
$$\Proj^+_{P_r}(H'_{r+1}g^{-1}_iH_{r+1})=\begin{cases}
 (^{s_{r+1}}g_i)^{-1} e^+_r  e^-_{r-1} \,\,&\text{if}\,\, (^{s_{r+1}}g_i)^{-1}\in P_r;\\
0\,\,&\text{if}\,\,  (^{s_{r+1}}g_i)^{-1}\notin P_r.
\end{cases}$$

Similarly to $(a)$, if $(^{s_{r+1}}g_i)^{-1}\in P_r$, we must have $g^{-1}_i\in P_{r-1}$ and we can write $$g^{-1}_i= v\times l\in P_{r-1}=U_{r-1}\rtimes G_{r-1}$$ where $l\in G_r$ and $v\in U_{r-1}.$
Since $v\in U_{r-1}$, by Lemma \ref{doublecoset} (d), we have ${^{s_{r+1}}v}\in U_{r}.$ It is easy to show that ${^{s_{r+1}}v} e^+_r = e^+_r $ and $v^{-1} e^+_{r-1} = e^+_{r-1} .$
Hence there is only one nonzero term and the nonzero term is exactly
\begin{align*}& \Proj^-_{P_r}((^{s_{r+1}}g_i)^{-1} e^-_{r}e^+_{r-1} ) \otimes e^-_{r-1} g_i \\&= \Proj^-_{P_r}({{^{s_{r+1}}v}l} e^-_r  e^+_{r-1} )   \otimes  e^-_{r-1}  l^{-1} v^{-1}\\&\xrightarrow{\thicksim} l e^+_{r-1}
\otimes  e^-_{r-1} l^{-1}= e^+_{r-1} \otimes  e^-_{r-1} .
\end{align*}
\end{proof}
\smallskip

\subsubsection{Mackey formula relations}

We define
$$
\RX^+:=\mathord{
\begin{tikzpicture}[baseline = -0.9mm]
	\draw[<-,thin,red] (0.4,0.2) to[out=-90, in=0] (0.1,-.2);
	\draw[-,thin,red] (0.1,-.2) to[out = 180, in = -90] (-0.2,0.2);
      \node at (0.38,0) {$\dott$};
\end{tikzpicture}
}\quad\text{
and}\quad
\RX^-:=\mathord{
\begin{tikzpicture}[baseline = -0.9mm]
	\draw[<-,thin,blue] (0.4,0.2) to[out=-90, in=0] (0.1,-.2);
	\draw[-,thin,blue] (0.1,-.2) to[out = 180, in = -90] (-0.2,0.2);
      \node at (0.38,0) {$\dott$};
\end{tikzpicture}.
}$$
By the definition, $\RX^\pm=(1_{E^\pm}\otimes X^\pm )\circ\eta^\pm_R .$
We also define
$$
\LX^+:=\mathord{
\begin{tikzpicture}[baseline = 1mm]
	\draw[-,thin,red] (0.4,0) to[out=90, in=0] (0.1,0.4);
	\draw[->,thin,red] (0.1,0.4) to[out = 180, in = 90] (-0.2,0);
      \node at (0.35,0.2) {$\dott$};
\end{tikzpicture}
}\quad\text{
and}
\quad
\LX^-:=\mathord{
\begin{tikzpicture}[baseline = 1mm]
	\draw[-,thin,blue] (0.4,0) to[out=90, in=0] (0.1,0.4);
	\draw[->,thin,blue] (0.1,0.4) to[out = 180, in = 90] (-0.2,0);
       \node at (0.35,0.2) {$\dott$};
\end{tikzpicture}
}.
$$
By the definition, $\LX^\pm=\varepsilon^\pm_L\circ(1_{E^\pm}\otimes X^\pm ).$

\begin{lemma}\label{lrdots}
\begin{itemize}
\item[$(a)$]
 The natural transformation $\LX^\pm$ is  given by the $( R G_r , R G_r )$-bimodule map
$$ e^\pm_r \cdot  R G_{r+1}  \cdot e^\pm_r  \to  R G_r ,$$
 $$ e^\pm_r \mapsto 0,$$
$$ X^\pm_{r+1} \,\mapsto\,-(t^\pm)^{2},$$
$$T^\pm_{r+1}\mapsto 0.$$

\item[$(b)$]
The natural transformation $\RX^\pm$ is  given by  $( R G_r , R G_r )$-bimodule map
$$ R G_r  \to  e^\pm_r \cdot  R G_{r+1} \cdot  e^\pm_r ,$$
$$1\mapsto  -({t^\pm z^\pm})^{-1} X^\pm_{r+1} .$$

\end{itemize}
\end{lemma}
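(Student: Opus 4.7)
The plan is to unpack both $\LX^\pm$ and $\RX^\pm$ directly using the bimodule descriptions of the adjunction maps $\varepsilon^\pm_L$ and $\eta^\pm_R$ from \S\ref{sec:adjunction}, together with the observation that the natural transformation $X^\pm\colon F^\pm\Rightarrow F^\pm$ is realized as right multiplication by $X^\pm_{r+1}$ on the bimodule $RG_{r+1}\cdot e^\pm_r$. By Proposition~\ref{mkd}(a), the bimodule $e^\pm_r\cdot RG_{r+1}\cdot e^\pm_r$ is generated as an $(RG_r,RG_r)$-bimodule by the three elements $e^\pm_r$, $X^\pm_{r+1}$, and $T^\pm_{r+1}$, so part~(a) reduces to checking the asserted formula on each of these.

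Part~(b) follows at once: composing $\eta^\pm_R(1)=-(t^\pm z^\pm)^{-1}e^\pm_r$ with right multiplication by $X^\pm_{r+1}$ and using $e^\pm_r X^\pm_{r+1}=X^\pm_{r+1}$ gives $\RX^\pm(1)=-(t^\pm z^\pm)^{-1}X^\pm_{r+1}$. For part~(a), the composition is the bimodule map $y\mapsto\Proj^\pm_{P_r}(y\cdot X^\pm_{r+1})$. On $y=e^\pm_r$ the product is a scalar multiple of $e^\pm_r\dot x_{r+1}e^\pm_r$; since $\dot x_{r+1}\notin P_r$ the projection vanishes. On $y=T^\pm_{r+1}$ the product is a scalar multiple of $e^\pm_r\dot s_{r+1}e^\pm_r\dot x_{r+1}e^\pm_r$, and expanding the middle idempotent reduces us to testing whether $\dot s_{r+1}u\dot x_{r+1}\in P_r$ for some $u\in U_r$; this would force $\dot s_{r+1}u\in P_r\dot x_{r+1}^{-1}P_r=P_r\dot x_{r+1}P_r$, contradicting the disjointness of the double cosets $P_r\dot s_{r+1}P_r$ and $P_r\dot x_{r+1}P_r$ recorded in Lemma~\ref{doublecoset}(b). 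Hence this value is also zero.

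The main remaining step is the evaluation $\Proj^\pm_{P_r}\bigl((X^\pm_{r+1})^2\bigr)$, which up to the scalar $c=q^{2r}$ (respectively $c=\beta^2 q^{2r}$) equals $\Proj^\pm_{P_r}\bigl(e^\pm_r\dot x_{r+1}e^\pm_r\dot x_{r+1}e^\pm_r\bigr)$. Expanding the middle idempotent as $e^\pm_r=|U_r|^{-1}\sum_{u\in U_r}\chi(u)u$ with $\chi\in\{1,\zeta\}$, one must determine which $u\in U_r$ satisfy $\dot x_{r+1}u\dot x_{r+1}\in P_r$. Since $\dot x_{r+1}^2=\pm\id_{G_{r+1}}$, with sign $+$ in the orthogonal cases and $-$ in the symplectic case, this reduces to an explicit matrix-level analysis of $\dot x_{r+1}^{-1}U_r\dot x_{r+1}\cap P_r$. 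The main obstacle is bookkeeping the case-dependent constants: once the sign from $\dot x_{r+1}^2$, the cardinality $|U_r|$ from Remark~\ref{Vr}, the values of $\chi$ on the surviving unipotent elements, and the normalizations $q^r$ and $\beta$ from \eqref{for:normalization} are all assembled, the result is precisely $-(t^\pm)^2$ with $t^\pm$ as in~\eqref{t+} and~\eqref{t-}. A useful consistency check is that this value, together with part~(b), reproduces via \eqref{summer1} and \eqref{summer2} the bubble evaluations~\eqref{redbubble} and~\eqref{bluebubble} at $k^\pm=-2$; indeed, the constant $\beta$ in~\eqref{for:normalization} was chosen precisely so that these bubble values take a uniform form across all four families.
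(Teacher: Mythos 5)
Your overall strategy coincides with the paper's: both parts are unpacked as compositions of the explicit bimodule maps, part (b) is the one-line computation $1\mapsto -(t^\pm z^\pm)^{-1}e^\pm_r\mapsto -(t^\pm z^\pm)^{-1}X^\pm_{r+1}$, and part (a) reduces via Proposition~\ref{mkd}(a) to evaluating $\Proj^\pm_{P_r}(y\cdot X^\pm_{r+1})$ on the three generators. Your treatment of $T^\pm_{r+1}\mapsto 0$ is a genuine (and cleaner) variant: the paper instead uses the affine Hecke relations to rewrite $T^\pm_{r+1}X^\pm_{r+1}$ as a combination of elements visibly supported off $P_r$, whereas your argument that $\dot s_{r+1}u\dot x_{r+1}\in P_r$ would merge the double cosets $P_r\dot s_{r+1}P_r$ and $P_r\dot x_{r+1}P_r$ (using $\dot x_{r+1}^{-1}\in \dot x_{r+1}P_r$) gets the same conclusion directly from Lemma~\ref{doublecoset}(b). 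Both are valid.

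The gap is in the one step that carries the entire content of the lemma, namely $X^\pm_{r+1}\mapsto -(t^\pm)^2$. You correctly reduce to deciding which $u\in U_r$ satisfy $\dot x_{r+1}u\dot x_{r+1}\in P_r$, but you never answer the question: you defer it to an unspecified ``matrix-level analysis'' and then assert the final constant. The answer is the crux — after absorbing the torus part of the middle idempotent into the outer ones, one needs that for $v$ in the unipotent radical $V_r$ one has $v_1\dot x_{r+1}v\dot x_{r+1}v_3\in P_r$ only for $v=1$ (the paper quotes Carter, Prop.~10.5.3 for exactly this), which is what produces the factor $|V_r|^{-1}$ and hence, combined with $q^{2r}$, $\beta^2$, $|V_r|$ from Remark~\ref{Vr}, and $e^\pm_r(\dot x_{r+1})^2e^\pm_r$, the value $-(t^\pm)^2$. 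Since the lemma is precisely the assertion of this constant, ``once the constants are assembled the result is $-(t^\pm)^2$'' is a restatement of the claim, not a proof of it; you need to actually exhibit the surviving terms and carry the bookkeeping through in each of the four families (the sign $\zeta(-1)$ from $(\dot x_{r+1})^2$ in the symplectic case, and the discrepancy between $|V_r|=q^{2r+1}$ and $q^{2r}$, are exactly where the case-dependent definitions of $t^\pm$ and $\beta$ enter). Your proposed consistency check against the bubble values \eqref{redbubble}--\eqref{bluebubble} is circular here, since those bubble relations are themselves consequences of this lemma via Proposition~\ref{prop:Mackey}.
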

\begin{proof}
We only prove $(a)$ for $\rm{LX}^-$ and $(b)$ for $\rm{RX}^-$.  The proof of others is similar.
\smallskip

$(a)$ Note that $\LX^-$ is the composition of the following bimodule maps:
$$ e^-_r \cdot  R G_{r+1} \cdot  e^-_r \stackrel{1\otimes X^- }{\longrightarrow} e^-_r \cdot R G_{r+1} \cdot e^-_r \stackrel{\varepsilon^-_L}{\longrightarrow}  R G_r .$$
The image of $ e^-_r $ is
 $$ e^-_r \mapsto X^-_{r+1}\mapsto \Proj^-_{P_r}( X^-_{r+1})=0.$$
 The image of $X^-_{r+1}$ is
$$X^-_{r+1}\mapsto (X^-_{r+1})^2\mapsto \Proj^-_{P_r}((X^-_{r+1})^2).$$
We have $(X^-_{r+1})^2
=\beta^2q^{2r} e^-_r \dot{x}_{r+1} e^-_r \dot{x}_{r+1} e^-_r $ and $ e^-_r =\sum\limits_{g\in U_r}\zeta(g)g$. Then, by the proof of \cite[Prop.10.5.3]{Car}, we have that $v_1\dot{x}_{r+1}v_2\dot{x}_{r+1}v_3\in P_r$ if and only $v_2=1$ where $v_1,v_2,v_3\in V_r.$ Hence,
\begin{align*}
(X^-_{r+1})^2
&=\beta^2q^{2r} e^-_r \dot{x}_{r+1} e^-_r \dot{x}_{r+1} e^-_r \\
&=\beta^2q^{2r} e^-_r \dot{x}_{r+1}e_{V_r}\dot{x}_{r+1} e^-_r \\
&=\beta^2q^{2r} e^-_r \dot{x}_{r+1}\frac{1}{|V_r|}(1+\sum\limits_{v\in V_{r}-\{1\}}{v})\dot{x}_{r+1} e^-_r .
\end{align*}
For any $v\in V_{r}-\{1\}$, we have $ e^-_r \dot{x}_{r+1}v\dot{x}_{r+1} e^-_r \notin RP_r$ and for $v=1$, we have $ e^-_r \dot{x}_{r+1}v\dot{x}_{r+1} e^-_r =
 e^-_r (\dot{x}_{r+1})^2 e^-_r .$
Note that $$(\dot{x}_{r+1})^2=\begin{cases}
 \id_{G_{r+1}}  &
    \text{if $G_r = \O_{2r+1}(q)$ or $\O^{\pm}_{2r}(q)$ }; \\
\diag(-1,\id_{G_r},-1) & \text{if $G_r=\Sp_{2r}(q)$},
\end{cases}\,$$
so
$$ e^-_r (\dot{x}_{r+1})^2 e^-_r =\begin{cases}
 e^-_r    &
    \text{if $G_r = \O_{2r+1}(q)$ or $\O^{\pm}_{2r}(q)$ }; \\
\zeta(-1) e^-_r  & \text{if $G_r=\Sp_{2r}(q)$}.
\end{cases}\,$$
Hence
$$X^-_{r+1}\mapsto \Proj^-_{P_r}((X^-_{r+1})^2)=\Proj^-_{P_r}
(\frac{\beta^2q^{2r}}{|V_r|} e^-_r (\dot{x}_{r+1})^2 e^-_r ) \xrightarrow{\thicksim} -(t^-)^{-2}.$$

The image of $T^-_{r+1}$ is
$$T^-_{r+1}\mapsto T^-_{r+1}X^-_{r+1}\mapsto \Proj^-_{P_r}( T^-_{r+1}X^-_{r+1}).$$
By a similar argument as in the proof of Lemma \ref{Lem:Rcrossing} (b), we have the equality $\Proj^-_{P_r}( T^-_{r+1}X^-_{r+1})=0,$ hence $T^-_{r+1}\mapsto 0.$
\smallskip

$(b)$ Note that $\RX^-$ is the composition of the following bimodule maps:
$$ R G_r  \stackrel{ \eta^-_R}{\longrightarrow} e^-_r \cdot R G_{r+1} \cdot e^-_r \stackrel{1\otimes X^-}{\longrightarrow}  e^-_r \cdot R G_{r+1} \cdot e^-_r ,$$
$$1\mapsto  -({t^- z^-})^{-1} e^-_r \mapsto -({t^- z^-})^{-1}X^-_{r+1}.$$
\end{proof}
Now we can state the main proposition in this section.
\begin{proposition}\label{prop:Mackey}
\begin{itemize}
\item[$(a)$]
The natural transformation\begin{align*}
\begin{array}{rl}
\left[\!\!\!\!\!
\begin{array}{l}\,\,\,\mathord{
\begin{tikzpicture}[baseline = 0]
	\draw[<-,thin,red] (-0.28,-.3) to (0.28,.4);
	\draw[-,line width=4pt,white] (0.28,-.3) to (-0.28,.4);
	\draw[->,thin,red] (0.28,-.3) to (-0.28,.4);
\end{tikzpicture}
}\\\,\,\,\mathord{
\begin{tikzpicture}[baseline = 1mm]
	\draw[-,thin,red] (0.4,0) to[out=90, in=0] (0.1,0.4);
	\draw[->,thin,red] (0.1,0.4) to[out = 180, in = 90] (-0.2,0);
\end{tikzpicture}
}
\\
\,\,\,\mathord{
\begin{tikzpicture}[baseline = 1mm]
	\draw[-,thin,red] (0.4,0) to[out=90, in=0] (0.1,0.4);
	\draw[->,thin,red] (0.1,0.4) to[out = 180, in = 90] (-0.2,0);
      \node at (0.35,0.2) {$\dott$};
\end{tikzpicture}
}
\end{array}
\right]
:
E^+F^+ \Rightarrow
F^+E^+ \oplus \unit^{\oplus 2}
\end{array}
\end{align*}

is an isomorphism of functors. It has a two-sided inverse
\begin{align*}
\label{invrel}
\begin{array}{rl}
\left[
\:\mathord{
\begin{tikzpicture}[baseline = 0]
	\draw[->,thin,red] (-0.28,-.3) to (0.28,.4);
	\draw[-,line width=4pt,white] (0.28,-.3) to (-0.28,.4);
	\draw[<-,thin,red] (0.28,-.3) to (-0.28,.4);
\end{tikzpicture}
}
\:\:\:\:-t^+z^+
\mathord{
\begin{tikzpicture}[baseline = -0.9mm]
	\draw[<-,thin,red] (0.4,0.2) to[out=-90, in=0] (0.1,-.2);
	\draw[-,thin,red] (0.1,-.2) to[out = 180, in = -90] (-0.2,0.2);
\end{tikzpicture}
}
\:\:\:\:\:\frac{z^+}{t^+}
\mathord{
\begin{tikzpicture}[baseline = -0.9mm]
	\draw[<-,thin,red] (0.4,0.2) to[out=-90, in=0] (0.1,-.2);
	\draw[-,thin,red] (0.1,-.2) to[out = 180, in = -90] (-0.2,0.2);
      \node at (0.38,0) {$\dott$};
\end{tikzpicture}
}
\right]
:F^+E^+\oplus
\unit^{\oplus 2}
\Rightarrow E^+F^+.
&
\end{array}
\end{align*}

\item[$(b)$]
The natural transformation\begin{align*}
\begin{array}{rl}
\left[\!\!\!\!\!
\begin{array}{l}\,\,\,\mathord{
\begin{tikzpicture}[baseline = 0]
	\draw[<-,thin,blue] (-0.28,-.3) to (0.28,.4);
	\draw[-,line width=4pt,white] (0.28,-.3) to (-0.28,.4);
	\draw[->,thin,blue] (0.28,-.3) to (-0.28,.4);
\end{tikzpicture}
}\\\,\,\,\mathord{
\begin{tikzpicture}[baseline = 1mm]
	\draw[-,thin,blue] (0.4,0) to[out=90, in=0] (0.1,0.4);
	\draw[->,thin,blue] (0.1,0.4) to[out = 180, in = 90] (-0.2,0);
\end{tikzpicture}
}
\\
\,\,\,\mathord{
\begin{tikzpicture}[baseline = 1mm]
	\draw[-,thin,blue] (0.4,0) to[out=90, in=0] (0.1,0.4);
	\draw[->,thin,blue] (0.1,0.4) to[out = 180, in = 90] (-0.2,0);
      \node at (0.35,0.2) {$\dott$};
\end{tikzpicture}
}
\end{array}
\right]
:
E^-F^- \Rightarrow
F^-E^- \oplus \unit^{\oplus 2}
\end{array}
\end{align*}

is an isomorphism of functors. It has a two-sided inverse
\begin{align*}
\begin{array}{rl}
\left[
\:\mathord{
\begin{tikzpicture}[baseline = 0]
	\draw[->,thin,blue] (-0.28,-.3) to (0.28,.4);
	\draw[-,line width=4pt,white] (0.28,-.3) to (-0.28,.4);
	\draw[<-,thin,blue] (0.28,-.3) to (-0.28,.4);
\end{tikzpicture}
}
\:\:\:\:-t^-z^-
\mathord{
\begin{tikzpicture}[baseline = -0.9mm]
	\draw[<-,thin,blue] (0.4,0.2) to[out=-90, in=0] (0.1,-.2);
	\draw[-,thin,blue] (0.1,-.2) to[out = 180, in = -90] (-0.2,0.2);
\end{tikzpicture}
}
\:\:\:\:\:\frac{z^-}{t^-}
\mathord{
\begin{tikzpicture}[baseline = -0.9mm]
	\draw[<-,thin,blue] (0.4,0.2) to[out=-90, in=0] (0.1,-.2);
	\draw[-,thin,blue] (0.1,-.2) to[out = 180, in = -90] (-0.2,0.2);
      \node at (0.38,0) {$\dott$};
\end{tikzpicture}
}
\right]
:F^-E^-\oplus
\unit ^{\oplus 2}
\Rightarrow
 E^-F^-.
\end{array}
\end{align*}

\item[$(c)$]
The natural transformations
\begin{align*}
\begin{array}{rl}
\mathord{
\begin{tikzpicture}[baseline = 0]
	\draw[->,thin,blue] (-0.28,-.3) to (0.28,.4);
	\draw[<-,thin,red] (0.28,-.3) to (-0.28,.4);
\end{tikzpicture}
}
:F^- E^+
\Rightarrow
 E^+  F^-
\end{array}
~\text{and}~
\begin{array}{rl}
\mathord{
\begin{tikzpicture}[baseline = 0]
	\draw[->,blue] (0.28,-.3) to (-0.28,.4);
	\draw[<-,red] (-0.28,-.3) to (0.28,.4);
\end{tikzpicture}
}
:E^+ F^-
\Rightarrow
 F^- E^+
\end{array}
\end{align*}
are two-sided inverse.

\item[$(d)$]
The natural transformations
\begin{align*}
\begin{array}{rl}
\mathord{
\begin{tikzpicture}[baseline = 0]
	\draw[->,thin,red] (-0.28,-.3) to (0.28,.4);
	\draw[<-,thin,blue] (0.28,-.3) to (-0.28,.4);
\end{tikzpicture}
}
:F^+ E^-
\Rightarrow
 E^- F^+
\end{array}
~\text{and}~
\begin{array}{rl}
\mathord{
\begin{tikzpicture}[baseline = 0]
	\draw[->,red] (0.28,-.3) to (-0.28,.4);
	\draw[<-,blue] (-0.28,-.3) to (0.28,.4);
\end{tikzpicture}
}
:E^- F^+
\Rightarrow
 F^+ E^-
\end{array}
\end{align*}
are two-sided inverse.
\end{itemize}
\end{proposition}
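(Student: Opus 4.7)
The strategy is to translate each natural transformation into an explicit $(RG_r,RG_r)$-bimodule map, and then to read off the four claims directly from the bimodule decompositions in Proposition~\ref{mkd}.

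For parts~(a) and~(b), Proposition~\ref{mkd}(a) provides the $(RG_r,RG_r)$-bimodule decomposition
\[
e^\pm_r \cdot RG_{r+1} \cdot e^\pm_r \;=\; RG_r\cdot e^\pm_r \;\oplus\; RG_r\cdot X^\pm_{r+1} \;\oplus\; RG_r\cdot T^\pm_{r+1}\cdot RG_r,
\]
which realises $E^\pm F^\pm$ as a direct sum of three bimodules. The first two summands are each isomorphic to the identity bimodule $RG_r$, via $e^\pm_r\mapsto 1$ and $X^\pm_{r+1}\mapsto 1$ respectively, and the third summand is isomorphic to the bimodule representing $F^\pm E^\pm$ via $T^\pm_{r+1}\mapsto e^\pm_{r-1}\otimes e^\pm_{r-1}$. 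Reading off the three components $\Lcross^\pm,\varepsilon^\pm_L,\LX^\pm$ from Lemma~\ref{Lem:Rcrossing}(a), the definition of $\varepsilon^\pm_L=\Proj^\pm_{P_r}$ in~\S\ref{sec:adjunction}(a), and Lemma~\ref{lrdots}(a), the map in~(a) becomes the explicit $3\times 3$ matrix
\[
\begin{pmatrix} 0 & 0 & 1 \\ 1 & 0 & 0 \\ 0 & -(t^+)^2 & 0 \end{pmatrix}
\]
with respect to the two three-summand decompositions; this is clearly invertible. I then use Lemma~\ref{lcross}(a), Lemma~\ref{lrdots}(b), and the definition of $\eta^+_R$ to check that $[\Rcross^+,-t^+z^+\eta^+_R,(z^+/t^+)\RX^+]$ realises the inverse matrix, establishing a two-sided inverse. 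Part~(b) follows identically with $t^-,z^-$ in place of $t^+,z^+$ throughout.

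For parts~(c) and~(d), the key input is Proposition~\ref{mkd}(b): the bimodules $e^\pm_r\cdot RG_{r+1}\cdot e^\mp_r$ representing $E^\pm F^\mp$ are cyclic as $(RG_r,RG_r)$-bimodules, with single generators $H_{r+1}$ and $H'_{r+1}$, while the bimodules $RG_r\cdot e^\mp_{r-1}\otimes_{RG_{r-1}}e^\pm_{r-1}\cdot RG_r$ representing $F^\mp E^\pm$ are cyclic, generated by $e^\mp_{r-1}\otimes e^\pm_{r-1}$. By Lemma~\ref{lcross}(b) the rightward mixed crossings send the tensor generators to $H_{r+1}$ and $H'_{r+1}$, and by Lemma~\ref{Lem:Rcrossing}(b) the leftward mixed crossings send these generators back to the tensor generators. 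Composing, for instance $\MixLcross'\circ\MixRcross$ sends $e^-_{r-1}\otimes e^+_{r-1}\mapsto H_{r+1}\mapsto e^-_{r-1}\otimes e^+_{r-1}$, which by $(RG_r,RG_r)$-bilinearity is the identity on $F^-E^+$; the opposite composition $\MixRcross\circ\MixLcross'$ sends $H_{r+1}\mapsto e^-_{r-1}\otimes e^+_{r-1}\mapsto H_{r+1}$, giving the identity on $E^+F^-$. This proves~(c), and the analogous computation with the colours exchanged handles~(d).

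The main obstacle in this plan is purely the scalar bookkeeping: one must carefully track the constants $t^\pm,z^\pm$ and the normalisation~$\beta$ through the definitions of $X^\pm,T^\pm,H,H'$, so as to confirm that the scalars $-(t^+)^2$, $-t^+z^+$, $z^+/t^+$ appearing in the stated inverse of~(a) are precisely those forced by the bimodule computation. Once the preparatory Lemmas~\ref{lcross}--\ref{lrdots} are accepted, no further conceptual input is required, since parts~(c) and~(d) reduce entirely to the observation that the rightward and leftward mixed crossings exchange cyclic generators of the relevant bimodules.
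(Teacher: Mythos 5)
Your proposal is correct and follows essentially the same route as the paper, whose proof simply cites Lemmas \ref{lcross}, \ref{Lem:Rcrossing}, \ref{lrdots} and "a straightforward computation on generators"; you have merely made that computation explicit via the three-summand decomposition of Proposition \ref{mkd}(a) (yielding the invertible matrix with entries $1$, $1$, $-(t^+)^2$) and the cyclic-generator argument from Proposition \ref{mkd}(b) for the mixed crossings. The scalar bookkeeping you flag does check out against Lemma \ref{lrdots} and the definitions of $\eta^\pm_R$, so no further work is needed.
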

\begin{proof}
 The statements follow from Lemmas \ref{lcross}, \ref{Lem:Rcrossing}, \ref{lrdots} and a straightforward computation on generators.
\end{proof}
\smallskip

\subsubsection{Curl relations}

\begin{proposition}\label{prop:curl}
The curl relations (\ref{k=2curl}) hold.
\end{proposition}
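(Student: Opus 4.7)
The strategy is to decompose the displayed self-crossing morphism into a composition of basic natural transformations (cups, caps and a crossing) whose bimodule realizations are already in hand from the previous lemmas, and then to compute the result on the cyclic generator. We treat the red curl; the blue case is identical after swapping colors and exchanging $(t^+,z^+)$ with $(t^-,z^-)$.

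Interpret the left curl as a natural transformation $C\in \End(F^+)$ and write
$$C = (\varepsilon^+_L \otimes 1_{F^+}) \circ (1_{E^+} \otimes T^+) \circ (\eta^+_R \otimes 1_{F^+}),$$
so that the composition of a right unit $\eta^+_R$ on the left, the positive crossing $T^+$ in the middle, and a left counit $\varepsilon^+_L$ on top produces a single $F^+$-strand with a self-crossing whose over/under pattern matches \eqref{k=2curl}. Since $C$ corresponds to a bimodule endomorphism of $RG_{r+1}\cdot e^+_r$, it suffices to evaluate it on the cyclic generator $e^+_r$.

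Tracing the three steps using \S\ref{sec:adjunction} and \S\ref{subsec:functors}: first $(\eta^+_R\otimes 1_{F^+})(e^+_r)= -(t^+ z^+)^{-1}\, e^+_{r+1}\otimes e^+_r$. Next $(1_{E^+}\otimes T^+)$ right-multiplies the last two tensor factors by $T^+_{r+2}$; after combining positions one and two under the canonical isomorphism $(e^+_{r+1}\cdot RG_{r+2})\otimes_{RG_{r+2}}(RG_{r+2}\cdot e^+_{r+1})\cong e^+_{r+1}\cdot RG_{r+2}\cdot e^+_{r+1}$, this produces (up to the scalar $-(t^+z^+)^{-1}q^{1/2}$) the element $e^+_{r+1}e^+_r\dot{s}_{r+2}e^+_{r+1}$ inside $e^+_{r+1}\cdot RG_{r+2}\cdot e^+_{r+1}$. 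Finally $(\varepsilon^+_L\otimes 1_{F^+})$ applies the projection $\Proj^+_{P_{r+1}}$ to this element.

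The vanishing is then immediate: every summand in the expansion of $e^+_{r+1}e^+_r\dot{s}_{r+2}e^+_{r+1}$ has the form $p\,\dot{s}_{r+2}\,p'$ with $p,p'\in P_{r+1}$ (since $U_r\subset G_{r+1}\subset P_{r+1}$ and $U_{r+1}\subset P_{r+1}$), and therefore lies in the double coset $P_{r+1}\dot{s}_{r+2}P_{r+1}$, which is disjoint from $P_{r+1}$ because $s_{r+2}$ is not a reflection of the Weyl group of the Levi complement of $P_{r+1}$. Hence $\Proj^+_{P_{r+1}}\bigl(e^+_{r+1}e^+_r\dot{s}_{r+2}e^+_{r+1}\bigr)=0$, giving $C(e^+_r)=0$, and thus $C\equiv 0$. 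The only delicate point is matching the handedness of the self-crossing in \eqref{k=2curl} with the positive crossing $T^+$ chosen in the decomposition; once this is verified the whole calculation reduces to the same parabolic non-containment that already forced the vanishings $\varepsilon^+_L(T^+_{r+1})=0$ and $\LX^\pm(T^\pm_{r+1})=0$ used in Lemmas~\ref{Lem:Rcrossing} and~\ref{lrdots}.
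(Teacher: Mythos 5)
Your decomposition of the curl as $(\varepsilon^+_L\otimes 1_{F^+})\circ(1_{E^+}\otimes T^+)\circ(\eta^+_R\otimes 1_{F^+})$ and the subsequent trace of the generator $e^+_r$ through the three bimodule maps is exactly the paper's proof of Proposition \ref{prop:curl}; you merely make explicit the final step, namely that $\Proj^+_{P_{r+1}}(T^+_{r+2})=0$ because $T^+_{r+2}$ is supported on the double coset $P_{r+1}\dot{s}_{r+2}P_{r+1}$, which is disjoint from $P_{r+1}$. The argument is correct as written.
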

\begin{proof}
We decompose them as $(\varepsilon_L^\pm\otimes 1_{F^\pm})\circ (1_{E^\pm}\otimes T^\pm)\circ(\eta_R^\pm\otimes 1_{F^\pm}).$
Then they are the compositions of the following bimodule maps:
\begin{align*}
& R G_{r+1} \cdot e^\pm_{r} \cong R G_{r+1} \otimes_{R G_{r+1}} R G_{r+1} \cdot e^\pm_{r}  \\&\stackrel{\eta^\pm_R\otimes 1}{\longrightarrow} e^\pm_{r+1}
\cdot R G_{r+2} \cdot e^\pm_{r+1} \otimes_{ R G_{r+1} } R G_{r+1}\cdot  e^\pm_r \\&\stackrel{ 1\otimes T^\pm}{\longrightarrow} e^\pm_{r+1}
\cdot R G_{r+2} \cdot e^\pm_{r+1} \otimes_{ R G_{r+1} } R G_{r+1}\cdot  e^\pm_r   \\&\stackrel{\varepsilon^\pm_L \otimes1}{\longrightarrow}  R G_{r+1} \otimes_{R G_{r+1}} R G_{r+1} \cdot e^\pm_{r}
\cong    R G_{r+1} \cdot e^\pm_{r} .
\end{align*}
We compute the image of $ e^\pm_{r}$ as follows:
\begin{align*}
&  e^\pm_{r}   \xrightarrow{\thicksim} 1\otimes    e^\pm_{r}\\
&\mapsto  -(t^\pm z^\pm)^{-1}e^\pm_{r+1} \otimes  e^\pm_r  \\
&\mapsto
  -(t^\pm z^\pm)^{-1}T^\pm_{r+2}\otimes e^\pm_{r} \\
&\mapsto -(t^\pm z^\pm)^{-1} \Proj^\pm_{P_{r+1}}({T^\pm_{r+2}})=0.
\end{align*}

\end{proof}
\smallskip

\subsubsection{Invertibility of $X^+$ and $X^-$}\label{sub:invetible}
In this section, we prove the invertibility  of natural transformations $X^+$ and $X^-$.

\begin{definition}Let $R=K$ or $k$ and  $M\in  \calC=RG_\bullet\mod $. We say that $M$ is $F^+$-cuspidal if $E^+(M)=0$ and $M$ is $F^-$-cuspidal if $E^-(M)=0.$
\end{definition}

\begin{proposition}\label{Prop:dim2}Let $R=K$ or $k$ be an algebraically closed field. Let $\epsilon\in \{\pm\}$ and $M$ be an irreducible $F^\epsilon$-cuspidal module.
\begin{itemize}
\item[$(1)$] We have $$\dim_{R}(\End_\mathcal{C}(F^\epsilon(M)))=2$$ and  the evaluation map $\phi_{F^\epsilon}\::\End(F^\epsilon)\to \End_\mathcal{C}(F^\epsilon(M))$ is surjective, where $\End(F^\epsilon)$ is the endomorphism ring of $F^\epsilon$ in the double quantum Heisenberg category.
\item[$(2)$] The image $X^\epsilon(M)$ of $X^\epsilon$ under $\phi_{F^\epsilon}$ acts on $F^\epsilon(M)$ satisfing
$$(X^\epsilon(M))^2=\gamma\cdot  X^\epsilon(M)-(t^\epsilon)^2\cdot \id_{F^\epsilon}(M)$$  for some $\gamma\in R$ and hence $X^\epsilon(M)$ is invertible.
\end{itemize}

\end{proposition}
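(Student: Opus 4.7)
The plan is to obtain (1) from the Mackey formula of Proposition \ref{prop:Mackey} together with Schur's lemma, and then derive (2) by extracting the constant term of the minimal polynomial of $X^\epsilon(M)$ from the bubble evaluations at central charge $k=-2$.

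First I would prove (1). Since $M$ is $F^\epsilon$-cuspidal, $E^\epsilon(M)=0$, and the Mackey isomorphism $E^\epsilon F^\epsilon \cong F^\epsilon E^\epsilon \oplus \unit^{\oplus 2}$ from Proposition \ref{prop:Mackey} specializes to $E^\epsilon F^\epsilon(M) \cong M\oplus M$. Combining this with the $(F^\epsilon, E^\epsilon)$-biadjunction and Schur's lemma (valid because $R$ is algebraically closed and $M$ is irreducible) yields
\[
\End_\calC(F^\epsilon(M)) \;\cong\; \Hom_\calC\!\bigl(M, E^\epsilon F^\epsilon(M)\bigr) \;\cong\; \Hom_\calC(M, M\oplus M) \;\cong\; R\oplus R,
\]
proving the dimension claim. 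For surjectivity of $\phi_{F^\epsilon}$, the explicit inverse in Proposition \ref{prop:Mackey} realizes the summand $\unit^{\oplus 2}\hookrightarrow E^\epsilon F^\epsilon$ via $(-t^\epsilon z^\epsilon)\eta_R^\epsilon$ and $\tfrac{z^\epsilon}{t^\epsilon}\RX^\epsilon$, where $\RX^\epsilon=(1_{E^\epsilon}\otimes X^\epsilon)\circ\eta_R^\epsilon$. Under the adjunction $\End(F^\epsilon)\cong\Hom(\unit, E^\epsilon F^\epsilon)$ these correspond (up to nonzero scalars) to $\id_{F^\epsilon}$ and $X^\epsilon$; evaluated on the cuspidal $M$ (for which the $F^\epsilon E^\epsilon$-summand vanishes), they produce $\id_{F^\epsilon(M)}$ and $X^\epsilon(M)$ as a basis of $\End_\calC(F^\epsilon(M))$, and hence $\phi_{F^\epsilon}$ is surjective.

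For (2), Part~(1) forces a relation $(X^\epsilon(M))^2 = \gamma\, X^\epsilon(M) + \delta\,\id$ for some $\gamma,\delta\in R$. Translating via the adjunction, this becomes the identity
\[
(1_{E^\epsilon}\otimes X^\epsilon)_M \circ \RX^\epsilon(M) \;=\; \gamma\,\RX^\epsilon(M) \;+\; \delta\,\eta_R^\epsilon(M)
\]
in $\Hom_\calC(M, E^\epsilon F^\epsilon(M))$, whose basis is $\{\eta_R^\epsilon(M), \RX^\epsilon(M)\}$. To isolate $\delta$, compose both sides on the left with the projection $\varepsilon_L^\epsilon(M)\colon E^\epsilon F^\epsilon(M)\to M$ arising as a component of the forward Mackey isomorphism. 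Each term then becomes the evaluation at $M$ of a closed bubble diagram $\varepsilon_L^\epsilon \circ (1\otimes X^\epsilon)^r \circ \eta_R^\epsilon$ for $r=0,1,2$; the $k=-2$ bubble relations \eqref{redbubble}--\eqref{bluebubble}, together with the identity $\LX^\epsilon(X^\epsilon)=-(t^\epsilon)^2$ from Lemma \ref{lrdots}, yield a scalar equation that forces $\delta=-(t^\epsilon)^2$. Since $(t^\epsilon)^2\in R^\times$, the relation $X^\epsilon(M)\bigl(\gamma\,\id - X^\epsilon(M)\bigr) = (t^\epsilon)^2\,\id$ exhibits the two-sided inverse $X^\epsilon(M)^{-1} = (t^\epsilon)^{-2}\bigl(\gamma\,\id - X^\epsilon(M)\bigr)$.

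The main obstacle is pinning down the precise value $\delta=-(t^\epsilon)^2$: part~(1) is essentially automatic from Mackey and Schur, but extracting this specific scalar rather than a related quantity (such as $-(t^\epsilon)^{-2}$) demands careful bookkeeping of the normalizations of $\eta_R^\epsilon$, $\varepsilon_L^\epsilon$, and $X^\epsilon$ (including the factor $\beta$) fixed in \S\ref{sec:adjunction}--\S\ref{sub:naturaltransf}, combined with the $k=-2$ specializations of the bubble relations.
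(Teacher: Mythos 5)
Your proposal is correct and follows essentially the same route as the paper: the dimension count via the Mackey isomorphism $E^\epsilon F^\epsilon\cong F^\epsilon E^\epsilon\oplus\unit^{\oplus 2}$, biadjunction and Schur's lemma, and the extraction of the constant term $-(t^\epsilon)^2$ from the $k=-2$ bubble values $-1/(t^\epsilon z^\epsilon)$, $0$, $t^\epsilon/z^\epsilon$. The only (harmless) variation is that you deduce the linear independence of $\id_{F^\epsilon}(M)$ and $X^\epsilon(M)$ from the invertibility of the Mackey map restricted to the $\unit^{\oplus 2}$ summand, whereas the paper verifies it directly by closing the diagrams with cups and caps — the same computation in different packaging.
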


\begin{proof}
$(a)$ By Mackey formula relations (see Proposition \ref{prop:Mackey}), we have $ E^\epsilon F^\epsilon\cong F^\epsilon E^\epsilon\oplus \unit^{\oplus2}.$ So
\begin{align*}
\End_\mathcal{C}(F^\epsilon(M))&=\Hom_\mathcal{C}(E^\epsilon  F^\epsilon(M),M)\quad\text{(since $F^\epsilon$ and $E^\epsilon$ are biadjoint)}\\
&=\Hom_\mathcal{C}(F^\epsilon E^\epsilon(M),M)\oplus \Hom_\mathcal{C}(M^{\oplus2},M)\\
&=\Hom_\mathcal{C}(M^{\oplus 2},M)\quad\text{(since $M$ is $F^\epsilon$-cuspidal)}.
\end{align*}
Since $M$ is irreducible, $\dim_{R}(\End_\mathcal{C}(F^\epsilon(M)))=2$ follows.
\smallskip

To show the surjectivity, it suffices to show that $\id_{F^\epsilon}(M)$ and $X^\epsilon(M)$ are linearly independent. Let $\up\in\{\red{\up},\blue{\up}\}.$ Assume that $$\:a\:\mathord{
\begin{tikzpicture}[baseline = -1mm]
 	\draw[->] (0.08,-.3) to (0.08,.4);
 	\draw[-,darkg,thick] (0.45,.4) to (0.45,-.3);
     \node at (0.45,-.45) {$\darkg\scriptstyle{M}$};
\end{tikzpicture}
}\:+\:b\: \mathord{
\begin{tikzpicture}[baseline = -1mm]
 	\draw[->] (0.08,-.3) to (0.08,.4);
    \node at (0.08,0.1) {$\dott$};
 	\draw[-,darkg,thick] (0.45,.4) to (0.45,-.3);
     \node at (0.45,-.45) {$\darkg\scriptstyle{M}$};
\end{tikzpicture}
}=0.$$
Composing with $\begin{tikzpicture}[baseline = .75mm]
	\draw[<-] (0.3,0.3) to[out=-90, in=0] (0.1,0);
	\draw[-] (0.1,0) to[out = 180, in = -90] (-0.1,0.3);
\end{tikzpicture}\:$ and
$\begin{tikzpicture}[baseline = 1mm]
	\draw[-] (0.3,0) to[out=90, in=0] (0.1,0.3);
	\draw[->] (0.1,0.3) to[out = 180, in = 90] (-0.1,0);
\end{tikzpicture}\:$, we get
$$
a\:\mathord{\begin{tikzpicture}[baseline = -1mm]
  \draw[-,thin] (0,0.2) to[out=180,in=90] (-.2,0);
  \draw[->,thin] (0.2,0) to[out=90,in=0] (0,.2);
 \draw[-,thin] (-.2,0) to[out=-90,in=180] (0,-0.2);
  \draw[-,thin] (0,-0.2) to[out=0,in=-90] (0.2,0);
  \draw[-,darkg,thick] (0.45,.4) to (0.45,-.3);
     \node at (0.45,-.45) {$\darkg\scriptstyle{M}$};
\end{tikzpicture}
}\:+\:b\: \mathord{\begin{tikzpicture}[baseline = -1mm]
  \draw[-,thin] (0,0.2) to[out=180,in=90] (-.2,0);
  \draw[->,thin] (0.2,0) to[out=90,in=0] (0,.2);
 \draw[-,thin] (-.2,0) to[out=-90,in=180] (0,-0.2);
  \draw[-,thin] (0,-0.2) to[out=0,in=-90] (0.2,0);
      \node at (0.2,0) {$\dott$};\draw[-,darkg,thick] (0.45,.4) to (0.45,-.3);
     \node at (0.45,-.45) {$\darkg\scriptstyle{M}$};
\end{tikzpicture}
}=0.
$$
Since $\mathord{\begin{tikzpicture}[baseline = -1mm]
  \draw[-,thin] (0,0.2) to[out=180,in=90] (-.2,0);
  \draw[->,thin] (0.2,0) to[out=90,in=0] (0,.2);
 \draw[-,thin] (-.2,0) to[out=-90,in=180] (0,-0.2);
  \draw[-,thin] (0,-0.2) to[out=0,in=-90] (0.2,0);
\end{tikzpicture}
}=-\frac{1}{t^\epsilon  z^\epsilon}\neq 0$ and $\mathord{\begin{tikzpicture}[baseline = -1mm]
  \draw[-,thin] (0,0.2) to[out=180,in=90] (-.2,0);
  \draw[->,thin] (0.2,0) to[out=90,in=0] (0,.2);
 \draw[-,thin] (-.2,0) to[out=-90,in=180] (0,-0.2);
  \draw[-,thin] (0,-0.2) to[out=0,in=-90] (0.2,0);
      \node at (0.2,0) {$\dott$};
\end{tikzpicture}
}=0$ (see (\ref{redbubble}) and (\ref{bluebubble})), we get $a=0.$
\smallskip

Composing with $\begin{tikzpicture}[baseline = .75mm]
	\draw[<-] (0.3,0.3) to[out=-90, in=0] (0.1,0);
	\draw[-] (0.1,0) to[out = 180, in = -90] (-0.1,0.3);
\end{tikzpicture}\:$ and
$\begin{tikzpicture}[baseline = 1mm]
	\draw[-] (0.3,0) to[out=90, in=0] (0.1,0.3);
	\draw[->] (0.1,0.3) to[out = 180, in = 90] (-0.1,0);
\node at (0.26,0.18) {$\dott$};
\end{tikzpicture}\:$, we get
$$
a\:\begin{tikzpicture}[baseline = -1mm]
  \draw[-,thin] (0,0.2) to[out=180,in=90] (-.2,0);
  \draw[->,thin] (0.2,0) to[out=90,in=0] (0,.2);
 \draw[-,thin] (-.2,0) to[out=-90,in=180] (0,-0.2);
  \draw[-,thin] (0,-0.2) to[out=0,in=-90] (0.2,0);
      \node at (0.2,0) {$\dott$};
 	\draw[-,darkg,thick] (0.45,.4) to (0.45,-.3);
     \node at (0.45,-.45) {$\darkg\scriptstyle{M}$};
\end{tikzpicture}
\:+\:b\: \begin{tikzpicture}[baseline = -1mm]
  \draw[-,thin] (0,0.2) to[out=180,in=90] (-.2,0);
  \draw[->,thin] (0.2,0) to[out=90,in=0] (0,.2);
 \draw[-,thin] (-.2,0) to[out=-90,in=180] (0,-0.2);
  \draw[-,thin] (0,-0.2) to[out=0,in=-90] (0.2,0);
      \node at (0.2,0) {$\dott$};
      \node at (0.4,0) {$\color{darkblue}\scriptstyle 2$};
 	\draw[-,darkg,thick] (0.55,.4) to (0.55,-.3);
     \node at (0.5,-.45) {$\darkg\scriptstyle{M}$};
\end{tikzpicture}
=0.$$
Since $\mathord{\begin{tikzpicture}[baseline = -1mm]
  \draw[-,thin] (0,0.2) to[out=180,in=90] (-.2,0);
  \draw[->,thin] (0.2,0) to[out=90,in=0] (0,.2);
 \draw[-,thin] (-.2,0) to[out=-90,in=180] (0,-0.2);
  \draw[-,thin] (0,-0.2) to[out=0,in=-90] (0.2,0);
  \node at (0.4,0) {$\color{darkblue}\scriptstyle 2$};
      \node at (0.2,0) {$\dott$};
\end{tikzpicture}
}=\frac{ t^\epsilon }{ z^\epsilon}\neq 0$ and $\mathord{\begin{tikzpicture}[baseline = -1mm]
  \draw[-,thin] (0,0.2) to[out=180,in=90] (-.2,0);
  \draw[->,thin] (0.2,0) to[out=90,in=0] (0,.2);
 \draw[-,thin] (-.2,0) to[out=-90,in=180] (0,-0.2);
  \draw[-,thin] (0,-0.2) to[out=0,in=-90] (0.2,0);
      \node at (0.2,0) {$\dott$};
\end{tikzpicture}
}=0$ (see also (\ref{redbubble}) and (\ref{bluebubble})), we get $b=0.$
Hence $\id_{F^\epsilon}(M)$ and $X^\epsilon(M)$ are linearly independent.
\smallskip

$(b)$  By $(a)$, we know that $\End_{\mathcal{C}}(F^\epsilon(M))$ is of dimension 2 and has a basis $\id_{F^\epsilon}(M)$ and $X^\epsilon(M)$. Since $(X^\epsilon)^2(M)\in \End_{\mathcal{C}}(F^\epsilon(M)),$ there are constants $c,d\in R$ such that $(X^\epsilon)^2(M)=c\cdot \id_{F^\epsilon}(M) +d\cdot X^\epsilon(M).$ Diagrammatically, let $\up\in\{\red{\up},\blue{\up}\}$, then
$$\mathord{
\begin{tikzpicture}[baseline = -1mm]
 	\draw[->] (0.08,-.3) to (0.08,.4);
     \node at (0.08,0.0) {$\dott$};
    \node at (0.08,0.2) {$\dott$};
 	\draw[-,darkg,thick] (0.45,.4) to (0.45,-.3);
     \node at (0.45,-.45) {$\darkg\scriptstyle{M}$};
\end{tikzpicture}
}= \:c\:\mathord{
\begin{tikzpicture}[baseline = -1mm]
 	\draw[->] (0.08,-.3) to (0.08,.4);
 	\draw[-,darkg,thick] (0.45,.4) to (0.45,-.3);
     \node at (0.45,-.45) {$\darkg\scriptstyle{M}$};
\end{tikzpicture}
}\:+\:d\: \mathord{
\begin{tikzpicture}[baseline = -1mm]
 	\draw[->] (0.08,-.3) to (0.08,.4);
    \node at (0.08,0.1) {$\dott$};
 	\draw[-,darkg,thick] (0.45,.4) to (0.45,-.3);
     \node at (0.45,-.45) {$\darkg\scriptstyle{M}$};
\end{tikzpicture}
}.
$$

Composing  with $\begin{tikzpicture}[baseline = .75mm]
	\draw[<-] (0.3,0.3) to[out=-90, in=0] (0.1,0);
	\draw[-] (0.1,0) to[out = 180, in = -90] (-0.1,0.3);
\end{tikzpicture}\:$ and
$\begin{tikzpicture}[baseline = 1mm]
	\draw[-] (0.3,0) to[out=90, in=0] (0.1,0.3);
	\draw[->] (0.1,0.3) to[out = 180, in = 90] (-0.1,0);
\end{tikzpicture}\:$, we get
$$
\mathord{
\begin{tikzpicture}[baseline = -1mm]
  \draw[-,thin] (0,0.2) to[out=180,in=90] (-.2,0);
  \draw[->,thin] (0.2,0) to[out=90,in=0] (0,.2);
 \draw[-,thin] (-.2,0) to[out=-90,in=180] (0,-0.2);
  \draw[-,thin] (0,-0.2) to[out=0,in=-90] (0.2,0);
      \node at (0.2,0) {$\dott$};
      \node at (0.4,0) {$\color{darkblue}\scriptstyle 2$};
 	\draw[-,darkg,thick] (0.55,.4) to (0.55,-.3);
     \node at (0.45,-.45) {$\darkg\scriptstyle{M}$};
\end{tikzpicture}
}= \:c\:\mathord{
\begin{tikzpicture}[baseline = -1mm]
  \draw[-,thin] (0,0.2) to[out=180,in=90] (-.2,0);
  \draw[->,thin] (0.2,0) to[out=90,in=0] (0,.2);
 \draw[-,thin] (-.2,0) to[out=-90,in=180] (0,-0.2);
  \draw[-,thin] (0,-0.2) to[out=0,in=-90] (0.2,0);
 	\draw[-,darkg,thick] (0.45,.4) to (0.45,-.3);
     \node at (0.45,-.45) {$\darkg\scriptstyle{M}$};
\end{tikzpicture}
}\:+\:d\: \mathord{
\begin{tikzpicture}[baseline = -1mm]
  \draw[-,thin] (0,0.2) to[out=180,in=90] (-.2,0);
  \draw[->,thin] (0.2,0) to[out=90,in=0] (0,.2);
 \draw[-,thin] (-.2,0) to[out=-90,in=180] (0,-0.2);
  \draw[-,thin] (0,-0.2) to[out=0,in=-90] (0.2,0);
      \node at (0.2,0) {$\dott$};
 	\draw[-,darkg,thick] (0.45,.4) to (0.45,-.3);
     \node at (0.45,-.45) {$\darkg\scriptstyle{M}$};
\end{tikzpicture}
}.
$$
Since $ \mathord{
\begin{tikzpicture}[baseline = -1mm]
  \draw[-,thin] (0,0.2) to[out=180,in=90] (-.2,0);
  \draw[->,thin] (0.2,0) to[out=90,in=0] (0,.2);
 \draw[-,thin] (-.2,0) to[out=-90,in=180] (0,-0.2);
  \draw[-,thin] (0,-0.2) to[out=0,in=-90] (0.2,0);
      \node at (0.2,0) {$\dott$};
\end{tikzpicture}
}=0$, we have
 $c=\mathord{
\begin{tikzpicture}[baseline = -1mm]
  \draw[-,thin] (0,0.2) to[out=180,in=90] (-.2,0);
  \draw[->,thin] (0.2,0) to[out=90,in=0] (0,.2);
 \draw[-,thin] (-.2,0) to[out=-90,in=180] (0,-0.2);
  \draw[-,thin] (0,-0.2) to[out=0,in=-90] (0.2,0);
      \node at (0.2,0) {$\dott$};
      \node at (0.4,0) {$\color{darkblue}\scriptstyle 2$};
\end{tikzpicture}
}/\mathord{
\begin{tikzpicture}[baseline = -1mm]
  \draw[-,thin] (0,0.2) to[out=180,in=90] (-.2,0);
  \draw[->,thin] (0.2,0) to[out=90,in=0] (0,.2);
 \draw[-,thin] (-.2,0) to[out=-90,in=180] (0,-0.2);
  \draw[-,thin] (0,-0.2) to[out=0,in=-90] (0.2,0);
\end{tikzpicture}
}=-(t^\epsilon)^2\neq 0$, i.e., $(X^\epsilon)^2(M)=\gamma\cdot X^\epsilon(M)-(t^\epsilon)^2\cdot  \id_{F^\epsilon}(M)$. In particular, $X^\epsilon(M)$ is invertible.
\end{proof}

\begin{proposition}\label{prop:invertible}
$X^+\in \End(F^+)$ and $X^-\in \End(F^-)$ are invertible.
\end{proposition}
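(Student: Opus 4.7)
The plan is to deduce the invertibility of $X^\pm$ from Proposition \ref{Prop:dim2} via a reduction to the $F^\pm$-cuspidal case. Since $X^\pm$ is a natural transformation of the exact functor $F^\pm$ and every module in $RG_\bullet\mod$ has finite length, $X^\pm$ is invertible as a natural transformation iff $X^\pm_L$ is invertible on $F^\pm(L)$ for every irreducible $L$ (propagating to composition series by a $5$-lemma). For $R=\calO$, the bimodule $\calO G_{r+1}\cdot e^\pm_r$ is $\calO$-free, so invertibility may be tested after base change to $K=\Frac(\calO)$ and to the residue field $k$. Hence we may assume $R$ is an algebraically closed field as required by Proposition \ref{Prop:dim2}, and induct on the rank $n$ such that $L\in RG_n\mod$.

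In the base case (smallest $n$, where $E^\pm$ has no target, so every irreducible is $F^\pm$-cuspidal), and more generally for any $F^\pm$-cuspidal irreducible $L$ encountered in the induction, Proposition \ref{Prop:dim2}(b) yields the quadratic identity
\[
(X^\pm_L)^2=\gamma\cdot X^\pm_L-(t^\pm)^2\cdot\id_{F^\pm(L)}\quad\text{for some }\gamma\in R.
\]
Since $(t^\pm)^2\neq 0$, this rewrites as $X^\pm_L\cdot(t^\pm)^{-2}(\gamma\cdot\id-X^\pm_L)=\id_{F^\pm(L)}$, exhibiting an explicit two-sided inverse of $X^\pm_L$.

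For a non-cuspidal irreducible $L\in RG_n\mod$ in the inductive step, we have $E^\pm(L)\neq 0$. Pick an irreducible composition factor $N\in RG_{n-1}\mod$; by biadjunction $L$ is a composition factor of $F^\pm(N)$, say $L=B/A$ with $A\subset B\subset F^\pm(N)$. By the induction hypothesis, $X^\pm_N$ is invertible on $F^\pm(N)$, so applying the exact functor $F^\pm$ produces an invertible endomorphism $F^\pm(X^\pm_N)$ of $(F^\pm)^2(N)$. The affine Hecke relation (\ref{AHH1}), read as an identity of natural transformations on $(F^\pm)^2$ and evaluated at $N$, gives
\[
T^\pm_N\circ F^\pm(X^\pm_N)=X^\pm_{F^\pm(N)}\circ T^\pm_N.
\]
Using the invertibility of the crossing $T^\pm$ (which follows from the quadratic Hecke identity $T^\pm-(T^\pm)^{-1}=z^\pm\id$ also contained in (\ref{AHH1})), this exhibits $X^\pm_{F^\pm(N)}=T^\pm_N\circ F^\pm(X^\pm_N)\circ(T^\pm_N)^{-1}$ as invertible on $(F^\pm)^2(N)$.

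To descend to $L$, apply $F^\pm$ to $A\subset B\subset F^\pm(N)$ to obtain $F^\pm(A)\subset F^\pm(B)\subset (F^\pm)^2(N)$ with $F^\pm(L)=F^\pm(B)/F^\pm(A)$. Naturality of $X^\pm$ forces $X^\pm_{F^\pm(N)}$ to preserve both $F^\pm(A)$ and $F^\pm(B)$, and to induce $X^\pm_L$ on the subquotient. Since $(F^\pm)^2(N)$ has finite length, invertibility of $X^\pm_{F^\pm(N)}$ passes to any invariant subquotient, and in particular $X^\pm_L$ is invertible on $F^\pm(L)$, completing the induction. The chief technical point beyond the cuspidal input of Proposition \ref{Prop:dim2} is the invertibility of $T^\pm$, which is the nontrivial ingredient the conjugation argument rests on.
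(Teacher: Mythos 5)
Your proof has the same three-step architecture as the paper's: reduce from $\calO$ to a field via freeness of the bimodule, reduce to irreducibles and then (by induction on the rank) to $F^\pm$-cuspidal irreducibles, and finish the cuspidal case with the quadratic identity of Proposition \ref{Prop:dim2}(b). The only place you genuinely deviate is the inductive step: the paper cites Grojnowski's lemma to conclude that the eigenvalues of $X^\epsilon_{F^\epsilon(N)}$ on $(F^\epsilon)^2(N)$ are $q^{\pm1}$ times those of $X^\epsilon_N$, hence nonzero, whereas you argue directly from the affine Hecke relations. That substitution is legitimate, but the identity you quote is wrong: the relations \eqref{AHH1} (together with $T^2=zT+1$) give the \emph{twisted} conjugation
\begin{equation*}
X^\pm_{F^\pm(N)}=T^\pm_N\circ F^\pm(X^\pm_N)\circ T^\pm_N,
\end{equation*}
not $T^\pm_N\circ F^\pm(X^\pm_N)\circ (T^\pm_N)^{-1}$; this is exactly the relation $X_{r+2}=T_{r+2}X_{r+1}T_{r+2}$ used in the proof of Lemma \ref{Lem:Rcrossing} (the two differ by the term $-z\,T^\pm_N\circ F^\pm(X^\pm_N)$, since $(T^\pm)^{-1}=T^\pm-z^\pm$). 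Your conclusion survives unchanged: with the correct identity, $X^\pm_{F^\pm(N)}$ is a composite of three invertible maps (invertibility of $T^\pm$ coming from $T^\pm(T^\pm-z^\pm)=\id$), and your subsequent descent to the invariant subquotient $F^\pm(L)$ by finite-length considerations is fine. So the argument is correct modulo this local fix, and it has the mild virtue of not invoking \cite{Go}.
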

\begin{proof}

Step 1: Reduce the problem to fields. When $R=\mathcal{O}$, the functor $F^\epsilon_{r, r+1}$ is represented by the $(\mathcal{O}G_{r+1},\mathcal{O}G_r)$-bimodule $\mathcal{O}G_{r+1}\cdot e^\epsilon_r $ which is a free $\mathcal{O}$-module. The action of $X^\epsilon$ on $F^\epsilon$ is given by right multiplication by $X^\epsilon_{r+1}$ on $ R G_{r+1} \cdot e^-_r $. Hence, to show that $ X^\epsilon$ is invertible, it suffices to show that $X^\epsilon_k\cong X^\epsilon\otimes_{\mathcal{O}}k$ is invertible in $\End(F^\epsilon\otimes_{\mathcal{O}}k)$, where $k$ is the residue field of $\calO.$
So now we assume that $R=K$ or $k.$ Moreover, we can assume that $R=K$ or $k$ is an algebraically closed field.
\smallskip

Step 2: Reduce this problem to $F^\epsilon$-cuspidal modules. To show that $X^\epsilon$ is invertible, it suffices to show that $X^\epsilon(M)$ is invertible in $\End_{\mathcal{C}}(F^\epsilon(M))$ for any module $M\in  RG_\bullet\mod .$ By the exactness of functor $F^\epsilon$, we only need to show that $X^\epsilon(M)$ is invertible in $\End_{\mathcal{C}}(F^\epsilon(M))$ for any irreducible module $M.$ From now on, we assume that $M$ is irreducible. If $E^\epsilon(M)\neq 0,$ than there exists a module $N\in  RG_\bullet\mod $ such that $M$ is a composition factor of $F^\epsilon(N).$
By induction, we can assume that $X^\epsilon(N)$ is invertible in $\End_{\mathcal{C}}(F^\epsilon(N))$. Since $X^\epsilon$ and $T^\epsilon$ satisfy affine Hecke relation, by \cite[Lemma 4.7]{Go}, we deduce that the the eigenvalues of $X^\epsilon(F^\epsilon(N))$ on $(F^\epsilon)^2(N)$  are $q^{\pm 1}$ times the eigenvalues of $X^\epsilon(N)$ on $F^\epsilon(N)$, hence $X^\epsilon(F^\epsilon(N))$ is  invertible, and so is $X^\epsilon(M).$
\smallskip

Step 3: Now assume that $M$ is an $F^\epsilon$-cuspidal irreducible module, then we need to show the invertibility of $X^\epsilon(M),$ which follows from Proposition \ref{Prop:dim2} $(b)$.
\end{proof}

\smallskip

\subsubsection{Proof of Theorem \ref{Thm:doubleheisenberg}}\label{sub:proof}

\begin{proof} The theorem follows by previous propositions. In fact, the affine Hecke relations (\ref{AHH1}) and (\ref{AHH2}) (for both red and blue colors) are checked in Proposition \ref{prop:12relations}; the biadjoint relations (\ref{adj})  are checked in Proposition \ref{prop:adj}; the bubble relations  and Mackey formula relations $(\ref{redbubble})$--$(\ref{MMackey2})$ are the corollaries of Proposition \ref{prop:Mackey} $(a)$ and $(b)$; the curl relations (\ref{k=2curl}) are checked in Proposition \ref{prop:curl}; the mixed affine-Hecke-type relations (\ref{Mixaff1})--(\ref{Mixaff3}) are checked in Proposition \ref{prop:12relations}; the mixed Mackey formula relations (\ref{Mixamackey1}) and (\ref{Mixamackey2}) are checked in Proposition \ref{prop:Mackey} $(c)$ and $(d);$ the invertibility of $X^+$ and $X^-$ is showed in Proposition \ref{prop:invertible}.
\end{proof}

%
\medskip

\section{Extra symmetries of categorification }\label{Chap:extrasymmeties}
 In this section, we investigate functors arising from the spinor  character, the
sign character and diagonal automorphism and their properties.
\smallskip

\subsection{Spinor functor for $\O_{2n+1}(q)$ and $\O^{\pm}_{2n}(q)$}\label{sub:spin}\hfill\\

In this section we let $G_n=\O_{2n+1}(q)$ or $\O^{\pm}_{2n}(q)$ with $q$ odd.
Let $\sp$ be the \emph{spinor character} of $G_n$
 (see \cite[\S 1.6]{Wa}). Let $R_{\sp}$ be the 1-dimensional $R G _n$-module affording $\sp$.
When there is a need to avoid ambiguity,
we use the notation $\sp_n$ with subscript $n$ for the character $\sp$ of $G_n$.
We define the functor $$\mathrm{Spin}_n:R G _n\mod\to R G _n\mod$$
by $$M\mapsto R_{\sp_n}\otimes_R M.$$
 We define a $(RG_n, RG_n)$-bimodule  $RG_n \cdot \mathfrak{sp}_n$ as follows: 
\begin{enumerate}
	\item   As a left $RG_n$-module, $RG_n \cdot \mathfrak{sp}_n$ is isomorphic to the group ring $RG_n$ itself, where $\mathfrak{sp}_n$ serves as a formal generator;
	
	\item The right $RG_n$-module structure is defined through the twisted action
	\[
	(x \cdot \mathfrak{sp}_n)\cdot h \coloneqq \sp_n(h)xh \cdot \mathfrak{sp}_n
	\]
	for all $ h \in G_n$ and $x  \cdot \mathfrak{sp}_n\in RG_n\cdot \mathfrak{sp}_n$.
\end{enumerate}
Then the functor $\mathrm{Spin}_n$
is represented by  the $(R G _n,R G _n)$-bimodule
 $R G _n\cdot\mathfrak{sp}_n.$ 
Let $$\mathrm{Spin}:=\mathord{
\begin{tikzpicture}[baseline = 0]
	\draw[-,gray, thick, dashed] (0.08,-.2) to (0.08,.4);
\end{tikzpicture}
}:=\bigoplus\limits_{n\in \bbN^*}\mathrm{Spin}_n:\bigoplus\limits_{n\in \bbN^*}R G_{n}
\mod\to \bigoplus\limits_{n\in \bbN^*}R G_{n}\mod.$$

\smallskip

 We define $\dot{\Phi}=\mathord{
\begin{tikzpicture}[baseline = -.6mm]
	\draw[-,blue] (-0.28,-.3) to (0,0);
    \draw[->,red] (0,0) to (0.28,.3);
	\draw[-,gray, thick, dashed] (0.28,-.3) to (-0.28,.3);
\end{tikzpicture}
}\::F^-\circ \mathrm{Spin} \to
\mathrm{Spin}\circ F^+\,$ and $\dot{\Phi}'=\mathord{
\begin{tikzpicture}[baseline = -.6mm]
	\draw[-,red] (-0.28,-.3) to (0,0);
\draw[->,blue] (0,0) to (0.28,.3);
	\draw[-,gray, thick, dashed] (0.28,-.3) to (-0.28,.3);
\end{tikzpicture}
}\::F^+\circ \mathrm{Spin}\to
\mathrm{Spin}\circ F^-\,$ by the following $(R G_{n+1}, RG_n)$-bimodule maps:

$$\begin{array}{rcl}
\dot{\Phi}\::R G_{n+1}\cdot e^-_{n}\otimes_{R G_n}R G_n\cdot\mathfrak{sp}_n
&\to& R G_{n+1}\cdot\mathfrak{sp}_{n+1}\otimes_{R G_{n+1}}R G_{n+1}\cdot
e^+_{n}\\
ge^-_{n}\otimes h\cdot\mathfrak{sp}_n &\mapsto & g\cdot
\mathfrak{sp}_{n+1}\otimes \sp_n(h)h e^+_{n}
\end{array}
$$
and
$$\begin{array}{rcl}
\dot{\Phi}'\:: R G_{n+1}\cdot e^+_{n}\otimes_{R G_n}R G_n\cdot\mathfrak{sp}_n &\to& R G_{n+1}
\cdot\mathfrak{sp}_{n+1}\otimes_{R G_{n+1}}R G_{n+1}\cdot
e^-_{n}\\
ge^+_{n}\otimes h\cdot\mathfrak{sp}_n&\mapsto& g\cdot
\mathfrak{sp}_{n+1}\otimes \sp_n(h)h e^-_{n},
\end{array}
$$ respectively,
for all $g\in  G_{n+1}$ and $h\in  G_n$.
\smallskip

Similarly, we define $\dot{\Psi}=\mathord{
\begin{tikzpicture}[baseline = -.6mm]
	\draw[<-,red] (0.28,-.3) to (0,0);
    \draw[-,blue] (0,0) to (-0.28,.3);
	\draw[-,gray, thick, dashed] (-0.28,-.3) to (0.28,.3);
\end{tikzpicture}
}\::\mathrm{Spin}\circ E^+\to E^-\circ \mathrm{Spin}
\,$ and $\dot{\Psi}'=\mathord{
\begin{tikzpicture}[baseline = -.6mm]
	\draw[<-,blue] (0.28,-.3) to (0,0);
\draw[-,red] (0,0) to (-0.28,.3);
	\draw[-,gray, thick, dashed] (-0.28,-.3) to (0.28,.3);
\end{tikzpicture}
}\::\mathrm{Spin}\circ E^-\to E^+\circ \mathrm{Spin}
\,$
 via the $(RG_{n-1}, RG_n)$-bimodule maps
$$\begin{array}{rcl}
 \dot{\Psi}\::R G_{n-1}\cdot\mathfrak{sp}_{n-1}\otimes_{R G_{n-1}}e^+_{n-1}\cdot R G_{n}
&\to&e^-_{n-1}\cdot R G_{n}\otimes_{R G_n}R G_n\cdot\mathfrak{sp}_n
\\
g\cdot
\mathfrak{sp}_{n-1}\otimes e^+_{n-1}h&\mapsto &e^-_{n-1}\cdot g\otimes \sp_n(h) h\cdot\mathfrak{sp}_n
\end{array}
$$
and
$$\begin{array}{rcl}
 \dot{\Psi}'\::R G_{n-1}\cdot\mathfrak{sp}_{n-1}\otimes_{R G_{n-1}}e^-_{n-1}\cdot R G_{n}
&\to&e^+_{n-1}\cdot R G_{n}\otimes_{R G_n}R G_n\cdot\mathfrak{sp}_n
\\
g\cdot
\mathfrak{sp}_{n-1}\otimes e^-_{n-1} h&\mapsto &e^+_{n-1}g\otimes \sp_n(h) h\cdot\mathfrak{sp}_n
\end{array}
$$ respectively,
for all $g\in  G_{n-1}$ and $h\in  G_n$.
\smallskip

It is easy to see that $\mathrm{Spin}^2\cong \unit.$
By  \cite[Lemma 4.7]{LLZ},  $\dot{\Phi}$ (resp. $\dot{\Phi}',\dot{\Psi} $ or $\dot{\Psi}'$) is an isomorphism.
We define its inverse  $\dot{\Phi}^{-1}=\mathord{
\begin{tikzpicture}[baseline = -.6mm]
	\draw[-,red] (0.28,-.3) to (0,0);
    \draw[->,blue] (0,0) to (-0.28,.3);
	\draw[-,gray, thick, dashed] (-0.28,-.3) to (0.28,.3);
\end{tikzpicture}
}$ (resp.  $\dot{\Phi}'^{-1}=\mathord{
\begin{tikzpicture}[baseline = -.6mm]
	\draw[-,blue] (0.28,-.3) to (0,0);
\draw[->,red] (0,0) to (-0.28,.3);
	\draw[-,gray, thick, dashed] (-0.28,-.3) to (0.28,.3);
\end{tikzpicture}
}\,,$ $\dot{\Psi}^{-1}=\mathord{
\begin{tikzpicture}[baseline = -.6mm]
	\draw[<-,blue] (-0.28,-.3) to (0,0);
    \draw[-,red] (0,0) to (0.28,.3);
	\draw[-,gray, thick, dashed] (0.28,-.3) to (-0.28,.3);
\end{tikzpicture}
}\,$ or $\dot{\Psi}'^{-1}=\mathord{
\begin{tikzpicture}[baseline = -.6mm]
	\draw[<-,red] (-0.28,-.3) to (0,0);
\draw[-,blue] (0,0) to (0.28,.3);
	\draw[-,gray, thick, dashed] (0.28,-.3) to (-0.28,.3);
\end{tikzpicture}
}$).

\begin{proposition}\label{prop:spinor}
We have the following commutative diagrams:
\begin{align}
\mathord{
\begin{tikzpicture}[baseline = 0mm]
	\draw[-,thin,red] (0.3,-.6)[out=90,in=-45] to (-.03,-.29);
\draw[-,thin,blue] (-.03,-.29)[out=-45,in=-90] to (-.3,0);
\draw[-,thin,blue] (-.3,0)[out=90,in=45] to (-.03,.31);
\draw[->,thin,red] (-.03,.31) [out=45,in=-90]to (.3,.6);
	\draw[-,gray, thick, dashed] (-0.3,-.6) to[out=90,in=-90] (0.3,0);
	\draw[-,gray, thick, dashed] (0.3,0) to[out=90,in=-90] (-0.3,.6);
\end{tikzpicture}
}\,=
\mathord{
\begin{tikzpicture}[baseline = -1mm]
	\draw[->,thin,red] (0.18,-.6) to (0.18,.6);
	\draw[-,gray, thick, dashed] (-0.18,-.6) to (-0.18,.6);
\end{tikzpicture}
}\:, \quad
\mathord{
\begin{tikzpicture}[baseline = 0mm]
	\draw[-,thin,blue] (0.3,-.6)[out=90,in=-45] to (-.03,-.29);
\draw[-,thin,red] (-.03,-.29)[out=-45,in=-90] to (-.3,0);
\draw[-,thin,red] (-.3,0)[out=90,in=45] to (-.03,.31);
\draw[->,thin,blue] (-.03,.31) [out=45,in=-90]to (.3,.6);
	\draw[-,gray, thick, dashed] (-0.3,-.6) to[out=90,in=-90] (0.3,0);
	\draw[-,gray, thick, dashed] (0.3,0) to[out=90,in=-90] (-0.3,.6);
\end{tikzpicture}
}\,=
\mathord{
\begin{tikzpicture}[baseline = -1mm]
	\draw[->,thin,blue] (0.18,-.6) to (0.18,.6);
	\draw[-,gray, thick, dashed] (-0.18,-.6) to (-0.18,.6);
\end{tikzpicture}
}\:, \quad
\mathord{
\begin{tikzpicture}[baseline = 0mm]
	\draw[<-,thin,red] (0.3,-.6)[out=90,in=-45] to (-.03,-.29);
\draw[-,thin,blue] (-.03,-.29)[out=-45,in=-90] to (-.3,0);
\draw[-,thin,blue] (-.3,0)[out=90,in=45] to (-.03,.31);
\draw[-,thin,red] (-.03,.31) [out=45,in=-90]to (.3,.6);
	\draw[-,gray, thick, dashed] (-0.3,-.6) to[out=90,in=-90] (0.3,0);
	\draw[-,gray, thick, dashed] (0.3,0) to[out=90,in=-90] (-0.3,.6);
\end{tikzpicture}
}\,=
\mathord{
\begin{tikzpicture}[baseline = -1mm]
	\draw[<-,thin,red] (0.18,-.6) to (0.18,.6);
	\draw[-,gray, thick, dashed] (-0.18,-.6) to (-0.18,.6);
\end{tikzpicture}
}\:,\quad
\mathord{
\begin{tikzpicture}[baseline = 0mm]
	\draw[<-,thin,blue] (0.3,-.6)[out=90,in=-45] to (-.03,-0.29);
\draw[-,thin,red] (-.03,-.29)[out=-45,in=-90] to (-.3,0);
\draw[-,thin,red] (-.3,0)[out=90,in=45] to (-.03,0.31);
\draw[-,thin,blue] (-.03,.31) [out=45,in=-90]to (.3,.6);
	\draw[-,gray, thick, dashed] (-0.3,-.6) to[out=90,in=-90] (0.3,0);
	\draw[-,gray, thick, dashed] (0.3,0) to[out=90,in=-90] (-0.3,.6);
\end{tikzpicture}
}\,=
\mathord{
\begin{tikzpicture}[baseline = -1mm]
	\draw[<-,thin,blue] (0.18,-.6) to (0.18,.6);
	\draw[-,gray, thick, dashed] (-0.18,-.6) to (-0.18,.6);
\end{tikzpicture}
}\:,
\end{align}

\begin{align}
\mathord{
\begin{tikzpicture}[baseline = 0mm]
	\draw[-,thin,red] (0.28,-.3) to (0,0);
\draw[->,thin,blue] (0,0) to (-0.28,.3);
      \node at (0.165,-0.18) {$\dott$};
	\draw[gray, thick, dashed,-] (-0.28,-.3) to (0.28,.3);
\end{tikzpicture}
}&=\,\mathord{
\begin{tikzpicture}[baseline = 0mm]
	\draw[gray, thick, dashed,-] (-0.28,-.3) to (0.28,.3);
	\draw[-,thin,red] (0.28,-.3) to (0,0);
\draw[->,thin,blue] (0,0) to (-0.28,.3);
      \node at (-0.14,0.13) {$\dott$};
\end{tikzpicture}
}
\:,&
\mathord{
\begin{tikzpicture}[baseline = 0mm]
	\draw[-,thin,blue] (0.28,-.3) to (0,0);
\draw[->,thin,red] (0,0) to (-0.28,.3);
      \node at (0.165,-0.18) {$\dott$};
	\draw[gray, thick, dashed,-] (-0.28,-.3) to (0.28,.3);
\end{tikzpicture}
}&=\,\mathord{
\begin{tikzpicture}[baseline = 0mm]
	\draw[gray, thick, dashed,-] (-0.28,-.3) to (0.28,.3);
	\draw[-,thin,blue] (0.28,-.3) to (0,0);
\draw[->,thin,red] (0,0) to (-0.28,.3);
      \node at (-0.14,0.13) {$\dott$};
\end{tikzpicture}
}
\:,
\end{align}

\begin{align}
\mathord{
\begin{tikzpicture}[baseline = -1mm]
	\draw[-,thin,red] (0.45,-.6) to (0,0);
\draw[->,thin,blue] (0,0) to (-0.45,.6);
        \draw[-,thin,red] (0,-.6) to[out=90,in=-45] (-.25,-.3);
        \draw[-,thin,blue] (-.25,-.3) to[out=-45,in=-90] (-.45,0);
        \draw[-,line width=4pt,white] (-0.45,0) to[out=90,in=-90] (0,0.6);
        \draw[->,thin,blue] (-0.45,0) to[out=90,in=-90] (0,0.6);
	\draw[-,thick,gray,dashed] (0.45,.6) to (-0.45,-.6);
\end{tikzpicture}
}
\,=
\mathord{
\begin{tikzpicture}[baseline = -1mm]
	\draw[-,thin,red] (0.45,-.6) to (0,0);
\draw[->,thin,blue] (0,0) to (-0.45,.6);
        \draw[-,line width=4pt,white] (0,-.6) to[out=90,in=-90] (.45,0);
        \draw[-,thin,red] (0,-.6) to[out=90,in=-90] (.45,0);
        \draw[-,thin,red] (0.45,0) to[out=90,in=-45] (.21,0.3);
        \draw[->,thin,blue] (0.21,.3) to[out=-45,in=-90] (0,0.6);
	\draw[-,thick,gray,dashed] (0.45,.6) to (-0.45,-.6);
\end{tikzpicture}
}\:,\quad
\mathord{
\begin{tikzpicture}[baseline = -1mm]
	\draw[-,thin,blue] (0.45,-.6) to (0,0);
\draw[->,thin,red] (0,0) to (-0.45,.6);
        \draw[-,thin,blue] (0,-.6) to[out=90,in=-45] (-.25,-.3);
        \draw[-,thin,red] (-.25,-.3) to[out=-45,in=-90] (-.45,0);
        \draw[-,line width=4pt,white] (-0.45,0) to[out=90,in=-90] (0,0.6);
        \draw[->,thin,red] (-0.45,0) to[out=90,in=-90] (0,0.6);
	\draw[-,thick,gray,dashed] (0.45,.6) to (-0.45,-.6);
\end{tikzpicture}
}
\,=
\mathord{
\begin{tikzpicture}[baseline = -1mm]
	\draw[-,thin,blue] (0.45,-.6) to (0,0);
\draw[->,thin,red] (0,0) to (-0.45,.6);
        \draw[-,line width=4pt,white] (0,-.6) to[out=90,in=-90] (.45,0);
        \draw[-,thin,blue] (0,-.6) to[out=90,in=-90] (.45,0);
        \draw[-,thin,blue] (0.45,0) to[out=90,in=-45] (.21,0.3);
        \draw[->,thin,red] (0.21,.3) to[out=-45,in=-90] (0,0.6);
	\draw[-,thick,gray,dashed] (0.45,.6) to (-0.45,-.6);
\end{tikzpicture}
}\:,\quad
\mathord{
\begin{tikzpicture}[baseline = -1mm]
	\draw[-,thick,gray,dashed] (0.45,.6) to (-0.45,-.6);
	\draw[-,blue] (0.45,-.6) to (0,0);
\draw[->,red] (0,0) to (-0.45,.6);
        \draw[-,red] (0,-.6) to[out=90,in=-45] (-.25,-.3);
        \draw[-,blue] (-.25,-.3) to[out=-45,in=-90] (-.45,0);
        \draw[->,blue] (-0.45,0) to[out=90,in=-90] (0,0.6);
\end{tikzpicture}
}
\,=
\mathord{
\begin{tikzpicture}[baseline = -1mm]
	\draw[-,thick,gray,dashed] (0.45,.6) to (-0.45,-.6);
	\draw[-,blue] (0.45,-.6) to (0,0);
\draw[->,red] (0,0) to (-0.45,.6);
        \draw[-,red] (0,-.6) to[out=90,in=-90] (.45,0);
        \draw[-,red] (0.45,0) to[out=90,in=-45] (.21,0.3);
        \draw[->,blue] (0.21,.3) to[out=-45,in=-90] (0,0.6);
\end{tikzpicture}
}\:,\quad \mathord{
\begin{tikzpicture}[baseline = -1mm]
	\draw[-,thick,gray,dashed] (0.45,.6) to (-0.45,-.6);
	\draw[-,red] (0.45,-.6) to (0,0);
\draw[->,blue] (0,0) to (-0.45,.6);
        \draw[-,blue] (0,-.6) to[out=90,in=-45] (-.25,-.3);
        \draw[-,red] (-.25,-.3) to[out=-45,in=-90] (-.45,0);
        \draw[->,red] (-0.45,0) to[out=90,in=-90] (0,0.6);
\end{tikzpicture}
}
\,=
\mathord{
\begin{tikzpicture}[baseline = -1mm]
	\draw[-,thick,gray,dashed] (0.45,.6) to (-0.45,-.6);
	\draw[-,red] (0.45,-.6) to (0,0);
\draw[->,blue] (0,0) to (-0.45,.6);
        \draw[-,blue] (0,-.6) to[out=90,in=-90] (.45,0);
        \draw[-,blue] (0.45,0) to[out=90,in=-45] (.21,0.3);
        \draw[->,red] (0.21,.3) to[out=-45,in=-90] (0,0.6);
\end{tikzpicture}
}\:,
\end{align}

\begin{align}
\mathord{
\begin{tikzpicture}[baseline = 0]
\draw[-,gray,dashed, thick](.6,.4) to (.1,-.3);
	\draw[<-,red] (0.6,-0.3) to[out=140, in=-60] (.33,0.03);
     \draw[-,blue] (.33,0.03) to[out=-60, in=0] (0.1,0.2);
	\draw[-,blue] (0.1,0.2) to[out = -180, in = 90] (-0.2,-0.3);
\end{tikzpicture}
}\,=
\mathord{
\begin{tikzpicture}[baseline = 0]
\draw[-,gray,dashed, thick](-.5,.4) to (0,-.3);
	\draw[<-,red] (0.3,-0.3) to[out=90, in=0] (0,0.2);
	\draw[-,red] (0,0.2) to[out = -180, in = 60] (-0.25,0.05);
\draw[-,blue] (-0.25,0.05) to[out = 60, in = 40] (-0.5,-0.3);
\end{tikzpicture}
}\:,\quad
\mathord{
\begin{tikzpicture}[baseline = 0]
\draw[-,gray,dashed, thick](.6,.4) to (.1,-.3);
	\draw[<-,blue] (0.6,-0.3) to[out=140, in=-60] (.33,0.03);
     \draw[-,red] (.33,0.03) to[out=-60, in=0] (0.1,0.2);
	\draw[-,red] (0.1,0.2) to[out = -180, in = 90] (-0.2,-0.3);
\end{tikzpicture}
}\,=
\mathord{
\begin{tikzpicture}[baseline = 0]
\draw[-,gray,dashed, thick](-.5,.4) to (0,-.3);
	\draw[<-,blue] (0.3,-0.3) to[out=90, in=0] (0,0.2);
	\draw[-,blue] (0,0.2) to[out = -180, in = 60] (-0.25,0.05);
\draw[-,red] (-0.25,0.05) to[out = 60, in = 40] (-0.5,-0.3);
\end{tikzpicture}
}\:,\quad
\mathord{
\begin{tikzpicture}[baseline = 0]
\draw[-,gray,dashed,thick](-.5,-.3) to (0,.4);
	\draw[<-,red] (0.3,0.4) to[out=-90, in=0] (0,-0.1);
	\draw[-,red] (0,-0.1) to[out = 180, in = -60] (-0.24,0.11);
    \draw[-,blue] (-.24,0.11) to[out =-60, in = -40] (-0.5,0.4);
\end{tikzpicture}
}\,=
\mathord{
\begin{tikzpicture}[baseline = 0]
\draw[-,gray,dashed,thick](.6,-.3) to (.1,.4);
	\draw[<-,red] (0.6,0.4) to[out=-140, in=60] (0.31,0.06);
\draw[-,blue] (0.31,0.06) to[out=60, in=0] (0.1,-0.1);
	\draw[-,blue] (0.1,-0.1) to[out = 180, in = -90] (-0.2,0.4);
\end{tikzpicture}
}
\:,\quad
\mathord{
\begin{tikzpicture}[baseline = 0]
\draw[-,gray,dashed,thick](-.5,-.3) to (0,.4);
	\draw[<-,blue] (0.3,0.4) to[out=-90, in=0] (0,-0.1);
	\draw[-,blue] (0,-0.1) to[out = 180, in = -60] (-0.24,0.11);
    \draw[-,red] (-.24,0.11) to[out =-60, in = -40] (-0.5,0.4);
\end{tikzpicture}
}\,=
\mathord{
\begin{tikzpicture}[baseline = 0]
\draw[-,gray,dashed,thick](.6,-.3) to (.1,.4);
	\draw[<-,blue] (0.6,0.4) to[out=-140, in=60] (0.31,0.06);
\draw[-,red] (0.31,0.06) to[out=60, in=0] (0.1,-0.1);
	\draw[-,red] (0.1,-0.1) to[out = 180, in = -90] (-0.2,0.4);
\end{tikzpicture}
}
\:,
\end{align}

\begin{align}
\mathord{
\begin{tikzpicture}[baseline = 0]
\draw[-,gray,dashed, thick](.6,.4) to (.1,-.3);
	\draw[-,red] (0.6,-0.3) to[out=140, in=-60] (.33,0.03);
     \draw[-,blue] (.33,0.03) to[out=-60, in=0] (0.1,0.2);
	\draw[->,blue] (0.1,0.2) to[out = -180, in = 90] (-0.2,-0.3);
\end{tikzpicture}
}\,=
\mathord{
\begin{tikzpicture}[baseline = 0]
\draw[-,gray,dashed, thick](-.5,.4) to (0,-.3);
	\draw[-,red] (0.3,-0.3) to[out=90, in=0] (0,0.2);
	\draw[-,red] (0,0.2) to[out = -180, in = 60] (-0.25,0.05);
\draw[->,blue] (-0.25,0.05) to[out = 60, in = 40] (-0.5,-0.3);
\end{tikzpicture}
}\:,\quad
\mathord{
\begin{tikzpicture}[baseline = 0]
\draw[-,gray,dashed, thick](.6,.4) to (.1,-.3);
	\draw[-,blue] (0.6,-0.3) to[out=140, in=-60] (.33,0.03);
     \draw[-,red] (.33,0.03) to[out=-60, in=0] (0.1,0.2);
	\draw[->,red] (0.1,0.2) to[out = -180, in = 90] (-0.2,-0.3);
\end{tikzpicture}
}\,=
\mathord{
\begin{tikzpicture}[baseline = 0]
\draw[-,gray,dashed, thick](-.5,.4) to (0,-.3);
	\draw[-,blue] (0.3,-0.3) to[out=90, in=0] (0,0.2);
	\draw[-,blue] (0,0.2) to[out = -180, in = 60] (-0.25,0.05);
\draw[->,red] (-0.25,0.05) to[out = 60, in = 40] (-0.5,-0.3);
\end{tikzpicture}
}\:,\quad
\mathord{
\begin{tikzpicture}[baseline = 0]
\draw[-,gray,dashed,thick](-.5,-.3) to (0,.4);
	\draw[-,red] (0.3,0.4) to[out=-90, in=0] (0,-0.1);
	\draw[-,red] (0,-0.1) to[out = 180, in = -60] (-0.24,0.11);
    \draw[->,blue] (-.24,0.11) to[out =-60, in = -40] (-0.5,0.4);
\end{tikzpicture}
}\,=
\mathord{
\begin{tikzpicture}[baseline = 0]
\draw[-,gray,dashed,thick](.6,-.3) to (.1,.4);
	\draw[-,red] (0.6,0.4) to[out=-140, in=60] (0.31,0.06);
\draw[-,blue] (0.31,0.06) to[out=60, in=0] (0.1,-0.1);
	\draw[->,blue] (0.1,-0.1) to[out = 180, in = -90] (-0.2,0.4);
\end{tikzpicture}
}
\:,\quad
\mathord{
\begin{tikzpicture}[baseline = 0]
\draw[-,gray,dashed,thick](-.5,-.3) to (0,.4);
	\draw[-,blue] (0.3,0.4) to[out=-90, in=0] (0,-0.1);
	\draw[-,blue] (0,-0.1) to[out = 180, in = -60] (-0.24,0.11);
    \draw[->,red] (-.24,0.11) to[out =-60, in = -40] (-0.5,0.4);
\end{tikzpicture}
}\,=
\mathord{
\begin{tikzpicture}[baseline = 0]
\draw[-,gray,dashed,thick](.6,-.3) to (.1,.4);
	\draw[-,blue] (0.6,0.4) to[out=-140, in=60] (0.31,0.06);
\draw[-,red] (0.31,0.06) to[out=60, in=0] (0.1,-0.1);
	\draw[->,red] (0.1,-0.1) to[out = 180, in = -90] (-0.2,0.4);
\end{tikzpicture}
}
\:.
\end{align}
%

\end{proposition}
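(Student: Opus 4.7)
The plan is to verify each of the sixteen relations by direct computation using the explicit $(RG_n, RG_n)$-bimodule descriptions given in §\ref{Chap:doubleHeisonfiniteclass} and §\ref{sub:spin}. The guiding principle is that conjugation by $\mathrm{Spin}$ implements the color-swap on the double quantum Heisenberg category, interchanging the red ($+$) data with the blue ($-$) data up to the precise signs encoded in the parameters $t^{\pm}$, $z^{\pm}$.

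The key preliminary lemma, which I would either cite from \cite[Lemma 4.7]{LLZ} or prove by a short case analysis using the tables for $\sp$, is the identity $\sp_n|_{U_{n-1}} = \zeta$ on the $\bbF_q^{\times}$-component and $\sp_n|_{V_{n-1}} = 1$. This has the crucial consequence that inside the bimodule $RG_n\cdot\mathfrak{sp}_n$ one has
\[
 \mathfrak{sp}_n \cdot e^{\pm}_{n-1} \;=\; e^{\mp}_{n-1} \cdot \mathfrak{sp}_n.
\]
This swap identity is precisely what makes the maps $\dot\Phi, \dot\Phi', \dot\Psi, \dot\Psi'$ well-defined bimodule homomorphisms; it also exhibits the explicit two-sided inverses $\dot\Phi^{-1}$, etc. With this in hand, the first row of relations reduces to the statement that $\dot\Phi \circ \dot\Phi^{-1} = \id$ (and the three analogous identities), combined with the isomorphism $\mathrm{Spin}^{2}\cong \unit$ that collapses the two dashed strands in each picture into a single identity strand.

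For the dot relation, I would evaluate both sides on a generator $g e^{\pm}_n \otimes h\cdot \mathfrak{sp}_n$ and use $X^{\pm}_{n+1} = c\,e^{\pm}_n \dot x_{n+1} e^{\pm}_n$; the swap identity plus the fact that $\dot x_{n+1}$ commutes with $\mathfrak{sp}_n$ up to a scalar that is the same for the $(+)$ and $(-)$ idempotents finishes the check. For the four mixed-crossing relations, I would substitute the explicit forms
\[
 T^{\pm}_{r+2}=q^{1/2}e^{\pm}_{r+1}e^{\pm}_r\dot s_{r+2}e^{\pm}_{r+1}e^{\pm}_r,\qquad
 H_{r+2}=q^{1/2}e^{+}_{r+1}e^{-}_{r}\dot s_{r+2}e^{-}_{r+1}e^{+}_r,
\]
and apply the swap identity at each idempotent: conjugating the left-hand side by $\dot\Phi^{\pm 1}$ transforms each $e^{\pm}$ into $e^{\mp}$ and turns a $T^+$-string into an $H$- or $T^-$-string, matching the right-hand side. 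The cap/cup relations are handled the same way, but one must additionally track the adjunction data: the element $Z^{\pm}_r = \sum_i g_i^{-1} e^{\pm}_{r-1} \otimes e^{\pm}_{r-1} g_i$ defining $\eta^{\pm}_L$ is independent of the coset representatives, and the swap identity yields $(1\otimes \mathrm{Spin})Z^+_r = Z^-_r(1\otimes \mathrm{Spin})$, after which each cap/cup relation follows by unwinding the bimodule composition.

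The main technical obstacle will be bookkeeping of the normalization constants. The constants $t^{\pm}$ in \eqref{t+} and \eqref{t-} differ between the $\O_{2n+1}(q)$ and $\O^{\pm}_{2n}(q)$ cases, and both $\varepsilon^{\pm}_R$ and $\eta^{\pm}_R$ are rescaled by $\pm t^{\pm}z^{\pm}$. When one side of a relation involves an $R$-adjunction of the red type and the other an $R$-adjunction of the blue type, it must be verified that $t^{+}=t^{-}$ in each of the two families of classical groups considered, which is indeed the content of \eqref{t+}–\eqref{t-}; the symplectic case, which is excluded here, is handled separately in the next section. Once this compatibility is checked, every diagram in the proposition reduces to a one-line computation in the corresponding bimodule.
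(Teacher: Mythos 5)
Your overall strategy — direct computation in the bimodule models via the swap identity $\mathfrak{sp}_{n+1}\cdot e^{\pm}_{n}=e^{\mp}_{n}\cdot\mathfrak{sp}_{n+1}$ — is indeed the paper's (the paper points to the analogue for $\mathrm{Det}$, Proposition~\ref{prop:det}, and to the corresponding computation in \cite{LLZ}). The swap identity is exactly why $\dot\Phi$, $\dot\Phi'$, $\dot\Psi$, $\dot\Psi'$ are well-defined, and it does reduce the crossing and cap/cup relations to unwinding compositions of bimodule maps, modulo keeping track of scalars.

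However, you have a concrete gap in the dot-slide relation. You write $X^{\pm}_{n+1}=c\,e^{\pm}_n\dot x_{n+1}e^{\pm}_n$ as if the same constant $c$ worked for both colors, and you assert that $\dot x_{n+1}$ ``commutes with $\mathfrak{sp}_{n+1}$ up to a scalar that is the same for the $(+)$ and $(-)$ idempotents.'' But $X^{-}_{r+1}=\beta\,q^{r}e^{-}_{r}\dot x_{r+1}e^{-}_{r}$ carries the extra normalization $\beta$ from \eqref{for:normalization} that $X^{+}_{r+1}$ does not. Pushing $\mathfrak{sp}_{n+1}$ through $X^{+}_{n+1}$ via the swap identity and $\dot x_{n+1}\cdot\mathfrak{sp}_{n+1}=\sp_{n+1}(\dot x_{n+1})\,\mathfrak{sp}_{n+1}\cdot\dot x_{n+1}$ yields
\[
X^{+}_{n+1}\cdot\mathfrak{sp}_{n+1}
=\sp_{n+1}(\dot x_{n+1})\,\beta^{-1}\,\mathfrak{sp}_{n+1}\cdot X^{-}_{n+1},
\]
so the dot-slide relation for $\mathrm{Spin}$ (which, unlike the $\mathrm{Det}$ case \eqref{dotslide11}, has no sign) is equivalent to the identity $\sp_{n+1}(\dot x_{n+1})=\beta=\zeta(2)$ in the orthogonal cases. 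This is a genuine nontrivial check — it is the computation of the spinor norm of the reflection $\dot x_{n+1}$ (a reflection along a vector of norm $2$ with respect to the anti-diagonal form), and it is precisely what motivates the otherwise arbitrary choice $\beta=\zeta(2)$ for the orthogonal groups. Your bookkeeping discussion covers the $t^{+}=t^{-}$ compatibility for the $R$-adjunctions, but you never flag the $\beta$ normalization nor verify its agreement with $\sp_{n+1}(\dot x_{n+1})$; without that verification the dot relation would not close.
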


\begin{proof}The proof is similar to the proof of \cite[Proposition 4.8]{LLZ}.
\end{proof}
Then we have the following direct corollary.
\begin{corollary}
Let  $\scrI_+$ and $ \scrI_-$ be the sets of the generalized eigenvalues of $X^+$ on $F^+$
 and of $X^-$ on $F^-$, respectively.
Then $\scrI_+= \scrI_-$ (as subsets of $R$) and we have  the following isomorphisms for all $i\in \scrI_+=I_-$:
$$\mathrm{Spin}\circ F^+_i\circ \mathrm{Spin}\cong F^-_i\quad \text{and}\,\,
\quad \mathrm{Spin}\circ E^+_i\circ \mathrm{Spin}\cong E^-_i.$$
 \end{corollary}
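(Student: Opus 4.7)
The plan is to upgrade the dot/crossing identities in Proposition \ref{prop:spinor} into an intertwining relation between $X^+$ and $X^-$ along $\dot\Phi$, and then to transfer the result to $E^\pm_i$ via biadjointness. I focus on $\dot\Phi$; the argument for $\dot\Phi'$ is identical after swapping $F^+\leftrightarrow F^-$.

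Reading the dot/spinor-crossing identity
\[
\mathord{
\begin{tikzpicture}[baseline = 0mm]
	\draw[-,thin,red] (0.28,-.3) to (0,0);
\draw[->,thin,blue] (0,0) to (-0.28,.3);
      \node at (0.165,-0.18) {$\dott$};
	\draw[gray, thick, dashed,-] (-0.28,-.3) to (0.28,.3);
\end{tikzpicture}
}\;=\;\mathord{
\begin{tikzpicture}[baseline = 0mm]
	\draw[gray, thick, dashed,-] (-0.28,-.3) to (0.28,.3);
	\draw[-,thin,red] (0.28,-.3) to (0,0);
\draw[->,thin,blue] (0,0) to (-0.28,.3);
      \node at (-0.14,0.13) {$\dott$};
\end{tikzpicture}
}
\]
of Proposition \ref{prop:spinor} as an equality of morphisms $\mathrm{Spin}\circ F^+\to F^-\circ \mathrm{Spin}$ (both sides represent $\dot\Phi^{-1}$ with a single dot inserted, at the bottom on the $F^+$-strand on the left and at the top on the $F^-$-strand on the right), one obtains
\[
\dot\Phi^{-1}\circ (1_{\mathrm{Spin}}\otimes X^+)\;=\;(X^-\otimes 1_{\mathrm{Spin}})\circ \dot\Phi^{-1},
\]
so $\dot\Phi$ intertwines the two dot actions on $F^-\circ\mathrm{Spin}$ and $\mathrm{Spin}\circ F^+$. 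It therefore restricts, for each $i\in R$, to an isomorphism between the generalised $i$-eigenspaces, i.e.\ to a natural isomorphism $F^-_i\circ \mathrm{Spin}\simto \mathrm{Spin}\circ F^+_i$. Since $\mathrm{Spin}$ is an equivalence this gives $F^+_i\ne 0\iff F^-_i\ne 0$, hence $\scrI_+=\scrI_-$; and composing on the right with $\mathrm{Spin}$ while using $\mathrm{Spin}^2\cong \unit$ yields the first asserted isomorphism $\mathrm{Spin}\circ F^+_i\circ \mathrm{Spin}\cong F^-_i$.

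For the $E$-side I appeal to biadjunction. Because $\mathrm{Spin}$ is the twist by the linear character $\mathrm{sp}_n$ with $\mathrm{sp}_n^2=1$, the functor $\mathrm{Spin}$ is its own two-sided adjoint and $\mathrm{Spin}^{-1}\cong\mathrm{Spin}$, so conjugation by $\mathrm{Spin}$ preserves biadjoint pairs. Proposition \ref{prop:adj} gives that $(E^\pm,F^\pm)$ is biadjoint, and the direct-sum decompositions $E^\pm=\bigoplus_i E^\pm_i$, $F^\pm=\bigoplus_i F^\pm_i$ match under the unit/counit, so $(E^\pm_i,F^\pm_i)$ is biadjoint for each $i$. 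Thus $(\mathrm{Spin}\circ E^+_i\circ \mathrm{Spin},\,\mathrm{Spin}\circ F^+_i\circ \mathrm{Spin})$ is biadjoint, with right member isomorphic to $F^-_i$ by the previous paragraph, while $F^-_i$ is biadjoint to $E^-_i$; uniqueness of two-sided adjoints forces $\mathrm{Spin}\circ E^+_i\circ \mathrm{Spin}\cong E^-_i$. The only delicate point in the whole argument is translating the string-diagram identity into the intertwining formula, keeping careful track of the convention that the leftmost strand in a picture corresponds to the leftmost (i.e.\ outermost) factor in the composition of endofunctors; after that everything is formal. A purely diagrammatic alternative for the $E$-case uses $\dot\Psi,\dot\Psi'$, the cap/cup relations for $\mathrm{Spin}$ listed in Proposition \ref{prop:spinor}, and the pivotal definition of $X^\pm$ on $E^\pm$ to obtain the same intertwining directly on the $E$-side.
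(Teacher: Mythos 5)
Your proof is correct and follows exactly the route the paper intends: the paper states this as a ``direct corollary'' of Proposition \ref{prop:spinor} with no written proof, and your intertwining relation $\dot\Phi^{-1}\circ(1_{\mathrm{Spin}}\otimes X^+)=(X^-\otimes 1_{\mathrm{Spin}})\circ\dot\Phi^{-1}$ together with $\mathrm{Spin}^2\cong\unit$ and biadjointness is precisely the intended argument. The only quibble is a harmless slip of prose (the dotted strand at the bottom of the diagram is the right-hand tensor factor $F^+$, not the left), which does not affect the formula you derive.
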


\smallskip

 \subsection{Determinant functor of  $\O_{2n+1}(q)$ and $\O^\pm_{2n}(q)$}\label{sub:det}\hfill\\

In this section, we let $G_n=\O_{2n+1}(q)$ or $\O^{\pm}_{2n}(q)$ with $q$ odd.
Denote by $\sgn$ or $\sgn_n$ be the \emph{determinant  character} of $G_n$ the unique nontrivial character of order 2 whose kernel coincides with the special orthogonal group  $\SO_{2n+1}(q)$ or $\SO^{\pm}_{2n}(q)$, respectively.
Let $R_{\mathrm{det}_n}$ be the corresponding 1-dimensional module.
Similar to the functor ${\rm Spin}_n$, we define a functor $$\mathrm{Det}_n:
R G _n\mod\to R G _n\mod$$
by $$M\mapsto R_{{\sgn_n}}\otimes_{R} M.$$
Similar to $RG_n \cdot \mathfrak{sp}_n$,  we define a $(RG_n, RG_n)$-bimodule  $RG_n \cdot \mathfrak{det}_n$ as follows: 
\begin{enumerate}
	\item   As a left $RG_n$-module, $RG_n \cdot \mathfrak{det}_n$ is isomorphic to the group ring $RG_n$ itself, where $\mathfrak{det}_n$ serves as a formal generator;
	
	\item The right $RG_n$-module structure is defined through the twisted action
	\[
	(x \cdot \mathfrak{det}_n)h \coloneqq \sgn_n(h)xh \cdot \mathfrak{det}_n
	\]
	for all $ h \in G_n$ and $x  \cdot \mathfrak{det}_n\in RG_n\cdot \mathfrak{det}_n$.
\end{enumerate}
Then the functor $\mathrm{Det}_n$
is represented by  the $(R G _n,R G _n)$-bimodule
$R G _n\cdot\mathfrak{det}_n.$

Let $$\mathrm{Det}:=\mathord{
\begin{tikzpicture}[baseline = 0]
	\draw[-,green, thick, dashed] (0.08,-.2) to (0.08,.4);
\end{tikzpicture}
}:=\bigoplus\limits_{n\geqs 0}\mathrm{Det}_n:
\bigoplus\limits_{n\in \bbN^*}R G_{n}\mod\to \bigoplus\limits_{n\in \bbN^*}R G_{n}\mod.$$
We define $\Phi^+=\mathord{
\begin{tikzpicture}[baseline = -.6mm]
	\draw[->,red] (-0.28,-.3) to (0.28,.3);
	\draw[-,green, thick, dashed] (0.28,-.3) to (-0.28,.3);
\end{tikzpicture}
}\::F^+\circ \mathrm{Det} \to
\mathrm{Det}\circ F^+\,$ and $\Phi^-=\mathord{
\begin{tikzpicture}[baseline = -.6mm]
	\draw[->,blue] (-0.28,-.3) to (0.28,.3);
	\draw[-,green, thick, dashed] (0.28,-.3) to (-0.28,.3);
\end{tikzpicture}
}\::F^-\circ \mathrm{Det}\to
\mathrm{Det}\circ F^-\,$ by the following $(R G_{n+1}, RG_n)$-bimodule maps:
$$\begin{array}{rcl}
\Phi^\epsilon\::R G_{n+1}\cdot e^\epsilon_{n}\otimes_{R G_n}R G_n\cdot\mathfrak{det}_n
&\to& R G_{n+1}\cdot\mathfrak{det}_{n+1}\otimes_{R G_{n+1}}R G_{n+1}\cdot
e^\epsilon_{n}\\
ge^\epsilon_{n}\otimes h\cdot\mathfrak{det}_n &\mapsto & g\cdot\mathfrak{det}_{n+1}\otimes
{\sgn}_n(h)h e^\epsilon_{n}
\end{array}
$$
for all $g\in  G_{n+1}$ and $h\in  G_n$ and $\epsilon\in \{\pm\}$.
\smallskip

Similarly, we define $\Psi^+=\mathord{
\begin{tikzpicture}[baseline = -.6mm]
	\draw[<-,red] (0.28,-.3)  to (-0.28,.3);
	\draw[-,green, thick, dashed] (-0.28,-.3) to (0.28,.3);
\end{tikzpicture}
}\::\mathrm{Det}\circ E^+\to E^+\circ \mathrm{Det}
\,$ and $\Psi^-=\mathord{
\begin{tikzpicture}[baseline = -.6mm]
	\draw[<-,blue] (0.28,-.3)  to (-0.28,.3);
	\draw[-,green, thick, dashed] (-0.28,-.3) to (0.28,.3);
\end{tikzpicture}
}\::\mathrm{Det}\circ E^-\to E^-\circ \mathrm{Det}
\,$ via the $(R G_{n-1}, RG_n)$-bimodule maps:
$$\begin{array}{rcl}
\Psi^\epsilon\::R G_{n-1}\cdot\mathfrak{det}_{n-1}\otimes_{R G_{n}}e^\epsilon_{n-1}\cdot R G_{n}
&\to& e^\epsilon_{n-1}\cdot R G_{n}\otimes_{R G_n}R G_n\cdot\mathfrak{det}_n\\
g\cdot\mathfrak{det}_{n-1}\otimes
e^\epsilon_{n}h &\mapsto &e^\epsilon_{n-1}g\otimes {\sgn}_n(h)h\cdot\mathfrak{det}_n
\end{array}
$$
for all $g\in  G_{n-1}$ and $h\in  G_n$ and $\epsilon\in \{\pm\}$.
It is easy to see that $\mathrm{Det}^2\cong \unit.$
Similarly,  $\Phi^+$ (resp. $\Phi^-,\Psi^+ $ or $\Psi^-$) is an isomorphisms.
We define its inverse  $(\Phi^+)^{-1}=\mathord{
\begin{tikzpicture}[baseline = -.6mm]
	\draw[->,red] (0.28,-.3) to  (-0.28,.3);
	\draw[-,green, thick, dashed] (-0.28,-.3) to (0.28,.3);
\end{tikzpicture}
}$ (resp.  $(\Phi^-)^{-1}=\mathord{
\begin{tikzpicture}[baseline = -.6mm]
	\draw[->,blue] (0.28,-.3) to (-0.28,.3);
	\draw[-,green, thick, dashed] (-0.28,-.3) to (0.28,.3);
\end{tikzpicture}
}\,,$ $(\Psi^+)^{-1}=\mathord{
\begin{tikzpicture}[baseline = -.6mm]
	\draw[<-,red] (-0.28,-.3) to (0.28,.3);
	\draw[-,green, thick, dashed] (0.28,-.3) to (-0.28,.3);
\end{tikzpicture}
}\,$ or $(\Psi^-)^{-1}=\mathord{
\begin{tikzpicture}[baseline = -.6mm]
	\draw[<-,blue] (-0.28,-.3) to (0.28,.3);
	\draw[-,green, thick, dashed] (0.28,-.3) to (-0.28,.3);
\end{tikzpicture}
}$).
\smallskip

Also, we have the following analogous results of Proposition \ref{prop:spinor} for $\mathrm{Det}$.
\begin{proposition}\label{prop:det}
We have the following commutative diagrams:
\begin{align}\label{change}
\mathord{
\begin{tikzpicture}[baseline = -1mm]
	\draw[-,thin,red] (0.28,-.6) to[out=90,in=-90] (-0.28,0);
	\draw[->,thin,red] (-0.28,0) to[out=90,in=-90] (0.28,.6);
	\draw[-,thick,green,dashed] (-0.28,-.6) to[out=90,in=-90] (0.28,0);
	\draw[-,thick,green,dashed] (0.28,0) to[out=90,in=-90] (-0.28,.6);
\end{tikzpicture}
}\,=
\mathord{
\begin{tikzpicture}[baseline = -1mm]
	\draw[->,thin,red] (0.18,-.6) to (0.18,.6);
	\draw[-,thick,green,dashed] (-0.18,-.6) to (-0.18,.6);
\end{tikzpicture}
}\:,\quad
\mathord{
\begin{tikzpicture}[baseline = -1mm]
	\draw[-,thick,green,dashed] (0.28,0) to[out=90,in=-90] (-0.28,.6);
	\draw[->,thin,blue] (-0.28,0) to[out=90,in=-90] (0.28,.6);
	\draw[-,thick,green,dashed] (-0.28,-.6) to[out=90,in=-90] (0.28,0);
	\draw[-,thin,blue] (0.28,-.6) to[out=90,in=-90] (-0.28,0);
\end{tikzpicture}
}\,=
\mathord{
\begin{tikzpicture}[baseline = -1mm]
	\draw[->,thin,blue] (0.18,-.6) to (0.18,.6);
	\draw[-,thick,green,dashed] (-0.18,-.6) to (-0.18,.6);
\end{tikzpicture}
}\:,\quad
\mathord{
\begin{tikzpicture}[baseline = -1mm]
	\draw[<-,thin,red] (0.28,-.6) to[out=90,in=-90] (-0.28,0);
	\draw[-,thin,red] (-0.28,0) to[out=90,in=-90] (0.28,.6);
	\draw[-,thick,green,dashed] (-0.28,-.6) to[out=90,in=-90] (0.28,0);
	\draw[-,thick,green,dashed] (0.28,0) to[out=90,in=-90] (-0.28,.6);
\end{tikzpicture}
}\,=
\mathord{
\begin{tikzpicture}[baseline = -1mm]
	\draw[<-,thin,red] (0.18,-.6) to (0.18,.6);
	\draw[-,thick,green,dashed] (-0.18,-.6) to (-0.18,.6);
\end{tikzpicture}
}\:,\quad
\mathord{
\begin{tikzpicture}[baseline = -1mm]
	\draw[-,thick,green,dashed] (0.28,0) to[out=90,in=-90] (-0.28,.6);
	\draw[-,thin,blue] (-0.28,0) to[out=90,in=-90] (0.28,.6);
	\draw[-,thick,green,dashed] (-0.28,-.6) to[out=90,in=-90] (0.28,0);
	\draw[<-,thin,blue] (0.28,-.6) to[out=90,in=-90] (-0.28,0);
\end{tikzpicture}
}\,=
\mathord{
\begin{tikzpicture}[baseline = -1mm]
	\draw[<-,thin,blue] (0.18,-.6) to (0.18,.6);
	\draw[-,thick,green,dashed] (-0.18,-.6) to (-0.18,.6);
\end{tikzpicture}
}\:,
\end{align}

\begin{align}\label{dotslide11}
\mathord{
\begin{tikzpicture}[baseline = -.5mm]
	\draw[->,thin,red] (0.28,-.3) to (-0.28,.4);
      \node at (0.165,-0.15) {$\dott$};
	\draw[-,thick,green,dashed] (-0.28,-.3) to (0.28,.4);
\end{tikzpicture}
}&=-\mathord{
\begin{tikzpicture}[baseline = -.5mm]
	\draw[-,thick,green,dashed] (-0.28,-.3) to (0.28,.4);
	\draw[->,thin,red] (0.28,-.3) to (-0.28,.4);
      \node at (-0.14,0.23) {$\dott$};
\end{tikzpicture}
},
&
\mathord{
\begin{tikzpicture}[baseline = -.5mm]
	\draw[->,thin,blue] (0.28,-.3) to (-0.28,.4);
      \node at (0.165,-0.15) {$\dott$};
	\draw[-,thick,green,dashed] (-0.28,-.3) to (0.28,.4);
\end{tikzpicture}
}&=-\mathord{
\begin{tikzpicture}[baseline = -.5mm]
	\draw[-,thick,green,dashed] (-0.28,-.3) to (0.28,.4);
	\draw[->,thin,blue] (0.28,-.3) to (-0.28,.4);
      \node at (-0.14,0.23) {$\dott$};
\end{tikzpicture}
}
\:,
\end{align}

\begin{align}\label{slide1}
\mathord{
\begin{tikzpicture}[baseline = -1mm]
	\draw[->,thin,red] (0.45,-.6) to (-0.45,.6);
        \draw[-,thin,red] (0,-.6) to[out=90,in=-90] (-.45,0);
        \draw[-,line width=4pt,white] (-0.45,0) to[out=90,in=-90] (0,0.6);
        \draw[->,thin,red] (-0.45,0) to[out=90,in=-90] (0,0.6);
	\draw[-,thick,green,dashed] (0.45,.6) to (-0.45,-.6);
\end{tikzpicture}
}\,
=
\mathord{
\begin{tikzpicture}[baseline = -1mm]
	\draw[->,thin,red] (0.45,-.6) to (-0.45,.6);
        \draw[-,line width=4pt,white] (0,-.6) to[out=90,in=-90] (.45,0);
        \draw[-,thin,red] (0,-.6) to[out=90,in=-90] (.45,0);
        \draw[->,thin,red] (0.45,0) to[out=90,in=-90] (0,0.6);
	\draw[-,thick,green,dashed] (0.45,.6) to (-0.45,-.6);
\end{tikzpicture}
}\:,\quad
\mathord{
\begin{tikzpicture}[baseline = -1mm]
	\draw[->,thin,blue] (0.45,-.6) to (-0.45,.6);
        \draw[-,thin,blue] (0,-.6) to[out=90,in=-90] (-.45,0);
        \draw[-,line width=4pt,white] (-0.45,0) to[out=90,in=-90] (0,0.6);
        \draw[->,thin,blue] (-0.45,0) to[out=90,in=-90] (0,0.6);
	\draw[-,thick,green,dashed] (0.45,.6) to (-0.45,-.6);
\end{tikzpicture}
}\,
=
\mathord{
\begin{tikzpicture}[baseline = -1mm]
	\draw[->,thin,blue] (0.45,-.6) to (-0.45,.6);
        \draw[-,line width=4pt,white] (0,-.6) to[out=90,in=-90] (.45,0);
        \draw[-,thin,blue] (0,-.6) to[out=90,in=-90] (.45,0);
        \draw[->,thin,blue] (0.45,0) to[out=90,in=-90] (0,0.6);
	\draw[-,thick,green,dashed] (0.45,.6) to (-0.45,-.6);
\end{tikzpicture}
}\:,\quad
\mathord{
\begin{tikzpicture}[baseline = -1mm]
	\draw[-,thick,green,dashed] (0.45,.6) to (-0.45,-.6);
	\draw[->,blue] (0.45,-.6) to (-0.45,.6);
        \draw[-,red] (0,-.6) to[out=90,in=-90] (-.45,0);
        \draw[->,red] (-0.45,0) to[out=90,in=-90] (0,0.6);
\end{tikzpicture}
}\,
=
\mathord{
\begin{tikzpicture}[baseline = -1mm]
	\draw[-,thick,green,dashed] (0.45,.6) to (-0.45,-.6);
	\draw[->,blue] (0.45,-.6) to (-0.45,.6);
        \draw[-,red] (0,-.6) to[out=90,in=-90] (.45,0);
        \draw[->,red] (0.45,0) to[out=90,in=-90] (0,0.6);
\end{tikzpicture}
}\:,\quad
\mathord{
\begin{tikzpicture}[baseline = -1mm]
	\draw[-,thick,green,dashed] (0.45,.6) to (-0.45,-.6);
	\draw[->,red] (0.45,-.6) to (-0.45,.6);
        \draw[-,blue] (0,-.6) to[out=90,in=-90] (-.45,0);
        \draw[->,blue] (-0.45,0) to[out=90,in=-90] (0,0.6);
\end{tikzpicture}
}
=
\mathord{
\begin{tikzpicture}[baseline = -1mm]
	\draw[-,thick,green,dashed] (0.45,.6) to (-0.45,-.6);
	\draw[->,red] (0.45,-.6) to (-0.45,.6);
        \draw[-,blue] (0,-.6) to[out=90,in=-90] (.45,0);
        \draw[->,blue] (0.45,0) to[out=90,in=-90] (0,0.6);
\end{tikzpicture}
}\:,
\end{align}

\begin{align}\label{slide2}
\mathord{
\begin{tikzpicture}[baseline = 0]
\draw[-,green,dashed, thick](.6,.4) to (.1,-.3);
	\draw[<-,red] (0.6,-0.3) to[out=140, in=0] (0.1,0.2);
	\draw[-,red] (0.1,0.2) to[out = -180, in = 90] (-0.2,-0.3);
\end{tikzpicture}
}\,=
\mathord{
\begin{tikzpicture}[baseline = 0]
\draw[-,green,dashed, thick](-.5,.4) to (0,-.3);
	\draw[<-,red] (0.3,-0.3) to[out=90, in=0] (0,0.2);
	\draw[-,red] (0,0.2) to[out = -180, in = 40] (-0.5,-0.3);
\end{tikzpicture}
}\:,\quad
\mathord{
\begin{tikzpicture}[baseline = 0]
\draw[-,green,dashed, thick](.6,.4) to (.1,-.3);
	\draw[<-,blue] (0.6,-0.3) to[out=140, in=0] (0.1,0.2);
	\draw[-,blue] (0.1,0.2) to[out = -180, in = 90] (-0.2,-0.3);
\end{tikzpicture}
}\,=
\mathord{
\begin{tikzpicture}[baseline = 0]
\draw[-,green,dashed, thick](-.5,.4) to (0,-.3);
	\draw[<-,blue] (0.3,-0.3) to[out=90, in=0] (0,0.2);
	\draw[-,blue] (0,0.2) to[out = -180, in = 40] (-0.5,-0.3);
\end{tikzpicture}
}\:,\quad
\mathord{
\begin{tikzpicture}[baseline = 0]
\draw[-,green,dashed,thick](-.5,-.3) to (0,.4);
	\draw[<-,red] (0.3,0.4) to[out=-90, in=0] (0,-0.1);
	\draw[-,red] (0,-0.1) to[out = 180, in = -40] (-0.5,0.4);
\end{tikzpicture}
}\,=
\mathord{
\begin{tikzpicture}[baseline = 0]
\draw[-,green,dashed,thick](.6,-.3) to (.1,.4);
	\draw[<-,red] (0.6,0.4) to[out=-140, in=0] (0.1,-0.1);
	\draw[-,red] (0.1,-0.1) to[out = 180, in = -90] (-0.2,0.4);
\end{tikzpicture}
}
\:,\quad
\mathord{
\begin{tikzpicture}[baseline = 0]
\draw[-,green,dashed,thick](-.5,-.3) to (0,.4);
	\draw[<-,blue] (0.3,0.4) to[out=-90, in=0] (0,-0.1);
	\draw[-,blue] (0,-0.1) to[out = 180, in = -40] (-0.5,0.4);
\end{tikzpicture}
}\,=
\mathord{
\begin{tikzpicture}[baseline = 0]
\draw[-,green,dashed,thick](.6,-.3) to (.1,.4);
	\draw[<-,blue] (0.6,0.4) to[out=-140, in=0] (0.1,-0.1);
	\draw[-,blue] (0.1,-0.1) to[out = 180, in = -90] (-0.2,0.4);
\end{tikzpicture}
}
\:,
\end{align}

\begin{align}
\mathord{
\begin{tikzpicture}[baseline = 0]
\draw[-,green,dashed, thick](.6,.4) to (.1,-.3);
	\draw[-,red] (0.6,-0.3) to[out=140, in=0] (0.1,0.2);
	\draw[->,red] (0.1,0.2) to[out = -180, in = 90] (-0.2,-0.3);
\end{tikzpicture}
}\,=
\mathord{
\begin{tikzpicture}[baseline = 0]
\draw[-,green,dashed, thick](-.5,.4) to (0,-.3);
	\draw[-,red] (0.3,-0.3) to[out=90, in=0] (0,0.2);
	\draw[->,red] (0,0.2) to[out = -180, in = 40] (-0.5,-0.3);
\end{tikzpicture}
}\:,\quad
\mathord{
\begin{tikzpicture}[baseline = 0]
\draw[-,green,dashed, thick](.6,.4) to (.1,-.3);
	\draw[-,blue] (0.6,-0.3) to[out=140, in=0] (0.1,0.2);
	\draw[->,blue] (0.1,0.2) to[out = -180, in = 90] (-0.2,-0.3);
\end{tikzpicture}
}\,=
\mathord{
\begin{tikzpicture}[baseline = 0]
\draw[-,green,dashed, thick](-.5,.4) to (0,-.3);
	\draw[-,blue] (0.3,-0.3) to[out=90, in=0] (0,0.2);
	\draw[->,blue] (0,0.2) to[out = -180, in = 40] (-0.5,-0.3);
\end{tikzpicture}
}\:,\quad
\mathord{
\begin{tikzpicture}[baseline = 0]
\draw[-,green,dashed,thick](-.5,-.3) to (0,.4);
	\draw[-,red] (0.3,0.4) to[out=-90, in=0] (0,-0.1);
	\draw[->,red] (0,-0.1) to[out = 180, in = -40] (-0.5,0.4);
\end{tikzpicture}
}\,=
\mathord{
\begin{tikzpicture}[baseline = 0]
\draw[-,green,dashed,thick](.6,-.3) to (.1,.4);
	\draw[-,red] (0.6,0.4) to[out=-140, in=0] (0.1,-0.1);
	\draw[->,red] (0.1,-0.1) to[out = 180, in = -90] (-0.2,0.4);
\end{tikzpicture}
}
\:,\quad
\mathord{
\begin{tikzpicture}[baseline = 0]
\draw[-,green,dashed,thick](-.5,-.3) to (0,.4);
	\draw[-,blue] (0.3,0.4) to[out=-90, in=0] (0,-0.1);
	\draw[->,blue] (0,-0.1) to[out = 180, in = -40] (-0.5,0.4);
\end{tikzpicture}
}\,=
\mathord{
\begin{tikzpicture}[baseline = 0]
\draw[-,green,dashed,thick](.6,-.3) to (.1,.4);
	\draw[-,blue] (0.6,0.4) to[out=-140, in=0] (0.1,-0.1);
	\draw[->,blue] (0.1,-0.1) to[out = 180, in = -90] (-0.2,0.4);
\end{tikzpicture}
}
\:.
\end{align}
\end{proposition}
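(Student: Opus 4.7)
The plan is to mirror the strategy of Proposition \ref{prop:spinor} (which in turn follows \cite[Proposition 4.8]{LLZ}), with the simplification that the determinant character $\sgn_n$ is a genuine linear character rather than a spinor-type character. Concretely, each commutative diagram can be checked by writing both sides as explicit $(RG_m, RG_n)$-bimodule maps and evaluating them on a generator; the content of the computation then reduces to the value of $\sgn_n$ on the specific Weyl group lifts used to define the generating $2$-morphisms.

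The first step is to verify that the four bimodule maps defining $\Phi^\pm$ and $\Psi^\pm$ are well-defined and are in fact isomorphisms. Well-definedness follows from the compatibility $\sgn_{n+1}|_{G_n} = \sgn_n$ under the block-diagonal embedding $G_n \hookrightarrow G_{n+1}$ of \S \ref{sec:construction} (since an element of $G_n$ embeds as a block whose determinant equals its determinant in $G_n$), combined with the multiplicativity of $\sgn$. Bijectivity follows from $\mathrm{Det}^2 \cong \unit$ and a direct inverse construction as in \cite[Lemma 4.7]{LLZ}.

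The central computational input is the behavior of $\sgn$ on the lifts from \S \ref{sub:lift-refl} and \S \ref{sub:naturaltransf}. The lifts $\dot{s}_i$ for $i \geq 2$ are block-diagonal in $\bfG\bfL_n$ with nontrivial block $\left(\begin{smallmatrix} 0&1 \\ -1&0\end{smallmatrix}\right)$ (and its inverse), so $\sgn(\dot{s}_i) = +1$ for all $i \geq 2$. By contrast, the explicit matrix for $\dot{x}_{r+1}$ recorded in \S \ref{sub:naturaltransf} has determinant $-1$, so $\sgn(\dot{x}_{r+1}) = -1$ in both the $\bfO_{2n+1}$ and $\bfO_{2n}^\pm$ cases. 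Since $T^\pm$ and $H, H'$ are represented by (scalar multiples of) $e^\pm_{r+1}e^\pm_r \dot{s}_{r+2} e^\pm_{r+1}e^\pm_r$ with $r+2 \geq 2$, the crossing-slide diagrams in \eqref{change} follow from $\sgn(\dot{s}_{r+2}) = +1$ and the fact that $\dot{s}_{r+2}$ centralizes $G_r$. Similarly, the dot-slide diagram \eqref{dotslide11} follows because $X^\pm$ is represented by a scalar multiple of $e^\pm_r \dot{x}_{r+1} e^\pm_r$ with $\sgn(\dot{x}_{r+1}) = -1$; this is exactly the source of the minus sign.

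The remaining families of diagrams are then reduced to the above. The sideways-crossing relations in \eqref{slide1} are obtained by combining \eqref{change} with the adjunction relations of \S \ref{sec:adjunction} and the definition of the sideways crossings as compositions of crossings with cups and caps. The cup/cap relations in \eqref{slide2} and the last group of diagrams reduce to checking compatibility of $\mathrm{Det}$ with $\varepsilon^\pm_R, \eta^\pm_R, \varepsilon^\pm_L, \eta^\pm_L$. For the $R$-versions this is immediate because the defining elements $e^\pm_r$ lie in the image of the parabolic subgroup $P_r$ and are built from $V_r \subset \ker(\sgn_n)$ together with $G_r$ acting on itself (so the $\sgn$-twists on both sides of the tensor cancel). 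For the $L$-versions, the defining element $Z^\pm_r = \sum_i g_i^{-1} e^\pm_{r-1} \otimes e^\pm_{r-1} g_i$ is easily checked to be $\sgn$-invariant in the required sense using multiplicativity: the factors $\sgn(g_i^{-1})$ and $\sgn(g_i)$ cancel across the tensor.

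The main obstacle is purely bookkeeping: one must keep track of which generator contributes which $\sgn$ factor, and make sure the signs assemble correctly on both sides of each diagram. No new conceptual ingredient beyond \cite[Proposition 4.8]{LLZ} is required, and in fact the argument is simpler than in the spinor case because $\sgn$ is a linear character with a transparent restriction formula, so the analysis of its branching along the tower $G_\bullet$ is elementary.
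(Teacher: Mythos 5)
Your proposal is correct and follows essentially the same route as the paper: the paper verifies only \eqref{dotslide11} in detail, using exactly the two facts you isolate — that $U_n$ lies in the kernel of $\sgn$ (so $e^\epsilon_n$ commutes with $\mathfrak{det}_{n+1}$) and that $\sgn(\dot{x}_{n+1})=-1$ — and declares the remaining relations similar, which your bookkeeping of $\sgn(\dot{s}_i)=+1$ for $i\geq 2$ and the cancellation of $\sgn(g_i^{-1})\sgn(g_i)$ in $Z^\pm_r$ fills in consistently. The only nitpick is that $e^\pm_r$ is a sum over all of $U_r=\bbF_q^\times\ltimes V_r$, not just $V_r$, but the torus part also has determinant $1$, so the conclusion is unaffected.
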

\begin{proof}
We only show (\ref{dotslide11}). The others are similar.
Note that
$$(1_{\rm{Det}}\otimes X^\epsilon)\circ\Phi^\epsilon:F^\epsilon_{n,n+1}\circ \mathrm{Det}_n\to \mathrm{Det}_{n+1}\circ F^\epsilon_{n,n+1}$$
is represented by the composition of the $(R G_{n+1},R G_n)$-bimodule maps
$$
\begin{array}{ccc}
	R G_{n+1}\cdot e^\epsilon_{n}\otimes_{R G_n}R G_n\cdot\mathfrak{det}_n
	&\rightarrow & R G_{n+1}\cdot\mathfrak{det}_{n+1}\otimes_{R G_{n+1}}R G_{n+1}\cdot e^\epsilon_{n}
	\\
	ge^\epsilon_{n}\otimes h\cdot\mathfrak{det}_n&\mapsto& g\cdot\mathfrak{det}_{n+1}\otimes {\sgn(h)h} e^\epsilon_{n},
\end{array}
$$
and
\begin{align*}
	R G_{n+1}\cdot\mathfrak{det}_{n+1}\otimes_{R G_{n+1}}R G_{n+1}\cdot e^\epsilon_{n}
	&\rightarrow  R G_{n+1}\cdot\mathfrak{det}_{n+1}\otimes_{R G_{n+1}}R G_{n+1}\cdot e^\epsilon_{n}\\ g\cdot\mathfrak{det}_{n+1}\otimes {\sgn(h)h} e^\epsilon_{n}
	&\mapsto g\cdot\mathfrak{det}_{n+1}\otimes {\sgn(h)h} X^\epsilon_{n+1},
\end{align*}
for all $g\in  G_{n+1}$ and $h\in  G_n.$
Also, we note that
$$\Phi^\epsilon\circ (X^\epsilon\otimes 1_{\rm {Det}}):F^\epsilon_{n,n+1} \circ \mathrm{Det
}_n\to \mathrm{Det}_{n+1}\circ F^\epsilon_{n,n+1}$$
is represented by the composition of the $(R G_{n+1},R G_n)$-bimodule maps
$$
\begin{array}{ccc}
	R G_{n+1}\cdot e^\epsilon_{n}\otimes_{R G_n}R G_n\cdot\mathfrak{det}_n
	&\to&
	R G_{n+1}\cdot e^\epsilon_{n}\otimes_{R G_n}R G_n\cdot\mathfrak{det}_n
	\\
	ge^\epsilon_{n}\otimes h\cdot\mathfrak{det}_n &\mapsto & gX^\epsilon_{n+1}\otimes h\cdot\mathfrak{det}_n,
\end{array}
$$
and
$$
\begin{array}{ccc}
	R G_{n+1}\cdot e^\epsilon_{n}\otimes_{R G_n}R G_n\cdot\mathfrak{det}_n
	&\to &
	R G_{n+1}\cdot\mathfrak{det}_{n+1}\otimes_{R G_{n+1}}R G_{n+1}\cdot e^\epsilon_{n}\\
	gX^\epsilon_{n+1}\otimes h\cdot\mathfrak{det}_n
	&\mapsto & gX^\epsilon_{n+1}\cdot\mathfrak{det}_{n+1}\otimes {\sgn(h)h} e^\epsilon_{n}.
\end{array}
$$
\smallskip

Since $U_n$ is a subgroup of $\SO_{2n+1}(q)$ or $\SO^\pm_{2n}(q)$, we have 
$e^\epsilon_{n}\cdot\mathfrak{det}_{n+1}=\mathfrak{det}_{n+1}\cdot e^\epsilon_{n}$.
We also have
$$\dot{x}_{n+1}\cdot\mathfrak{det}_{n+1}
=\mathfrak{det}_{n+1}\cdot {\sgn(\dot{x}_{n+1})}\dot{x}_{n+1} =-\mathfrak{det}_{n+1}\cdot \dot{x}_{n+1},$$ since $\sgn(\dot{x}_{n+1})=-1.$ 
It follows that
$$gX^\epsilon_{n+1}\cdot\mathfrak{det}_{n+1}\otimes {\sgn(h)h} e_{n+1,n}=-g\cdot\mathfrak{det}_{n+1}\otimes {\sgn(h)h}\, X^\epsilon_{n+1}.$$
Hence
$(1_{\rm {Det}}\otimes X^\epsilon)\circ\Phi^\epsilon=-\Phi^\epsilon\circ (X^\epsilon \otimes  1_{\rm {Det}})$, which proves (\ref{dotslide11}).
\end{proof}
Proposition \ref{prop:det} has a straightforward consequence.

\begin{corollary} Let  $\scrI_+$ and $ \scrI_-$ be the sets of the
generalized eigenvalues of $X^+$ on $F^+$
 and of $X^-$ on $F^-$,
 respectively. Then $-\scrI_+=\scrI_+$ and $-\scrI_-=\scrI_-.$ Namely,
if $i\in \scrI_+$ (resp. $i'\in \scrI_-$) is a generalized
eigenvalue of $X^+$ on $F^+$ (resp. $X^-$ on $F^-$) then $-i$
(resp. $-i'$) is also a generalized eigenvalue of $X^+$ on $F^+$ (resp. $X^-$ on $F^-$).
Moreover, we have the following isomorphisms for all $i\in I_+$ and $i'\in I_-:$
$$\mathrm{Det}\circ F^+_i\circ\mathrm{Det}\cong F^+_{- i}~{,}~\mathrm{Det}
\circ F^-_{i'}\circ\mathrm{Det}\cong F^-_{- i'},$$
$$\mathrm{Det}\circ E^+_i\circ\mathrm{Det}\cong E^+_{- i}~{,}~\mathrm{Det}
\circ E^-_{i'}\circ\mathrm{Det}\cong E^-_{- i'}.$$
\end{corollary}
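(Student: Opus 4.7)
The plan is to read off the corollary as a direct consequence of the sign relation \eqref{dotslide11} in Proposition \ref{prop:det}, combined with the fact that $\mathrm{Det}$ is an involution, $\mathrm{Det}^2\cong\unit$.

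First I would construct the desired isomorphisms at the level of functors before passing to eigenvalue data. Using $\Phi^+$ (which is invertible by Proposition \ref{prop:det}), tensoring on both sides with $\mathrm{Det}$ and using $\mathrm{Det}^2\cong\unit$ gives an isomorphism of endofunctors
\[
\alpha\colon \mathrm{Det}\circ F^+\circ\mathrm{Det}\;\xrightarrow{\sim}\; F^+,
\]
and similarly $\mathrm{Det}\circ F^-\circ \mathrm{Det}\cong F^-$, $\mathrm{Det}\circ E^\pm\circ \mathrm{Det}\cong E^\pm$ (for the $E^\pm$ cases one either uses $\Psi^\pm$ directly or deduces them by biadjunction from the $F^\pm$ cases).

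The key point is then to track what happens to $X^\pm$ under these conjugations. Relation \eqref{dotslide11} says precisely that sliding a dot past a $\mathrm{Det}$-strand picks up a sign. Packaged via $\Phi^+$, this means that under the isomorphism $\alpha$ the natural transformation $X^+$ on $F^+$ corresponds to $-X^+$ on $\mathrm{Det}\circ F^+\circ\mathrm{Det}$. Consequently, for each $i\in R$, the generalized $i$-eigenspace functor of $X^+$ acting on $F^+$ is carried by $\alpha^{-1}$ to the generalized $(-i)$-eigenspace functor of $X^+$ on $\mathrm{Det}\circ F^+\circ\mathrm{Det}$, yielding
\[
\mathrm{Det}\circ F^+_i\circ\mathrm{Det}\;\cong\; F^+_{-i}.
\]
The same argument using the blue version of \eqref{dotslide11} gives the analogous isomorphism for $F^-_{i'}$, and the $E^\pm$ statements follow either by the adjoint of the $F^\pm$ statements (since adjunction is compatible with the conjugation by the self-inverse $\mathrm{Det}$) or by running the identical argument with $\Psi^\pm$ in place of $\Phi^\pm$.

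Finally, the claim $-\scrI_+=\scrI_+$ (and $-\scrI_-=\scrI_-$) is immediate: if $i\in\scrI_+$ then $F^+_i\neq 0$, so $\mathrm{Det}\circ F^+_i\circ\mathrm{Det}\neq 0$ (as $\mathrm{Det}$ is an equivalence), hence $F^+_{-i}\neq 0$ and thus $-i\in\scrI_+$; the same reasoning works for $\scrI_-$. There is no real obstacle here --- the only point that requires care is verifying that the sign in \eqref{dotslide11} actually translates into the claimed correspondence $X^+\leftrightarrow -X^+$ under $\Phi^+$, but this is a formal consequence of string-diagram manipulation (using the invertibility of $\Phi^+$ and $\mathrm{Det}^2\cong\unit$) and does not require any further computation beyond what has already been recorded in Proposition \ref{prop:det}.
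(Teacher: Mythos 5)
Your proposal is correct and is exactly the argument the paper has in mind: the corollary is stated as a direct consequence of Proposition \ref{prop:det} with no written proof, and your derivation (conjugating by the involution $\mathrm{Det}$ via the invertible $\Phi^\pm$, using the sign in \eqref{dotslide11} to send the generalized $i$-eigenspace of $X^\pm$ to the $(-i)$-eigenspace, and deducing the $E^\pm$ statements by biadjunction) is the standard way to make that "straightforward consequence" precise. The only detail worth writing out is the bookkeeping showing that $\alpha=(\mathrm{Det}^2\cong\unit)\circ(1_{\mathrm{Det}}\otimes\Phi^+)$ intertwines $1_{\mathrm{Det}}\otimes X^+\otimes 1_{\mathrm{Det}}$ with $-X^+$, which follows by whiskering \eqref{dotslide11} with $1_{\mathrm{Det}}$ exactly as you indicate.
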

\smallskip

\subsection{Diagonal automorphism functor of $\Sp_{2n}(q)$ and $\O^{\pm}_{2n}(q)$}\label{sec:diag}\hfill\\

In this section, let $G_n=\Sp_{2n}(q)$ or $\O^{\pm}_{2n}(q)$ and
$ \widetilde{G}_n=\CSp_{2n+1}(q)$ or $\CO^{\pm}_{2n}(q)$ be the corresponding
similitude group with $q$ odd.
Let $\tau_n\in  \widetilde{G}_n$ satisfying $\zeta(\lambda(\tau_n))=-1$, where $\lambda$ is determined by $\langle hu,hv\rangle=\lambda(h)\langle u,v\rangle$ for any $h\in \widetilde{G}_n$ and $u,v$ lying in the corresponding quadratic space with quadratic from $\langle\,\,, \,\,\rangle$, then the \emph{diagonal automorphism} of $G_n$ is given by$$\diag_n:G_n\to G_n$$ such that
$\mathrm{diag}_n(g)={^{\tau_n}g}$ for any $g\in G_n$, where $^{\tau_n}g={\tau_n}g\tau^{-1}_n.$
Note that up to inner automorphisms, the diagonal automorphism is unique and of order 2. For any representation $\rho$ of $G_n$, let $\rho^c$ denote
the representation obtained from $\rho$ by conjugating $\tau_n$ in the corresponding
similitude group  (see\cite[\S 4.3,\,\S 4.10]{Wa}).

In particular, we fix a choice of $\tau_n\in  \widetilde{G}_n$ for explicit computation. Let $\delta$ be a non-square in $\bbF_q$ and let $\beta$ and $\overline{\beta}$ be the two roots of $x^2-\delta.$
Then we define
$\tau_n$ 
to  be  the matrix
 $$\left(\begin{array}{cc} \id_n&   \\ & \delta\id_{n} \\ \end{array}\right),
 \quad \left(\begin{array}{cc} \id_n&  \\ & \delta\id_{n}  \\ \end{array}\right)
  \quad\text{or} \quad \left(\begin{array}{cccc}\id_{n-1} & & &\\&0&\beta& \\ &\overline{\beta}&0& \\&& &\delta\id_{n-1}\\ \end{array}\right)$$
when $G_n= \Sp_{2n}(q), \O_{2n}^{+}(q)$ or  $\O_{2n}^{-}(q)$, respectively. It is easy to check that under our choices of $\tau_n$,
$\tau_{n-1}\tau_n^{-1}$ commutes with $G_{n-1}$, i.e., $[\tau_{n-1}\tau_n^{-1},G_{n-1}]=1.$
To define the functor $\mathrm{Diag}_n$,
we define a $(RG_n, RG_n)$-bimodule  $RG_n \cdot \mathfrak{diag}_n$ as follows: 
\begin{enumerate}
	\item   As a left $RG_n$-module, $RG_n \cdot \mathfrak{diag}_n$ is isomorphic to the group ring $RG_n$ itself, where $\mathfrak{diag}_n$ serves as a formal generator;
	
	\item The right $RG_n$-module structure is defined through the twisted action
	\[
	(x \cdot \mathfrak{diag}_n)h \coloneqq x(^{\tau^{-1}_n }h )\cdot \mathfrak{diag}_n
	\]
	for all $ h \in G_n$ and $x  \cdot \mathfrak{diag}_n\in RG_n\cdot \mathfrak{diag}_n$.
\end{enumerate}
We define the functor $$\mathrm{Diag}_n:
R G _n\mod\to R G _n\mod$$
by $$M\mapsto  RG_n \cdot \mathfrak{diag}_n  \otimes_{RG_n} M.$$ 
If we denote the representation associated with the module $M$ by $\rho$, it follows directly that the representation corresponding to the module $\mathrm{Diag}_n(M)$ is precisely $\rho^c.$
\smallskip

Let $$\mathrm{Diag}:=\mathord{
\begin{tikzpicture}[baseline = 0]
	\draw[-,brown, thick, dashed] (0.08,-.2) to (0.08,.4);
\end{tikzpicture}
}:=\bigoplus\limits_{n\in \bbN^*}\mathrm{Diag}_n:\bigoplus_{n\in \bbN^*}
R{G}_{n}\mod\to \bigoplus_{n\in \bbN^*}R{G}_{n}\mod.$$

 We define $\dot{\Phi}^+=\mathord{
\begin{tikzpicture}[baseline = -.6mm]
	\draw[->,red] (-0.28,-.3) to (0.28,.3);
	\draw[-,brown, thick, dashed] (0.28,-.3) to (-0.28,.3);
\end{tikzpicture}
}\::F^+\circ \mathrm{Diag} \to
\mathrm{Diag}\circ F^+\,$ and $\dot{\Phi}^-=\mathord{
\begin{tikzpicture}[baseline = -.6mm]
	\draw[->,blue] (-0.28,-.3) to (0.28,.3);
	\draw[-,brown, thick, dashed] (0.28,-.3) to (-0.28,.3);
\end{tikzpicture}
}\::F^-\circ \mathrm{Diag}\to
\mathrm{Diag}\circ F^-\,$ by the following $(R G_{n+1}, RG_n)$-bimodule maps:
$$\begin{array}{rcl}
\dot{\Phi}^\epsilon\::R G _{n+1}\cdot e^\epsilon_{n}\otimes_{R G _n}R G _n\cdot\mathfrak{diag}_n
&\to& R G _{n+1}\cdot\mathfrak{diag}_{n+1}\otimes_{R G _{n+1}}R G _{n+1}
\cdot e^\epsilon_{n}\\
ge^\epsilon_{n}\otimes h\cdot\mathfrak{diag}_n &\mapsto & g\cdot\mathfrak{diag}_{n+1}
\otimes {^{\tau_n}h} e^\epsilon_{n}
\end{array}
$$
for all $g\in  G_{n+1}$ and $h\in  G_n$ and $\epsilon\in \{\pm\}$.
\smallskip

Similarly,  we define $\dot{\Psi}^+=\mathord{
\begin{tikzpicture}[baseline = -.6mm]
	\draw[->,red] (-0.28,.3) to (0.28,-.3);
	\draw[-,brown, thick, dashed] (0.28,.3) to (-0.28,-.3);
\end{tikzpicture}
}\:: \mathrm{Diag}\circ E^+\to E^+\circ \mathrm{Diag}
\,$ and $\dot{\Psi}^-=\mathord{
\begin{tikzpicture}[baseline = -.6mm]
	\draw[->,blue] (-0.28,.3) to (0.28,-.3);
	\draw[-,brown, thick, dashed] (0.28,.3) to (-0.28,-.3);
\end{tikzpicture}
}\::\mathrm{Diag}\circ E^-\to
E^-\circ \mathrm{Diag}\,$ by the following $(R G_{n-1}, RG_n)$-bimodule maps:
$$\begin{array}{rcl}
\dot{\Psi}^\epsilon\::R G _{n-1}\cdot\mathfrak{diag}_{n-1}\otimes_{R G _{n-1}}
e^\epsilon_{n}\cdot R G _{n}
&\to&e^\epsilon_{n-1}\cdot R G _{n}\otimes_{R G _n}R G _n\cdot\mathfrak{diag}_n\\
g\cdot\mathfrak{diag}_{n-1}
\otimes e^\epsilon_{n}h&\mapsto &e^\epsilon_{n-1} g\otimes{^{\tau^{-1}_n}h}
\cdot\mathfrak{diag}_n
\end{array}
$$
for all $g\in  G_{n-1}$ and $h\in  G_n$ and $\epsilon\in \{\pm\}$.

\begin{lemma}\label{lem:diag}
The map $\dot{\Phi}^\epsilon$ (resp. $\dot{\Psi}^\epsilon$) is an
isomorphism between the functors $F^\epsilon\circ \mathrm{Diag}$ and
$\mathrm{Diag}\circ F^\epsilon$ (resp. $\mathrm{Diag}\circ E^\epsilon$ and $E^\epsilon\circ \mathrm{Diag}$).
\end{lemma}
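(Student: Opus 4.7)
The plan is to prove invertibility by constructing an explicit two-sided inverse for each of $\dot\Phi^\epsilon$ and $\dot\Psi^\epsilon$, much in the same spirit as the analogous result for the spinor functor \cite[Lemma 4.7]{LLZ}. Before doing so, I would first verify that the formulas defining $\dot\Phi^\epsilon$ and $\dot\Psi^\epsilon$ actually give well-defined $(RG_{n+1},RG_n)$- and $(RG_{n-1},RG_n)$-bimodule maps. For $\dot\Phi^\epsilon$, the two nontrivial checks are: balancing over the middle copy of $RG_n$, i.e. the equality of the images of $g e^\epsilon_n c \otimes h\cdot\mathfrak{diag}_n$ and $ge^\epsilon_n \otimes ch\cdot\mathfrak{diag}_n$ for $c \in G_n$, and compatibility with the right $RG_n$-action coming from $\mathfrak{diag}_n$. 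Both reduce to the two identities $[\tau_{n+1}\tau_n^{-1},G_n]=1$ (which holds by the explicit choice of $\tau_n$ given in the excerpt) and ${}^{\tau_n}e^\epsilon_n = e^\epsilon_n$ (see the last paragraph).

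Next I would write down the candidate inverse of $\dot\Phi^\epsilon$ as the $(RG_{n+1},RG_n)$-bimodule map
\[
(\dot\Phi^\epsilon)^{-1}\colon RG_{n+1}\cdot\mathfrak{diag}_{n+1}\otimes_{RG_{n+1}}RG_{n+1}\cdot e^\epsilon_n \longrightarrow RG_{n+1}\cdot e^\epsilon_n\otimes_{RG_n}RG_n\cdot\mathfrak{diag}_n,
\]
sending $g\cdot\mathfrak{diag}_{n+1}\otimes h e^\epsilon_n$ to $g e^\epsilon_n \otimes {}^{\tau_n^{-1}}h\cdot\mathfrak{diag}_n$. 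Well-definedness uses the same two identities as above, and a direct substitution confirms $\dot\Phi^\epsilon\circ(\dot\Phi^\epsilon)^{-1}=\mathrm{id}$ and $(\dot\Phi^\epsilon)^{-1}\circ\dot\Phi^\epsilon=\mathrm{id}$ on generators of the respective tensor products. The argument for $\dot\Psi^\epsilon$ is entirely parallel: its inverse sends $e^\epsilon_{n-1}g\otimes h\cdot\mathfrak{diag}_n$ to $g\cdot\mathfrak{diag}_{n-1}\otimes e^\epsilon_{n-1}\,{}^{\tau_n}h$, and the same two identities (now with the indices shifted) suffice to check the bimodule axioms and mutual inversion.

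The main (and essentially only) technical point is the identity ${}^{\tau_n}e^\epsilon_n = e^\epsilon_n$, which is where the specific normalization of $\tau_n$ matters. For $\epsilon=+$, since $e^+_n=e_{U_n}$, it suffices to show that conjugation by $\tau_n$ normalizes $U_n$; this is straightforward from the matrix description of $\tau_n$ and of $U_n$ given in \S\ref{subsec:functors}. For $\epsilon=-$, one additionally needs that the induced automorphism of $U_n$ preserves the restriction of $\zeta$, which amounts to showing that conjugation by $\tau_n$ acts on the quotient $U_n\twoheadrightarrow\bbF_q^\times$ either as the identity or by $t\mapsto t^{\pm 1}$; in all cases $\zeta$ is preserved because $\zeta(t^{-1})=\zeta(t)$. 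I expect this to be the only place where a case-by-case check is unavoidable, but it is quite short. Once this is settled, the rest of the proof is formal bimodule algebra.
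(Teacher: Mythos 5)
Your proposal is correct and follows essentially the same route as the paper: the paper likewise reduces well-definedness to $[\tau_n\tau_{n+1}^{-1},G_n]=1$ and to $\tau_{n+1}$ normalizing $U_n$ with $\zeta_\epsilon({}^{\tau_{n+1}}u)=\zeta_\epsilon(u)$ (equivalently ${}^{\tau_{n+1}}e^\epsilon_n=e^\epsilon_n$ — note the conjugating element should be $\tau_{n+1}$, not $\tau_n$, since $U_n\subset G_{n+1}$), and then proves bijectivity by exhibiting exactly the preimage formula you propose as the inverse, only phrased as surjectivity plus the observation that both bimodules are $R$-free of the same rank. The difference between checking a two-sided inverse directly and the paper's rank argument is purely cosmetic.
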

\begin{proof}
We only prove the case of $\dot{\Phi}^\epsilon$.
It suffices to show that $\dot{\Phi}^\epsilon$ is
an $(R G _{n+1},R G _n)$-bimodule isomorphism for  each $n\in\bbN$.
We first prove that the map $\dot{\Phi}^\epsilon$ is well-defined. In fact, it suffices to prove that
\begin{equation}\label{Phi:1}
\dot{\Phi}^\epsilon(g k^{-1}e^\epsilon_{n}\otimes kh\cdot\mathfrak{diag}_n)=\dot{\Phi}^\epsilon(g e^\epsilon_{n}\otimes h\cdot\mathfrak{diag}_n)
\end{equation}
and \begin{equation}\label{Phi:2}
\dot{\Phi}^\epsilon( gue^\epsilon_{n}\otimes h\cdot\mathfrak{diag}_n)=\dot{\Phi}^\epsilon( \zeta_{\epsilon}(u) g e^\epsilon_{n}\otimes h\cdot\mathfrak{diag}_n)
\end{equation}
for all $k\in G_n$ and $u\in U_{n},$
where $\zeta_\epsilon=1$ if $\epsilon=+$ and $\zeta_\epsilon=\zeta$ if $\epsilon=-.$
\smallskip

For Equality (\ref{Phi:1}), we have
\begin{align*}
&\dot{\Phi}^\epsilon(g k^{-1}e^\epsilon_{n}\otimes kh\cdot\mathfrak{diag}_n)\\
&= gk^{-1}\cdot\mathfrak{diag}_{n+1}\otimes {^{\tau_n}(kh)} e^\epsilon_{n}  \\
 &= g \cdot\mathfrak{diag}_{n+1}\,{^{\tau_{n+1}}(k^{-1})}\otimes {^{\tau_n}(kh)} e^\epsilon_{n}\\
 &= g \cdot\mathfrak{diag}_{n+1}\,{^{\tau_{n}}(k^{-1})}\otimes {^{\tau_n}(kh)} e^\epsilon_{n}\quad (\text{since $[\tau_{n}\tau_{n+1}^{-1},G_{n}]=1$})\\
  &= g \cdot\mathfrak{diag}_{n+1}\,\otimes {^{\tau_{n}}(k^{-1})}\,\,{^{\tau_n}(kh)} e^\epsilon_{n}\\
  &= g\cdot\mathfrak{diag}_{n+1}\otimes {^{\tau_n}h} e^\epsilon_{n}\\
  &= \dot{\Phi}^\epsilon(g e^\epsilon_{n}\otimes h\cdot\mathfrak{diag}_n).
\end{align*}

Note that
$\tau_{n+1}$ normalize $U_n$ and $\zeta_\epsilon({^{\tau_{n+1}}u})=\zeta_\epsilon(u)$.
For Equality (\ref{Phi:2}), we
have
$$\begin{array}{rclr}
\dot{\Phi}^\epsilon( gue^\epsilon_{n}\otimes h\cdot\mathfrak{diag}_n)&=&
 gu\cdot\mathfrak{diag}_{n+1}\otimes  {^{\tau_n}h} e^\epsilon_{n} &  \\
 &=&g\cdot\mathfrak{diag}_{n+1} {^{\tau_{n+1}}u}\otimes {^{\tau_{n+1}}h} e^\epsilon_{n} &\\
  &=&g\cdot\mathfrak{diag}_{n+1}\otimes  {^{\tau_{n+1}}(uh)} e^\epsilon_{n} &\\
  &=&g\cdot\mathfrak{diag}_{n+1}\otimes  {^{\tau_{n+1}}(hu)} e^\epsilon_{n} &\\
 &=&\zeta_\epsilon({^{\tau_{n+1}}u})g\cdot\mathfrak{diag}_{n+1}\otimes {^{\tau_n}h} e^\epsilon_{n} &\\
 &=&\zeta_\epsilon(u)g\cdot\mathfrak{diag}_{n+1}\otimes {^{\tau_n}h} e^\epsilon_{n} &\\
  &=&\dot{\Phi}^\epsilon( \zeta_{\epsilon}(u) g e^\epsilon_{n}\otimes h\cdot\mathfrak{diag}_n). &
\end{array}
$$

Now we prove that the map $\dot{\Phi}^\epsilon$ is an $(R G_{n+1},R G_n)$-bimodule homomorphism.
Obviously,  $\dot{\Phi}^\epsilon$ is a left $R G_{n+1}$-module map.
The conclusion that $\dot{\Phi}^\epsilon$ is also a right $ R G_n$-module map follows from
the following:
$$\begin{array}{rcl}
\dot{\Phi}^\epsilon(g e^\epsilon_{n}\otimes h\cdot\mathfrak{diag}_n k)&=&\dot{\Phi}^\epsilon (g e^\epsilon_{n}\otimes h\,\,{^{\tau^{-1}_n}k}\cdot\mathfrak{diag}_n )\\
&=& g\cdot\mathfrak{diag}_{n+1}\otimes {^{\tau_n}( h\,\,{^{\tau^{-1}_n}k})} e^\epsilon_{n} \\
&=& g\cdot\mathfrak{diag}_{n+1}\otimes {^{\tau_n}h} e^\epsilon_{n}k \\
&=&\dot{\Phi}^\epsilon(g e^\epsilon_{n}\otimes h\cdot\mathfrak{diag}_n)k
\end{array}
$$
for any $k\in  G_n$.

\smallskip

Finally, we prove that the map $\dot{\Phi}^\epsilon$ is an isomorphism.
The surjection of $\dot{\Phi}^\epsilon$ is clear since every element
$g\cdot\mathfrak{diag}_{n+1}\otimes he^\epsilon_{n}$ has a  pre-image
$ge^\epsilon_{n}\otimes {^{\tau_n^{-1}}h}\cdot\mathfrak{diag}_{n}.$
It is easy to see that $R G_{n+1}\cdot e^\epsilon_{n}\otimes_{R G_n}R G_n\cdot\mathfrak{diag}_n$ and
$R G_{n+1}\cdot\mathfrak{diag}_{n+1}\otimes_{R G_{n+1}}R G_{n+1}\cdot e^\epsilon_{n}$
are both $R$-free with the same rank equal to that of the
bimodule $R G_{n+1}\cdot e^\epsilon_{n}$.
Since $\dot{\Phi}^\epsilon$ is an $R$-free map, we get that $\dot{\Phi}^\epsilon$ is an isomorphism.
\end{proof}

Since  $\dot{\Phi}^+$ (resp. $\dot{\Phi}^-,\dot{\Psi}^+ $ or $\dot{\Psi}^-$) is an isomorphisms.
we define its inverse  $(\dot{\Phi}^+)^{-1}=\mathord{
\begin{tikzpicture}[baseline = -.6mm]
	\draw[->,red] (0.28,-.3) to  (-0.28,.3);
	\draw[-,brown, thick, dashed] (-0.28,-.3) to (0.28,.3);
\end{tikzpicture}
}$ (resp.  $(\dot{\Phi}^-)^{-1}=\mathord{
\begin{tikzpicture}[baseline = -.6mm]
	\draw[->,blue] (0.28,-.3) to (-0.28,.3);
	\draw[-,brown, thick, dashed] (-0.28,-.3) to (0.28,.3);
\end{tikzpicture}
}\,,$ $(\dot{\Psi}^+)^{-1}=\mathord{
\begin{tikzpicture}[baseline = -.6mm]
	\draw[<-,red] (-0.28,-.3) to (0.28,.3);
	\draw[-,brown, thick, dashed] (0.28,-.3) to (-0.28,.3);
\end{tikzpicture}
}\,$ or $(\dot{\Psi}^-)^{-1}=\mathord{
\begin{tikzpicture}[baseline = -.6mm]
	\draw[<-,blue] (-0.28,-.3) to (0.28,.3);
	\draw[-,brown, thick, dashed] (0.28,-.3) to (-0.28,.3);
\end{tikzpicture}
}$).
\smallskip

 Also, we have the following analogous results of Proposition \ref{prop:spinor} for $\mathrm{Diag}$.
\begin{proposition}\label{prop:diag}
We have the following relations:
\begin{align}
\mathord{
\begin{tikzpicture}[baseline = -1mm]
	\draw[-,thin,red] (0.28,-.6) to[out=90,in=-90] (-0.28,0);
	\draw[->,thin,red] (-0.28,0) to[out=90,in=-90] (0.28,.6);
	\draw[-,thick,dashed,brown] (-0.28,-.6) to[out=90,in=-90] (0.28,0);
	\draw[-,thick,dashed,brown] (0.28,0) to[out=90,in=-90] (-0.28,.6);
\end{tikzpicture}
}\,=
\mathord{
\begin{tikzpicture}[baseline = -1mm]
	\draw[->,thin,red] (0.18,-.6) to (0.18,.6);
	\draw[-,thick,dashed,brown] (-0.18,-.6) to (-0.18,.6);
\end{tikzpicture}
}\:,\quad
\mathord{
\begin{tikzpicture}[baseline = -1mm]
	\draw[-,thick,dashed,brown] (0.28,0) to[out=90,in=-90] (-0.28,.6);
	\draw[->,thin,blue] (-0.28,0) to[out=90,in=-90] (0.28,.6);
	\draw[-,thick,dashed,brown] (-0.28,-.6) to[out=90,in=-90] (0.28,0);
	\draw[-,thin,blue] (0.28,-.6) to[out=90,in=-90] (-0.28,0);
\end{tikzpicture}
}\,=
\mathord{
\begin{tikzpicture}[baseline = -1mm]
	\draw[->,thin,blue] (0.18,-.6) to (0.18,.6);
	\draw[-,thick,dashed,brown] (-0.18,-.6) to (-0.18,.6);
\end{tikzpicture}
}\:,\quad
\mathord{
\begin{tikzpicture}[baseline = -1mm]
	\draw[<-,thin,red] (0.28,-.6) to[out=90,in=-90] (-0.28,0);
	\draw[-,thin,red] (-0.28,0) to[out=90,in=-90] (0.28,.6);
	\draw[-,thick,dashed,brown] (-0.28,-.6) to[out=90,in=-90] (0.28,0);
	\draw[-,thick,dashed,brown] (0.28,0) to[out=90,in=-90] (-0.28,.6);
\end{tikzpicture}
}\,=
\mathord{
\begin{tikzpicture}[baseline = -1mm]
	\draw[<-,thin,red] (0.18,-.6) to (0.18,.6);
	\draw[-,thick,dashed,brown] (-0.18,-.6) to (-0.18,.6);
\end{tikzpicture}
}\:,\quad
\mathord{
\begin{tikzpicture}[baseline = -1mm]
	\draw[-,thick,dashed,brown] (0.28,0) to[out=90,in=-90] (-0.28,.6);
	\draw[-,thin,blue] (-0.28,0) to[out=90,in=-90] (0.28,.6);
	\draw[-,thick,dashed,brown] (-0.28,-.6) to[out=90,in=-90] (0.28,0);
	\draw[<-,thin,blue] (0.28,-.6) to[out=90,in=-90] (-0.28,0);
\end{tikzpicture}
}\,=
\mathord{
\begin{tikzpicture}[baseline = -1mm]
	\draw[<-,thin,blue] (0.18,-.6) to (0.18,.6);
	\draw[-,thick,dashed,brown] (-0.18,-.6) to (-0.18,.6);
\end{tikzpicture}
}\:,
\end{align}
\begin{align}\label{mixedcross}
\mathord{
\begin{tikzpicture}[baseline = -.5mm]
	\draw[->,thin,red] (0.28,-.3) to (-0.28,.4);
      \node at (0.165,-0.15) {$\dott$};
	\draw[-,thick,dashed,brown] (-0.28,-.3) to (0.28,.4);
\end{tikzpicture}
}&=\mathord{
\begin{tikzpicture}[baseline = -.5mm]
	\draw[-,thick,dashed,brown] (-0.28,-.3) to (0.28,.4);
	\draw[->,thin,red] (0.28,-.3) to (-0.28,.4);
      \node at (-0.14,0.23) {$\dott$};
\end{tikzpicture}
},
&
\mathord{
\begin{tikzpicture}[baseline = -.5mm]
	\draw[->,thin,blue] (0.28,-.3) to (-0.28,.4);
      \node at (0.165,-0.15) {$\dott$};
	\draw[-,thick,dashed,brown] (-0.28,-.3) to (0.28,.4);
\end{tikzpicture}
}&=-\mathord{
\begin{tikzpicture}[baseline = -.5mm]
	\draw[-,thick,dashed,brown] (-0.28,-.3) to (0.28,.4);
	\draw[->,thin,blue] (0.28,-.3) to (-0.28,.4);
      \node at (-0.14,0.23) {$\dott$};
\end{tikzpicture}
}
\:,
\end{align}

\begin{align}
\mathord{
\begin{tikzpicture}[baseline = -1mm]
	\draw[->,thin,red] (0.45,-.6) to (-0.45,.6);
        \draw[-,thin,red] (0,-.6) to[out=90,in=-90] (-.45,0);
        \draw[-,line width=4pt,white] (-0.45,0) to[out=90,in=-90] (0,0.6);
        \draw[->,thin,red] (-0.45,0) to[out=90,in=-90] (0,0.6);
	\draw[-,thick,dashed,brown] (0.45,.6) to (-0.45,-.6);
\end{tikzpicture}
}
\,=
\mathord{
\begin{tikzpicture}[baseline = -1mm]
	\draw[->,thin,red] (0.45,-.6) to (-0.45,.6);
        \draw[-,line width=4pt,white] (0,-.6) to[out=90,in=-90] (.45,0);
        \draw[-,thin,red] (0,-.6) to[out=90,in=-90] (.45,0);
        \draw[->,thin,red] (0.45,0) to[out=90,in=-90] (0,0.6);
	\draw[-,thick,dashed,brown] (0.45,.6) to (-0.45,-.6);
\end{tikzpicture}
}\:,\quad
\mathord{
\begin{tikzpicture}[baseline = -1mm]
	\draw[->,thin,blue] (0.45,-.6) to (-0.45,.6);
        \draw[-,thin,blue] (0,-.6) to[out=90,in=-90] (-.45,0);
        \draw[-,line width=4pt,white] (-0.45,0) to[out=90,in=-90] (0,0.6);
        \draw[->,thin,blue] (-0.45,0) to[out=90,in=-90] (0,0.6);
	\draw[-,thick,dashed,brown] (0.45,.6) to (-0.45,-.6);
\end{tikzpicture}
}
\,=
\mathord{
\begin{tikzpicture}[baseline = -1mm]
	\draw[->,thin,blue] (0.45,-.6) to (-0.45,.6);
        \draw[-,line width=4pt,white] (0,-.6) to[out=90,in=-90] (.45,0);
        \draw[-,thin,blue] (0,-.6) to[out=90,in=-90] (.45,0);
        \draw[->,thin,blue] (0.45,0) to[out=90,in=-90] (0,0.6);
	\draw[-,thick,dashed,brown] (0.45,.6) to (-0.45,-.6);
\end{tikzpicture}
}\:,\quad
\mathord{
\begin{tikzpicture}[baseline = -1mm]
	\draw[-,thick,dashed,brown] (0.45,.6) to (-0.45,-.6);
	\draw[->,blue] (0.45,-.6) to (-0.45,.6);
        \draw[-,red] (0,-.6) to[out=90,in=-90] (-.45,0);
        \draw[->,red] (-0.45,0) to[out=90,in=-90] (0,0.6);
\end{tikzpicture}
}
\,=
\mathord{
\begin{tikzpicture}[baseline = -1mm]
	\draw[-,thick,dashed,brown] (0.45,.6) to (-0.45,-.6);
	\draw[->,blue] (0.45,-.6) to (-0.45,.6);
        \draw[-,red] (0,-.6) to[out=90,in=-90] (.45,0);
        \draw[->,red] (0.45,0) to[out=90,in=-90] (0,0.6);
\end{tikzpicture}
}\:,\quad \mathord{
\begin{tikzpicture}[baseline = -1mm]
	\draw[-,thick,dashed,brown] (0.45,.6) to (-0.45,-.6);
	\draw[->,red] (0.45,-.6) to (-0.45,.6);
        \draw[-,blue] (0,-.6) to[out=90,in=-90] (-.45,0);
        \draw[->,blue] (-0.45,0) to[out=90,in=-90] (0,0.6);
\end{tikzpicture}
}
\,=
\mathord{
\begin{tikzpicture}[baseline = -1mm]
	\draw[-,thick,dashed,brown] (0.45,.6) to (-0.45,-.6);
	\draw[->,red] (0.45,-.6) to (-0.45,.6);
        \draw[-,blue] (0,-.6) to[out=90,in=-90] (.45,0);
        \draw[->,blue] (0.45,0) to[out=90,in=-90] (0,0.6);
\end{tikzpicture}
}\:,
\end{align}

\begin{align}
\mathord{
\begin{tikzpicture}[baseline = 0]
\draw[-,dashed, thick,brown](.6,.4) to (.1,-.3);
	\draw[<-,red] (0.6,-0.3) to[out=140, in=0] (0.1,0.2);
	\draw[-,red] (0.1,0.2) to[out = -180, in = 90] (-0.2,-0.3);
\end{tikzpicture}
}\,=
\mathord{
\begin{tikzpicture}[baseline = 0]
\draw[-,dashed, thick,brown](-.5,.4) to (0,-.3);
	\draw[<-,red] (0.3,-0.3) to[out=90, in=0] (0,0.2);
	\draw[-,red] (0,0.2) to[out = -180, in = 40] (-0.5,-0.3);
\end{tikzpicture}
}\:,\quad
\mathord{
\begin{tikzpicture}[baseline = 0]
\draw[-,dashed, thick,brown](.6,.4) to (.1,-.3);
	\draw[<-,blue] (0.6,-0.3) to[out=140, in=0] (0.1,0.2);
	\draw[-,blue] (0.1,0.2) to[out = -180, in = 90] (-0.2,-0.3);
\end{tikzpicture}
}\,=
\mathord{
\begin{tikzpicture}[baseline = 0]
\draw[-,dashed, thick,brown](-.5,.4) to (0,-.3);
	\draw[<-,blue] (0.3,-0.3) to[out=90, in=0] (0,0.2);
	\draw[-,blue] (0,0.2) to[out = -180, in = 40] (-0.5,-0.3);
\end{tikzpicture}
}\:,\quad
\mathord{
\begin{tikzpicture}[baseline = 0]
\draw[-,dashed,thick,brown](-.5,-.3) to (0,.4);
	\draw[<-,red] (0.3,0.4) to[out=-90, in=0] (0,-0.1);
	\draw[-,red] (0,-0.1) to[out = 180, in = -40] (-0.5,0.4);
\end{tikzpicture}
}\,=
\mathord{
\begin{tikzpicture}[baseline = 0]
\draw[-,dashed,thick,brown](.6,-.3) to (.1,.4);
	\draw[<-,red] (0.6,0.4) to[out=-140, in=0] (0.1,-0.1);
	\draw[-,red] (0.1,-0.1) to[out = 180, in = -90] (-0.2,0.4);
\end{tikzpicture}
}
\:,\quad
\mathord{
\begin{tikzpicture}[baseline = 0]
\draw[-,dashed,thick,brown](-.5,-.3) to (0,.4);
	\draw[<-,blue] (0.3,0.4) to[out=-90, in=0] (0,-0.1);
	\draw[-,blue] (0,-0.1) to[out = 180, in = -40] (-0.5,0.4);
\end{tikzpicture}
}\,=
\mathord{
\begin{tikzpicture}[baseline = 0]
\draw[-,dashed,thick,brown](.6,-.3) to (.1,.4);
	\draw[<-,blue] (0.6,0.4) to[out=-140, in=0] (0.1,-0.1);
	\draw[-,blue] (0.1,-0.1) to[out = 180, in = -90] (-0.2,0.4);
\end{tikzpicture}
}
\:,
\end{align}

\begin{align}
\mathord{
\begin{tikzpicture}[baseline = 0]
\draw[-,dashed, thick,brown](.6,.4) to (.1,-.3);
	\draw[-,red] (0.6,-0.3) to[out=140, in=0] (0.1,0.2);
	\draw[->,red] (0.1,0.2) to[out = -180, in = 90] (-0.2,-0.3);
\end{tikzpicture}
}\,=
\mathord{
\begin{tikzpicture}[baseline = 0]
\draw[-,dashed, thick,brown](-.5,.4) to (0,-.3);
	\draw[-,red] (0.3,-0.3) to[out=90, in=0] (0,0.2);
	\draw[->,red] (0,0.2) to[out = -180, in = 40] (-0.5,-0.3);
\end{tikzpicture}
}\:,\quad
\mathord{
\begin{tikzpicture}[baseline = 0]
\draw[-,dashed, thick,brown](.6,.4) to (.1,-.3);
	\draw[-,blue] (0.6,-0.3) to[out=140, in=0] (0.1,0.2);
	\draw[->,blue] (0.1,0.2) to[out = -180, in = 90] (-0.2,-0.3);
\end{tikzpicture}
}\,=
\mathord{
\begin{tikzpicture}[baseline = 0]
\draw[-,dashed, thick,brown](-.5,.4) to (0,-.3);
	\draw[-,blue] (0.3,-0.3) to[out=90, in=0] (0,0.2);
	\draw[->,blue] (0,0.2) to[out = -180, in = 40] (-0.5,-0.3);
\end{tikzpicture}
}\:,\quad
\mathord{
\begin{tikzpicture}[baseline = 0]
\draw[-,dashed,thick,brown](-.5,-.3) to (0,.4);
	\draw[-,red] (0.3,0.4) to[out=-90, in=0] (0,-0.1);
	\draw[->,red] (0,-0.1) to[out = 180, in = -40] (-0.5,0.4);
\end{tikzpicture}
}\,=
\mathord{
\begin{tikzpicture}[baseline = 0]
\draw[-,dashed,thick,brown](.6,-.3) to (.1,.4);
	\draw[-,red] (0.6,0.4) to[out=-140, in=0] (0.1,-0.1);
	\draw[->,red] (0.1,-0.1) to[out = 180, in = -90] (-0.2,0.4);
\end{tikzpicture}
}
\:,\quad
\mathord{
\begin{tikzpicture}[baseline = 0]
\draw[-,dashed,thick,brown](-.5,-.3) to (0,.4);
	\draw[-,blue] (0.3,0.4) to[out=-90, in=0] (0,-0.1);
	\draw[->,blue] (0,-0.1) to[out = 180, in = -40] (-0.5,0.4);
\end{tikzpicture}
}\,=
\mathord{
\begin{tikzpicture}[baseline = 0]
\draw[-,dashed,thick,brown](.6,-.3) to (.1,.4);
	\draw[-,blue] (0.6,0.4) to[out=-140, in=0] (0.1,-0.1);
	\draw[->,blue] (0.1,-0.1) to[out = 180, in = -90] (-0.2,0.4);
\end{tikzpicture}
}
\:.
\end{align}
\end{proposition}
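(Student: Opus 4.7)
The plan is to verify each family of identities at the level of bimodule homomorphisms, using the explicit descriptions of all the generators $X^\pm$, $T^\pm$, $H$, $H'$, $\eta_{R/L}^\pm$, $\varepsilon_{R/L}^\pm$ from Section~\ref{Chap:doubleHeisonfiniteclass} together with the formulas for $\dot\Phi^\pm,\dot\Psi^\pm$ introduced in this section. Each generator is represented by right multiplication by an explicit element, and each functor map between Hom-spaces of bimodules is uniquely determined by its value on the generator of the relevant $(R G_{n+k}, R G_n)$-bimodule. Thus every identity in the proposition reduces to a short direct computation, once we know how $\tau$-conjugation interacts with $\dot s_{r+2}$, $\dot x_{r+1}$ and the idempotents $e^\pm_r$.

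The first batch of four relations is tautological: each one expresses the composition of a crossing with its inverse, and follows from the fact that $\dot\Phi^\pm,\dot\Psi^\pm$ are isomorphisms (Lemma~\ref{lem:diag}). The heart of the proposition is the sign in~(\ref{mixedcross}), which we prove by computing $^{\tau_{n+1}}\dot x_{n+1}$ explicitly in each of the three cases $G_n=\Sp_{2n}(q),\O^\pm_{2n}(q)$ with our fixed choices of $\tau_{n+1}$. In all three cases a direct matrix calculation yields
\[
{}^{\tau_{n+1}}\dot x_{n+1} \;=\; \dot x_{n+1}\cdot d_\delta,
\]
where $d_\delta\in U_n$ is the image in $U_n$ of the scalar $\delta=\lambda(\tau_{n+1})\in\bbF_q^\times$ under the embedding $\bbF_q^\times\hookrightarrow U_n$ from~(\ref{U}). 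Since $e^+_r\cdot d_\delta=e^+_r$ and $e^-_r\cdot d_\delta=\zeta(\delta)e^-_r=-e^-_r$, right multiplication by $X^\epsilon_{n+1}={\rm const}\cdot e^\epsilon_r\dot x_{n+1}e^\epsilon_r$ intertwines $\dot\Phi^\epsilon$ up to a sign $\epsilon$; this is exactly the sign appearing in~(\ref{mixedcross}). The computation is the same for $\dot\Psi^\epsilon$ because $\tau_n\tau_{n+1}^{-1}$ centralizes $G_n$ by construction, so the two conventions for $\mathfrak{diag}$-twisting at levels $n$ and $n+1$ differ only by an element that acts trivially on both idempotents.

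The mixed-crossing slide relations in the third and fourth lines are verified in the same spirit: each crossing is represented by right multiplication by $e^\epsilon_{r+1}e^{\epsilon'}_{r}\dot s_{r+2}e^\epsilon_{r+1}e^{\epsilon'}_{r}$, the element $\tau_{r+2}$ commutes with $\dot s_{r+2}$ (since with our choice $\tau$ is diagonal outside the two middle coordinates for $\Sp$ and $\O^+$, and the non-diagonal block of $\tau$ for $\O^-$ lies in coordinates disjoint from the support of $\dot s_{r+2}$), and the leftover torus element $\tau_{r+2}\tau_{r}^{-1}$ is swallowed by the idempotents on both sides. Tracing the generator $e^\epsilon_{r+1}e^{\epsilon'}_r\otimes 1\otimes 1$ through both sides of each diagram yields equal elements in the appropriate bimodule.

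Finally, the cup/cap relations follow from the defining formulas in \S\ref{sec:adjunction}. The elements $Z^\pm_r=\sum_i g_i^{-1}e^\pm_{r-1}\otimes e^\pm_{r-1}g_i$ are $G_r$-central and independent of the chosen left coset representatives for $P_{r-1}\backslash G_r$; since $\tau_r$-conjugation permutes these cosets (as $\tau_r$ normalizes $P_{r-1}$ modulo $G_{r-1}$), the right multiplication maps defining $\eta_R^\pm,\varepsilon_R^\pm,\eta_L^\pm,\varepsilon_L^\pm$ commute with the twist by $\mathfrak{diag}$. The anticipated main obstacle is the bookkeeping in the third batch of relations: verifying that the $\tau$-conjugates of the sideways-crossing definitions agree on both sides requires chasing a generator through the Mackey/adjunction decompositions of Proposition~\ref{mkd}, but no new structural input beyond the explicit $^{\tau}\dot s$ and $^{\tau}\dot x$ computations is needed.
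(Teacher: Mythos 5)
Your proposal is correct and follows essentially the same route as the paper, which proves only the dot-slide relation~(\ref{mixedcross}) in detail and declares the rest similar: the key computation in both is that ${}^{\tau_{n+1}}\dot x_{n+1}=\dot x_{n+1}\cdot t$ with $t=\diag(\delta,\id_{G_n},\delta^{-1})$ the image of the non-square $\delta$ in $U_n$, so that $t$ acts on $e^{\epsilon}_n$ by the sign $\epsilon$, while ${}^{\tau_{n+1}}e^{\epsilon}_n=e^{\epsilon}_n$ because $\tau_{n+1}$ normalizes $U_n$ and preserves $\zeta_\epsilon$. Your additional verifications (invertibility for the first batch, $\tau$-invariance of $\dot s_{r+2}$ and of $Z^{\pm}_r$ for the crossing and cup/cap relations) are exactly the "similar" checks the paper omits.
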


\begin{proof}
We only prove (\ref{mixedcross}). The others are similar.
Note that
$$(1_{\rm{Diag}}\otimes X^\epsilon)\circ\dot{\Phi}^\epsilon:F^\epsilon_{n,n+1}\circ \mathrm{Diag}_n\to \mathrm{Diag}_{n+1}\circ F^\epsilon_{n,n+1}$$
 is represented by the composition of the $(R G_{n+1},R G_n)$-bimodule maps
 $$
 \begin{array}{ccc}
R G_{n+1}\cdot e^\epsilon_{n}\otimes_{R G_n}R G_n\cdot\mathfrak{diag}_n
&\rightarrow & R G_{n+1}\cdot\mathfrak{diag}_{n+1}\otimes_{R G_{n+1}}R G_{n+1}\cdot e^\epsilon_{n}
\\
ge^\epsilon_{n}\otimes h\cdot\mathfrak{diag}_n&\mapsto& g\cdot\mathfrak{diag}_{n+1}\otimes {^{\tau_n}h} e^\epsilon_{n},
 \end{array}
$$
and
\begin{align*}
 R G_{n+1}\cdot\mathfrak{diag}_{n+1}\otimes_{R G_{n+1}}R G_{n+1}\cdot e^\epsilon_{n}
&\rightarrow  R G_{n+1}\cdot\mathfrak{diag}_{n+1}\otimes_{R G_{n+1}}R G_{n+1}\cdot e^\epsilon_{n}\\ g\cdot\mathfrak{diag}_{n+1}\otimes {^{\tau_n}h} e^\epsilon_{n}
&\mapsto g\cdot\mathfrak{diag}_{n+1}\otimes {^{\tau_n}h} X^\epsilon_{n+1},
 \end{align*}
for all $g\in  G_{n+1}$ and $h\in  G_n.$
Also, we note that
$$\dot{\Phi}^\epsilon\circ (X^\epsilon\otimes 1_{\rm {Diag}}):F^\epsilon_{n,n+1} \circ \mathrm{Diag
}_n\to \mathrm{Diag}_{n+1}\circ F^\epsilon_{n,n+1}$$
 is represented by the composition of the $(R G_{n+1},R G_n)$-bimodule maps
$$
 \begin{array}{ccc}
R G_{n+1}\cdot e^\epsilon_{n}\otimes_{R G_n}R G_n\cdot\mathfrak{diag}_n
&\to&
R G_{n+1}\cdot e^\epsilon_{n}\otimes_{R G_n}R G_n\cdot\mathfrak{diag}_n
\\
ge^\epsilon_{n}\otimes h\cdot\mathfrak{diag}_n &\mapsto & gX^\epsilon_{n+1}\otimes h\cdot\mathfrak{diag}_n,
 \end{array}
$$
and
$$
 \begin{array}{ccc}
R G_{n+1}\cdot e^\epsilon_{n}\otimes_{R G_n}R G_n\cdot\mathfrak{diag}_n
&\to &
 R G_{n+1}\cdot\mathfrak{diag}_{n+1}\otimes_{R G_{n+1}}R G_{n+1}\cdot e^\epsilon_{n}\\
 gX^\epsilon_{n+1}\otimes h\cdot\mathfrak{diag}_n
&\mapsto & gX^\epsilon_{n+1}\cdot\mathfrak{diag}_{n+1}\otimes {^{\tau_n}h} e^\epsilon_{n}.
 \end{array}
$$
\smallskip
 
We have
 \begin{align}\label{compute}
\diag_{n+1}(e^\epsilon_{n})=\frac{1}{|U_{n}|}\sum\limits_{u\in U_{n}}\zeta_\epsilon(u)\cdot {^{\tau_{n+1}}u}=\frac{1}{|U_{n}|}\sum\limits_{u\in U_{n}}\zeta_\epsilon({^{\tau_{n+1}}u})\cdot{^{\tau_{n+1}}u}=e^\epsilon_{n}.
\end{align}
Hence
$e^\epsilon_{n}\cdot\mathfrak{diag}_{n+1}=\mathfrak{diag}_{n+1}\cdot e^\epsilon_{n}$.
\smallskip

On the other hand,
 $$\dot{x}_{n+1}\cdot\mathfrak{diag}_{n+1}
=\mathfrak{diag}_{n+1}\cdot {^{\tau_{n+1}}\dot{x}_{n+1}}=\mathfrak{diag}_{n+1}\cdot \dot{x}_{n+1}t,$$
where $t=\diag(\delta,\id_{G_n},\delta^{-1}).$
\smallskip

Since
\begin{align*}te^\epsilon_{n}=
\begin{cases}1(\delta)\cdot e^\epsilon_{n}=e^\epsilon_{n}&\text{if $\epsilon=+$};\\
\zeta(\delta)\cdot e^\epsilon_{n}=-e^\epsilon_{n}&\text{if $\epsilon=-$},
\end{cases}
\end{align*}
it follows that
$$gX^\epsilon_{n+1}\cdot\mathfrak{diag}_{n+1}\otimes {^{\tau_{n}}h} e_{n+1,n}=g\cdot\mathfrak{diag}_{n+1}\otimes {^{\tau_{n}}h}\,\, (\epsilon\cdot X^\epsilon_{n+1}).$$
Hence
 $(1_{\rm {Diag}}\otimes X^\epsilon)\circ\dot{\Phi}^\epsilon=\dot{\Phi}^\epsilon\circ (\epsilon\cdot X^\epsilon \otimes  1_{\rm {Diag}})$, which proves (\ref{mixedcross}).
\end{proof}

 Then we have the following direct corollary.
\begin{corollary}
Let $I_+$ and $ \scrI_-$ be the sets of the generalized eigenvalues of $X^+$ on $F^+$ and $X^-$ on $F^-$, respectively.
Then $\scrI_-=-\scrI_-$ and we have  the following isomorphisms for
all $i\in \scrI_+$ and $j\in \scrI_-$:
$$\mathrm{Diag}\circ F^+_i\circ \mathrm{Diag}\cong F^+_i,\quad\,\,
\quad \mathrm{Diag}\circ F^-_j\circ \mathrm{Diag}\cong F^-_{-j},$$
$$\mathrm{Diag}\circ E^+_i\circ \mathrm{Diag}\cong E^+_i,\quad\,\,
\quad \mathrm{Diag}\circ E^-_j\circ \mathrm{Diag}\cong E^-_{-j}.$$

 \end{corollary}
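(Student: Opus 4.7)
The plan is to deduce the four isomorphisms directly from the sliding relations for the diagonal functor established in Proposition \ref{prop:diag}, together with the fact (used in defining $\mathrm{Diag}$) that $\mathrm{Diag}^2\cong \unit$. First I would rewrite the key relation (\ref{mixedcross}): sliding a red dot past $\mathrm{Diag}$ is the identity, while sliding a blue dot past $\mathrm{Diag}$ picks up a sign. Phrased in terms of natural transformations, this says
\[
(1_{\mathrm{Diag}}\otimes X^+)\circ \dot{\Phi}^+ = \dot{\Phi}^+\circ (X^+\otimes 1_{\mathrm{Diag}}),\qquad
(1_{\mathrm{Diag}}\otimes X^-)\circ \dot{\Phi}^- = -\,\dot{\Phi}^-\circ (X^-\otimes 1_{\mathrm{Diag}}),
\]
with $\dot{\Phi}^\pm$ invertible by Lemma \ref{lem:diag}. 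Conjugating by $\mathrm{Diag}$ (which is an equivalence as $\mathrm{Diag}^2\cong\unit$) thus transports the endomorphism $X^+$ of $F^+$ to itself and transports the endomorphism $X^-$ of $F^-$ to $-X^-$.

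Next I would pass to the generalized eigenspace decompositions. By definition $F^+_i$ (resp.\ $F^-_j$) is the direct summand of $F^+$ (resp.\ $F^-$) on which $X^+-i$ (resp.\ $X^--j$) is locally nilpotent. Since conjugation by $\mathrm{Diag}$ is an exact auto-equivalence, it sends a generalized $i$-eigenspace of $X^+$ to a generalized $i$-eigenspace, and a generalized $j$-eigenspace of $X^-$ to a generalized $(-j)$-eigenspace, giving
\[
\mathrm{Diag}\circ F^+_i\circ \mathrm{Diag}\cong F^+_i,\qquad \mathrm{Diag}\circ F^-_j\circ \mathrm{Diag}\cong F^-_{-j}.
\]
In particular the second isomorphism forces $F^-_j\neq 0$ if and only if $F^-_{-j}\neq 0$, which gives the equality of sets $\scrI_-=-\scrI_-$.

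For the $E$-statements I would use the biadjunction $(E^\pm,F^\pm)$ together with the remaining relations in Proposition \ref{prop:diag}, namely that all four caps and cups slide through the dashed strand cleanly. These relations say exactly that $\dot{\Psi}^\pm$ is compatible with $\dot{\Phi}^\pm$ under the units and counits of adjunction, so taking adjoints of the isomorphisms already obtained for $F^\pm_i$ and $F^\pm_j$ yields
\[
\mathrm{Diag}\circ E^+_i\circ \mathrm{Diag}\cong E^+_i,\qquad \mathrm{Diag}\circ E^-_j\circ \mathrm{Diag}\cong E^-_{-j}.
\]
The only part that requires any care is keeping track of the sign that appears in the blue sliding relation; this is what produces the flip $j\mapsto -j$ and hence the symmetry $\scrI_-=-\scrI_-$, and it is the analogue of the calculation (\ref{compute}) carried out in the proof of Proposition \ref{prop:diag}. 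No genuine obstacle is expected since all needed compatibilities have been pre-packaged in Proposition \ref{prop:diag}.
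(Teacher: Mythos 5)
Your proposal is correct and is exactly the argument the paper intends: the result is stated as a direct corollary of Proposition \ref{prop:diag}, and the mechanism you describe — the dot-sliding relation (\ref{mixedcross}) transporting $X^+$ to $X^+$ and $X^-$ to $-X^-$ under conjugation by $\mathrm{Diag}$, hence permuting generalized eigenspaces, with the $E$-statements following by biadjunction via the cap/cup sliding relations — is precisely the omitted verification. No gaps.
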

Now we have the following extra symmetries in Table \ref{diag:extrasymm}, which is a summary of this section.
\begin{center}
\begin{table}[h]\caption{Extra symmetries}\label{diag:extrasymm}
\small{\begin{tabular}{ccc}
  \hline
  $\O_{2\bullet+1}(q)$&$\Sp_{2\bullet}(q)$ &$\O^{\pm}_{2\bullet}(q)$\\\hline
  $\mathfrak{U}(\fraks\frakl'_{I_+}\oplus\fraks\frakl'_{I_-})\rtimes ( \bbZ/2\bbZ\times \bbZ/2\bbZ)$&
$\mathfrak{U}(\fraks\frakl'_{I_+}\oplus\fraks\frakl'_{I_-})\rtimes \bbZ/2\bbZ$&
$\mathfrak{U}(\fraks\frakl'_{I_+}\oplus\fraks\frakl'_{I_-})\rtimes ( \bbZ/2\bbZ\wr \bbZ/2\bbZ)$\\\hline
  extra symmetries:&  extra symmetries: &extra symmetries:\\
  $\mathrm{Spin},\,\mathrm{Det}$ &$\mathrm{Diag}$&$\mathrm{Spin},\,\mathrm{Det},\,\mathrm{Diag}$
  \\\hline
\end{tabular}}
\end{table}
\end{center}

\section{Complete invariants of $KG_\bullet\mod$}\label{Chap:completeinv}

In this section, we only consider the representation theory in characteristic 0. So we always let $R=K$ be an algebraically closed field of characteristic 0. Let $KG_\bullet\mod:=\bigoplus_{n\in \bbN^*}K G _n\mod.$
Recall that in \S \ref{sec:doublequantumtoKac}, we define two colored central elements $\bbO^+(u):=\red\clock(u)$ and
$\bbO^-(v):=\blue\clock(v).$ For each irreducible module
$L\in KG_\bullet\mod$, we consider the action of $\bbO^+(u)$ and $\bbO^-(v)$ on $\unit (L)=L$. Then $\bbO^+(u)(L)\in K((u^{-1}))$ and $\bbO^-(v)(L)\in K((v^{-1})).$ So we get the following functions, and  we call them \emph{colored weight functions}
$$\bbO^+(u)(-):\, \Irr(KG_\bullet\mod)\to K((u^{-1})),\quad \bbO^-(v)(-):\, \Irr(KG_\bullet\mod)\to K((v^{-1})).$$

 We will show that $\bbO^+(u)(-)$, $\bbO^-(v)(-)$  and
 uniform projection
   can distinguish all irreducible modules.
   In fact, when focusing solely on quadratic unipotent modules,
   $\bbO^+(u)(-)$ and $\bbO^-(v)(-)$ serve as complete invariants for these modules.

\smallskip

\subsection{Lusztig parametrization for finite classical groups}\label{sec:Lusztigparametrization}\hfill\\

In this section, we will recall some basic facts about character theory of finite classical groups.
\smallskip

\subsubsection{Partitions and Lusztig symbols}\label{sub:symbol}
A \emph{partition} of $n$ is a non-increasing sequence of non-negative integers
$\lambda = (\lambda_1 \geqslant \lambda_2 \geqslant \cdots)$ with $\sum_i \lambda_i=n$,
to which one
associates the so-called \emph{Young diagram}
$Y(\lambda)=\{(x,y)\in\bbZ_{>0}\times\bbZ_{>0}\,\mid\,y\leqslant \lambda_x\}.$
We write $\scrP=\bigsqcup_n\scrP_n$ for the set
of all partitions, where $\scrP_n$ is the set of partitions of $n$.
For $\lambda\in \scrP$, we denote by $|\lambda|$ the \emph{weight}
of $\lambda$ and by $\lambda^*$ the partition  conjugate to $\lambda$.
An \emph{$l$-partition} of $n$ is an $l$-tuple $\tuple\lambda=(\lambda^{1},\ldots,\lambda^{l})$ of partitions whose weights add up to $n$,
and its Young diagram is the set $Y(\tuple\lambda)=\bigsqcup_{p=1}^lY(\lambda^p)\times\{p\}.$
The integer $|\tuple\lambda|=\sum_p|\lambda^{p}|$ is called the weight of the $l$-partition. We write $\scrP^l=\bigsqcup_n\scrP^l_n$ for the set of all $l$-partitions, where
$\scrP^l_n$ is the set of $l$-partitions of $n$.
We will call the \emph{charged $l$-partition} a pair in $\scrP^l\times \bbZ^l.$ In particular, if $l=1$, we will call them \emph{charged partition}.
\smallskip

A \emph{beta-set} of a partition $\lambda=(\alpha_1, \alpha_2, \cdots , \alpha_t)$ is a finite set of non-negative integers
$\{\beta_1, \beta_2, \cdots , \beta_s\}$ where $\beta_1 < \beta_2 < \cdots < \beta_s$ and $\alpha_i=\beta_{s-i+1}-s+i$ for $1 \leq i \leq s$ and $\alpha_i$
is considered to be zero for $i > t.$
Now if $d$ is a non-negative integer then the $d$-shift of a beta-set $\{\beta_1, \beta_2, \cdots, \beta_s\}$ is
$\{0, 1, \cdots, d-1\}\cup \{\beta_1+d, \beta_2+d, \cdots , \beta_s+d\}.$
Two beta-sets are said to be equivalent if one is the $d$-shift of the other for some non-
negative integer $d$ and two beta-sets give rise to the same partition if and only if they
are equivalent. There is a bijection between partitions and equivalence
classes of 	beta-sets, mapping the partition $\lambda=(\alpha_1, \alpha_2, \cdots , \alpha_t)$ to the beta-set containing $\beta=\{\alpha_t,\alpha_{t-1}+1,\cdots,\alpha_{1}+t-1\}.$ For a positive integer $d$,  an \emph{$d$-hook} of $\beta$ is a pair $(x,x+e)$ such
that $x\in \beta$ and $x+d\not\in \beta$.

\smallskip

An \emph{(ordered) Lusztig symbol} or \emph{symbol} is an ordered pair of beta-sets  $$\Lambda=\begin{pmatrix}
X\\
Y
\end{pmatrix}
=
\begin{pmatrix}
a_1,a_2,\cdots,a_{m_1}\\
b_1,b_2,\cdots,b_{m_2}
\end{pmatrix}.$$ A symbol $\lambda$ is said to be
degenerate if $X = Y$. Two symbols are said to be equivalent if one of them can be
obtained from the other by simultaneous $d$-shifts on both parts of the symbol for some
non-negative integer $d$. Let $\calS$ denote the set of equivalent
classes of symbols.  Let
\begin{align*}
&\mathrm{rank}(\Lambda)=\sum_{a_i\in X}a_i+\sum_{b_i\in Y}b_i-\left\lfloor\left(\frac{|X|+|Y|-1}{2}\right)^2\right\rfloor, \\
&\mathrm{def}(\Lambda)=|X|-|Y|,
\end{align*}
which are called \emph{rank} and \emph{defect} of $\Lambda,$ respectively.  A symbol $\Lambda$ is called \emph{cuspidal} if
$
{\rm rank}(\Lambda)=\left\lfloor\left(\frac{{\rm def}
(\Lambda)}{2}\right)^2\right\rfloor.
$
For a symbol $\Lambda=\begin{pmatrix}
X\\
Y
\end{pmatrix}$, let
$\Lambda^t=\begin{pmatrix}
Y\\
X
\end{pmatrix}$. A \emph{$d$-hook} of $\Lambda$ is a pair of integers $(x,x+d)$ which is either
a $d$-hook of $X$ or a $d$-hook of $Y$. The  symbol obtained by deleting $x$ from $X$ (resp. $Y$)
and replacing it by $x+d$ is said to be gotten from $\Lambda$ by \emph{adding the $d$-hook $(x,x+d)$}.
\smallskip

Modified from Lusztig, we define
\begin{align}\label{S}
\calS_{\O^+_{2n}} &=\{\,\Lambda\in\calS\mid {\rm rank}(\Lambda)=n,\ {\rm def}(\Lambda)\equiv 0 \,(\textrm{mod }4)\,\}, \\
\calS_{\SO_{2n+1}}=\calS_{\Sp_{2n}} &=\{\,\Lambda\in\calS\mid {\rm rank}(\Lambda)=n,\ {\rm def}(\Lambda)\equiv 1\,(\textrm{mod }4) \,\}, \\
\calS_{\O^-_{2n}} &=\{\,\Lambda\in\calS\mid {\rm rank}(\Lambda)=n,\ {\rm def}(\Lambda)\equiv 2\,(\textrm{mod }4)\,\}.
\end{align}
Note that $\Lambda\in\calS_{\O^\epsilon_{2n}}$ if and only if $\Lambda^t\in\calS_{\O^\epsilon_{2n}}$
where $\epsilon=+$ or $-$.
\smallskip

It is not difficult to see that a symbol is cuspidal if and only if it is similar to a symbol of the forms
$\binom{k,k-1,\ldots,0}{-}$ or $\binom{-}{k,k-1,\ldots,0}$ for some non-negative integer $k$.
Note that $\binom{A}{-}$ means that the second row of the symbol is the empty set.
\smallskip

The map $\Upsilon$ from symbols to bi-partitions is given by
\begin{equation}\label{bipartitionmap}
\Upsilon\colon\binom{a_1,a_2,\ldots,a_{m_1}}{b_1,b_2,\ldots,b_{m_2}}\mapsto
(\mu_1,\mu_2).
\end{equation}
where $\mu_1=(a_1-(m_1-1),a_2-(m_1-2),\ldots,a_{m_1-1}-1,a_{m_1})$ and $\mu_2=(b_1-(m_2-1),b_2-(m_2-2),
\ldots,b_{m_2}-1,b_{m_2}).$
 Let $\calS_{\delta, m}$ be all symbols $\Lambda$ such that $\mathrm{rank}(\Lambda)=m$ and $\mathrm{def}(\Lambda)=\delta,$ we can check that  $\Upsilon$ gives a bijection between $\calS_{\delta, m}$ and bi-partitions of $m$
\begin{equation}
\calS_{\delta, m}\rightarrow\scrP^2_m.
\end{equation}

\subsubsection{Deligne--Lusztig characters}\label{sub:uniformprojection}As in \cite[\S 5]{DL}, we fix once and for all a group isomorphism $\iota: \overline{\bbF}_q^{\times}\to (\bbQ/\bbZ)_{p'}$ and an injective group homomorphism $\iota': (\bbQ/\bbZ)_{p'}\to \mu(K^\times),$ where $\mu(K^\times)$ is the multiplicative group of roots of unity in $K^\times$.
First, suppose $\bfG$ is a connected classical group $\bfG\bfL_n,\bfG\bfU_n,\bfSO_{2n+1}$, $\bfSp_{2n}$, or $\bfSO^\epsilon_{2n}$ with $\epsilon=\pm$ and $F$ is a Frobenius map of $\bfG$. 
The notation $\bfSO^-_{2n}$ indicates that, in this context, $\bfG = \bfSO_{2n}$, while the Frobenius map $F$ corresponds to $F'$, as defined in Section \ref{sec:construction} (3b). Similarly, the notation $\bfO^-_{2n}$ (used later) adopts an analogous interpretation.
\smallskip

Assume that $(\bfG^*,F^*)$ is in duality with $(\bfG,F)$. Let $G=\bfG^F$ and $G^*=\bfG^{*F^*}.$ For an $F$-stable maximal torus $\bfT$ in $\bfG$ and $\theta\in \Irr(\bfT^F)$, the \emph{Deligne-Lusztig virtual characters} $R^\bfG_{\bfT}(s)$ is defined in \cite{DL}. There is a cananical bijection between $\bfG^F$-orbits of pair $(\bfT,\theta)$
 (where $\bfT \subset \bfG$ is an $F$-stable maximal torus and $\theta\in \Irr(\bfT^F)$) and $(\bfG^*)^{F^*}$-orbits of pairs $(\bfT^*,s)$ (where $\bfT^* \subset \bfG^*$ is an $F^*$-stable maximal torus and $s\in (\bfT^*)^{F^*}$). We remark that this bijection depends on the choices of $\iota$ and $\iota'.$ If the pairs $(\bfT,\theta)$ and $(\bfT^*,s)$ correspond to each other in this way, then we write $R^{\bfG}_{\bfT^*,s} = R^{\bfG}_{\bfT}(\theta).$
\smallskip

For the disconnected group $\bfG=\bfO_{2n+1}$ or $\bfO^\epsilon_{2n}$, let $\bfG^*=\bfSp_{2n}\times \bbZ/2\bbZ$ or $\bfO^\epsilon_{2n}$, respectively. In particular, ${\bfG^*}^0=\bfSp_{2n}$ or $\bfSO^\epsilon_{2n}$, respectively. We define 
$R^{\bfO_{2n+1}}_{\bfT}(\theta)$ and $R^{\bfO^\epsilon_{2n}}_{\bfT}(\theta)$  by\begin{equation}
R^{\bfO_{2n+1}}_{\bfT}(\theta)=\Ind_{\SO_{2n+1}}^{\O_{2n+1
}}R^{\bfSO_{2n+1}}_{\bfT}(\theta),
\quad
R^{\bfO^\epsilon_{2n}}_{\bfT}(\theta)=\Ind_{\SO^\epsilon_{2n}}
^{\O^\epsilon_{2n}}R^{\bfSO^\epsilon_{2n}}_{\bfT}(\theta)
,\end{equation}
respectively. Similarly, if the pairs $(\bfT,\theta)$ and $(\bfT^*,s)$ correspond to each other, then we write $R^{\bfG}_{\bfT^*,s} = R^{\bfG}_{\bfT}(\theta).$
\smallskip

We shall speak below of ``complex conjugation" in the field $K$, denoted by
$a\mapsto \overline{a}$. This is justified by noting that $K$ is abstractly isomorphic to $\bbC.$
Let $\calV(G)$ be the space of class function on $G$,
and let $\calV(G)^\sharp$ denote the subspace spanned by Deligne-Lusztig virtual characters.
Note that $\calV(G)$ is an inner product space with the inner product $\langle,\rangle_G$ given by
$$
\langle f,g\rangle_G=\frac{1}{|G|}\sum_{x\in G}f(x)\overline{g(x)}
$$
for $f,g\in\calV(G)$.
For $f\in\calV(G)$ the orthogonal projection of $f$ onto $\calV(G)^\sharp$
is denoted by $f^\sharp$ and called the \emph{uniform projection}.
A class function $f\in\calV(G)$ is called \emph{uniform} if $f^\sharp=f$.
\smallskip

For a semisimple element $s\in ({\bfG^*}^0)^{F^*}$, define
$$\calE(G,s):=\{\chi\in \Irr(G)| \langle\chi, R^{\bfG}_{\bfT^*,s}\rangle\neq 0\,\,\text{ for some $F^*$-stable $\bfT^*$ contains $s$}\}.$$
The set $\calE(G,s)$ is called a \emph{Lusztig series}, and it is known that $\Irr(G)$ is partitioned into
Lusztig series indexed by the conjugacy classes $(s)$ of rational semisimple elements $s$
in ${\bfG^*}^0$, i.e., 
$$\mathrm{Irr}(G)=\bigcup_{(s)} {\mathcal E}(G,s)$$
where $(s)$ runs over the  set of $G^*$-conjugacy classes of semisimple elements of
$({\bfG^*}^0)^{F^*}$.
The {\em unipotent characters} of $G$ are those in $\mathcal{E}(G,1)$.

\begin{proposition}[Lusztig]\label{prop:jordandec}
	
	Let $\bfG$ be one of classical groups $\bfG\bfL_n,\bfG\bfU_n,\bfSO_{2n+1}$, $\bfSp_{2n}$,  $\bfSO^\epsilon_{2n}$ or $\bfO^{\epsilon}_{2n}.$
There  is a bijection
 $\calJ_s:\mathcal{E}(G,s)\to \mathcal{E}(C_{G^*}(s)^*,1)$ satisfying the condition 
 $$\langle \rho,\epsilon_{\bfG}R^{\bfG}_{\bfT^*,s}\rangle_G=\langle \calJ_s(\rho),\epsilon_{C_{\bfG*}(s)}R^{C_{\bfG^*}(s)}_{\bfT^*}(1_{T^*})\rangle_{C_{G^*}(s)}$$
for any $F^*$ -stable maximal torus $T^*$ containing $s$, where $\epsilon_{\bfG}=(-1)^r$ with $r$ is the semi-simple $\bbF_q$-rank of $\bfG^0.$ 
Moreover, we have $$\dim(\rho)=\frac{|G|_{p'}}{|C_{G^*}(s)|_{p'}}\dim(\calJ_s(\rho)),$$
where $|G|_{p'}$ denotes greatest factor of $|G|$ not divided by $p$.
 \end{proposition}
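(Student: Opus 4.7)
The plan is to assemble this statement from three layers of Lusztig's classification, rather than to prove it from scratch: the connected-center case, the disconnected-center case handled by a regular embedding, and the non-connected ambient group case handled by induction from the identity component.

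First, I would dispose of the connected-center cases $\bfG = \bfG\bfL_n$, $\bfG\bfU_n$, $\bfSO_{2n+1}$. Here the proposition is precisely Lusztig's Jordan decomposition theorem from \cite{Lu84} (together with \cite{Lu88} for the unitary case). One invokes the main classification theorem directly: Lusztig constructs a bijection $\calJ_s : \calE(G,s) \to \calE(C_{G^*}(s),1)$ which intertwines Deligne--Lusztig induction in the stated way, and the degree formula $\dim(\rho) = [G^* : C_{G^*}(s)]_{p'}\cdot \dim(\calJ_s(\rho))$ is part of that statement. Since the center is connected, $C_{G^*}(s)$ is already connected, so $C_{G^*}(s)^* = C_{G^*}(s)$ in the sense required.

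Next, I would treat the connected classical groups with disconnected center, namely $\bfG = \bfSp_{2n}$ and $\bfSO^\epsilon_{2n}$. The standard device is to choose a regular embedding $\bfG \hookrightarrow \widetilde{\bfG}$ into a group with connected center (e.g.\ $\bfC\bfSp_{2n}$, $\bfC\bfSO^\epsilon_{2n}$), yielding a dual surjection $\widetilde{\bfG}^* \twoheadrightarrow \bfG^*$ with connected kernel. One then applies the connected-center bijection on $\widetilde G$ and descends by Clifford theory: an irreducible $\rho \in \calE(G,s)$ is a constituent of the restriction of some $\widetilde\rho \in \calE(\widetilde G, \widetilde s)$, and on the dual side one compares $C_{\widetilde G^*}(\widetilde s)$ with $C_{G^*}(s)^*$, the latter being by definition the identity component of $C_{\bfG^*}(s)$ taken as the correct dual partner (this is where the $^*$ notation matters in the disconnected-center setting). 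The compatibility with Deligne--Lusztig characters follows from the transitivity of restriction with $R^{\widetilde\bfG}_{\widetilde\bfT}$ and the adjunction formula; the degree formula follows by multiplying the corresponding formula for $\widetilde G$ by the index $[\widetilde G : G Z(\widetilde G)^F]$ which cancels appropriately in both numerator and denominator.

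Finally, for the disconnected ambient groups $\bfG = \bfO_{2n+1}$ and $\bfO^\epsilon_{2n}$, I would use the definition $R^{\bfG}_{\bfT}(\theta) = \Ind_{\SO}^{\O} R^{\bfSO}_{\bfT}(\theta)$ given just above the proposition, and transport the bijection from $\bfSO$ to $\bfO$ via Clifford theory with respect to the index-two subgroup $\SO \trianglelefteq \O$. The dual picture is $\bfG^* = \bfSp_{2n} \times \bbZ/2\bbZ$ (respectively $\bfO^\epsilon_{2n}$ itself), so $C_{\bfG^*}(s)^*$ again decomposes appropriately, and the key input is simply Frobenius reciprocity together with the already-established bijection on the identity component. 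The main obstacle here --- and really the only nontrivial point in the whole proof --- is bookkeeping the disconnectedness of centralizers: one must verify that the $\O/\SO$-action on $\calE(\SO,s)$ matches the $C_{\bfG^*}(s)/C_{\bfG^*}(s)^\circ$-action on $\calE(C_{\bfG^*}(s)^\circ, 1)$, so that the orbit/stabilizer counts on both sides of $\calJ_s$ match and the induced/restricted characters split identically. This compatibility is precisely the content of Lusztig's \cite{Lu88} and (for orthogonal groups) is treated in detail in subsequent work on the classification of characters of disconnected classical groups; I would cite those references rather than redo the bookkeeping, since the present proposition is a recollection of that body of results.
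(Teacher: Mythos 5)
Your outline is correct and matches the paper's treatment: the paper gives no proof of this proposition but simply cites \cite[Theorem 11.5.1]{DM} for the connected cases and \cite[Proposition 1.7]{AMR} (together with \cite{Wa} and \cite{LS-depth0}) for $\bfO^{\pm}_{2n}$, and your three-layer reduction (connected center, regular embedding for disconnected center, Clifford theory for the index-two extension $\SO\trianglelefteq\O$) is exactly how those references proceed. The only stray detail is that you also treat $\bfO_{2n+1}$, which is not in the proposition's list of groups; this is harmless.
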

 When $\bfG$ is connected, the above proposition can be found in \cite[Theorem 11.5.1]{DM};
 when $\bfG=\bfO^{\pm}_{2n}$, the proposition is  \cite[Proposition 1.7]{AMR}, see also \cite[\S 4]{Wa} and \cite[\S 7.5]{LS-depth0}. Such a bijection $\calJ_s$ is called a \emph{Lusztig's Jordan decomposition}. A character $\chi\in \calE(G,s)$ is called \emph{qudratic unipotent}, if $s^2=1.$
\smallskip

\subsubsection{Unipotent characters}\label{sub:unipotent}
We now review some results on the classification of the irreducible unipotent representations by Lusztig.
\smallskip

If $G$ is a general linear group $\GL_n(q)$ or a unitary group $\GU_n(q)$,
then every irreducible character is uniform (i.e., $\rho=\rho^\sharp$).
For $\lambda\in\scrP_n$, we define
\[
R_\lambda^G=\frac{1}{|S_n|}\sum_{w\in S_n}\varphi_\lambda(w)R^\bfG_{\bfT_w}(1)
\]
where $\varphi_\lambda$ denotes the irreducible character of symmetric group $S_n$ corresponding to $\lambda$ and $\bfT_w$ is the maximal torus of type $w$, see \cite[\S 15.4]{DM}.
It is known that $R_\lambda^G$ is an irreducible unipotent character (up to a sign $\epsilon_\lambda$) of $G$.
Let $\calS_{\GL_n}=\calS_{\GU_n}=\scrP_n$.
Then the map
\begin{align}\label{GLnGUn}
\calL_1\colon\calS_G\rightarrow\calE(G,1)\quad \text{ given by }
\lambda\mapsto\rho_\lambda:=\epsilon_\lambda R_\lambda^G
\end{align}
is a bijection.
\smallskip

However, if $G$ is a symplectic group or orthogonal group $\SO_{2n+1}$, $\Sp_{2n}$ or $\O^{\epsilon}_{2n}$, we have $\calV(G)\neq \calV(G)^\sharp$.
Lusztig constructs
a bijective map $$\calL_1\colon\calS_G\rightarrow\calE(G,1)$$ denoted by
$\Lambda\mapsto\rho_\Lambda$ satisfying some good properties, see \cite[Proposition 3.12]{P4}.
Such a map $\calL_1$ is called a \emph{Lusztig parametrization} of unipotent characters of $G$.
For example, under this map,  cuspidal symbols map to cuspidal modules. We always write $\bbN=\bbZ_{\geqs 0}.$
 Now we give a parametrization for unipotent cuspidal modules for finite classical groups. 
 \begin{itemize}

 \item For $G=\Sp_{2n}(q)$ or $\SO_{2n+1}(q)$, if $n=t^2+t$ for some $t\in \bbN $, then $G$ has exactly one cuspidal module; otherwise, it has none.
     We label it  by $\rho_{t}=\rho^{\Sp}_{t}$ or $\rho^{\SO}_{t}$ and the corresponding cuspidal Lusztig symbol $\Lambda_t$ with defect $(-1)^t(2t+1)$.
 \item For $G=\O_{2n+1}(q),$ if $n=t^2+t$ for some $t\in \bbN $, then $G$ has exactly two cuspidal modules, otherwise, it has none. We label them  by $\rho_{ t, \epsilon}=\rho^{\O_{\mathrm{odd}}}_{ t, \epsilon}$ for $t\in \bbN, \epsilon\in \{\pm1\}$ such that $\rho^{\O_{\mathrm{odd}}}_{t,\epsilon}\cdot \sgn=\rho^{\O_{\mathrm{odd}}}_{t,-\epsilon}$. Let the corresponding cuspidal Lusztig symbol of $\rho_{t,\epsilon}$ be $\Lambda_t\times \epsilon$ such that   $\mathrm{def}(\Lambda_t)=(-1)^t(2t+1)$.
 \item For $G=\O^\epsilon_{2n}(q),$ if $n=t^2\neq 0$ and $\epsilon=(-1)^t$ for some integer $t$, then $G$ has exactly two unipotent cuspidal modules; if $n=0$, then $G=\O^+_0(q)=\{1\}$ has exactly one unipotent cuspidal module; otherwise, it has none.
     We label them  by $\rho_{\pm t}=\rho^{\O_{\mathrm{even}}}_{\pm t}$ for $\pm t\in\bbZ$ such that $\rho^{\O_{\mathrm{even}}}_{t}\cdot \sgn=\rho^{\O_{\mathrm{even}}}_{-t}$. Let the corresponding cuspidal Lusztig symbol of $\rho_{t}$ be $\Lambda_t$ such that   $\mathrm{def}(\Lambda_t)=2t$.
 \end{itemize}
\begin{remark}
For unipotent characters, the notation ``cuspidal'' coincides with  ``$F^+$-cuspidal''. So the above is also a parametrization for unipotent $F^+$-cuspidal modules.
\end{remark}
\subsubsection{Unipotent Harish-Chandra series in finite classical groups}
 Let $\rho=\rho_{t}$ (or $\rho_{t,\epsilon}$ in $\O_{2r+1}(q)$ case) be a unipotent cuspidal character of $G_{r}$, then by the above classification we know that $r=t^2+t$ if  $G=\SO_{2r+1}(q)$, $\Sp_{2r}(q)$ or $\O_{2r+1}(q)$ and $r=t^2$ if  $G=\O^\pm_{2r}(q)$ for some $t\in \bbN $ or $t\in\bbZ$.
 \smallskip

  By \cite{L77}, the Harish-Chandra series of $G_n$ with $n=r+m$ above $\rho$ consists of the unipotent characters of $G$ labelled by symbols in $\calS_{t,m}.$ It is well-known that the irreducible characters $\Irr(W_m)$ of Weyl group $W_m$ of type $B_m$ is parametrized by bi-partitions $\scrP^2_m.$
 Then the map (see (\ref{bipartitionmap}))
$$\Upsilon\:: \calS_{t,m}\rightarrow\scrP^2_m $$
define a natural bijection of Harish-Chandra series above $\rho$ and $\Irr(W_{m}).$

\subsubsection{Semisimple conjugacy classes of finite classical groups}\label{sub:conjugacyclass}

We follow mainly the notation from \cite{FS89}.
Let $V$ be a finite-dimensional symplectic or orthogonal space over the field $\bbF_q$.
We denote by $G(V)$ the group of isometries of $V$,
$G_0(V)$ the subgroups of $G(V)$ of determinant $1$, and
$\eta(V)=\pm1$ the type of $V$ if $V$ is orthogonal, i.e., $\eta(V)=\epsilon$ if $G(V)=\O^\epsilon_{2n}(q).$
For simplicity, we set $\eta(V) = 1$ if $V$ is symplectic.
We identify $1$, $-1$ with $+$, $-$ respectively when considering the type of spaces and groups.
\smallskip

We denote by  $\Irr(\bbF_{q}[x])$ the set of all monic irreducible polynomials over the field $\bbF_{q}$.
For each $\Gamma$ in $\Irr(\bbF_{q}[x])\setminus \{x\}$, we define $\Gamma^*$ be the polynomial in
$\Irr(\bbF_{q}[x])$ whose roots are the inverses of the roots of $\Gamma$.
Now, we denote by
\begin{align*}
\calF_{0}&=\left\{ ~x-1,x+1 ~\right\},\\
\calF_{1}&=\left\{ ~\Gamma\in\Irr(\bbF_{q}[x])\mid \Gamma\notin \calF_0, \Gamma\neq x,\Gamma=\Gamma^* ~\right\},\\
\calF_{2}&=\left\{~ \Gamma\Gamma^* ~|~ \Gamma\in\Irr(\bbF_{q}[x])\setminus \calF_0, \Gamma\neq x,\Gamma\ne\Gamma^* ~\right\}.
\end{align*}
Let $\calF=\calF_0\cup\calF_1\cup\calF_2$.
Given $\Gamma\in\calF$, denote by $d_\Gamma$ its degree and by $\delta_\Gamma$ its \emph{reduced degree} defined by
$$\delta_\Gamma=
\left\{ \begin{array}{ll} d_\Gamma & \text{if}\ \Gamma\in\calF_0; \\
\frac{1}{2}d_\Gamma & \text{if}\ \Gamma\in\calF_1\cup \calF_2 . \end{array} \right.$$
Since the polynomials in $\calF_1\cup \calF_2$ have even degree, $\delta_\Gamma$ is an integer.
In addition, we mention a sign $\varepsilon_\Gamma$ for $\Gamma\in\calF_1\cup \calF_2$ defined by
$$\varepsilon_\Gamma=
\left\{ \begin{array}{ll} -1 & \text{if}\ \Gamma\in\calF_1; \\
1 & \text{if}\ \Gamma\in\calF_2 . \end{array} \right.$$

Let $V$ be a vector space over $\bbF_q$ equipped with a  non-degenerate quadratic from $\langle\,\,,\,\,\rangle_V:V\times V\to \bbF_q$ and let $G(V)$ be the isometry group of $\langle\,\,,\,\,\rangle_V.$
Given a semisimple element $s\in G(V)$, there exists a unique orthogonal decomposition
\begin{equation}\label{def-pri-dec}
V=\bigoplus\limits_\Gamma V_\Gamma(s), \quad s=\prod_{\Gamma}s_\Gamma,
\end{equation}
where the $V_\Gamma(s)$ are non-degenerate subspaces of $V$, $s_\Gamma\in G(V_\Gamma(s))$, and $s_\Gamma$ has minimal polynomial $\Gamma$.
The decomposition (\ref{def-pri-dec}) is called the \emph{primary decomposition} of $s$ in $G(V)$.
Let $m_\Gamma(s)$ be the multiplicity of $\Gamma$ in $s_\Gamma$.
If $m_\Gamma(s)\ne 0$, then we say $\Gamma$ is an \emph{elementary divisor} of $s$.
Then the centralizer of $s$ in  $G(V)$ has a decomposition $C_{G(V)}(s)=\prod_{\Gamma}C_\Gamma(s)$, where $C_\Gamma(s)=C_{G(V_\Gamma(s))}(s_\Gamma)$.
Moreover, by \cite[(1.13)]{FS89},
\begin{equation*}
C_\Gamma(s)=
\left\{ \begin{array}{ll} G(V_\Gamma(s)) & \text{if}\ \Gamma\in\calF_0; \\
\GL_{m_\Gamma(s)}(\varepsilon_\Gamma q^{\delta_\Gamma}) & \text{if}\ \Gamma\in\calF_1\cup\calF_2. \end{array} \right.
\end{equation*}
Here, $\GL_m(-q)$ means $\GU_m(q)$.
\smallskip

Let $\eta_\Gamma(s)$ be the type of $V_\Gamma(s)$.
Here $\eta_\Gamma(s)=1$ for all $\Gamma\in\calF$ if $V$ is symplectic.
By \cite[(1.12)]{FS89},
the multiplicity and type functions $\Gamma\mapsto m_\Gamma(s)$, $\Gamma\mapsto \eta_\Gamma(s)$ satisfy the following relations
\begin{equation}\label{function-mult-type}
\begin{aligned}
&\mathrm{dim}V=\sum\limits_\Gamma d_\Gamma m_\Gamma(s),\\
&\eta(V)=\zeta(-1)^{m_{x-1}(s)m_{x+1}(s)}\prod_{\Gamma}\eta_\Gamma(s),\\
&\eta_\Gamma(s)=\varepsilon_\Gamma^{m_\Gamma(s)}\  \text{for} \ \Gamma\in\calF_1\cup\calF_2.
\end{aligned}
\end{equation}
Recall that $\zeta$ is the Legendre symbol for $\bbF_q^\times.$
Conversely, if $\Gamma\mapsto m_\Gamma(s)$, $\Gamma\mapsto \eta_\Gamma$ are functions from $\calF$ to $\mathbb N$, $\{\pm 1\}$ respectively satisfying (\ref{function-mult-type}),
then there exists a semisimple element $s$ of $G(V)$ with these functions as multiplicity and type functions.
Moreover, two semisimple elements $s$ and $s'$ of $G(V)$ are $G(V)$-conjugate if and only if $m_\Gamma(s) = m_\Gamma(s')$ and $\eta_\Gamma(s)=\eta_\Gamma(s')$ for all $\Gamma\in\calF$.
If $V$ is orthogonal,
a semisimple element $s$ lies in $G_0(V)$ if and only if $m_{x+1}(s)$ is even.
For more details, see \cite[\S1]{FS89}.
\smallskip

\subsubsection{Lusztig's Jordan decomposition for finite classical groups}\label{sub:Jordan}
Let $G$ be one of $\Sp_{2n}(q),$ $\SO_{2n+1}(q)$ and $\O^\epsilon_{2n}(q).$ Then $G^*=\SO_{2n+1}(q)$, $\Sp_{2n}(q)$ and $\O^\epsilon_{2n}(q)$, respectively.
For $s\in (G^*)^0$, we define
\begin{align}
 G^{(+)}=G^{(+)}(s)
&=\begin{cases}
G(V^*_{x-1}(s))& \text{if $G=\SO_{2n+1}(q)$ or $\O^\epsilon_{2n}(q)$};\\
G_0(V^*_{x-1}(s))& \text{if $G=\Sp_{2n}(q)$};
\end{cases}\\G^{(-)}=G^{(-)}(s)
&=G(V^*_{x+1}(s));
\\
G^{(\star)}=G^{(\star)}(s)
&=\prod_{\Gamma\in \calF_{1}\cup \calF_{2}}C_{G(V^*_{\Gamma}(s))}(s_\Gamma),
\end{align}
where $V^*_{\Gamma}(s)$ is defined in the last subsection.
We know that
$$
C_{G^*}(s)\simeq  G^{(+)}\times G^{(-)}\times G^{(\star)}.
$$
and $G^{(\star)}$ is a product of general linear groups or unitary group, and
\begin{equation}\label{equ:+-}
(G^{(+)},G^{(-)})=\begin{cases}
(\Sp_{2n^{(+)}(q)},\Sp_{2n^{(-)}}(q)), & \text{if $G=\SO_{2n+1}(q)$};\\
(\SO_{2n^{(+)}+1}(q),\O^{\epsilon^{(-)}}_{2n^{(-)}}(q)), & \text{if $G=\Sp_{2n}(q)$};\\
(\O^{\epsilon^{(+)}}_{2n^{(+)}}(q),\O^{\epsilon^{(-)}}_{2n^{(-)}}(q)), & \text{if $G=\O^\epsilon_{2n}(q)$}
\end{cases}
\end{equation}
where  $n^{(+)}=\lfloor m_{x-1}(s)/2\rfloor,n^{(-)}=m_{x+1}(s)/2$ and
$\epsilon^{(+)}=\eta_{x-1}(s),\epsilon^{(-)}=\eta_{x+1}(s)\in \{\pm1\}$.
The element $s$ can be written as
\begin{equation}
s= s^{(+)}\times s^{(-)}\times s^{(\star)}
\end{equation}
where $s^{(+)}=s_{x-1}$ (resp.~$s^{(-)}=s_{x+1}$) is the part whose eigenvalues are all equal to $1$ (resp. $-1$),
and $s^{(\star)}=\prod\limits_{\Gamma\in \calF_1\cup\calF_2}s_\Gamma$ is the part whose eigenvalues do not contain $1$ or $-1$.
In particular, $s^{(\dag)}$ is in the center of $G^{(\dag)}$ for $\dag=+,-,\star$.
\smallskip

Then a  Lusztig's Jordan decomposition (see Propositon \ref{prop:jordandec})
\begin{equation}\label{Jsjordan}
\mathcal{J}_s\colon \mathcal{E}(G,s)\rightarrow\mathcal{E}(G^{(+)}\times G^{(-)}\times G^{(\star)},1)
\end{equation}
can be written as
\begin{equation}
\mathcal{J}_s(\rho)=\rho^{(+)}\otimes\rho^{(-)}\otimes\rho^{(\star)}
\end{equation}
where $\rho^{(\dag)}\in\mathcal{E}(G^{(\dag)},1)$.
\smallskip

Now suppose that $G=\O_{2n+1}(q).$  In this case, we have $G^*=\Sp_{2n}(q)\times \bbZ/2\bbZ,$ $G^0=\SO_{2n+1}(q)$ and $(G^*)^0=\Sp_{2n}(q).$ Since $\O_{2n+1}(q)\cong\SO_{2n+1}(q)\times \bbZ/2\bbZ,$ where $\bbZ/2\bbZ$ is generated by the element $-\id_{2n+1}\in \O_{2n+1}(q)$, we have $$\Irr(G)=\{\rho\cdot 1_{\bbZ/2\bbZ}, \rho\cdot \sign_{\bbZ/2\bbZ}|\rho\in \Irr(G^0)\},$$
where $\sign_{\bbZ/2\bbZ}$ is the unique non-trivial character of $\bbZ/2\bbZ.$
For $s\in (G^*)^0$, we define $$\calE(G,s)_1=\{\rho\cdot 1_{\bbZ/2\bbZ} |\rho\in \calE(G^0,s)\}\quad\text{and}\quad \calE(G,s)_{-1}=\{\rho\cdot \sign_{\bbZ/2\bbZ} |\rho\in \calE(G^0,s)\},$$ so $\calE(G,s)=\calE(G,s)_1 \sqcup \calE(G,s)_{-1}$ is the Lusztig series of $G$ corresponding to $s\in (G^*)^0.$
Writing $s=s^{(+)}\times s^{(-)}\times s^{(\star)}$, we have
$$
C_{G^{*}}(s)\cong G^{(+)}\times G^{(-)}\times G^{(\star)}\times\bbZ/2\bbZ,
$$
and the Lusztig's Joran decomposition for $G=\O_{2n+1}(q)$:
\begin{align}
\begin{split}
\calJ_s\colon \mathcal{E}(G,s)\rightarrow&\mathcal{E}(G^{(+)}\times G^{(-)}\times G^{(\star)},1)\times\Irr(\bbZ/2\bbZ) \\
\mathcal{J}_s(\rho)&=\rho^{(+)}\otimes\rho^{(-)}\otimes\rho^{(\star)}\otimes \sign^\epsilon
\end{split}
\end{align}
where $\epsilon=+$ and $\sign^+=1_{\bbZ/2\bbZ}$ if $\rho\in \calE(G,s)_1$ and $\epsilon=-$ and $\sign^{-}=\sign_{\bbZ/2\bbZ}$ if $\rho\in \calE(G,s)_{-1}.$ 
\smallskip

Recall that in section \S \ref{sec:diag}, we denote by $\rho^c$ to be the representation $\rho$ conjugated by diagonal automorphism. Additionally, recall that $\sp$ and $\sgn$ represent the spinor and determinant characters, respectively, as defined in \S \ref{sub:spin} and \S \ref{sub:det}.
Pan in \cite{P4} discussed the unicity and ambiguity of Lusztig's Jordan decompositions.
\begin{theorem}[\cite{P4}]\label{Thm:Panuniformproj}
Let $G$ be a symplectic group or an orthogonal group, and let $\rho,\rho'\in\Irr(G)$.
Then $\rho'^\sharp=\rho^\sharp$ if and only if
\begin{equation}
\rho'=\begin{cases}
\rho, \rho\cdot\sgn& \text{if\/ $G=\O_{2n+1}(q)$};\\
\rho,\rho^c & \text{if\/ $G=\Sp_{2n}(q)$};\\
\rho,\rho^c,\rho\cdot\sgn,\rho^c\cdot\sgn & \text{if\/ $G=\O^\pm_{2n}(q)$}.
\end{cases}
\end{equation}
\end{theorem}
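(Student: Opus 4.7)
The plan is to reduce the theorem to an analysis of the ambiguity in Lusztig's Jordan decomposition, using the fact that uniform projection is determined by the inner products with Deligne--Lusztig characters. Since $\rho^\sharp$ determines the Lusztig series containing $\rho$, the condition $\rho^\sharp = \rho'^\sharp$ forces $\rho, \rho' \in \calE(G, s)$ for a common semisimple class $(s)$. By Proposition~\ref{prop:jordandec}, up to the signs $\epsilon_\bfG$ and $\epsilon_{C_{\bfG^*}(s)}$, the inner products $\langle \rho, R^\bfG_{\bfT^*, s}\rangle_G$ coincide with $\langle \calJ_s(\rho), R^{C_{\bfG^*}(s)}_{\bfT^*}(1)\rangle_{C_{G^*}(s)}$, so the question reduces to the uniform projection of the unipotent character $\calJ_s(\rho)$ on $C_{G^*}(s)$ and the precise ambiguity of $\calJ_s$ itself.

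For the ``if'' direction, one verifies directly that each listed symmetry preserves uniform projections. For the $\sgn$ twist in the orthogonal cases, one uses $R^\bfO_\bfT(\theta) = \Ind_{\bfSO}^\bfO R^{\bfSO}_\bfT(\theta)$ from \S\ref{sub:uniformprojection}, which is $\sgn$-invariant (being induced from $\Ker \sgn$); pairing with $\rho$ and with $\rho \cdot \sgn$ thus gives identical coefficients. For the diagonal automorphism $c$ (in the symplectic and even-orthogonal cases), $c$ is realized by conjugation by an element of the similitude group, which permutes $F$-stable maximal tori and sends each Deligne--Lusztig character to another one; a direct class-function computation then shows that $c$ preserves the decomposition $\rho^\sharp = \sum c_{\bfT, \theta} R^\bfG_\bfT(\theta)$, yielding $(\rho^c)^\sharp = \rho^\sharp$.

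For the ``only if'' direction, one uses the decomposition $C_{G^*}(s) = G^{(+)} \times G^{(-)} \times G^{(\star)}$ from \S\ref{sub:Jordan}. The factor $G^{(\star)}$ is a product of general linear and unitary groups whose characters are all uniform (see the bijection \eqref{GLnGUn}), so $\calJ_s(\rho)^\sharp$ determines the $G^{(\star)}$-component of $\calJ_s(\rho)$ uniquely. The problem therefore reduces to the case of unipotent characters on the smaller classical group $G^{(+)} \times G^{(-)}$. By induction on semisimple rank, the ambiguity there is generated by the analogous symmetries $\sgn$ and $c$ applied componentwise; lifting through $\calJ_s$ to $\Irr(G)$ identifies these with the operations in the theorem, where the $\bbZ/2\bbZ$-factor in the dual of $\O_{2n+1}(q)$ accounts for the $\sgn$-twist in that case, the presence of an even-orthogonal factor in the centralizer for $\Sp_{2n}(q)$ accounts for $c$, and both phenomena occur for $\O^\pm_{2n}(q)$.

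The main obstacle is the base case of the induction, namely showing that distinct unipotent cuspidal characters of a classical group have distinct uniform projections, and conversely that the $\sgn$- and $c$-orbits of unipotent cuspidals on each factor of $C_{G^*}(s)$ have the expected size. This can be handled using the explicit Lusztig parametrization by cuspidal symbols recalled in \S\ref{sub:unipotent}: there are either $1$ or $2$ unipotent cuspidal characters on each classical group, and when there are two they differ precisely by $\sgn$ or by $c$ (or their composition). Combined with the additional bookkeeping to check that no unexpected identifications arise among the resulting orbit under the full symmetry group (so the orbit has size $2$ in the $\O_{2n+1}(q)$ and $\Sp_{2n}(q)$ cases and size $4$ in the $\O^\pm_{2n}(q)$ case), this yields the three-case statement.
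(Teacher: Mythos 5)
The paper does not prove this statement: it is quoted verbatim from \cite{P4} (Pan's work on the unicity and ambiguity of Lusztig parametrizations), so there is no internal proof to compare against. Judged on its own merits, your proposal has a correct overall architecture for the ``if'' direction: the $\sgn$-twist argument via $R^{\bfO}_{\bfT}(\theta)=\Ind_{\SO}^{\O}R^{\bfSO}_{\bfT}(\theta)$ and Frobenius reciprocity is right, and the claim $(\rho^c)^\sharp=\rho^\sharp$ is true, though your ``direct class-function computation'' hides the actual content, namely that each individual $R^{\bfG}_{\bfT}(\theta)$ is \emph{fixed} (not merely permuted) by conjugation with $\tau$ in the similitude group; a priori $\ad(\tau)$ sends $R^{\bfG}_{\bfT}(\theta)$ to $R^{\bfG}_{{}^\tau\bfT}({}^\tau\theta)$, and one must argue (e.g.\ via the character formula and the fact that the induced automorphism of $\bfG^*$ is inner) that this is the same class function.

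The genuine gap is in the ``only if'' direction. After reducing via Jordan decomposition to unipotent characters of $C_{G^*}(s)=G^{(+)}\times G^{(-)}\times G^{(\star)}$, the statement you actually need is that two \emph{arbitrary} unipotent characters of a symplectic or orthogonal group have equal uniform projections only when related by the listed symmetries. Your proposed ``induction on semisimple rank'' with a base case of \emph{cuspidal} unipotent characters does not carry this weight: the dangerous coincidences occur between non-cuspidal unipotent characters lying in the same Lusztig family (symbols with the same multiset of entries distributed differently between the rows), and Harish-Chandra theory does not separate these. Deciding whether such characters have equal uniform projections requires the explicit Fourier transform matrices of Lusztig families together with the identification of exactly which almost characters lie in $\calV(G)^\sharp$ — this combinatorial analysis (plus the treatment of degenerate symbols for $\O^\pm_{2n}(q)$ and the disconnectedness of the full orthogonal groups) is the actual core of Pan's theorem, and in your write-up it is compressed into ``additional bookkeeping.'' A secondary unaddressed point: the Jordan decomposition $\calJ_s$ is itself non-canonical, so transporting the equality $\rho^\sharp=\rho'^\sharp$ to an equality of uniform projections on $C_{G^*}(s)$ and then back requires controlling that ambiguity, which is again precisely the subject of \cite{P4} rather than something one may assume.
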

\begin{theorem}[\cite{W,P4}]\label{Thm:Jordan} Let $G=\O_{2n+1}(q), \Sp_{2n}(q)$ or $ \O^\epsilon_{2n}(q)$, $s\in(G^*)^0$, and $\rho\in \calE(G,s).$
Then there exists a Jordan decomposition $\calJ_s\colon\calE(G,s)\rightarrow\calE(C_{G^*}(s),1)$  which is compatible with Harish-Chandra induction and satisfying the following extra symmetries.
\begin{itemize}
\item[$(1)$]
For $G=\O_{2n+1}(q)$, assume $\mathcal{J}_s(\rho)=\rho^{(+)}\otimes\rho^{(-)}\otimes\rho^{(\star)}\otimes\sign^\epsilon,$ then
$\rho\cdot\sgn\in\calE(G,s)$ and
\[
\calJ_s(\rho\cdot\sgn)=(\rho^{(+)})\otimes(\rho^{(-)})\otimes\rho^{(\star)}\otimes \sign^{-\epsilon},
\]
\[
\calJ_s(\rho\cdot\sp)=(\rho^{(-)})\otimes(\rho^{(+)})\otimes\rho^{(\star)}\otimes {\sign^{\sp(-\id_{2n+1})\cdot\epsilon}}.
\]
\item[$(2)$] For $G=\Sp_{2n}(q),$ assume $\mathcal{J}_s(\rho)=\rho^{(+)}\otimes\rho^{(-)}\otimes\rho^{(\star)},$ then  $\rho^c\in\calE(G,s)$ and
\[
\calJ_s(\rho^c)=(\rho^{(+)})\otimes(\rho^{(-)}\cdot\sgn)\otimes\rho^{(\star)}.
\]
\item[$(3)$] For $G=\O^\epsilon_{2n}(q),$ assume $\mathcal{J}_s(\rho)=\rho^{(+)}\otimes\rho^{(-)}\otimes\rho^{(\star)},$ then we have $\rho\cdot\sgn, \rho^c, \rho^c\cdot\sgn\in\calE(G,s)$ and
\[
\calJ_s(\rho\cdot\sgn)=(\rho^{(+)}\cdot\sgn)\otimes(\rho^{(-)}\cdot\sgn)\otimes\rho^{(\star)},
\]

\[
\calJ_s(\rho\cdot\sp)=(\rho^{(-)})\otimes(\rho^{(+)})\otimes\rho^{(\star)}
\]
and
\[
\calJ_s(\rho^c)=(\rho^{(+)})\otimes(\rho^{(-)}\cdot\sgn)\otimes\rho^{(\star)}.
\]
\end{itemize}
\end{theorem}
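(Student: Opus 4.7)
The plan is to reduce the theorem to a computation on the dual side using two main tools: the compatibility of Lusztig's Jordan decomposition with Harish-Chandra induction (established in Proposition \ref{prop:jordandec}), and the uniform-projection criterion of Theorem \ref{Thm:Panuniformproj}. The strategy is to show that a single Jordan decomposition $\calJ_s$ can be \emph{chosen} so that it intertwines each of the three symmetries with the claimed combinatorial operation on the centralizer data $(\rho^{(+)},\rho^{(-)},\rho^{(\star)})$, and then verify compatibility with Harish-Chandra induction as a corollary.

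First I would invoke Theorem \ref{Thm:Panuniformproj} to observe that the orbit of $\rho$ under the three symmetries $\rho\mapsto\rho\cdot\sgn$, $\rho\mapsto\rho\cdot\sp$, $\rho\mapsto\rho^c$ is exactly the fibre of the uniform projection $\rho\mapsto\rho^\sharp$. Since a Jordan decomposition is determined only up to twists preserving uniform projection, the theorem reformulates as the existence of a choice of $\calJ_s$ that transforms in the prescribed way under each symmetry; the Harish-Chandra compatibility then follows automatically, as each of the three symmetries extends to or commutes with parabolic induction (the characters $\sgn$ and $\sp$ extend to the similitude group, and the diagonal automorphism normalizes every rational Levi of the form $\bfL_{r,m_1,\ldots,m_t}$).

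Next I would analyze each symmetry on the dual side. For the determinant character on $\O_{2n+1}(q)$, the splitting $\O_{2n+1}(q)\cong\SO_{2n+1}(q)\times\langle -\id_{2n+1}\rangle$ identifies $\sgn$ with $1\otimes\sign_{\bbZ/2\bbZ}$, so $\sgn$-twisting merely flips the $\sign^\epsilon$ tag in the Jordan datum, yielding (1a). For $G=\O^\epsilon_{2n}(q)$, the standard correspondence between $(G/[G,G])^{\vee}$ and $Z((G^*)^0)$ translates tensoring by $\sgn$ into multiplication of $s$ by the central element $-\id\in (G^*)^0$, which lies in the centres of \emph{both} $G^{(+)}$ and $G^{(-)}$ and hence twists $\rho^{(+)}$ and $\rho^{(-)}$ each by its own $\sgn$. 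For the spinor character, using Waldspurger's analysis via the Weil representation, multiplication by $\sp$ corresponds on the dual side to the involution $s\mapsto -s$, which by the primary decomposition swaps $V^*_{x-1}(s)\leftrightarrow V^*_{x+1}(s)$ while fixing the primary pieces for $\Gamma\in\calF_1\cup\calF_2$; this produces the exchange $\rho^{(+)}\leftrightarrow\rho^{(-)}$, and the residual action on the $\bbZ/2\bbZ$ tag in the $\O_{2n+1}$ case contributes the sign $\sp(-\id_{2n+1})$. Finally, the diagonal automorphism lifts to conjugation by $\tau\in\widetilde G$ with $\zeta(\lambda(\tau))=-1$; on the dual side $\tau$ induces an outer automorphism supported on $G^{(-)}$ (the factor that houses the $-1$-eigenspace), twisting $\rho^{(-)}$ by $\sgn$ while leaving $\rho^{(+)}$ and $\rho^{(\star)}$ fixed.

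The hard part will be establishing the \emph{simultaneous coherence} of a single choice of $\calJ_s$ realizing all three formulas at once, for every $s$. Individually each identity reduces to a character-theoretic statement that can be checked via the defining compatibility of $\calJ_s$ with twisted Deligne--Lusztig induction $R^{\bfG}_{\bfT^*,s}$, but fixing the ambiguity group (of order up to four for even orthogonal groups) requires pinning down $\calJ_s$ on a set of rigid representatives---in practice the cuspidal unipotent characters of $G^{(+)}\times G^{(-)}\times G^{(\star)}$, where the Jordan decomposition has little or no freedom. This pinning is accomplished using explicit degree formulas, the behaviour of the Weil representation under restriction to $G\times G^*$ (for $\sp$), and the action of the similitude centre (for $c$); this is the technical heart of Waldspurger's \cite{W} and Pan's \cite{P4} analysis. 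Once the choice is fixed on cuspidal characters, it propagates through each Harish-Chandra series via the compatibility with Harish-Chandra induction, yielding the formulas stated in $(1)$--$(3)$.
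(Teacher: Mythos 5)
The paper does not prove this statement; it is cited as \cite{W,P4} and used as a black box, so there is no in-paper proof to compare against. Your proposal is a plausible reconstruction of the strategy one would use following Waldspurger and Pan, and the overall framework (analyze each symmetry on the dual side, pin down the ambiguity on cuspidals, propagate via Harish--Chandra compatibility) is the right one.

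However, there is a genuine error in the middle step. For $G=\O^\epsilon_{2n}(q)$ you attribute to $\sgn$ the dual-side effect of multiplication of $s$ by $-\id\in Z((G^*)^0)$, and then read off the twist of $\rho^{(+)},\rho^{(-)}$ from that. This conflates the two characters. The determinant character $\sgn$ is trivial on $\SO^\epsilon_{2n}(q)$, so it does not correspond to any element of $Z((G^*)^0)$ and in particular does not change $s$; its effect is a disconnected-group phenomenon, producing the simultaneous twist $\rho^{(\pm)}\mapsto\rho^{(\pm)}\cdot\sgn$ with $s$ fixed, exactly as in the statement. It is the spinor norm $\sp$ (nontrivial on $\SO^\epsilon_{2n}(q)$) that corresponds to the central element $-\id\in Z(\SO^\epsilon_{2n})$ and sends $\calE(G,s)\to\calE(G,-s)$; since $V^*_{x\mp1}(-s)=V^*_{x\pm1}(s)$, this is what produces the swap $\rho^{(+)}\leftrightarrow\rho^{(-)}$. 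As written, your explanation implies that $\sgn$-twisting should also swap the $(+)$ and $(-)$ factors, which contradicts the formula you are trying to establish. Separately, you correctly identify that simultaneously coherent choices for all three symmetries are the real technical content, but you do not actually carry out that analysis --- you defer it entirely to \cite{W,P4} --- so the proposal is a roadmap rather than a proof.
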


Note that a Lusztig's Jordan decomposition satisfying the properties in Theorem \ref{Thm:Jordan} is not unique (see \cite{P4}).
Henceforth, we consistently assume that Lusztig’s Jordan decomposition is compatible with Harish-Chandra induction, thereby simplifying certain arguments in  \S \ref{Sec:allcharacters}.
\smallskip
\subsubsection{ Lusztig parametrization}\label{sub:parametrization}
For $G=\SO_{2n+1}(q),\Sp_{2n}(q)$ or $\O^{\pm}_{2n}(q)$,
let $\mathcal{L}_1$ denote a bijection $\mathcal{S}_G\rightarrow\mathcal{E}(G,1)$ denoted by $\Lambda\mapsto\rho_\Lambda$.
Note that such a parametrization of unipotent characters also exists for a general linear group
or a unitary group given by (\ref{GLnGUn}) in \S\ref{sub:unipotent}.
Combining $\mathcal{L}_1$ for $G^{(+)}\times G^{(-)}\times G^{(\star)}$
and the inverse of $\calJ_s$ in (\ref{Jsjordan}):
\[
\mathcal{S}_{G^{(+)}}\times\mathcal{S}_{G^{(-)}}\times\mathcal{S}_{G^{(*)}}
\rightarrow \mathcal{E}(G^{(+)}\times G^{(-)}\times G^{(\star)},1)
\rightarrow \mathcal{E}(G,s^{(+)}\times s^{(-)}\times s^{(\star)})
\]
we obtain a bijection
\begin{align}
\begin{split}
\mathcal{L}_s\colon\mathcal{S}_{G^{(+)}}\times
\mathcal{S}_{G^{(-)}}\times\mathcal{S}_{G^{(\star)}} &\rightarrow\mathcal{E}(G,s) \\
(\Lambda_+,\Lambda_-,\lambda_\star) &\mapsto  \rho_{\Lambda_+,\Lambda_-,\lambda_\star},
\end{split}
\end{align}
where $\calS_{G^{(\star)}}=\prod\limits_{\Gamma\in \calF_1\cup\calF_2}\calS_{C_{G(V^*_{\Gamma})}(s_\Gamma)}$ and $\lambda_\star=\prod\limits_{\Gamma\in \calF_1\cup\calF_2}\lambda_{\Gamma}\in \calS_{G^{(\star)}}.$
Such a bijection $\calL_s$ is  called a \emph{Lusztig parametrization}.
\smallskip

For $G=\O_{2n+1}(q),$  we have
Lusztig parametrization:
\begin{align}
\begin{split}
\calL_s\colon\mathcal{S}_{G^{(+)}}\times\mathcal{S}_{G^{(-)}}\times\mathcal{S}_{G^{(*)}}&\times\{\pm\}\rightarrow \mathcal{E}(G,s) \\
(\Lambda_+,\Lambda_-, \lambda_\star,\epsilon) &\mapsto  \rho_{\Lambda_+,\Lambda_-,\lambda_\star,\epsilon}=\rho^G_{\Lambda_+,\Lambda_-,\lambda_\star,\epsilon}
\end{split}
\end{align}
where $\rho^G_{\Lambda_+,\Lambda_-,\lambda_\star,\epsilon}=\rho^{G^0}_{\Lambda_+,\Lambda_-,\lambda_\star}\cdot 1_{\bbZ/2\bbZ}$ if $\epsilon=+$ and $\rho^G_{\Lambda_+,\Lambda_-,\lambda_\star,\epsilon}=\rho^{G^0}_{\Lambda_+,\Lambda_-,\lambda_\star}\cdot\sign_{\bbZ/2\bbZ}$ if $\epsilon=-.$
\smallskip

\subsubsection{$F^+$-cuspidal modules and $F^-$-cuspidal modules}

Under the Lusztig parametrization,
we can give a explicit description of the action of $F^+$ and $F^-.$
For a character $\rho=\rho_{\Lambda_+,\Lambda_-,\lambda_*}\in \calE(G_n,s^{(+)}\times s^{(-)}\times s^{(*))})$ (or $\rho=\rho_{\Lambda_+,\Lambda_-,\lambda_\star,\epsilon}$ for $G_n=\O_{2n+1}(q)$ case),
by the compatibility with parabolic induction, we have
$$F^+(\rho_{\Lambda_+,\Lambda_-,\lambda_\star})=\sum_{\Theta_+}\rho_{\Theta_+,\Lambda_-,\lambda_\star}
$$
where the summand runs over all
$\Theta_+$ such that $\Theta_+$ is obtained from $\Lambda_+$ by adding a $1$-hook, and
$$F^-(\rho_{\Lambda_+,\Lambda_-,\lambda_\star})=\sum_{\Theta_-}\rho_{\Lambda_+,\Theta_-,\lambda_\star}$$
 where the summand runs over all
$\Theta_-$ such that $\Theta_-$ is obtained from $\Lambda_-$ by adding a $1$-hook (or in $\O_{2n+1}(q)$ case, \begin{align}\label{O2n+1(1)}F^+(\rho_{\Lambda_+,\Lambda_-,\lambda_\star,\epsilon})=\sum_{\Theta_+}\rho_{\Theta_+,\Lambda_-,\lambda_\star,\epsilon}
\end{align}
where the summand runs over all
$\Theta_+$ such that $\Theta_+$ is obtained from $\Lambda_+$ by adding a $1$-hook, and
\begin{align}\label{O2n+1(2)}
F^-(\rho_{\Lambda_+,\Lambda_-,\lambda_\star,\epsilon})=
\sum_{\Theta_-}\rho_{\Lambda_+,\Theta_-,\lambda_\star,\zeta(-1)\cdot\epsilon}
\end{align}
where the summand runs over all
$\Theta_-$ such that $\Theta_-$ is obtained from $\Lambda_-$ by adding a $1$-hook, see \cite[Proposition 4.12 (a2)]{LLZ}). So we deduce the following lemma.
\begin{lemma}\label{Lem:cuspidal}
Let $G=\O_{2n+1}(q)$, $\Sp_{2n}(q)$ and $\O_{2n}^\pm(q)$,
$s\in G^*$ and $\calJ_s$ be the fixed  Lusztig's Jordan decomposition in Theorem \ref{Thm:Jordan}. Then $\rho=\rho_{\Lambda_+,\Lambda_-,\lambda_\star}$ (or $\rho_{\Lambda_+,\Lambda_-,\lambda_\star,\epsilon}$) is
 $F^\pm$-cuspidal if and only if $\Lambda_\pm$ is cuspidal.
\end{lemma}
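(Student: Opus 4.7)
The plan is to deduce the lemma directly from the explicit formula for $F^\pm$ recalled just before the statement, combined with biadjointness and the standard combinatorial characterization of cuspidal symbols.

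First, by Proposition \ref{prop:adj}, the pairs $(F^+,E^+)$ and $(F^-,E^-)$ are biadjoint, so in particular Frobenius reciprocity gives
\[
[E^\pm(\rho):\sigma]=[\rho:F^\pm(\sigma)]
\]
for any irreducible characters $\rho,\sigma$ of $KG_\bullet\mod$. Applying this to $\rho=\rho_{\Lambda_+,\Lambda_-,\lambda_\star}$ (or its twisted version in the $\O_{2n+1}(q)$ case) and using the formulas recalled just before the lemma, one obtains
\[
E^+(\rho_{\Lambda_+,\Lambda_-,\lambda_\star})=\sum_{\Theta_+}\rho_{\Theta_+,\Lambda_-,\lambda_\star},\qquad
E^-(\rho_{\Lambda_+,\Lambda_-,\lambda_\star})=\sum_{\Theta_-}\rho_{\Lambda_+,\Theta_-,\lambda_\star},
\]
where $\Theta_\pm$ ranges over symbols obtained from $\Lambda_\pm$ by \emph{removing} a $1$-hook, and analogously for the $\O_{2n+1}(q)$ version with the additional twist $\epsilon\mapsto\zeta(-1)\cdot\epsilon$ in the $E^-$ formula inherited from \eqref{O2n+1(2)}. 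Here one must observe that only characters with the same $\Lambda_-$ (resp.\ $\Lambda_+$) and the same $\lambda_\star$ can contribute: this is exactly the content of the compatibility of Lusztig's Jordan decomposition with Harish-Chandra induction recorded in Theorem \ref{Thm:Jordan}, which ensures that the $F^\pm$ functors only touch the $(+)$ or $(-)$ part of the symbol data.

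Second, one invokes the standard combinatorial fact: a symbol $\Lambda\in\calS$ admits a removable $1$-hook if and only if $\mathrm{rank}(\Lambda)>\lfloor(\mathrm{def}(\Lambda)/2)^2\rfloor$, i.e., if and only if $\Lambda$ is not cuspidal in the sense of \S\ref{sub:symbol}. Indeed, the cuspidal symbols are (up to shift-equivalence) precisely those of the form $\binom{k,k-1,\ldots,0}{-}$ or $\binom{-}{k,k-1,\ldots,0}$, and a direct inspection shows that in these beta-sets no pair $(x,x+1)$ with $x+1$ present and $x$ absent (with $x\geqs 0$) can be found. Conversely, removing a $1$-hook lowers the rank by $1$ without changing the defect, so any non-cuspidal symbol can be obtained from some strictly smaller symbol by adding a $1$-hook, hence admits a removable $1$-hook.

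Combining the two steps, $\rho$ is $F^+$-cuspidal (resp.\ $F^-$-cuspidal), i.e. $E^+(\rho)=0$ (resp.\ $E^-(\rho)=0$), if and only if $\Lambda_+$ (resp.\ $\Lambda_-$) has no removable $1$-hook, which by the combinatorial fact above is equivalent to $\Lambda_+$ (resp.\ $\Lambda_-$) being cuspidal. The $\O_{2n+1}(q)$ case goes through identically, since the $\epsilon$-label is merely permuted by $F^-$ and plays no role in the vanishing of $E^\pm(\rho)$. The only point requiring mild care is to make sure the $F^\pm$ formula really extends from unipotent series to arbitrary Lusztig series $\calE(G_n,s)$, and this is precisely guaranteed by our standing assumption (made after Theorem \ref{Thm:Jordan}) that $\calJ_s$ is compatible with Harish-Chandra induction.
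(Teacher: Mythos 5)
Your proof is correct and follows essentially the same approach as the paper, which records exactly this argument in one line: the lemma follows from the biadjointness of $(E^\pm,F^\pm)$ and the explicit formulas for $F^\pm$ on the Lusztig parameters $(\Lambda_+,\Lambda_-,\lambda_\star)$. The only step you state loosely is the converse direction of the combinatorial claim: rather than saying ``any non-cuspidal symbol can be obtained from a smaller one by adding a $1$-hook'' (which does not follow from the preceding sentence as written), it is cleanest to observe that a beta-set admits no removable $1$-hook if and only if it equals $\{0,1,\ldots,m-1\}$ for some $m\geqs 0$, so a symbol $\Lambda$ with no removable $1$-hook has both entries of $\Upsilon(\Lambda)$ equal to the empty partition, hence is shift-equivalent to $\binom{k,\ldots,0}{-}$ or its transpose, i.e.\ cuspidal.
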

\begin{proof}Since $(E^+,F^+)$ and $(E^-,F^-)$ are adjoint pairs, so this lemma follows from the action of $F^+$ and $F^-$ described as above.
\end{proof}

Fix a semisimple element $s=s^{(+)}\times s^{(-)}\times s^{(\star)}\in (G^*)^0,$ we have $C_{G^*}(s)=G^{(+)}\times G^{(-)}\times G^{(\star )}$ (or $G^{(+)}\times G^{(-)}\times G^{(\star )}\times \bbZ/2\bbZ$).
In each Lusztig series $\calE(G,s)$, if $\rho\in \calE(G,s)$ is both $F^+$-cuspidal and $F^-$-cuspidal, so is its $\langle [\rm{Det}_{\O_{2n+1}}]\rangle$-, $\langle [\rm{Diag}_{\Sp_{2n}}]\rangle$- or  $\langle [\rm{Diag}_{\O^\pm_{2n}}], [\rm{Det}_{\O^\pm_{2n}}]\rangle$-orbit  for $G=\O_{2n+1}(q),\Sp_{2n}(q)$ or $\O^\pm_{2n}(q)$, respectively.
 We have the following description of irreducible characters which are both $F^+$-cuspidal and $F^-$-cuspidal.
 \begin{itemize}
\item For $G=\O_{2n+1}(q),$ if $n^{(+)}=t_+^2+t_+$ and
 $n^{(-)}=t_-^2+t_-$ for some $t_+,t_-\in \bbN ,$
 then $\calE(G,s)$ has characters $\rho$ which are both $F^+$-cuspidal and $F^-$-cuspidal; otherwise, it has none.
 Moreover, under Lusztig's Jordan decomposition,
 $\calJ_s(\rho)=\rho^{(+)}\otimes \rho^{(-)}\otimes \rho^{(\star)}\otimes \sign^\epsilon$
 satisfies  $\rho^{(+)}=\rho^{\Sp}_{t_+}$, $\rho^{(-)}=\rho^{\Sp}_{ t_-}$ and $\rho(-\id)=\epsilon\cdot\rho(\id)$.
     We label them by $\rho^{\O_{\mathrm{odd}}}_{t_+,t_-,\lambda_\star, \epsilon}$
     for $t_+,t_-\in\bbN , \epsilon\in \{\pm 1\}$.
     Note that this parametrization is unique and satisfies
     $\rho^{\O_{\mathrm{odd}}}_{t_+,t_-,\lambda_\star, \epsilon}
     \cdot \sgn=\rho^{\O_{\mathrm{odd}}}_{t_+,t_-,\lambda_\star, -\epsilon}$.

 \item For $G=\Sp_{2n}(q)$, if $n^{(+)}=t_+^2+t_+$ and $n^{(-)}=t_-^2$ for some $t_+,t_-\in \bbN $, then $\calE(G,s)$ has characters  $\rho$ which are both $F^+$-cuspidal and $F^-$-cuspidal; otherwise, it has none. Moreover, under Lusztig's Jordan decomposition,  $\calJ_s(\rho)=\rho^{(+)}\otimes \rho^{(-)}\otimes \rho^{(\star)}$ satisfies  $\rho^{(+)}=\rho^{\SO}_{t_+}$ and $\rho^{(-)}=\rho^{\O_{\mathrm{even}}}_{\pm t_-}$.
     We label them by $\rho^{\Sp}_{t_+, t_-,\lambda_\star}$ for $t_+\in\bbN , t_-\in \bbZ$ such
     that $(\rho^{\Sp}_{t_+,t_-,\lambda_\star})^c =\rho^{\Sp}_{t_+,-t_-,\lambda_\star}$.

 \item For $G=\O^\epsilon_{2n}(q),$ if $n^{(+)}=t_+^2, n^{(-)}=t_-^2$ and
 $\epsilon^{(+)}=(-1)^{t_{(+)}}, \epsilon^{(-)}=(-1)^{t_{(-)}}$ for
 some integer $t_+,t_-\in \bbN$, then $\calE(G,s)$  has characters $\rho$ which are both $F^+$-cuspidal and $F^-$-cuspidal; otherwise, it has none. Moreover, under Lusztig's Jordan decomposition,  $\calJ_s(\rho)=\rho^{(+)}\otimes \rho^{(-)}\otimes \rho^{(\star)}$ satisfies
 $\rho^{(+)}=\rho^{\O_{\mathrm{even}}}_{\pm t_+}$ and $\rho^{(-)}=
 \rho^{\O_\mathrm{even}}_{\pm t_-}$.
     We label them  by $\rho^{\O_{\mathrm{even}}}_{t_+,t_-,\lambda_\star}$
     for $t_+,t_-\in\bbZ$ such that $\rho^{\O_{\mathrm{even}}}_{t_+,t_-,\lambda_\star}
     \cdot \sgn=\rho^{\O_{\mathrm{even}}}_{-t_+,-t_-,\lambda_\star}$ and
     $(\rho^{\O_{\mathrm{even}}}_{t_+,t_-,\lambda_\star})^c=
     \rho^{\O_{\mathrm{even}}}_{t_+,-t_-,\lambda_\star}$.

 \end{itemize}
  Note that the parametrization of characters of $\Sp_{2n}(q)$ and $\O_{2n}^\pm(q)$, which are both $F^+$-cuspidal and $F^-$-cuspidal, is not unique.  In the next section, by using the colored weight function $\bbO^{+}(u)(-)$ and $\bbO^{-}(v)(-),$ we will give a unique parametrization.

\begin{center}
\begin{table}[h]\caption{ $F^+$-cuspidal and $F^-$-cuspidal modules}\label{diag:cuspidal}
\begin{tabular}{cccc}
  \hline
  &$\O_{2\bullet+1}(q)$&$\Sp_{2\bullet}(q)$ &$\O^{\pm}_{2\bullet}(q)$\\\hline
  &$\rho_{t_+,t_-,\lambda_\star, \varepsilon},t_\pm\in \bbN ,\varepsilon\in\{\pm\}$&
$\rho_{t_+,t_-,\lambda_\star},t_+\in\bbN ,t_-\in \bbZ$&
$\rho_{t_+,t_-,\lambda_\star}, t_\pm\in \bbZ$\\\hline
\end{tabular}
\end{table}
\end{center}

\subsection{Colored weight functions for $F^+$-cuspidal  and $F^-$-cuspidal modules}\label{sec:coloredweight}\hfill\\

In this section,  we deduce that the Kac-Moody algebra associated to the categorical action of ${\Heis_{-2}(z^+,t^+)}\odot{ \Heis_{-2}(z^-,t^-)}$ on $KG_\bullet\mod$ is $\fraks\frakl'_{I_+}\oplus \fraks\frakl'_{I_-}$ with $$I_+=q^\bbZ\sqcup-q^\bbZ~~\text{and}~~I_-=q^\bbZ\sqcup-q^\bbZ,$$ see Corollary \ref{ThmB}. 

\smallskip

\subsubsection{Some lemmas}
For any irreducible module $\rho\in KG_\bullet\mod,$
we  define $m^+_\rho(u) \in K[u]$ (resp. $ n^+_\rho(u)\in K[u]$, $m^-_\rho(v)\in K[v]$ and $n^-_\rho(v) \in K[v]$) to
be the monic  minimal polynomials of
the endomorphisms
$\mathord{\begin{tikzpicture}[baseline = -1mm]
 	\draw[->,red] (0.08,-.2) to (0.08,.2);
     \node at (0.08,0) {$\dott$};
 	\draw[-,darkg,thick] (0.38,.2) to (0.38,-.2);
     \node at (0.55,0) {$\darkg\scriptstyle{\rho}$};
\end{tikzpicture}
}$ (resp.
$\mathord{\begin{tikzpicture}[baseline = -1mm]
 	\draw[<-,red] (0.08,-.2) to (0.08,.2);
     \node at (0.08,0.02) {$\dott$};
 	\draw[-,darkg,thick] (0.38,.2) to (0.38,-.2);
     \node at (0.55,0) {$\darkg\scriptstyle{\rho}$};
\end{tikzpicture}
},$  $\mathord{\begin{tikzpicture}[baseline = -1mm]
 	\draw[->,blue] (0.08,-.2) to (0.08,.2);
     \node at (0.08,0) {$\dott$};
 	\draw[-,darkg,thick] (0.38,.2) to (0.38,-.2);
     \node at (0.55,0) {$\darkg\scriptstyle{\rho}$};
\end{tikzpicture}
}$ and
$\mathord{\begin{tikzpicture}[baseline = -1mm]
 	\draw[<-,blue] (0.08,-.2) to (0.08,.2);
     \node at (0.08,0.02) {$\dott$};
 	\draw[-,darkg,thick] (0.38,.2) to (0.38,-.2);
     \node at (0.55,0) {$\darkg\scriptstyle{\rho}$};
\end{tikzpicture}
}$).

\begin{lemma}\label{Lemma:minimalpoly}
Let $\rho\in \Irr(G_n)$ be  $F^{\epsilon}$-cuspidal, where $\epsilon\in \{\pm1\}$.
Then we have
$\bbO^{\epsilon}(u)(\rho)=m^\epsilon_\rho(u)$ the monic minimal polynomial of $X^\epsilon(\rho)$ on $F^{\varepsilon}(\rho).$
\end{lemma}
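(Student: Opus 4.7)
The plan is to appeal to the general identity
\[
\bbO^{\epsilon}(u)(\rho) \;=\; \frac{m^{\epsilon}_{\rho}(u)}{n^{\epsilon}_{\rho}(u)}
\]
which was already recorded in \S\ref{sec:doublequantumtoKac} as a consequence of the double-quantum analog of \cite[Lemma 4.17]{BSW2}. Once this identity is in hand, the lemma reduces to showing that the denominator $n^{\epsilon}_{\rho}(u)$ is the constant polynomial $1$.

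By its very definition, $n^{\epsilon}_{\rho}(u)$ is the monic minimal polynomial of the endomorphism $X^{\epsilon}$ acting on the object $E^{\epsilon}(\rho)$ (diagrammatically, the downward dot evaluated on $\rho$). The hypothesis that $\rho$ is $F^{\epsilon}$-cuspidal means precisely that $E^{\epsilon}(\rho)=0$, so $X^{\epsilon}$ acts on the zero object, and the minimal polynomial of the zero endomorphism on the zero object is the constant polynomial $1$. Substituting back gives $\bbO^{\epsilon}(u)(\rho)=m^{\epsilon}_{\rho}(u)$, as required.

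I do not anticipate a genuine obstacle: both ingredients (the ratio formula for $\bbO^{\epsilon}(u)$ and the vanishing of $E^{\epsilon}(\rho)$) are already available in this excerpt. The only point that deserves a brief sentence of justification is the convention that the minimal polynomial of an endomorphism of the zero module equals $1$; this is forced by the definition as the monic generator of the annihilator ideal in $K[u]$, which in this degenerate case is all of $K[u]$.
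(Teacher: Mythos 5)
Your proposal is correct and matches the paper's own proof: the paper likewise observes that $F^{\epsilon}$-cuspidality gives $E^{\epsilon}(\rho)=0$, hence $n^{\epsilon}_{\rho}(u)=1$, and then invokes the ratio formula $\bbO^{\epsilon}(u)(\rho)=m^{\epsilon}_{\rho}(u)/n^{\epsilon}_{\rho}(u)$ from \cite{BSW2}. Your extra remark on the minimal polynomial of an endomorphism of the zero object being $1$ is a reasonable clarification of the same step.
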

\begin{proof}
Since $\rho$ is $F^{\epsilon}$-cuspidal, $E^\epsilon(\rho)=0$ and  we have $n^\epsilon_\rho(u)=1.$
Then this lemma  follows by \cite[Lemma 4.4]{BSW2}.
\end{proof}
\begin{lemma}
Let $\rho\in \Irr(G_n)$ be $F^{\epsilon}$-cuspidal,
where $\epsilon\in \{\pm1\}$. Let $\rho_1$ and $\rho_2$
be the two irreducible components of $F^{\varepsilon}(\rho).$
Then  $\dim(\rho_1)/\dim(\rho_2)=q^c$ with $c\in \bbQ$.
Then the action of
$X^{\epsilon}(\rho)$ on $F^{\epsilon}(\rho)$ satisfies
$$X^{\epsilon}(\rho)^2=\gamma\cdot X^{\epsilon}(\rho)-(t^\epsilon)^2\cdot \id_{F^\epsilon}(\rho)$$ and $\gamma$ satisfies $\gamma^2=-\frac{(q^c-1)^2\cdot(t^\epsilon)^2}{q^c}.$

\end{lemma}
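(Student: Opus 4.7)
The plan is to identify the scalar $\gamma$ already extracted in Proposition \ref{Prop:dim2}(b), which supplies the quadratic relation $X^\epsilon(\rho)^2 = \gamma\, X^\epsilon(\rho) - (t^\epsilon)^2 \id_{F^\epsilon(\rho)}$ and the fact that $\End_{\calC}(F^\epsilon(\rho))$ is commutative of dimension two. From the latter, $F^\epsilon(\rho)$ decomposes multiplicity-freely as $\rho_1 \oplus \rho_2$ with $\rho_1 \not\cong \rho_2$, and $X^\epsilon(\rho)$ acts by scalars $\lambda_1, \lambda_2$ on the two summands, subject to $\lambda_1 \lambda_2 = (t^\epsilon)^2$ and $\lambda_1 + \lambda_2 = \gamma$. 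The strategy is to compute the ratio $\lambda_1/\lambda_2$ via Howlett--Lehrer theory and then recover $\gamma^2$ from the elementary symmetric functions of $\lambda_1$ and $\lambda_2$.

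First I would invoke Howlett--Lehrer theory. Since $\rho$ is $F^\epsilon$-cuspidal, $F^\epsilon(\rho) = R^{G_{n+1}}_{L_{n,1}}(\rho \boxtimes \xi_\epsilon)$ is Harish-Chandra induced from a cuspidal pair in $L_{n,1} \cong G_n \times \GL_1(q)$, where $\xi_\epsilon \in \{1, \zeta\}$. The corresponding relative Weyl group is of order two, so $\End_{KG_{n+1}}(F^\epsilon(\rho))$ is isomorphic to a one-parameter Iwahori--Hecke algebra $H(\mathfrak{S}_2, q^a)$ for some $a \in \bbQ$. The standard formula for the dimensions of the irreducible components of such a parabolic induction then gives $\dim(\rho_1)/\dim(\rho_2) = q^a$, and hence $a = c \in \bbQ$. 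The standard generator $T_s$ of $H(\mathfrak{S}_2, q^c)$ satisfies $(T_s - q^c)(T_s + 1) = 0$, so it acts on $\rho_1$ and $\rho_2$ by the scalars $q^c$ and $-1$, respectively.

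Next I would match $X^\epsilon(\rho)$ with a specific element of $H(\mathfrak{S}_2, q^c)$. Under the Howlett--Lehrer isomorphism, $X^\epsilon(\rho)$ becomes an affine-linear combination $\alpha\, T_s + \beta_0 \cdot 1$ of $T_s$ and the identity, with coefficients $\alpha, \beta_0 \in K$ determined by the explicit form $X^\epsilon_{n+1} = c_\epsilon q^n e^\epsilon_n \dot x_{n+1} e^\epsilon_n$ of \S\ref{sub:naturaltransf} and the normalization constants in \eqref{t+}, \eqref{t-}, \eqref{for:normalization}. Consequently $\lambda_1 = \alpha q^c + \beta_0$ and $\lambda_2 = -\alpha + \beta_0$; imposing $\lambda_1 \lambda_2 = (t^\epsilon)^2$, and cross-checking with a single concrete cuspidal example (such as a unipotent cuspidal character, where the Hecke-algebra eigenvalues are explicitly known from \cite{LLZ}), pins down $\alpha, \beta_0$ up to sign and forces $\lambda_1/\lambda_2 = -q^c$. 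From $(\lambda_1 - \lambda_2)^2/(\lambda_1 \lambda_2) = -(q^c + 1)^2/q^c$ we then obtain
\begin{equation*}
\gamma^2 \,=\, (\lambda_1 + \lambda_2)^2 \,=\, (\lambda_1 - \lambda_2)^2 + 4 \lambda_1 \lambda_2 \,=\, -\frac{(q^c - 1)^2 (t^\epsilon)^2}{q^c},
\end{equation*}
as claimed.

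The main obstacle is the normalization bookkeeping in the last step: one must verify that the Howlett--Lehrer parameter of $\End_{KG_{n+1}}(F^\epsilon(\rho))$ is exactly $q^c$ rather than $-q^c$, $q^{2c}$ or $q^{-c}$, and that the various constants $q^r$, $\beta$, $\sqrt{-1}$ and $q^{\pm 1/2}$ appearing in $X^\epsilon_{n+1}$ and $t^\epsilon$ conspire to give the sign $\lambda_1/\lambda_2 = -q^c$ (and not $+q^c$). This minus sign is the crucial input and ultimately traces back to the form of the affine-Hecke relation in Proposition \ref{prop:12relations} together with the curl relation \eqref{k=2curl}.
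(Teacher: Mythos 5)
Your approach tracks the paper's quite closely: both begin with the quadratic relation $X^\epsilon(\rho)^2=\gamma X^\epsilon(\rho)-(t^\epsilon)^2\mathrm{id}$ from Proposition~\ref{Prop:dim2}(b), both identify $F^\epsilon(\rho)\cong R^{G_{n+1}}_{L_{n,1}}(\rho\boxtimes\xi_\epsilon)$ and invoke Howlett--Lehrer theory to control the two eigenvalues $\lambda_1,\lambda_2$, and both finish by the same elementary-symmetric-function computation (which you carry out correctly). The paper simply cites \cite[Theorem~3.18]{HL80} to extract the two facts $\lambda_1\lambda_2=(t^\epsilon)^2$ and $\lambda_1/\lambda_2=-q^c$ in one step.

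The one place your write-up is weaker than it needs to be is the ``affine-linear combination $\alpha T_s + \beta_0\cdot 1$'' step. You allow a constant term $\beta_0$ and then propose to pin down $\alpha,\beta_0$ by checking a single concrete cuspidal example. That is not quite a proof: the Hecke parameter $q^c$ varies with $\rho$, so an identity verified in one example does not automatically propagate to all $F^\epsilon$-cuspidal $\rho$. The cleaner point --- and this is really the content of what the paper reruns from \cite{HL80} --- is that $X^\epsilon(\rho)$ is itself a scalar multiple of the Howlett--Lehrer canonical basis element $a_w$ for the nontrivial relative Weyl group element, because $X^\epsilon_{n+1}$ is by definition proportional to $e^\epsilon_n\dot x_{n+1}e^\epsilon_n$ and the Howlett--Lehrer basis element is exactly built from $e\,\dot w\,e$. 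Hence $\beta_0=0$ identically, and the eigenvalue ratio $-q^c$ then comes straight out of \cite[Theorem~3.18]{HL80} (which relates the eigenvalue ratio of $a_w$ to the dimension ratio $\dim\rho_1/\dim\rho_2$), with no need for example-checking. With that one adjustment your argument becomes the paper's argument.
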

\begin{proof}By Propostion \ref{Prop:dim2}$(b)$ we know that $X^{\epsilon}(\rho)$ on $F^{\epsilon}(\rho)$ satisfies
$X^{\epsilon}(\rho)^2=\gamma\cdot X^{\varepsilon}(\rho)-(t^\epsilon)^2\cdot \id_{F^\epsilon}(\rho).$
Note that $F^{+}(\rho)\cong R_{L_{n,1}}^{G_{n+1}}(\rho\otimes 1)$ and $F^{-}(\rho)\cong R_{L_{n,1}}^{G_{n+1}}(\rho\otimes \zeta)$.
Repeat the proof of \cite[Theorem 3.18]{HL80}, then we know that if $\alpha_1$ and $\alpha_2$ are two roots of $m^\varepsilon_\rho(u)=0$, then $\alpha_1\alpha_2=(t^\epsilon)^2$ and $\alpha_1/\alpha_2=-q^c$. Hence $\alpha_1^2=-q^c\cdot (t^\epsilon)^2$, $\alpha_2^2=-q^{-c}\cdot(t^\epsilon)^2$ and $\gamma^2=(\alpha_1+\alpha_2)^2=\alpha_1^2+\alpha_2^2+2\alpha_1\alpha_2=-\frac{(q^c-1)^2\cdot(t^\epsilon)^2}{q^c}.$
\end{proof}

\begin{corollary}\label{cor:minimalpoly}
\begin{itemize}
\item[$(a)$]
If $G_n=\Sp_{2n}(q),$ then
 \begin{itemize}
\item[$(a1)$] $\bbO^+(u)(\rho_{t_+,t_-,\lambda_\star})=(u-\epsilon' q^{t_+})(u+\epsilon' q^{-1-t_+}),$
\item[$(a2)$]$\bbO^-(v)(\rho_{t_+,t_-,\lambda_\star})=
(v-\epsilon'' q^{t_-})(v+\epsilon'' q^{-t_-})$
\end{itemize}
for some $\epsilon', \epsilon'' \in \{\pm 1\}.$
\item[$(b)$]
If $G_n=\O^\pm_{2n}(q)$, then
\begin{itemize}
\item[$(b1)$] $\bbO^+(u)(\rho_{t_+,t_-,\lambda_\star})=(u-\epsilon'
    q^{t_+})(u+\epsilon' q^{-t_+}),$
\item[$(b2)$]$\bbO^-(v)(\rho_{t_+,t_-,\lambda_\star})=(v-\epsilon''q^{t_-})
(v+\epsilon'' q^{-t_-})$
\end{itemize}
for some $\epsilon', \epsilon'' \in \{\pm 1\}.$

\item[$(c)$] If $G_n=\O_{2n+1}(q)$, then
\begin{itemize}
\item[$(c1)$] $\bbO^+(u)(\rho_{t_+,t_-,\lambda_\star,\epsilon})=
    (u-\epsilon' q^{t_+})(u+ \epsilon' q^{-1-t_+}),$
\item[$(c2)$]$\bbO^-(v)(\rho_{t_+,t_-,\lambda_\star,\epsilon})=
(v-\epsilon'' q^{t_-})(v+ \epsilon''q^{-1-t_-})$
\end{itemize}
for some $\epsilon', \epsilon'' \in \{\pm 1\}.$
\end{itemize}
In all cases, the signatures $\epsilon'$ and $\epsilon''$ depend on $t_+,t_-,\lambda_\star$ and $\epsilon.$
\end{corollary}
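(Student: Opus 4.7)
The plan is to invoke Lemma~\ref{Lemma:minimalpoly} together with the auxiliary lemma stated just above the corollary, reducing the entire computation to the determination of a single integer $c$ encoding the Hecke-algebra parameter attached to the cuspidal datum in each case.

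Since $\rho$ is $F^\epsilon$-cuspidal one has $E^\epsilon(\rho)=0$, hence $n^\epsilon_\rho(u)=1$, so Lemma~\ref{Lemma:minimalpoly} gives $\bbO^\epsilon(x)(\rho)=m^\epsilon_\rho(x)$. On the other hand, the $1$-hook-addition description of $F^\epsilon$ recalled just before Lemma~\ref{Lem:cuspidal} shows that $F^\epsilon(\rho)$ has exactly two distinct irreducible composition factors $\rho_1,\rho_2$, so by Proposition~\ref{Prop:dim2}(b) the minimal polynomial $m^\epsilon_\rho$ has degree precisely two. Writing $q^c:=\dim(\rho_1)/\dim(\rho_2)$, the preceding lemma asserts that its two roots $\alpha_1,\alpha_2$ satisfy $\alpha_1\alpha_2=(t^\epsilon)^2$ and $\alpha_1/\alpha_2=-q^c$; equivalently,
\[
m^\epsilon_\rho(x)=x^2-\gamma\,x+(t^\epsilon)^2,\qquad \gamma^2=-\frac{(q^c-1)^2(t^\epsilon)^2}{q^c}.
\]
Thus once $c$ is identified, substituting the values of $t^\pm$ from \eqref{t+}--\eqref{t-} determines the polynomial up to the overall sign of $\gamma$, and a direct factorisation will yield a quadratic of the shape claimed in the corollary (with $\epsilon'$ or $\epsilon''$ recording this sign).

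The main step is therefore the computation of $c$. The two composition factors $\rho_1,\rho_2$ are labelled by the two symbols obtained from the cuspidal symbol $\Lambda_\epsilon$ by adding a $1$-hook (one hook being visible in the top row of $\Lambda_\epsilon$, the other becoming visible after a simultaneous shift by~$1$). Via the map $\Upsilon$ of \eqref{bipartitionmap} these two symbols correspond to the two bipartitions $(\emptyset,(1))$ and $((1),\emptyset)$, i.e.\ to the sign and trivial characters of the relative Weyl group $\bbZ/2$ governing the attached Harish-Chandra series, so the ratio $\dim(\rho_1)/\dim(\rho_2)$ is precisely the Howlett--Lehrer parameter of the corresponding order-two Hecke algebra. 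Using the explicit cuspidal defects listed in~\S\ref{sub:unipotent}, namely $(-1)^{t_+}(2t_++1)$ for the $\Sp_{2n}$ and $\SO_{2n+1}$ cuspidals and $2t_\pm$ for the $\O^\pm_{\mathrm{even}}$ cuspidals, together with Lusztig's degree formula, a direct calculation yields
\[
c=\begin{cases}
2t_++1 & \text{for $F^+$ on $\Sp_{2n}(q)$ or $\O_{2n+1}(q)$,}\\
2t_-+1 & \text{for $F^-$ on $\O_{2n+1}(q)$,}\\
2t_- & \text{for $F^-$ on $\Sp_{2n}(q)$ or on $\O^\pm_{2n}(q)$,}\\
2t_+ & \text{for $F^+$ on $\O^\pm_{2n}(q)$.}
\end{cases}
\]

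Plugging these values, together with $(t^\pm)^2=-q^{-1}$ in the odd-dimensional cases and $(t^\pm)^2=-1$ in the even-dimensional cases, into the quadratic $x^2-\gamma x+(t^\epsilon)^2$ and factoring gives exactly the six displayed formulas. The main obstacle is the case-by-case Hecke-parameter calculation above: although combinatorially straightforward, it requires careful bookkeeping of the Lusztig parametrization conventions, and it is precisely the shift between the two defect families that produces the exponent $-1-t_\pm$ (versus $-t_\pm$) in the second factor of the odd-dimensional cases, matching the two different normalisations of $t^\pm$ in \eqref{t+}--\eqref{t-}.
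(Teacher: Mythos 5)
Your proposal is correct and follows essentially the same route as the paper: both arguments rest on the unnumbered lemma preceding the corollary (which pins down the two roots via $\alpha_1\alpha_2=(t^\epsilon)^2$ and $\alpha_1/\alpha_2=-q^c$), reduce the determination of $c$ to the unipotent cuspidal datum using the compatibility of Lusztig's Jordan decomposition with Harish--Chandra induction and its preservation of dimension ratios, and then appeal to Lusztig's computation of the relevant Hecke-algebra parameters ($q^{2t+1}$ for defect $\pm(2t+1)$, $q^{2t}$ for defect $\pm 2t$). The only difference is one of packaging: the paper outsources the parameter values to a citation of Carter, whereas you sketch the generic-degree computation yourself; your table of values of $c$ and the resulting factorisations are all correct.
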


\begin{proof}
This is just a corollary of Lusztig's Jordan decomposition since Lusztig's Jordan decomposition can be chosen such that it is compatible with Harish-Chandra induction and preserves the radio $\dim(\rho_1)/\dim(\rho_2)=\dim(\calJ_s(\rho_1))/\dim(\calJ_s(\rho_2))$, see Proposition \ref{prop:jordandec} and Theorem \ref{Thm:Jordan}. Then the problem is reduced to unipotent case. Finally, in the unipotent case, this result is established by Lusztig; see \cite[p. 464]{Car},  also \cite[\S 9]{Wa}.
\end{proof}

\subsubsection{Kac-Moody algebra $\fraks\frakl'_{I_+}\oplus \fraks\frakl'_{I_-}$}
We define the set $I_+$ (resp. $I_-$)
to be the union of the sets of roots of the minimal polynomials
$m^+_\rho(u)$ (resp. $m^-_\rho(u)$) for all irreducible modules $\rho \in KG_\bullet\mod$. Now we get the following proposition.
\begin{proposition}\label{Prop:I+-}
For $KG_\bullet\mod$, we have $I_+=q^\bbZ\sqcup \,-q^\bbZ$ and $I_-=q^\bbZ\sqcup \,-q^\bbZ.$
\end{proposition}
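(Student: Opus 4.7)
The plan is to establish the equality by proving the two inclusions $I_\pm \subseteq q^{\mathbb{Z}} \sqcup -q^{\mathbb{Z}}$ and $I_\pm \supseteq q^{\mathbb{Z}} \sqcup -q^{\mathbb{Z}}$ separately.

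For the forward inclusion, I would argue by induction on the rank $n$ of $G_n$, for each irreducible $\rho \in \mathrm{Irr}(G_n)$. In the base case where $\rho$ is $F^\epsilon$-cuspidal, $E^\epsilon(\rho) = 0$, and Lemma \ref{Lemma:minimalpoly} identifies $m^\epsilon_\rho(u)$ with the colored weight $\bbO^\epsilon(u)(\rho)$; Corollary \ref{cor:minimalpoly} then gives an explicit factorization whose two roots both lie in $\{\pm q^k : k \in \mathbb{Z}\}$. In the inductive step, if $\rho$ is not $F^\epsilon$-cuspidal, then $E^\epsilon(\rho) \neq 0$, so by adjunction $\rho$ is a composition factor of $F^\epsilon(\sigma)$ for some irreducible $\sigma$ of strictly smaller rank. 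By the induction hypothesis, the generalized eigenvalues of $X^\epsilon$ on $F^\epsilon(\sigma)$ all lie in $q^{\mathbb{Z}} \sqcup -q^{\mathbb{Z}}$. Invoking the affine Hecke relations of Proposition \ref{prop:12relations} together with \cite[Lemma 4.7]{Go}, exactly as in Step 2 of the proof of Proposition \ref{prop:invertible}, the generalized eigenvalues of $X^\epsilon$ on $F^\epsilon(F^\epsilon(\sigma))$ are obtained from those on $F^\epsilon(\sigma)$ by multiplication by $q^{\pm 1}$. Since $F^\epsilon(\rho)$ appears as a subquotient of $F^\epsilon(F^\epsilon(\sigma))$ by exactness of $F^\epsilon$, its eigenvalues remain in $q^{\mathbb{Z}} \sqcup -q^{\mathbb{Z}}$, closing the induction.

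For the reverse inclusion, the strategy is to exhibit, for every target $i \in q^{\mathbb{Z}} \sqcup -q^{\mathbb{Z}}$, an $F^\epsilon$-cuspidal module whose minimal polynomial has $i$ as a root. Corollary \ref{cor:minimalpoly} shows that each $F^\epsilon$-cuspidal $\rho$ contributes roots of the form $\epsilon' q^t$ and $-\epsilon' q^{-1-t}$ (or the appropriate analogue in the even orthogonal case) for some integer $t \geq 0$ and some sign $\epsilon' \in \{\pm 1\}$. Lemma \ref{Lem:cuspidal} combined with the Lusztig parametrization of \S\ref{sub:parametrization} produces such cuspidal modules for every admissible $t$: one simply chooses a semisimple element $s \in (G^*)^0$ whose component $n^{(\epsilon)}(s)$ takes the required form $t^2 + t$ or $t^2$, guaranteeing a cuspidal symbol $\Lambda_\epsilon$. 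Varying $t$ thus supplies an infinite family of eigenvalues, and it only remains to realize both signs $\epsilon' = \pm 1$. For $G_n = \O_{2n+1}(q)$ or $\O^{\pm}_{2n}(q)$ this is immediate from the determinant symmetry of \S\ref{sub:det}: the isomorphism $\mathrm{Det} \circ F^\pm_i \circ \mathrm{Det} \cong F^\pm_{-i}$ forces $i \in I_\pm$ if and only if $-i \in I_\pm$. For $G_n = \Sp_{2n}(q)$ and the set $I_-$, the diagonal automorphism of \S\ref{sec:diag} provides analogously $\mathrm{Diag} \circ F^-_j \circ \mathrm{Diag} \cong F^-_{-j}$, yielding the sign flip.

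The main obstacle is the remaining case $G_n = \Sp_{2n}(q)$ with the set $I_+$, since $\mathrm{Diag}$ acts trivially on $F^+$-eigenvalues and no visible categorical symmetry of our construction sends $F^+_i$ to $F^+_{-i}$. To overcome this, I would reduce via Lusztig's Jordan decomposition to the unipotent part of $K\Sp_{2\bullet}(q)\mathrm{-mod}$, where the Kac-Moody categorification of Dudas--Varagnolo--Vasserot \cite{DVV,DVV2} already establishes that both signs $\epsilon' = \pm 1$ arise among cuspidal unipotent Harish-Chandra series: their analysis of the Hecke algebra eigenvalues acting on unipotent cuspidal induction pinpoints both signs explicitly. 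Since unipotent characters belong to $KG_\bullet\mathrm{-mod}$, these eigenvalues populate $I_+$ with both signs, which together with the infinite family in $t$ from Corollary \ref{cor:minimalpoly} completes the reverse inclusion and therefore the proposition.
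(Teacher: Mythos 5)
Your forward inclusion $I_\pm \subseteq q^{\bbZ}\sqcup -q^{\bbZ}$ is exactly the paper's argument: reduce to the $F^\epsilon$-cuspidal case using the affine Hecke relations and \cite[Lemma 4.7]{Go}, then invoke Lemma \ref{Lemma:minimalpoly} and Corollary \ref{cor:minimalpoly}. Where you diverge is the reverse inclusion, which the paper leaves implicit, and there you make the problem harder than it is. You do not need to realize both signs $\epsilon'=\pm1$: for any single $F^\epsilon$-cuspidal module the two roots of $m^\epsilon_\rho(u)$ are $\epsilon' q^{t}$ and $-\epsilon' q^{-1-t}$ (or $\epsilon' q^{t}$ and $-\epsilon' q^{-t}$ in the even orthogonal case), and since $q>1$ is a positive integer in $K$, exactly one of these lies in $q^{\bbZ}$ and the other in $-q^{\bbZ}$, whichever value $\epsilon'$ takes. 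Combined with the stability of the spectrum under multiplication by $q^{\pm1}$ (\cite[Lemma 4.6]{BSW2}, quoted in \S\ref{sec:KactoHeis}, or equivalently your own induction step read in reverse), a single cuspidal module already forces $I_\epsilon \supseteq q^{\bbZ}\sqcup -q^{\bbZ}$. Consequently the detour through $\mathrm{Det}$, $\mathrm{Diag}$ and, for the pair $(\Sp_{2n}(q), I_+)$, the Dudas--Varagnolo--Vasserot computation is superfluous; that last step also imports a hard external input (essentially Theorem \ref{Thm:unipotenttheta}(a), which this paper only establishes later via the theta correspondence) where none is needed. Your argument is not incorrect, but the sign $\epsilon'$ only governs which of the two roots is the one lying in $q^{\bbZ}$, not whether both components of $q^{\bbZ}\sqcup -q^{\bbZ}$ are reached.
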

\begin{proof} Let us first reduce this problem to the irreducible  modules which are both $F^+$-cuspidal and $F^-$-cuspidal. Assume that $\rho\in \Irr(KG_\bullet\mod)$ is not $F^\epsilon$-cuspidal, then there exists $\sigma\in \Irr(KG_\bullet\mod)$ such that $\rho$ is a direct summand of $F^\epsilon(\sigma)$. Since $X^\epsilon$ and $T^\epsilon$ satisfying affine Hecke relation, by \cite[Lemma 4.7]{Go}, we deduce that the the eigenvalues of $X^\epsilon(F^\epsilon(\sigma))$ on $(F^\epsilon)^2(\sigma)$  are $q^{\pm 1}$ times the eigenvalues of $X^\epsilon(\sigma)$ on $F^\epsilon(\sigma)$, hence if the roots of $m^\epsilon_{\sigma}(u)$ belong to $ q^\bbZ\sqcup \,-q^\bbZ$, then so is $m^\epsilon_{\rho}(u)$.
\smallskip
If $\rho$ is both $F^+$-cuspidal and $F^-$-cuspidal, it is proved in Corollary \ref{cor:minimalpoly}.
\end{proof}

Now we can state our main theorem in this section, which is Corollary B in the Introduction.
\begin{corollary}\label{ThmB}
Let $R=K$ or $k$, then there is a strict $R$-linear monoidal functor
$$\Psi':\,\frakU(\fraks\frakl'_{I_+}\oplus \fraks\frakl'_{I_-})\to \mathcal{E}nd(RG_\bullet\mod),$$
where
$I_+=q^\bbZ\sqcup -q^\bbZ$ and $I_-=q^\bbZ\sqcup -q^\bbZ.$
\end{corollary}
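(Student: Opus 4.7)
The plan is to combine Theorem \ref{cor:kacmoody} with Proposition \ref{Prop:I+-}. Theorem \ref{cor:kacmoody}, itself a direct consequence of Theorem A (our categorical double quantum Heisenberg action, Theorem \ref{Thm:doubleheisenberg}) together with the general Heisenberg-to-Kac-Moody promotion of \cite{HL} recalled as Theorem \ref{Thm:HL}, already produces a strict $R$-linear monoidal functor
$$
\Psi' \colon \frakU(\fraks\frakl'_{I_+} \oplus \fraks\frakl'_{I_-}) \to \mathcal{E}nd(RG_\bullet\mod)
$$
for $R = K$ or $k$, under the standing hypothesis $\ell \nmid q(q-1)$. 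What remains is to identify the two spectra $I_+$ and $I_-$, defined respectively as the sets of generalized eigenvalues of $X^+$ on $F^+$ and of $X^-$ on $F^-$, with the explicit subsets of $R^\times$ described in the statement.

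For this I would invoke Proposition \ref{Prop:I+-}. The strategy there is to reduce the determination of $I_\pm$ to modules that are both $F^+$-cuspidal and $F^-$-cuspidal: since $X^\pm$ together with $T^\pm$ satisfies the affine Hecke relations of Proposition \ref{prop:12relations}, \cite[Lemma 4.7]{Go} implies that the eigenvalues of $X^\pm$ on $(F^\pm)^2(\sigma)$ are $q^{\pm 1}$ times those of $X^\pm$ on $F^\pm(\sigma)$. Iterating, every element of $I_\pm$ coming from a non-cuspidal irreducible lies in the $q$-orbit of an eigenvalue already realized on a cuspidal module, so the sets $I_\pm$ are each stable under multiplication by $q$, and it suffices to compute the spectra on doubly cuspidal irreducibles.

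For such a doubly cuspidal irreducible $\rho \in \Irr(KG_\bullet\mod)$, Lemma \ref{Lemma:minimalpoly} identifies $\bbO^\pm(u)(\rho)$ with the minimal polynomial $m^\pm_\rho(u)$ of $X^\pm$ on $F^\pm(\rho)$, which is monic of degree two by Proposition \ref{Prop:dim2}. The next step is to compute its roots explicitly, which is the content of Corollary \ref{cor:minimalpoly}: Lusztig's Jordan decomposition (as fixed in Theorem \ref{Thm:Jordan}, chosen compatibly with Harish-Chandra induction) transports the computation to the unipotent case in a pair of classical groups $G^{(+)}, G^{(-)}$ of smaller rank, and there the Hecke parameters governing the two constituents of $F^\pm$ applied to a unipotent cuspidal character are classically known to be of the form $\pm q^t$ for some integer $t$. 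Combining the two steps yields $I_+ = I_- = q^\bbZ \sqcup (-q^\bbZ)$, finishing the proof.

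The main point that has to be handled carefully is the second step: one needs a Lusztig Jordan decomposition that is both compatible with Harish-Chandra induction (so that the affine-Hecke reduction argument interacts correctly with the block decomposition by $s$) and that preserves the ratio of dimensions of the two constituents of $F^\pm(\rho)$ (so that the Hecke parameters of the interval can be read off from those of the corresponding unipotent interval in $C_{G^*}(s)$). Both properties are built into the version of $\calJ_s$ recalled in Theorem \ref{Thm:Jordan}, and once they are invoked the identification of $I_+$ and $I_-$ follows immediately.
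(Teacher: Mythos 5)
Your route for $R=K$ is exactly the paper's: the functor itself comes from Theorem \ref{cor:kacmoody} (Theorem \ref{Thm:doubleheisenberg} combined with the Heisenberg-to-Kac--Moody promotion of Theorem \ref{Thm:HL}), and the identification $I_+=I_-=q^{\bbZ}\sqcup(-q^{\bbZ})$ comes from Proposition \ref{Prop:I+-}, whose proof you reproduce faithfully (reduction to cuspidal modules via the affine Hecke relations and \cite[Lemma 4.7]{Go}, then Corollary \ref{cor:minimalpoly} via a Jordan decomposition compatible with Harish--Chandra induction and preserving the dimension ratio of the two constituents of $F^\pm(\rho)$).

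The one genuine omission is the case $R=k$. Proposition \ref{Prop:I+-} and Corollary \ref{cor:minimalpoly} are statements about $KG_\bullet\mod$ only: they rest on Lusztig's classification of ordinary irreducible characters and on the Howlett--Lehrer computation of Hecke parameters in characteristic $0$, none of which transfers verbatim to $kG_\bullet\mod$. So your argument identifies the spectra only in characteristic zero, whereas the corollary asserts the same explicit description of $I_\pm$ over $k$. The paper closes this by a reduction modulo $\ell$: the bimodules $\calO G_{r+1}\cdot e^{\pm}_{r}$ representing $F^{\pm}$ are $\calO$-free and $X^{\pm}$ acts by right multiplication by a fixed element of $\calO G_{r+1}$, so the generalized eigenvalues of $X^{\pm}$ in characteristic $\ell$ are the images under reduction of those in characteristic $0$; since $q(q-1)$ is invertible in $k$, the subset $q^{\bbZ}\sqcup(-q^{\bbZ})$ of $\calO$ reduces onto $q^{\bbZ}\sqcup(-q^{\bbZ})$ viewed inside $k$. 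You need to add this step (or an equivalent modular argument) to cover $R=k$.
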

\begin{proof}
For characteristic 0, it follows from Theorem \ref{cor:kacmoody} and Proposition \ref{Prop:I+-}. For positive characteristic, using modulo $\ell$ reduction, it follows from the result in characteristic 0.
\end{proof}
\subsubsection{Extra symmetries and colored weight functions}

In section \ref{Chap:extrasymmeties}, we study the extra symmetries of categorical  double quantum Heisenberg action on $RG_\bullet\mod.$ Now we will study the influence of extra symmetries on colored weight functions.

\begin{proposition}\label{Prop:extrasymm}
Let $\rho\in \Irr(G_n).$
\begin{itemize}
\item[$(a)$]If $G_n=\O_{2n+1}(q)$,
\begin{itemize}
\item $\bbO^{+}(u)(\sgn\cdot\rho)=\bbO^{+}(-u)(\rho)$ and $\bbO^{-}(v)(\sgn\cdot\rho)=\bbO^{-}(-v)(\rho),$
    \item $\bbO^{+}(u)(\sp\cdot\rho)=\bbO^{-}(u)(\rho)$ and $\bbO^{-}(v)(\sp\cdot\rho)=\bbO^{+}(v)(\rho).$
    \end{itemize}
\item[$(b)$] If $G_n=\Sp_{2n}(q)$,  $\bbO^{+}(u)(\rho^c)=\bbO^{+}(u)(\rho)$ and $\bbO^{-}(v)(\rho^c)=\bbO^{-}(-v)(\rho).$

\item[$(c)$]If $G_n=\O^{\pm}_{2n}(q)$,
\begin{itemize}
\item $\bbO^{+}(u)(\sgn\cdot\rho)=\bbO^{+}(-u)(\rho)$ and $\bbO^{-}(v)(\sgn\cdot\rho)=\bbO^{-}(-v)(\rho),$
    \item $\bbO^{+}(u)(\sp\cdot\rho)=\bbO^{-}(u)(\rho)$ and $\bbO^{-}(v)(\sp\cdot\rho)=\bbO^{+}(v)(\rho),$
    \item$\bbO^{+}(u)(\rho^c)=\bbO^{+}(u)(\rho)$ and $\bbO^{-}(v)(\rho^c)=\bbO^{-}(-v)(\rho).$
    \end{itemize}
\end{itemize}

\end{proposition}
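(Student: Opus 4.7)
My plan is to exploit the extra symmetry endofunctors $\mathrm{Spin}$, $\mathrm{Det}$, and $\mathrm{Diag}$ from Section \ref{Chap:extrasymmeties}. Each is an auto-equivalence of $KG_\bullet\mod$ that squares to the identity and implements, on the level of characters, the twist by $\sp$, by $\sgn$, or the diagonal conjugation $\rho \mapsto \rho^c$. The key observation is that for any such symmetry $\Upsilon$ and any natural transformation $\omega: \unit \Rightarrow \unit$ landing in $K((w^{-1}))$ after evaluating on an irreducible module, one has
\[
\omega_{\Upsilon(\rho)} \;=\; \Upsilon\bigl((\Upsilon^{-1}\circ \omega\circ \Upsilon)_\rho\bigr),
\]
so Schur's lemma reduces the computation of $\bbO^\pm(u)(\Upsilon(\rho))$ to identifying the natural transformation $\Upsilon^{-1}\circ \bbO^\pm\circ \Upsilon$ in terms of $\red\clock(u)$ and $\blue\clock(v)$.

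Next, I would compute the conjugated natural transformations by pushing the dashed $\Upsilon$-strand through the bubble generating functions defined in \S\ref{sec:doublequantumtoKac}, using the local commutation relations established in Section \ref{Chap:extrasymmeties}. For $\mathrm{Det}$: by relation (\ref{dotslide11}) every red or blue dot picks up a sign on crossing a green strand, while caps and cups (hence the underlying bubble shape) commute via (\ref{change})--(\ref{slide2}); consequently the $r$-dot bubble in $\red\clock(u)$ and $\blue\clock(v)$ is multiplied by $(-1)^r$, and since $(-1)^r u^{-r}=(-u)^{-r}$ we obtain $\mathrm{Det}\circ \red\clock(u)\circ\mathrm{Det}=\red\clock(-u)$ and $\mathrm{Det}\circ\blue\clock(v)\circ\mathrm{Det}=\blue\clock(-v)$. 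For $\mathrm{Diag}$: the mixed-dot relation (\ref{mixedcross}) shows that red dots commute with the brown strand while blue dots acquire a sign; together with the cap/cup relations of Proposition \ref{prop:diag} this yields $\mathrm{Diag}\circ\red\clock(u)\circ\mathrm{Diag}=\red\clock(u)$ and $\mathrm{Diag}\circ\blue\clock(v)\circ\mathrm{Diag}=\blue\clock(-v)$. For $\mathrm{Spin}$: Proposition \ref{prop:spinor} shows that pushing a gray dashed strand through a red (resp.\ blue) strand produces a blue (resp.\ red) strand, exchanging caps, cups and dots of the two colors; combined with the fact that in the cases $G_n=\O_{2n+1}(q)$ or $\O^\pm_{2n}(q)$ the normalizations (\ref{constant})--(\ref{t-}) force $t^+=t^-$ and $z^+=z^-$, the scalar prefactors of $\red\clock$ and $\blue\clock$ match, giving $\mathrm{Spin}\circ\red\clock(u)\circ\mathrm{Spin}=\blue\clock(u)$ and $\mathrm{Spin}\circ\blue\clock(v)\circ\mathrm{Spin}=\red\clock(v)$.

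Finally, evaluating these identities on an irreducible $\rho$, applying Schur's lemma and using the isomorphisms $\mathrm{Spin}(\rho)\cong \sp\cdot\rho$, $\mathrm{Det}(\rho)\cong\sgn\cdot\rho$, $\mathrm{Diag}(\rho)\cong\rho^c$, yields exactly the identities listed in (a)--(c). The only genuine subtlety is the $\mathrm{Spin}$ case, where the proof crucially relies on the coincidence of the normalization constants $t^+=t^-$ and $z^+=z^-$ for the orthogonal groups; this is the point one must check carefully before reading the identity off the diagrammatic manipulation. The remaining verifications are straightforward term-by-term applications of the relations already proved in Section \ref{Chap:extrasymmeties}.
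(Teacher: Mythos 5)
Your proposal is correct and follows essentially the same route as the paper: conjugate the bubble generating functions $\red\clock(u)$, $\blue\clock(v)$ by the symmetry functors using the slide relations of Section \ref{Chap:extrasymmeties}, so that the $r$-dotted bubble picks up $(-1)^r$ (for $\mathrm{Det}$, and for $\mathrm{Diag}$ on the blue side) or changes color (for $\mathrm{Spin}$), and then read off the substitution $u\mapsto -u$ or the exchange of $\bbO^+$ and $\bbO^-$. The paper only writes out the $\mathrm{Det}$ computation and declares the rest similar, so your explicit check that $t^+=t^-$ and $z^+=z^-$ for the orthogonal groups (needed to match the scalar prefactors in the $\mathrm{Spin}$ case) is a worthwhile, correct addition rather than a deviation.
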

\begin{proof}
We only prove the first formula of $(a)$, and the others are similar.  Indeed, let $\up\in\{\red{\up},\blue{\up}\},$ then for clockwise bubbles,
\begin{align*}
\mathord{\begin{tikzpicture}[baseline=0]
  \draw[<-,thin,darkblue] (0.3,0) to[out=90,in=0] (0,0.3);
  \draw[-,thin,darkblue] (0,0.3) to[out=180,in=90] (-.3,0);
\draw[-,thin,darkblue] (-.3,0) to[out=-90,in=180] (0,-0.3);
  \draw[-,thin,darkblue] (0,-0.3) to[out=0,in=-90] (0.3,0);
  \draw[-,thick,green, dashed] (0.6,-0.5) to (0.6,.6);
   \node at (-.6,0) {$\color{darkblue}\scriptstyle{x^r}$};
      \node at (-.3,0) {$\dott$};
\end{tikzpicture}
}
&\stackrel{(\ref{change})}{=}
\mathord{\begin{tikzpicture}[baseline=0]
  \draw[-,thick,green,dashed] (0.4,-0.5) to [out=135,in=-90](0.1,.05);
  \draw[-,thick,green,dashed] (0.1,0.05) to [out=90,in=-135] (0.4,.6);
  \draw[<-,thin,darkblue] (0.3,0) to[out=90,in=0] (0,0.3);
  \draw[-,thin,darkblue] (0,0.3) to[out=180,in=90] (-.3,0);
\draw[-,thin,darkblue] (-.3,0) to[out=-90,in=180] (0,-0.3);
  \draw[-,thin,darkblue] (0,-0.3) to[out=0,in=-90] (0.3,0);
  \node at (-.6,0) {$\color{darkblue}\scriptstyle{x^r}$};
      \node at (-.3,0) {$\dott$};
\end{tikzpicture}
}
\:\:\substack{(\ref{slide1})\\{\displaystyle =}\\(\ref{slide2})}\:
\mathord{\begin{tikzpicture}[baseline=0]
  \draw[-,thick,green,dashed] (-0.4,-0.5) to [out=45,in=-90](-.1,.05);
  \draw[-,thick,green,dashed] (-0.1,0.05) to [out=90,in=-45] (-0.4,.6);
  \draw[<-,thin,darkblue] (0.3,0) to[out=90,in=0] (0,0.3);
  \draw[-,thin,darkblue] (0,0.3) to[out=180,in=90] (-.3,0);
\draw[-,thin,darkblue] (-.3,0) to[out=-90,in=180] (0,-0.3);
  \draw[-,thin,darkblue] (0,-0.3) to[out=0,in=-90] (0.3,0);
  \node at (-.8,0) {$\color{darkblue}\scriptstyle{x^r}$};
      \node at (-.3,0) {$\dott$};
\end{tikzpicture}
}
\stackrel{(\ref{dotslide11})}{=}
\mathord{\begin{tikzpicture}[baseline=0]
  \draw[-,thick,green,dashed] (-0.4,-0.5) to [out=45,in=-90](-.1,.05);
  \draw[-,thick,green,dashed] (-0.1,0.05) to [out=90,in=-45] (-0.4,.6);
  \draw[<-,thin,darkblue] (0.3,0) to[out=90,in=0] (0,0.3);
  \draw[-,thin,darkblue] (0,0.3) to[out=180,in=90] (-.3,0);
\draw[-,thin,darkblue] (-.3,0) to[out=-90,in=180] (0,-0.3);
  \draw[-,thin,darkblue] (0,-0.3) to[out=0,in=-90] (0.3,0);
  \node at (.6,-.36) {$\color{darkblue}\scriptstyle{(-x)^r}$};
      \node at (-0.05,-.28) {$\dott$};
\end{tikzpicture}
}
\stackrel{(\ref{change})}{=}
\mathord{\begin{tikzpicture}[baseline=0]
  \draw[<-,thin,darkblue] (0.3,0) to[out=90,in=0] (0,0.3);
  \draw[-,thin,darkblue] (0,0.3) to[out=180,in=90] (-.3,0);
\draw[-,thin,darkblue] (-.3,0) to[out=-90,in=180] (0,-0.3);
  \draw[-,thin,darkblue] (0,-0.3) to[out=0,in=-90] (0.3,0);
  \draw[-,thick,green,dashed] (-1.25,-0.5) to (-1.25,.6);
   \node at (-.8,0) {$\color{darkblue}\scriptstyle{(-x)^r}$};
      \node at (-.3,0) {$\dott$};
\end{tikzpicture}
}\:
\:.
\end{align*}
The proof is similar for counter-clockwise bubbles. Then by definition, $(a)$ follows.
\end{proof}

\subsubsection{Unique parametrization by colored weight functions}
Based on the classification of irreducible modules which are both $F^+$-cuspidal and $F^-$-cuspidal and Proposition \ref{Prop:extrasymm}, we can determine a part of information of $\bbO^+(u)(-)$ and $\bbO^-(v)(-)$, which can help us to give the unique parametrization of irreducible modules which are both $F^+$-cuspidal and $F^-$-cuspidal.
\smallskip

For $G=\O_{2n+1}(q)$ case the parametrization for irreducible modules which are both $F^+$-cuspidal and $F^-$-cuspidal
 is completely determined.
\smallskip

 For $G=\Sp_{2n}(q)$, let $\rho=\rho_{t_+, t_-,\lambda_\star},$ then $\rho^c=\rho_{t_+, -t_-,\lambda_\star}$.
 By Proposition \ref{Prop:extrasymm}, we know that $\bbO^{-}(v)
 (\rho)
 =(v-q^{t_-})(v+q^{-t_-})$ or $(v-q^{-t_-})(v+q^{t_-})$.
 On the other hand, $$\bbO^{-}(v)
 (\rho)=\bbO^{-}(-v)(\rho^c).$$
 So one of $\bbO^{-}(v)
 (\rho)$ and $\bbO^{-}(v)(\rho^c)$ must be equal to $(v-q^{t_-})(v+q^{-t_-})$ and then the other is equal to $(v-q^{-t_-})(v+q^{t_-}).$
Hence we can relabel them such that $\bbO^{-}(v)
 (\rho_{t_+,t_-,\lambda_\star})=(v-q^{t_-})(v+q^{-t_-})$.
\smallskip

 Similarly, for $G=\O^\pm_{2n}(q)$, we relabel them such that  $\bbO^{+}(u)
 (\rho_{t_+,t_-,\lambda_\star})=(u-q^{t_+})(u+q^{-t_+})$ and
 $\bbO^{-}(v)
 (\rho_{t_+,t_-,\lambda_\star})=(v-q^{t_-})(v+q^{-t_-}).$ Now we get
 a unique parametrization for irreducible modules which are both $F^+$-cuspidal and $F^-$-cuspidal,
 see Table \ref{diag:label}.
 \begin{center}
\small{\begin{table}[h]
\caption{Unique parametrization of irreducible modules which are both $F^+$-cuspidal and $F^-$-cuspidal}\label{diag:label}
\begin{tabular}{cccc}
 \hline
  &$\O_{2\bullet+1}(q)$&$\Sp_{2\bullet}(q)$ &$\O^{\pm}_{2\bullet}(q)$\\\hline
  &$\rho_{t_+,t_-,\lambda_\star, \varepsilon}$&
$\rho_{t_+,t_-,\lambda_\star}$&
$\rho_{t_+,t_-,\lambda_\star}$\\
  &$t_\pm\in \bbN ,\varepsilon\in\{\pm1\}$&
$t_+\in\bbN ,t_-\in \bbZ$&
$ t_\pm\in \bbZ$\\\hline
  $\bbO^+(u)(-):$&$-$& $-$  &$(u- q^{t_+})(u+ q^{-t_+})$\\\hline
   $\bbO^-(v)(-):$&$-$& $(v- q^{t_-})(v+ q^{-t_-})$  &$(v- q^{t_-})(v+ q^{-t_-})$\\\hline
\end{tabular}
\end{table}}
\end{center}
\begin{remark}
The colored weight functions in the three blanks of Table \ref{diag:label}
  will be determined in the next section, see Table \ref{diag:final}.
\end{remark}

\begin{proposition}\label{prop:colorcuspidal}Let $\rho, \rho'$ be both $F^+$-cuspidal and $F^-$-cuspidal and $\rho^\sharp=\rho'^\sharp$.
Then $\rho=\rho'$ if and only if
$$\mathbb{O}^+(u)(\rho)=\mathbb{O}^+(u)(\rho')~~\text{  and }~~ \mathbb{O}^-(v)(\rho)=\mathbb{O}^-(v)(\rho').$$
\end{proposition}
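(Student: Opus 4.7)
The \emph{only if} direction is tautological, so the task is to prove that, within the set $\{\rho'\in\Irr(G_n)\mid \rho'^\sharp=\rho^\sharp\}$ of irreducible modules sharing the uniform projection of $\rho$, the pair $(\bbO^+(u)(-),\bbO^-(v)(-))$ separates $\rho$ from every other element. The plan is to combine three ingredients: the description of the $\sharp$-fibre from Theorem \ref{Thm:Panuniformproj}, the explicit action of the extra symmetries $\mathrm{Det},\mathrm{Spin},\mathrm{Diag}$ on the colored weight functions from Proposition \ref{Prop:extrasymm}, and the normalized form of $\bbO^\pm(-)$ on $(F^+,F^-)$-bicuspidal modules coming from Corollary \ref{cor:minimalpoly} together with the unique parametrization of Table \ref{diag:label}.

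First I would use Theorem \ref{Thm:Panuniformproj} to list the elements sharing the uniform projection with $\rho$: for $G_n=\O_{2n+1}(q)$ the fibre is $\{\rho,\rho\cdot\sgn\}$; for $G_n=\Sp_{2n}(q)$ it is $\{\rho,\rho^c\}$; and for $G_n=\O^\pm_{2n}(q)$ it is $\{\rho,\rho\cdot\sgn,\rho^c,\rho^c\cdot\sgn\}$. Note $\rho'$ remains both $F^+$- and $F^-$-cuspidal throughout its fibre since the symmetries commute (up to twist by $\pm$) with $F^\pm$. Then I would read off the transformations from Proposition \ref{Prop:extrasymm}: twisting by $\sgn$ sends $u\mapsto -u$ (and $v\mapsto -v$), the spinor twist swaps $\bbO^+$ and $\bbO^-$, while the diagonal twist negates the variable in $\bbO^-$ only.

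Next I would match these transformations against the explicit values in Table \ref{diag:label}, i.e., $\bbO^\epsilon = (x-q^{t_\epsilon})(x+q^{-t_\epsilon})$ for $\Sp_{2n}$ and $\O^\pm_{2n}$ (with the appropriate shift by $q^{-1}$ for $\O_{2n+1}$ from Corollary \ref{cor:minimalpoly}(c)). Carrying this out case by case: for $\O_{2n+1}(q)$, the multisets of roots of $\bbO^+(u)(\rho)$ and $\bbO^+(u)(\rho\cdot\sgn)$ are $\{q^{t_+},-q^{-1-t_+}\}$ and $\{-q^{t_+},q^{-1-t_+}\}$, which coincide only if $q^{2t_++1}=1$, impossible for $t_+\in\bbN$; for $\Sp_{2n}(q)$, comparing $\bbO^-(v)(\rho)$ with $\bbO^-(v)(\rho^c)$ forces $q^{2t_-}=1$, hence $t_-=0$, in which case $\rho^c=\rho$ by the parametrization; and for $\O^\pm_{2n}(q)$ one runs through the four representatives and observes that $\bbO^+$ handles $\sgn$-twists (forcing $t_+=0$), $\bbO^-$ handles $c$-twists (forcing $t_-=0$), and the combined twist $\rho^c\cdot\sgn$ is caught by $\bbO^+$ again, so the only way for the pair to match is to have $t_+=t_-=0$, when all four representatives coincide.

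The main obstacle is purely bookkeeping: one must be careful that the labels $(t_+,t_-,\lambda_\star,\varepsilon)$ assigned in Table \ref{diag:label} are the canonically chosen ones, and that Proposition \ref{Prop:extrasymm} is being applied to a representative in the normalization where the leading signs $\epsilon',\epsilon''$ of Corollary \ref{cor:minimalpoly} are fixed as $+1$. Once this convention is in place, the entire argument reduces to observing that the two roots $q^{t_\epsilon}$ and $-q^{-t_\epsilon}$ of a quadratic $\bbO^\epsilon$ lie in distinct cosets of $\{\pm 1\}\subset R^\times/q^{\bbZ}$, so negating $u$ (or $v$) produces a strictly different multiset unless $t_\epsilon=0$, at which point the relevant symmetry fixes $\rho$ itself.
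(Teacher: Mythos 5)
Your proposal is correct and follows essentially the same route as the paper's own proof: identify the $\sharp$-fibre via Theorem \ref{Thm:Panuniformproj}, transport the twists to the colored weight functions via Proposition \ref{Prop:extrasymm}, and compare against the explicit root multisets coming from Corollary \ref{cor:minimalpoly} and the normalization of Table \ref{diag:label}. The paper leaves the case-by-case root comparison implicit, whereas you spell it out; the only small imprecision is your closing remark that ``negating $u$ produces a strictly different multiset unless $t_\epsilon=0$, at which point the symmetry fixes $\rho$ itself''---for $\O_{2n+1}(q)$ the offset $q^{-1-t_+}$ in Corollary \ref{cor:minimalpoly}(c1) means the multiset changes for \emph{every} $t_+\in\bbN$ (matching the fact that $\rho\cdot\sgn\neq\rho$ always holds there), so the ``unless $t_\epsilon=0$'' clause applies only in the $\Sp_{2n}$ and $\O^\pm_{2n}$ cases.
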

\begin{proof}
By Theorem \ref{Thm:Panuniformproj},
\begin{itemize}
\item for $G=\O_{2n+1}(q)$, $\rho^\sharp=\rho'^\sharp$ if and only if $\rho'\in\{\rho,\rho\cdot \sgn\}$;
\item for $G=\Sp_{2n}(q)$, $\rho^\sharp=\rho'^\sharp$ if and only if $\rho'\in\{\rho,\rho^c\}$;
\item for $G=\O^\pm_{2n}(q)$, $\rho^\sharp=\rho'^\sharp$ if and only if $\rho'\in\{\rho,\rho^c,\rho\cdot \sgn,\rho^c\cdot \sgn\}$.
\end{itemize}

Note that $\rho$ is both  $F^+$-cuspidal and $F^-$-cuspidal if and only if $\rho^c$ (resp. $\rho\cdot\sgn$, $\rho^c\cdot\sgn$) is both $F^+$-cuspidal and $F^-$-cuspidal.
\smallskip

On the other hand, by Proposition \ref{Prop:extrasymm}, $\bbO^{+}(u)(\sgn\cdot\rho)=\bbO^{+}(-u)(\rho)$, $\bbO^{-}(v)(\sgn\cdot\rho)=\bbO^{-}(-v)(\rho),$ $\bbO^{+}(u)(\rho^c)=\bbO^{+}(u)(\rho)$ and $\bbO^{-}(v)(\rho^c)=\bbO^{-}(-v)(\rho).$
Then the lemma follows by the classification of modules which are both $F^+$-cuspidal and $F^-$-cuspidal (see Lemma \ref{Lem:cuspidal}) and Corollary \ref{cor:minimalpoly}.
\end{proof}
\medskip

\subsection{$\fraks\frakl'_{I_+}\oplus \fraks\frakl'_{I_-}$-categorification and complete invariants of $KG_\bullet\mod$}\label{Sec:allcharacters}\hfill\\

By Theorem \ref{ThmB}, we know that the set $I_+$ and $I_-$
are both equal to $q^\bbZ \sqcup -q^\bbZ$. So in characteristic 0, we have $\fraks\frakl'_{I_+}=\fraks\frakl_{\infty}\oplus\fraks\frakl_{\infty}$, $\fraks\frakl'_{I_-}=\fraks\frakl_{\infty}\oplus\fraks\frakl_{\infty}$, and so $\frakg=\fraks\frakl'_{I_+}\oplus \fraks\frakl'_{I_-}$ is isomorphic to four copies of Kac-Moody Lie algebra $\fraks\frakl_{\infty}.$
\smallskip
%
%
%
Let $E^+=\bigoplus_{i\in I_+}E^+_i$ and  $F^+=\bigoplus_{i\in I_+}F^+_i$ (resp. $E^-=\bigoplus_{i'\in I_-}E^-_{i'}$ and $F^-=\bigoplus_{i'\in I_-}F^-_{i'}$) be the decomposition of the functors into generalized
$i$-eigenspaces for $X^+$ (resp. generalized
$i'$-eigenspaces for $X^-$) on $E^+$ and $F^+$ (resp. $E^-$ and $F^-$). Then $$\{[E^+_i], [F^+_i],[E^-_{i'}], [F^-_{i'}]\}_{i\in I_+,i'\in I_-}$$ act as the Chevalley generators $\{e^+_i,f^+_i,e^-_{i'},f^-_{i'}\}_{i\in I_+,i'\in I_-}$ of $\frakg=\fraks\frakl'_{I_+}\oplus \fraks\frakl'_{I_-}$ on
the Grothendieck group $[
KG_\bullet\mod]$ of $KG_\bullet\mod$ and many problems on $KG_\bullet\mod$ have a Lie-theoretic
counterpart:
\begin{itemize}
\item irreducible modules which are both $F^+$-cuspidal and $F^-$-cuspidal correspond to highest weight vectors of $\frakg$-module $[KG_\bullet\mod]$;
\item the decomposition of $[KG_\bullet\mod]$ as $\frakg$-module into
 a direct sum of irreducible highest weight
modules are parametrized by irreducible modules which are both $F^+$-cuspidal and $F^-$-cuspidal;
\item the colored weight functions $\bbO^+(u)(\sigma),$ $\bbO^-(v)(\sigma)$  and the weight $\wt(\sigma)$ for any irreducible $\sigma\in KG_\bullet\mod$ are determined mutually.
\end{itemize}
So once we can determine $\wt(\rho)$ the weight of irreducible modules $\rho$ which are both $F^+$-cuspidal and $F^-$-cuspidal, we can compute the weights of all irreducible modules.
In this section, we will show that the colored weight functions $\bbO^+(u)(\rho)$, $\bbO^-(v)(\rho)$ and uniform projection can distinguish all irreducible modules of $ KG_\bullet\mod$. This  extends Proposition \ref{prop:colorcuspidal} which only concerns the case where the modules are  both $F^+$-cuspidal and $F^-$-cuspidal.
\smallskip

\subsubsection{Residues and contents}\label{sub:contents}
We fix $\tuple \xi = (\xi_1,\ldots,\xi_l) \in \scrI^l$, and assume that $\scrI$ is stable under the multiplication by
$q$ and $q^{-1}$.
Let $A\in Y(\tuple\lambda)$ be the box which lies in
the $i$-th row and $j$-th column of the diagram of
$\lambda^p$.
The \emph{$(\tuple \xi,q)$-shifted residue} of the node $A$ is the element of $\scrI$ given by
$q\text{-}\!\res^{\tuple \xi}(A)=q^{j-i}\xi_{p}.$  If $\tuple\lambda$, $\tuple\mu$ are $l$-partitions such
that $|\tuple\mu|=|\tuple\lambda|+1$ we write $q\text{-}\!\res^{\tuple\xi}(\tuple\mu-\tuple\lambda)=i$ if $Y(\tuple\mu)$ is obtained by
adding a node of $(\tuple\xi,q)$-shifted residue $i$ to $Y(\tuple \lambda)$.  We call the node a removable $i$-node of $\tuple \mu$ and an addable node of $\tuple \lambda$.
A \emph{charge} of the tuple $\tuple \xi= (\xi_1,\ldots,\xi_l)$ is an $l$-tuple of integers
$\tuple s = (s_1,\ldots,s_l)$ such that $\xi_p = q^{s_p}$ for all $p = 1,\ldots,l$.
Conversely, given $\scrI \subset R^\times$ and $q \in R^\times$, any
$l$-tuple of integers $\tuple s \in \bbZ^l$ defines a tuple $\tuple \xi = (q^{s_1},\ldots,q^{s_l})$ with
charge $\tuple s$.
The \emph{$\tuple s$-shifted content}
of the box $A=(i,j,p)$ is the integer $\text{cont}^{\tuple s}(A)=s_p+j-i$. Similarly, if $\tuple\lambda$, $\tuple\mu$ are $l$-partitions such
that $|\tuple\mu|=|\tuple\lambda|+1$ we write $\text{cont}^{\tuple s}(\tuple\mu-\tuple\lambda)=i$ if $Y(\tuple\mu)$ is obtained by
adding a node of $\tuple s$-shifted content  $i$ to $Y(\lambda)$.
It is related to the residue of $A$ by the formula $q\text{-}\!\res^{\tuple \xi}(A)=q^{\text{cont}^{\tuple s}(A)}$.
\smallskip

\subsubsection{Fock spaces}

Recall that a \emph{Fock space} $\bfF(\tuple\xi)$
 is the $\bbC$-vector
space with basis
$\{|\tuple\lambda,\tuple\xi\rangle_I\,|\,\tuple\lambda\in\scrP^l\}$  and the action of $e_i, f_i$ for all $i \in \scrI$ given by
$$
  f_i(|\tuple\lambda,\tuple \xi\rangle_{\scrI})=\sum\limits_{q\text{-}\!\res^{\tuple\xi}(\tuple\mu-\tuple\lambda)=i}|\tuple\mu,\tuple\xi\rangle_{\scrI},$$$$
  e_i(|\tuple\mu,\tuple\xi\rangle_{\scrI})=\sum\limits_{q\text{-}\!\res^{\tuple\xi}(\tuple\mu-\tuple\lambda)=i}|\tuple\lambda,\tuple\xi\rangle_{\scrI}.
$$
For brevity, we will omit the subscript $\scrI$ if there is no confusion. The basis
$\{|\tuple\lambda,\tuple\xi\rangle\,|\,\tuple\lambda\in\scrP^l\}$ is
called  the
\emph{standard monomial basis}, where every element is a weight vector.
\smallskip


 Now we describe the action of $\fraks\frakl'_{\scrI}$ on $\bfF(\tuple \xi)$ in the case where $q$ has infinite
order, i.e., $\fraks\frakl'_{\scrI}$ is isomorphic to a direct sum of $\fraks\frakl_{\infty}$.
For an $l$-partition $\tuple \lambda$ and $i\in\scrI$ we define
\begin{align*}
&N_i(\tuple\lambda |\tuple \xi) = \sharp\{ \text{ addable $i$-nodes of $\tuple\lambda$}\}
 - \sharp\{ \text{ removable $i$-nodes of $\tuple\lambda$} \}.
\end{align*}
Let $\tuple \xi =(\xi_1,\dots,\xi_l) \in I^l$.
Now we can state the following theorem.
\begin{theorem}[\cite{U}]
The following formulas define on $\bfF(\tuple \xi)$ the structure of an integrable $\fraks\frakl'_{\scrI}$-module:
\begin{align*}
&f_i |\tuple \lambda,\tuple \xi\rangle  = \sum_{q\text{-}\!\res^{\tuple\xi}(\tuple \mu-\tuple \lambda) = i}  |\tuple \mu,\tuple \xi\rangle, \\
&e_i |\tuple \mu,\tuple \xi\rangle  = \sum_{q\text{-}\!\res^{\tuple\xi}(\tuple \mu-\tuple \lambda) = i}|\tuple \lambda,\tuple \xi\rangle,  \\
&\alpha^\vee_i |\tuple \lambda,\tuple \xi\rangle  = {N_i(\tuple \lambda |\tuple \xi)} |\tuple \lambda,\tuple \xi\rangle.
\end{align*}
For this action the weight of a standard basis element is
\begin{equation}\label{eq:weight}
  \mathrm{wt}(|\tuple\lambda,\tuple \xi\rangle)= \sum_{i\in I}N_i(\tuple \lambda |\tuple \xi)\Lambda_i.
\end{equation}
\end{theorem}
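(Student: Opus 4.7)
The plan is to verify the defining relations of an integrable $\fraks\frakl'_{\scrI}$-module directly on the standard monomial basis $\{|\tuple\lambda,\tuple\xi\rangle\}$. Since $q$ has infinite order, each connected component of the quiver $\scrI(q)$ is of type $A_\infty$, so $\fraks\frakl'_{\scrI}$ is a direct sum of copies of $\fraks\frakl_\infty$. In particular, $a_{ii}=2$ for all $i\in\scrI$, $a_{ij}=-1$ exactly when $\{i,j\}=\{j,qj\}$, and $a_{ij}=0$ otherwise. The Chevalley relations to verify are then $[\alpha_i^\vee,\alpha_j^\vee]=0$, $[\alpha_i^\vee,e_j]=a_{ij}e_j$, $[\alpha_i^\vee,f_j]=-a_{ij}f_j$, $[e_i,f_j]=\delta_{ij}\alpha_i^\vee$, together with the Serre relations $(\ad e_i)^{1-a_{ij}}(e_j)=0$ and $(\ad f_i)^{1-a_{ij}}(f_j)=0$.

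First I would reduce to the level-one case. The linear isomorphism $\bfF(\tuple\xi)\simeq \bfF(\xi_1)\otimes\cdots\otimes\bfF(\xi_l)$ sending $|\tuple\lambda,\tuple\xi\rangle$ to $|\lambda^1,\xi_1\rangle\otimes\cdots\otimes|\lambda^l,\xi_l\rangle$ intertwines the operators in the theorem with the standard coproduct operators $f_i=\sum_p 1^{\otimes(p-1)}\otimes f_i\otimes 1^{\otimes(l-p)}$ (and similarly for $e_i,\alpha_i^\vee$), because adding an $i$-node to $\tuple\lambda$ means adding it to exactly one component $\lambda^p$, whose $\xi_p$-shifted residue agrees with the $\tuple\xi$-shifted residue. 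Since the coproduct of the universal enveloping algebra is an algebra homomorphism, if each $\bfF(\xi_p)$ is an integrable $\fraks\frakl'_{\scrI}$-module then so is their tensor product. Thus it suffices to prove the theorem when $l=1$.

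In the level-one case I would carry out combinatorial verifications on single partitions. The relation $[e_i,f_j]|\lambda,\xi\rangle=\delta_{ij}\alpha_i^\vee|\lambda,\xi\rangle$ reduces, for $i\neq j$, to a bijection between the pairs ``add a $j$-node then remove an $i$-node'' and ``remove an $i$-node then add a $j$-node'' which do not share a common box, since shared-box contributions cancel when comparing the two orderings; for $i=j$ the difference collapses to $(\sharp\{\text{addable $i$-nodes of }\lambda\}-\sharp\{\text{removable $i$-nodes of }\lambda\})|\lambda,\xi\rangle$, which equals $N_i(\lambda|\xi)|\lambda,\xi\rangle$ by definition. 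The relation $[\alpha_i^\vee,f_j]=-a_{ij}f_j$ follows by computing $N_i(\mu|\xi)-N_i(\lambda|\xi)$ when $\mu$ is obtained from $\lambda$ by adding a $j$-node; a straightforward case analysis on the residues of neighboring boxes shows that this difference equals $-a_{ij}$. The Serre relations, which in type $A_\infty$ take the form $e_i^2 e_j-2e_ie_je_i+e_je_i^2=0$ when $a_{ij}=-1$ and $[e_i,e_j]=0$ when $a_{ij}=0$, follow from a direct combinatorial enumeration of sequences of three addable boxes, observing that removable/addable $i$-nodes and $j$-nodes on a Young diagram interact only at a bounded local configuration determined by the $A_\infty$ structure.

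For integrability I would argue as follows: $e_i$ strictly decreases $|\lambda|$, so $e_i^{|\lambda|+1}|\lambda,\xi\rangle=0$; and for $f_i$, the $\fraks\frakl_2^{(i)}$-submodule generated by $|\lambda,\xi\rangle$ has the action of $\alpha_i^\vee$ bounded (because only finitely many sequences of $i$-additions can be made to a fixed $\lambda$ before the addable positions stabilize to the top of an $i$-strip, making the rest of the action determined by a finite-dimensional $\fraks\frakl_2$-module), which via standard $\fraks\frakl_2$-theory forces $f_i$ to act locally nilpotently. The weight formula $\wt(|\tuple\lambda,\tuple\xi\rangle)=\sum_i N_i(\tuple\lambda|\tuple\xi)\Lambda_i$ is then immediate from the eigenvalues of $\alpha_i^\vee$ in the last displayed formula. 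The main obstacle will be the careful bookkeeping for the Serre relations across adjacent residues, where several addable/removable configurations must be enumerated and shown to cancel; this is made tractable by the fact that in type $A_\infty$ only nearest-neighbor residues interact nontrivially, reducing the check to a finite number of local patterns on the Young diagram.
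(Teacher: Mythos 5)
The paper states this theorem with a citation to Uglov \cite{U} and provides no proof of its own, so there is no argument in the text to compare your proposal against. Your approach is a self-contained direct verification of the Chevalley--Serre relations and integrability, which is a standard alternative to Uglov's original construction (Uglov builds the higher-level $q$-deformed Fock space from semi-infinite wedge products of level-one $U_q(\hat{\fraks\frakl}_e)$-modules; the $\fraks\frakl_\infty$-version used in this paper is a degeneration of that). The reduction to level one via the tensor-product/coproduct isomorphism $\bfF(\tuple\xi)\simeq\bfF(\xi_1)\otimes\cdots\otimes\bfF(\xi_l)$ is correct, the $[e_i,f_j]=\delta_{ij}\alpha_i^\vee$ and $[\alpha_i^\vee,f_j]=-a_{ij}f_j$ verifications are routine, and the Serre relations in type $A_\infty$ do reduce to a finite local configuration check, so the plan is sound.

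Two points could be tightened. First, the integrability argument is more convoluted than necessary: for a fixed residue $i=q^m$, the boxes of $\lambda^p$ with that residue all lie on the single content diagonal $b-a=m-s_p$, on which there is at most one addable node; hence there are at most $l$ addable $i$-nodes in all of $\tuple\lambda$ and $f_i^{l+1}=0$ directly, with no need for an ``$i$-strip'' heuristic or an $\fraks\frakl_2$-finiteness detour (likewise $e_i^{l+1}=0$). Second, when verifying the Serre relations in the level-one case you should make explicit that $e_i^2=0$ there, so the degree-three Serre relation $e_i^2e_j-2e_ie_je_i+e_je_i^2=0$ for $a_{ij}=-1$ collapses to $e_ie_je_i=0$; that is the single statement requiring the careful Young-diagram analysis you gesture at, and it does hold because removing an $i$-box and then an adjacent $j$-box never frees up a second removable $i$-box on the same content diagonal in a single partition.
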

%
%
%
%
%
\subsubsection{Content function and $(q,\pm)$-residue function}\label{sub:contentfunction}
Let $|\lambda, s\rangle \in \scrP\times \bbZ$ be a charged partition,
then we define its content function by
\begin{align}\label{coloredweight1}\bbO_{|\lambda,  s \rangle}(u)=\frac{\prod\limits_{\text{$b$  addable }}(u-\text{cont}^{ s}(b))}{\prod\limits_{\text{$b$  removable }}(u-\text{cont}^{ s}(b))}
	\end{align}
and $( q,\epsilon)$-residue function by
\begin{align}\label{coloredweight2}\bbO^{(q,\epsilon)}_{|\lambda,  s \rangle}(u)=\frac{\prod\limits_{\text{$b$  addable }}(u-\epsilon\cdot q^{\text{cont}^{ s}(b)})}{\prod\limits_{\text{$b$  removable }}(u-\epsilon \cdot q^{\text{cont}^{ s}(b)})}	\end{align}
where $b$ runs over the boxes of the Young diagram $Y(\lambda)$ of $\lambda$ and $\epsilon\in \{\pm \}.$

\begin{example}
Let $\lambda=(4,1,1)$ and $s=1.$
The Young diagram of $\lambda$ with its $ s$-shifted contents is
  $$
\begin{array}{|c|c|c|c|}
  \hline
  1& 2  &3 &4   \\
  \hline
  0 \\
  \cline{1-1}
  -1 \\
  \cline{1-1}
\end{array}.$$
Then we have  $\bbO_{|\lambda, s \rangle}(u)=\frac
{(u-5)(u-1)(u+2)}{(u-4)(u+1)},$ $\bbO^{(q,+)}_{|\lambda, s \rangle}(u)=\frac
{(u-q^5)(u-q)(u-q^{-2})}{(u-q^4)(u-q^{-1})}$ and $\bbO^{(q,-)}_{|\lambda, s \rangle}(u)=\frac
{(u+q^5)(u+q)(u+q^{-2})}{(u+q^4)(u+q^{-1})}.$
\end{example}
\subsubsection{Interlacing integer sequences and charged partitions}\label{sub:intersequence}

\begin{definition}[\cite{Ker}]
Two integer sets (or increasing integer sequences) $y_1,\,\ldots,y_{n-1}\in \bbZ$ and
$x_1,\,\ldots,x_{n-1},x_n\in\bbZ$ are said to be \emph{interlacing}, if and only if
$$
x_1 < y_1 < x_2 < \ldots < x_{n-1} < y_{n-1} < x_n.
$$
The number $c=\sum x_k-\sum y_k$ is called the \emph{center} of
interlacing integer sequences.
\end{definition}

An interlacing integer sequences is identified with a function $\omega$ on $\bbR$ such that
\begin{enumerate}
    \item $\omega$ is piecewise linear with slopes being either $1$ or $-1$,
    \item local maxima and minima are located on $\bbZ$,
    \item $\omega (x)=x-c$ when $x\gg 0$ and $\omega (x)=c-x$ when $x\ll 0$ for some $c\in \bbZ$.
\end{enumerate}
The sequence can be recovered from its local minima $x_{1}<x_{2}<\cdots<x_{n}$
and local maxima $y_{1}<y_{2}<\cdots <y_{n-1}$. Note that the center is $c.$
\smallskip

Each charged partition $|\mu,c\rangle\in \scrP\times \bbZ$ uniquely defines two  interlacing integer sequences as follows:
\begin{itemize}
\item $x_1<x_2<\cdots< x_n$ is the ordered list of all $c$-shifted content $\text{cont}^c(b)$ of addable box $b$ with respect to the Young diagram of $\mu$,
\item  $y_1<y_2<\cdots< y_{n-1}$ is the ordered list of all $c$-shifted content $\text{cont}^c(b)$ of removeable box $b$ with respect to the Young diagram of $\mu$.
\end{itemize}
Conversely,  the region
$$S_\omega= \{(u,v) \in \bbR^2: |u-c| \leq v\leq  \omega(u)\}$$
between the graphs of functions $v=\omega(u)$ and $v=|u-c|$ corresponds to the shape of a
Young diagram, which determines a partition $\mu$ (see Example \ref{interlacing}). Together with the center $c=\sum x_k-\sum y_k$, we finally get the charged partition $|\mu, c\rangle$.
\begin{lemma}[\cite{Ker}]\label{Lem:interlacingseq} The above two maps are inverse to each other, so they give a bijection between the space of interlacing integer sequences  and charged partitions $\scrP\times\bbZ.$
\end{lemma}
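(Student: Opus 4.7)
The plan is to show that the boundary of a Young diagram, drawn in the so-called Russian convention (rotated $45^\circ$) and translated horizontally by the charge $c$, is exactly the graph of the function $\omega$ associated to an interlacing integer sequence, and that local minima (respectively maxima) of this boundary correspond to addable (respectively removable) boxes of the charged partition.

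First, I would set up the Russian picture of the Young diagram of a partition $\mu\in\scrP$ with charge $c\in\bbZ$: each box $(i,j)\in Y(\mu)$ is drawn as a unit square rotated $45^\circ$ centered above the point $u=\text{cont}^{c}((i,j))=c+j-i$ on the horizontal axis. The upper boundary of this shape, extended by the half-lines $v=u-c$ for $u\gg 0$ and $v=c-u$ for $u\ll 0$, gives a piecewise linear function $\omega_{|\mu,c\rangle}\colon\bbR\to\bbR$ with slopes $\pm1$ and integer extrema, and it satisfies the correct asymptotics by construction. Thus the forward map lands in the set of interlacing integer sequences.

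Second, I would identify the two types of corners combinatorially: an inner corner of the Russian diagram (a local minimum of $\omega_{|\mu,c\rangle}$) corresponds exactly to a position where a new box can be added to $Y(\mu)$, so its $u$-coordinate is the $c$-shifted content of an addable node; symmetrically, outer corners (local maxima) correspond to removable nodes. This matches the definitions of the sequences $x_1<\cdots<x_n$ and $y_1<\cdots<y_{n-1}$ in \S\ref{sub:intersequence}. For the center identity $\sum x_k-\sum y_k=c$, I would run an induction on $|\mu|$: the empty diagram gives $\omega(u)=|u-c|$ with the single minimum $x_1=c$ and no maxima, so the formula holds; and adding a box at $c$-shifted content $i$ replaces the local minimum at $u=i$ either by no corner (if $\mu$ had only one addable node at $i$, in which case the change is $-i+(i-1)+(i+1)-i=0$ after accounting for the neighboring contents), or in general changes the set of corners by $+1$ minimum, $+1$ maximum on each side of $i$ balanced against $-i$; tracking this shows $\sum x_k - \sum y_k$ is invariant under adding a box, hence equals $c$.

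Third, for the inverse map, given interlacing sequences $(x_k)$ and $(y_k)$ with center $c$, I would reconstruct $\omega$ as the piecewise linear function joining these extrema with slopes $\pm 1$, and show that the region $S_\omega=\{(u,v)\mid |u-c|\leq v\leq \omega(u)\}$ is a finite union of rotated unit squares whose centers form the Young diagram (in Russian coordinates) of a unique partition $\mu$; finiteness follows from the asymptotic condition and the integer-extrema condition, while the ``staircase'' shape is forced by slopes $\pm 1$. The charge is then read off as $c$.

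Finally, I would check that the two constructions are mutually inverse: starting from $|\mu,c\rangle$, passing to $\omega$ and back to $S_\omega$ recovers the Russian picture of $\mu$ centered at $u=c$; conversely, starting from sequences $(x_k),(y_k)$, the Russian diagram produced has local minima and maxima precisely at the $x_k$ and $y_k$. The main obstacle I anticipate is bookkeeping in the center identity $c=\sum x_k-\sum y_k$; it is visually obvious from the Russian picture but requires either the induction sketched above or a direct computation that $\omega(u)-|u-c|$ integrates to $2|\mu|$ while $\tfrac{1}{2}\bigl(\omega'(+\infty)-\omega'(-\infty)\bigr)$-style telescoping yields the center, and care is needed at the asymptotic regions where $\omega$ coincides with $|u-c|$ so that only finitely many $x_k,y_k$ contribute.
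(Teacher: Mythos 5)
Your proposal is correct, and it is essentially the standard argument: the paper does not prove this lemma but cites it to Kerov, and your Russian-convention proof (profile with slopes $\pm1$, inner/outer corners $\leftrightarrow$ addable/removable boxes, invariance of $\sum x_k-\sum y_k$ under adding a box) is exactly the argument behind the cited result and the picture in the paper's Example \ref{interlacing}. One small wording slip: adding a box at the local minimum $u=i$ always turns that minimum into a local maximum at $i$ (the new box is removable); the case analysis only concerns whether new minima appear at $i\pm1$ or old maxima there disappear — but since either event contributes $+(i\pm1)$ to the center, your net computation $-i-i+(i-1)+(i+1)=0$ is valid in every case.
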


\begin{example} \label{interlacing}Let $\red{x_{1}}<\red{x_{2}}<\red{x_3}<\red{x_{4}}$
and $\color{green}{y_{1}}<\color{green}{y_{2}}<\color{green}{y_{3}}$ be
the interlacing integer sequences
\begin{equation*}
{\color{red}{-4}} < {\color{red}{-2}} < {\color{red}{0}} < {\color{red}{3}} \quad\quad \text{and} \quad\quad {\color{green}{-3}} < {\color{green}{-1}} < {\color{green}{2}}.
\end{equation*}
The corresponding function $\omega$  is represented graphically by the boldened trajectory in the following diagram.
Note that center of this interlacing integer sequences is $c=-1$
and we can get the partition $\mu = (4,2,1)$. Finally, we get the charged partition $|\mu, c\rangle.$

\begin{center}
\begin{tikzpicture}

\vspace{3mm}

\node at (2.8,2.3) {\color{red}{\scriptsize{$x_4$}}};
\node at (2.1,2.3) {\color{green}{\scriptsize{$y_3$}}};
\node at (.7,2.3) {\color{red}{\scriptsize{$x_3$}}};
\node at (0,2.3) {\color{green}{\scriptsize{$y_2$}}};
\node at (-.7,2.3) {\color{red}{\scriptsize{$x_2$}}};
\node at (-1.4,2.3) {\color{green}{\scriptsize{$y_1$}}};
\node at (-2.1,2.3) {\color{red}{\scriptsize{$x_1$}}};

\draw (-4,-2.15) -- (4,-2.15);

\node at (2.72,-2.3) {\scriptsize{$3$}};
\node at (2.05,-2.3) {\scriptsize{$2$}};
\node at (1.37,-2.3) {\scriptsize{$1$}};
\node at (.7,-2.3) {\scriptsize{$0$}};
\node at (0,-2.3) {\scriptsize{$-1$}};
\node at (-.78,-2.3) {\scriptsize{$-2$}};
\node at (-1.48,-2.3) {\scriptsize{$-3$}};
\node at (-2.15,-2.3) {\scriptsize{$-4$}};

\node at (0,0) {\begin{tikzpicture}[rotate = 45, scale = 1.2]

\draw (0,0) rectangle (.8,.8);
\draw (.8,0) rectangle (1.6,.8);
\draw (1.6,0) rectangle (2.4,.8);
\draw (2.4,0) rectangle (3.2,.8);

\draw (0,.8) rectangle (.8,1.6);
\draw (.8,.8) rectangle (1.6,1.6);

\draw (0,1.6) rectangle (.8,2.4);

\draw[-> ] (0,0) -- (0,5);
\draw[->] (0,0) -- (5,0);
\draw[line width=1.1pt][->] (3.2,0) -- (5,0);
\draw[line width=1.1pt][->] (0,2.4) -- (0,5);
\draw[line width=1.1pt][-] (0,2.4) -- (0.8,2.4);
\draw[line width=1.1pt][-] (0.8,2.4) -- (0.8,1.6);
\draw[line width=1.1pt][-] (0.8,1.6) -- (1.6,1.6);
\draw[line width=1.1pt][-] (1.6,1.6) -- (1.6,0.8);
\draw[line width=1.1pt][-] (1.6,0.8) -- (3.2,0.8);
\draw[line width=1.1pt][-] (3.2,0.8)-- (3.2,0);

\draw[dotted] (0,0) -- (2.4,2.4);
\draw[dotted] (-.4,.4) -- (2,2.8);
\draw[dotted] (-.8,.8) -- (1.6,3.2);
\draw[dotted] (-1.2,1.2) -- (1.2,3.6);

\draw[dotted] (.4,-.4) -- (2.8,2);
\draw[dotted] (.8,-.8) -- (3.2,1.6);
\draw[dotted] (1.2,-1.2) -- (3.6,1.2);
\draw[dotted] (1.6,-1.6) -- (4,.8);

\end{tikzpicture}};
\end{tikzpicture}
\end{center}
\end{example}

Given a charged partition $|\mu,c\rangle\in \scrP\times\bbZ$, let $x_{1}<x_{2}<\cdots<x_{n}$
and $y_{1}<y_{2}<\cdots <y_{n-1}$ be the corresponding interlacing integer sequences then by the definition of content function, we have
$$\bbO_{|\mu, c\rangle}(u)=\frac{\prod_{i=1}^n(u-x_i)}{\prod_{j=1}^{n-1}(u-y_j)}.$$

\begin{corollary}\label{cor:injective}

\begin{itemize}
\item[$(a)$]
The content function $\bbO_{\bullet}(u): \scrP\times\bbZ\to K(u)$ is an injective map.
\item[$(b)$]Let $\epsilon\in \{\pm\}.$
The $(q,\epsilon)$-residue function $\bbO^{(q,\epsilon)}_{\bullet}(u): \scrP\times\bbZ\to K(u)$ is also an injective map.
\end{itemize}
\end{corollary}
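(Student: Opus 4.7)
The plan is to exploit the bijection of Lemma \ref{Lem:interlacingseq} between $\scrP\times\bbZ$ and interlacing integer sequences, and to observe that the content function and the $(q,\epsilon)$-residue function encode precisely those sequences as the zero/pole data of a rational function.

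For part $(a)$, given a charged partition $|\mu,c\rangle$, let $x_1<\cdots<x_n$ and $y_1<\cdots<y_{n-1}$ be the interlacing integer sequences associated to $|\mu,c\rangle$, so that
\[
\bbO_{|\mu, c\rangle}(u)=\frac{\prod_{i=1}^n(u-x_i)}{\prod_{j=1}^{n-1}(u-y_j)}.
\]
The strict interlacing condition $x_1<y_1<x_2<\cdots<y_{n-1}<x_n$ forces all the $x_i$ and $y_j$ to be pairwise distinct, so the numerator and denominator of $\bbO_{|\mu, c\rangle}(u)$ are coprime in $K[u]$. Therefore, from the rational function $\bbO_{|\mu, c\rangle}(u)$ alone one recovers the multiset of zeros $\{x_1,\dots,x_n\}$ and the multiset of poles $\{y_1,\dots,y_{n-1}\}$, and then their orderings as interlacing sequences. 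The center $c$ is then determined by $c=\sum_i x_i-\sum_j y_j$. Applying Lemma \ref{Lem:interlacingseq} in the reverse direction reconstructs $|\mu,c\rangle$ uniquely, establishing injectivity.

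For part $(b)$, write
\[
\bbO^{(q,\epsilon)}_{|\mu, c\rangle}(u)=\frac{\prod_{i=1}^n(u-\epsilon q^{x_i})}{\prod_{j=1}^{n-1}(u-\epsilon q^{y_j})}.
\]
Since $q$ is a prime power in our setting, the map $\bbZ\to K^\times$, $x\mapsto \epsilon q^x$, is injective, so the interlacing of the $x_i$ and $y_j$ again forces all the numbers $\epsilon q^{x_i}$ and $\epsilon q^{y_j}$ to be distinct. Hence numerator and denominator are coprime, and from the zero/pole data one recovers the multisets $\{\epsilon q^{x_i}\}$ and $\{\epsilon q^{y_j}\}$, then by injectivity the integer sequences $(x_i)$ and $(y_j)$, and finally $|\mu,c\rangle$ by the same argument as in $(a)$.

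There is no real obstacle here: the proof is essentially a transcription of the bijection of Lemma \ref{Lem:interlacingseq} into the language of rational functions, the only substantive point being that strict interlacing prevents any cancellation between the zero and pole data. Both injectivity statements follow immediately from this observation.
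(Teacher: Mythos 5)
Your proof is correct and follows essentially the same route as the paper's: reduce to the bijection of Lemma \ref{Lem:interlacingseq} by observing that strict interlacing prevents zero--pole cancellation, so the rational function determines the interlacing sequences (and hence the charged partition), and for part (b) use that $x\mapsto\epsilon q^x$ is injective since $q$ has infinite multiplicative order in $K$. The paper states this more tersely (one sentence per part), but the underlying argument is identical; your version just fills in the coprimality and recovery-of-the-center steps explicitly.
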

\begin{proof}
$(a)$ Note that the content functions for charged partitions and interlacing sequences determines each other. The corollary is deduced from Lemma \ref{Lem:interlacingseq}.
\smallskip

$(b)$ Note that the content function $\bbO_{\bullet}(u)$ are 1-1 correspondent to $(q,\epsilon)$-residue function $\bbO^{(q,\epsilon)}_{\bullet}(u)$ since $q\in \bbR_{>1}$.
\end{proof}

\smallskip

\begin{remark}\label{bubbleempty}
It is straightforward to verify that $\bbO_{|\mu,c\rangle}(u)$ (resp. $\bbO^{(q,\epsilon)}_{|\mu,c\rangle}(u)$) is a polynomial (i.e., an element of $K[u]$ rather than merely $K(u)$) if and only if $\mu=\emptyset.$ Indeed, if $\mu\neq \emptyset,$ then the removable boxes of the Young diagram $Y(\mu)$ is non-empty. Since all addable and removable boxes possess distinct contents, it follows that $\bbO_{|\mu,c\rangle}(u)$ (resp. $\bbO^{(q,\epsilon)}_{|\mu,c\rangle}(u)$) can not be a polynomial in this case.
\end{remark}
\smallskip

\subsubsection{Complete invariants of $KG_\bullet\mod$}\label{sec:completeinvriants}

The main task of this subsection is to show that the colored weight functions $\bbO^+(u)(\rho)$, $\bbO^-(v)(\rho)$ and uniform projection can distinguish  all irreducible modules of $ KG_\bullet\mod$.
\smallskip

For short, we will write $\rho_{t_+\,t_-,\lambda_\star,(\epsilon)}=\rho_{t_+\,t_-,\lambda_\star}$ if $G=\Sp_{2n}(q)$ or $\O^\pm_{2n}(q)$ and $\rho_{t_+\,t_-,\lambda_\star,(\epsilon)}=\rho_{t_+\,t_-,\lambda_\star,\epsilon}$ if $G=\O_{2n+1}(q).$
We know that the irreducible modules which are both $F^+$-cuspidal and $F^-$-cuspidal have the form $\rho_{t_+\,t_-,\lambda_\star,(\epsilon)}$.
Now let $(KG_\bullet,\rho_{t_+,t_-, \lambda_\star,(\epsilon)})\mod$ be the Serre subcategory of $KG_\bullet\mod$ generated by the Kac-Moody 2-category $\frakU(\fraks\frakl'_{I_+}\oplus\fraks\frakl'_{I_-})$ acting on  $\rho_{t_+,t_-, \lambda_\star,(\epsilon)}$. Since the category $KG_\bullet\mod$ is semisimple,
it can actually be generated by
 the modules $(F^-)^{m_-}(F^+)^{m_+}(\rho_{t_+,t_-,\lambda_\star,(\epsilon)})$ for all $m_+,m_-\in \bbN$. 
Then we have that $$[KG_\bullet\mod]=\bigoplus_{t_+,t_-,\lambda_\star,(\epsilon)}[(KG_\bullet,\rho_{t_+,t_-, \lambda_\star,(\epsilon)})\mod]$$ is the decomposition of $[KG_\bullet\mod]$ as $\fraks\frakl'_{I_+}\oplus\fraks\frakl'_{I_-}$-module into
 a direct sum of irreducible highest weight
modules, and the image of $\rho_{t_+,t_-, \lambda_\star,(\epsilon)}$ in $[KG_\bullet\mod]$ is a highest weight vector. In particular, the highest weight of $[\rho_{t_+,t_-,\lambda_\star,(\epsilon)}]$ is determined by $\bbO^+(u)(\rho_{t_+,t_-,\lambda_\star,(\epsilon)})$ and $\bbO^-(v)(\rho_{t_+,t_-,\lambda_\star,(\epsilon)})$.
\smallskip

We have the following theorem.
\begin{theorem}\label{TheoremC}

Let $\rho=\rho_{t_+,t_-, \lambda_\star,(\epsilon)}$ be both $F^+$-cuspidal and $F^-$-cuspidal. We assume that $\bbO^+(u)(\rho)=(u-q^{s_1^{+}})(u+q^{s_2^{+}})$ and $\bbO^-(v)(\rho)=(v-q^{s_1^{-}})(v+q^{s_2^{-}}).$
 \begin{itemize}[leftmargin=8mm]
  \item[$\mathrm{(a)}$]
  The operators $[F^+_i],[E^+_i], [F^-_{i'}],[E^-_{i'}]$ for $i\in\scrI_+$, $i'\in\scrI_-$ yield a representation of
  $\fraks\frakl'_{I_+}\oplus \fraks\frakl'_{I_-}$ on $[(KG_\bullet,\rho)\mod]$.
  \item[$\mathrm{(b)}$] We write $\Upsilon(\Lambda_+)=(\mu^+_1,\mu^+_2)$
  and $\Upsilon(\Lambda_-)=(\mu^-_1,\mu^-_2).$
  Then the map
  $$|(\mu^{+}_1,\mu^+_2),(q^{s_1^{+}},-q^{s^+_2})\rangle
  \otimes|(\mu^-_1,\mu^-_2),(q^{s^-_1},-q^{s^-_2})\rangle\otimes
  \mapsto [\rho_{\Lambda_{+},
  \Lambda_{-},\lambda_\star,(\epsilon')}]$$ gives an
  $\fraks\frakl'_{I_+}\oplus \fraks\frakl'_{I_-}$-module isomorphism $$
  \bfF(q^{s^+_1},-q^{s^+_2})\otimes
  \bfF(q^{s^-_1},-q^{s^-_2})\simto [(KG_\bullet,\rho)\mod],$$
  where $\epsilon'=(-1)^{\mathrm{rank}(\Lambda_-)}\epsilon$ for $G_n=\O_{2n+1}(q)$ case and for other cases $\epsilon'=\emptyset$.

  \item[$\mathrm{(c)}$]
  For $G_n=\O_{2n+1}(q)$ case let $\epsilon'=(-1)^{\mathrm{rank}(\Lambda_-)}\epsilon$ and for other cases let $\epsilon'=\emptyset.$ We have

  \begin{itemize}
  \item[$(c1)$] $\bbO^+(u)(\rho_{\Lambda_{+},
  \Lambda_{-},\lambda_\star,(\epsilon')})= \bbO^{(q,+)}_{|\mu_1^+,s_1^+\rangle}(u)\cdot \bbO^{(q,-)}_{|\mu_2^+,s_2^+\rangle}(u)$,
  \item[$(c2)$] $\bbO^-(v)(\rho_{\Lambda_{+},
  \Lambda_{-},\lambda_\star,(\epsilon')})=\bbO^{(q,+)}_{|\mu_1^-,s_1^-\rangle}(v)\cdot \bbO^{(q,-)}_{|\mu_2^-,s^-_2\rangle}(v).$
  \end{itemize}

  \end{itemize}
\end{theorem}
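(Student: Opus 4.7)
The plan is to view $[\rho]$ as a highest weight vector for $\fraks\frakl'_{I_+}\oplus \fraks\frakl'_{I_-}$ and identify the cyclic module it generates under the categorical action with a tensor product of level-two Fock spaces, using the compatibility of the action with Harish-Chandra induction together with the map $\Upsilon$ from symbols to bi-partitions.

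For (a), I would simply note that Corollary B provides the Kac-Moody 2-categorification of $\fraks\frakl'_{I_+}\oplus \fraks\frakl'_{I_-}$ on $KG_\bullet\mod$. Because $(KG_\bullet, \rho)\mod$ is by construction the Serre subcategory generated by the orbit of $\rho$ under the action of all functors $F^\pm_i$ and $E^\pm_i$, it is stable under each of these and inherits the induced representation of $\fraks\frakl'_{I_+}\oplus \fraks\frakl'_{I_-}$ on its complexified Grothendieck group.

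For (b), the core observation is that $\rho$ being both $F^+$- and $F^-$-cuspidal means $E^+_i[\rho]=0$ and $E^-_{i'}[\rho]=0$ for all $i,i'$, so $[\rho]$ is a highest weight vector. By Lemma \ref{Lemma:minimalpoly} the colored weight functions encode its weight: $\bbO^+(u)(\rho)$ has two roots $q^{s_1^+}$ and $-q^{s_2^+}$, corresponding to two addable residues in $I_+ = q^\Z \sqcup -q^\Z$, with one addable node on each connected component of the quiver $I_+(q)$; similarly for $\bbO^-$. Hence the highest weight is $\Lambda_{q^{s_1^+}}+\Lambda_{-q^{s_2^+}}+\Lambda_{q^{s_1^-}}+\Lambda_{-q^{s_2^-}}$. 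Since the action is integrable (the functors $E^\pm_i, F^\pm_i$ are biadjoint and exact), the cyclic $\fraks\frakl'_{I_+}\oplus \fraks\frakl'_{I_-}$-module generated by $[\rho]$ is the irreducible integrable highest weight module of this weight, which by \cite{U} is exactly $\bfF(q^{s_1^+},-q^{s_2^+})\otimes \bfF(q^{s_1^-},-q^{s_2^-})$. To make the isomorphism explicit I would use the Lusztig parametrization of \S\ref{sub:parametrization}: an arbitrary irreducible in the series indexed by the same semisimple element $s$ and same $\lambda_\star$ and with cuspidal symbol $\Lambda_\pm$ maps to the standard monomial indexed by $\Upsilon(\Lambda_+)\otimes \Upsilon(\Lambda_-)$ with the prescribed charges. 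The only delicate point is the sign $\epsilon$ in the $\O_{2n+1}(q)$ case: by \eqref{O2n+1(2)} the functor $F^-$ twists $\epsilon$ by $\zeta(-1)$, and iterating $m=\mathrm{rank}(\Lambda_-)$ times gives the factor $(-1)^{\mathrm{rank}(\Lambda_-)}$ in the formula for $\epsilon'$.

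For (c), I would argue that the compatibility of the Jordan decomposition $\calJ_s$ with Harish-Chandra induction (Theorem \ref{Thm:Jordan}) reduces the computation of $F^\pm$ on the whole series to the unipotent level, where \eqref{O2n+1(1)}--\eqref{O2n+1(2)} show that $F^\pm$ acts by adding all possible $1$-hooks to $\Lambda_\pm$. Under $\Upsilon$, adding a $1$-hook to a symbol corresponds precisely to adding a single box to either $\mu_1^\pm$ or $\mu_2^\pm$, and the matching of charges $(s_1^\pm, s_2^\pm)$ with the eigenvalue-exponents of $X^\pm$ on the new node is exactly what the tuple $\tuple\xi=(q^{s_1^\pm},-q^{s_2^\pm})$ records. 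Consequently the $i$-eigenspace decomposition of $F^\pm$ at $\rho_{\Lambda_+,\Lambda_-,\lambda_\star,(\epsilon')}$ is indexed by addable nodes of residue $i$, and similarly for $E^\pm$; the formulas in (c1) and (c2) then follow from the definition \eqref{coloredweight2} of the $(q,\pm)$-residue function applied separately to each component of the bi-partition.

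The main obstacle I foresee is the bookkeeping needed to fix the assignment of the two roots of $\bbO^+(u)(\rho)$ to the two components $q^\Z$ and $-q^\Z$ of $I_+$ (and likewise for $I_-$), and then to verify that the bijection $\Upsilon$ sends the first part $\mu^\pm_1$ of the bi-partition to the $q^\Z$-component and the second part $\mu^\pm_2$ to the $-q^\Z$-component in a way compatible with the categorical action of the Chevalley generators. This amounts to a careful match of conventions between the Lusztig symbol combinatorics of \S\ref{sub:symbol}, the charged partition picture of \S\ref{sub:contentfunction}, and the eigenvalue data of $X^\pm$ computed in Corollary \ref{cor:minimalpoly}; once it is established for the highest weight vector, integrability propagates it to all of $[(KG_\bullet,\rho)\mod]$.
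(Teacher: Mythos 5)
Your proposal follows essentially the same route as the paper's proof, which simply appeals to \cite[Theorem 5.23]{LLZ} and \cite[Proposition 6.9]{DVV2} for (a)–(b) and then reads off (c) from the module isomorphism together with (\ref{eq:weight}) and (\ref{chickens}); you correctly supply the key ideas these references package up (highest weight vector from bi-cuspidality, integrability, level-two Fock space identification, compatibility of the Jordan decomposition with Harish-Chandra induction). One caveat: in the $\O_{2n+1}(q)$ sign bookkeeping you pass from the per-step twist $\zeta(-1)$ of (\ref{O2n+1(2)}) directly to the factor $(-1)^{\mathrm{rank}(\Lambda_-)}$, but the number of $F^-$-applications is $\mathrm{rank}(\Lambda_-)-\mathrm{rank}(\Lambda_-^{\mathrm{cusp}})$ (not $\mathrm{rank}(\Lambda_-)$ itself, though the cuspidal rank $t_-^2+t_-$ is even so the parity is unchanged), and $\zeta(-1)$ equals $-1$ only when $q\equiv 3\pmod 4$, so one should say a word about why $\zeta(-1)^{\mathrm{rank}(\Lambda_-)}$ can be replaced by $(-1)^{\mathrm{rank}(\Lambda_-)}$ in the stated convention — the paper's own proof is equally terse on exactly this point.
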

\begin{proof}
The proofs of $(a)$ and $(b)$ are similar to \cite[Theorem 5.23]{LLZ}, see also \cite[Proposition 6.9]{DVV2}. In particular, for the $G=\O_{2n+1}(q)$ case, the sign $\epsilon'\in\{\pm 1\}$ is because of (\ref{O2n+1(1)}) and $(\ref{O2n+1(2)}).$
For $(c)$, note that under this isomorphism, we can compute the weight of any irreducible modules, hence colored weight functions $\bbO^{+}(u)(-)$ and $\bbO^-(v)(-)$ by (\ref{eq:weight}) and (\ref{chickens}).
\end{proof}

Now we can prove Theorem C in the Introduction, and we restate it as follows.
\begin{theorem} Let $G_n=\O_{2n+1}(q), \Sp_{2n}(q)$ or $\O_{2n}^{\pm}(q)$ and $\rho,\rho'\in \Irr(G_n).$
\begin{itemize}
\item[$(a)$]
We have $\rho=\rho'$ if and only if the following hold:
\begin{itemize}
\item[$(1)$] $\rho^\sharp=\rho'^\sharp,$
\item[$(2)$] $\mathbb{O}^+(u)(\rho)=\mathbb{O}^+(u)(\rho')$  and  $\mathbb{O}^-(v)(\rho)=\mathbb{O}^-(v)(\rho').$
    \end{itemize}
\item[$(b)$] If $\rho$ and $\rho'$ are quadratic unipotent, then $\rho=\rho'$ if and only if
$\mathbb{O}^+(u)(\rho)=\mathbb{O}^+(u)(\rho')$  and  $\mathbb{O}^-(v)(\rho)=\mathbb{O}^-(v)(\rho').$
\item[$(c)$] If $\rho$ and $\rho'$ are unipotent, then $\rho=\rho'$ if and only if
$\mathbb{O}^+(u)(\rho)=\mathbb{O}^+(u)(\rho').$
\end{itemize}
\end{theorem}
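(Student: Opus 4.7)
The ``only if'' direction in each part is tautological. For the ``if'' direction, the key is the explicit formula from Theorem~\ref{TheoremC}(c), which expresses the colored weight functions of any irreducible $\rho$ as
\[
\bbO^+(u)(\rho)=\bbO^{(q,+)}_{|\mu_1^+,s_1^+\rangle}(u)\cdot\bbO^{(q,-)}_{|\mu_2^+,s_2^+\rangle}(u),
\qquad
\bbO^-(v)(\rho)=\bbO^{(q,+)}_{|\mu_1^-,s_1^-\rangle}(v)\cdot\bbO^{(q,-)}_{|\mu_2^-,s_2^-\rangle}(v),
\]
where $(\mu^\pm_1,\mu^\pm_2)=\Upsilon(\Lambda_\pm)$ arises from the Lusztig symbols of $\rho$ under the $\mathfrak{sl}'_{I_+}\oplus\mathfrak{sl}'_{I_-}$-module isomorphism of Theorem~\ref{TheoremC}(b), and the charges $s_i^{\pm}$ encode the cuspidal data $(t_+,t_-)$ (and $\epsilon$ in the $\O_{2n+1}$ case). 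Because $q>1$, the set $q^{\bbZ}$ of positive $q$-powers is disjoint from $-q^{\bbZ}$, so the two factors of $\bbO^+(u)(\rho)$ (resp.\ $\bbO^-(v)(\rho)$) can be read off as the ``positive'' and ``negative'' parts of the rational function. Then by Corollary~\ref{cor:injective}, each factor determines a unique charged partition $|\mu,s\rangle$, which in turn recovers the pair $(\Lambda_+,t_+,s_1^+,s_2^+)$ from $\bbO^+$ and the pair $(\Lambda_-,t_-,s_1^-,s_2^-)$ from $\bbO^-$.

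For part (c), a unipotent $\rho$ of $G_n$ corresponds, via the Jordan decomposition at $s=1$, to a single symbol $\Lambda_+$ together with trivial $\Lambda_-,\lambda_\star$. In this case $\bbO^-(v)(\rho)$ is a constant (a product of the two $\bbO^{(q,\pm)}_{|\emptyset,s_i^-\rangle}$, each a linear polynomial independent of $\rho$ within the Serre subcategory), and thus carries no information. The argument above using $\bbO^+(u)$ alone recovers $(\mu_1^+,\mu_2^+)$ together with $(s_1^+,s_2^+)$, hence $\Lambda_+$ and $t_+$, hence $\rho$. For part (b), quadratic unipotent $\rho$ satisfies $s^2=1$, which forces $\lambda_\star=\emptyset$ and makes $G^{(\star)}$ trivial; the parameters then reduce exactly to $(\Lambda_+,\Lambda_-,t_+,t_-)$, and for $\O_{2n+1}$ to the additional sign $\epsilon$. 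The same recovery from $\bbO^+(u)$ and $\bbO^-(v)$ gives $\Lambda_\pm, t_\pm$; in the $\O_{2n+1}$ case $\epsilon$ is pinned down by the sign $\epsilon'$ appearing in Corollary~\ref{cor:minimalpoly}(c), which is flipped when $\rho$ is replaced by $\sgn\cdot\rho$ (Proposition~\ref{Prop:extrasymm}(a)) and which propagates through the Kac--Moody action in Theorem~\ref{TheoremC}(b).

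For part (a), suppose $\rho^\sharp=\rho'^\sharp$ and the two colored weight functions agree. By Theorem~\ref{Thm:Panuniformproj}, $\rho'$ belongs to the explicit finite orbit of $\rho$ under sign twists $\{1,\sgn\}$ and/or the diagonal automorphism $c$ (the orbit has size $2$ for $\O_{2n+1}$ and $\Sp_{2n}$ and size $4$ for $\O^{\pm}_{2n}$). By Proposition~\ref{Prop:extrasymm}, applying $\sgn$ (resp.\ $c$) to $\rho$ replaces $\bbO^+(u), \bbO^-(v)$ by $\bbO^+(-u), \bbO^-(-v)$ (resp.\ leaves $\bbO^+(u)$ unchanged and replaces $\bbO^-(v)$ by $\bbO^-(-v)$). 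A non-trivial orbit element with the same colored weight functions would therefore force the equality $\bbO^+(-u)(\rho)=\bbO^+(u)(\rho)$ or $\bbO^-(-v)(\rho)=\bbO^-(v)(\rho)$; but in the factorization above, the substitution $u\mapsto -u$ swaps the roles of $|\mu_1^+,s_1^+\rangle$ and $|\mu_2^+,s_2^+\rangle$ (up to an overall sign), and the injectivity of the $(q,\pm)$-residue functions (Corollary~\ref{cor:injective}) forces both partitions and both charges to coincide pairwise, which together with the cuspidal compatibility already shown above yields $\rho=\rho'$.

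The main obstacle I anticipate is the careful bookkeeping in part (a) for $\O^{\pm}_{2n}(q)$, where the orbit has four elements and one must simultaneously rule out each of the three non-trivial twists ($\rho^c$, $\rho\cdot\sgn$, $\rho^c\cdot\sgn$). This requires combining the $\bbO^+$ and $\bbO^-$ constraints (the first blind to $c$, the second blind to the product $\sgn\circ c$) and verifying that the only common fixed point under these substitutions in the factorized form is $\rho=\rho'$; this is where the disjointness of $q^{\bbZ}$ and $-q^{\bbZ}$ and the injectivity of Corollary~\ref{cor:injective} must be used in tandem.
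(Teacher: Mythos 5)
Your proposal is correct and follows essentially the same route as the paper's proof: reduce via Theorem~\ref{Thm:Panuniformproj} to the finite twist-orbit, track how $\sgn$, $\sp$ and the diagonal automorphism act on the colored weight functions (Proposition~\ref{Prop:extrasymm}), and use the factorization from Theorem~\ref{TheoremC}(c) together with the disjointness of $q^{\bbZ}$ and $-q^{\bbZ}$ and the injectivity of Corollary~\ref{cor:injective} to recover the charged partitions, hence the Lusztig data. The only cosmetic slip is calling $\bbO^-(v)(\rho)$ a ``constant'' in part (c) — it is a fixed quadratic polynomial independent of the unipotent $\rho$ — but your parenthetical already makes the intended meaning clear.
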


\begin{proof}

$(a)$
We assume that $\rho=\rho_{\Lambda_+,\Lambda_-,\lambda_\star}\in (KG_\bullet, \rho_{t_+,t_-,\lambda_\star})\mod$ (or $\rho=\rho_{\Lambda_+,\Lambda_-,\lambda_\star,\epsilon'}\in (KG_\bullet, \rho_{t_+,t_-,\lambda_\star,\epsilon})\mod$).
Then by Theorem \ref{TheoremC}, we know that $$\bbO^+(u)(\rho)=\bbO^{(q,+)}_{|\mu_1^+,s_1^+\rangle}(u)\cdot \bbO^{(q,-)}_{|\mu_2^+,s_2^+\rangle}(u)~\text{and}~\bbO^-(v) (\rho)=\bbO^{(q,+)}_{|\mu_1^-,s_1^-\rangle}(v)\cdot \bbO^{(q,-)}_{|\mu_2^-,s^-_2\rangle}(v).$$

By Theorem \ref{Thm:Panuniformproj}, we have
\begin{itemize}
\item for $G=\O_{2n+1}(q)$, $\rho^\sharp=\rho'^\sharp$ if and only if $\rho'\in\{\rho,\rho\cdot \sgn\}$;
\item for $G=\Sp_{2n}(q)$, $\rho^\sharp=\rho'^\sharp$ if and only if $\rho'\in\{\rho,\rho^c\}$;
\item for $G=\O^\pm_{2n}(q)$, $\rho^\sharp=\rho'^\sharp$ if and only if $\rho'\in\{\rho,\rho^c,\rho\cdot \sgn,\rho^c\cdot \sgn\}$.
\end{itemize}

We only prove the case $G=\Sp_{2n}(q),$ and the proofs of other cases are similar.
For $G=\Sp_{2n},$ we have $\bbO^+(u)(\rho^c)=\bbO^{(q,+)}_{|\mu_1^+,s_1^+\rangle}(u)
\cdot \bbO^{(q,-)}_{|\mu_2^+,s_2^+\rangle}(u)$  and
$$\bbO^-(v) (\rho^c)=\bbO^{(q,+)}_{|\mu_1^-,s_1^-\rangle}(-v)\cdot \bbO^{(q,-)}_{|\mu_2^-,s^-_2\rangle}(-v)=\bbO^{(q,+)}_{|\mu_2^-,s^-_2\rangle}(v)\cdot \bbO^{(q,-)}_{|\mu_1^-,s_1^-\rangle}(v).$$
By Proposition \ref{cor:injective},  we know that $\bbO^{(q,\epsilon)}_{\bullet}(v)$ is injective from $\scrP\times\bbZ$ to $K(v)$ for $\epsilon\in\{\pm\}$. Note that the  poles and zeros of $\bbO^{(q,+)}_{|\mu,s\rangle}(v)$ belong to $q^\bbZ$ and the  poles and zeros of $\bbO^{(q,-)}_{|\mu,s\rangle}(v)$ belong to $-q^\bbZ$. Hence $\bbO^-(v)(\rho^c)=\bbO^-(v)(\rho)$ if and only if $|\mu_1^-,s_1^-\rangle=|\mu_2^-,s^-_2\rangle$, i.e., $\Lambda_-=\Lambda^t_-,$
 which is equivalent to
$\rho=\rho^c.$
\smallskip

$(b)$
Assume that $\rho$ and $\rho'$ are quadratic unipotent, then $\rho=\rho_{\Lambda_+,\Lambda_-,(\epsilon)}$ and $\rho'=\rho_{\Lambda'_+,\Lambda'_-,(\epsilon')}.$
We have  $$\bbO^+(u)(\rho)=\bbO^{(q,+)}_{|\mu_1^+,s_1^+\rangle}(u)\cdot \bbO^{(q,-)}_{|\mu_2^+,s_2^+\rangle}(u)~\text{and}~\bbO^-(v) (\rho)=\bbO^{(q,+)}_{|\mu_1^-,s_1^-\rangle}(v)\cdot \bbO^{(q,-)}_{|\mu_2^-,s^-_2\rangle}(v).$$
Similarly, we have $$\bbO^+(u)(\rho')=\bbO^{(q,+)}_{|{\mu'}_1^+,{s'}_1^+\rangle}(u)\cdot \bbO^{(q,-)}_{|{\mu'}_2^+,{s'}_2^+\rangle}(u)~\text{and}~\bbO^-(v) (\rho')=\bbO^{(q,+)}_{|{\mu'}_1^-,{s'}_1^-\rangle}(v)\cdot \bbO^{(q,-)}_{|{\mu'}_2^-,{s'}^-_2\rangle}(v).$$
Then the conditions $\mathbb{O}^+(u)(\rho)=\mathbb{O}^+(u)(\rho')$  and  $\mathbb{O}^-(v)(\rho)=\mathbb{O}^-(v)(\rho')$ imply
$$\bbO^{(q,\pm)}_{|\mu_i^+,s_i^+\rangle}(u)=\bbO^{(q,\pm)}_{|{\mu'}_i^+,{s'}_i^+\rangle}(u)~\text{and}~
\bbO^{(q,\pm)}_{|\mu_i^-,s_i^-\rangle}(v)=\bbO^{(q,\pm)}_{|{\mu'}_i^-,{s'}_i^-\rangle}(v)$$ for $i=1,2$.
By the injectivity of $\bbO^{(q,\epsilon)}_{\bullet}(u)$, we must have $$|\mu_i^+,s_i^+\rangle=|{\mu'}_i^+,{s'}_i^+\rangle~\text{and}~|\mu_i^-,
s_i^-\rangle=|{\mu'}_i^-,{s'}_i^-\rangle$$ for $i=1,2$,
which imply that $\Lambda_+=\Lambda'_+$ and $\Lambda_-=\Lambda'_-$ (also $\epsilon=\epsilon'$ for $\O_{2n+1}(q)$ case). So $(b)$ follows.
\smallskip

The proof of $(c)$ is similar to $(b).$
\end{proof}
\smallskip

By Lemma \ref{Lem:cuspidal} and Remark \ref{bubbleempty},	we have the following corollary, which provides a characterization of $F^\pm$-cuspidal modules.
\begin{corollary} Let $\rho\in \Irr(G).$
Then 
\begin{itemize}
	\item[$(a)$] $\rho$ is $F^+$-cuspidal if and only if $\bbO^+(u)(\rho)$ is a polynomial;
	\item[$(b)$] $\rho$ is $F^-$-cuspidal if and only if $\bbO^-(v)(\rho)$ is a polynomial.
	\end{itemize} 
	In particular, for a unipotent irreducible module $\rho$, it is cuspidal if and only if $\bbO^+(u)(\rho)$ is a polynomial.
\end{corollary}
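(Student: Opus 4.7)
The plan is to reduce the three assertions to a direct comparison between the Lusztig-parametrisation description of $F^{\pm}$-cuspidality (Lemma~\ref{Lem:cuspidal}) and the explicit formulas for the colored weight functions coming from Theorem~\ref{TheoremC}(c). By symmetry between $+$ and $-$ it suffices to prove (a); part (c) will then be an easy specialisation at $s=1$.

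Fix an irreducible $\rho\in\Irr(G)$ and write $\rho=\rho_{\Lambda_+,\Lambda_-,\lambda_\star,(\epsilon)}$ in the Lusztig parametrisation. By Lemma~\ref{Lem:cuspidal}, $\rho$ is $F^+$-cuspidal if and only if $\Lambda_+$ is a cuspidal symbol, i.e. similar to one of $\binom{k,k-1,\ldots,0}{-}$ or $\binom{-}{k,k-1,\ldots,0}$. A direct inspection of the bijection $\Upsilon$ recalled in~\eqref{bipartitionmap} shows that these are exactly the symbols whose image $\Upsilon(\Lambda_+)=(\mu_1^+,\mu_2^+)$ equals $(\emptyset,\emptyset)$; indeed, subtracting the staircase $(m_1-1,m_1-2,\ldots,0)$ from a strictly decreasing sequence yields the zero sequence precisely for the staircase symbols. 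So $F^+$-cuspidality of $\rho$ is equivalent to $\mu_1^+=\mu_2^+=\emptyset$.

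Next I would invoke Theorem~\ref{TheoremC}(c), which supplies the factorisation
\[
\mathbb{O}^+(u)(\rho)=\mathbb{O}^{(q,+)}_{|\mu_1^+,s_1^+\rangle}(u)\cdot \mathbb{O}^{(q,-)}_{|\mu_2^+,s_2^+\rangle}(u).
\]
By Remark~\ref{bubbleempty}, each factor individually is a polynomial if and only if the corresponding partition is empty. The step that needs a brief justification — this is the only genuine check — is that no cancellation between the two factors can occur: all zeros and poles of $\mathbb{O}^{(q,+)}_{|\mu_1^+,s_1^+\rangle}(u)$ lie in $q^{\mathbb{Z}}$, while all zeros and poles of $\mathbb{O}^{(q,-)}_{|\mu_2^+,s_2^+\rangle}(u)$ lie in $-q^{\mathbb{Z}}$ (see \eqref{coloredweight2}), and these two sets are disjoint because $q$ has infinite order and $-1\notin q^{\mathbb{Z}}$. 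Hence the product is a polynomial if and only if both factors are polynomials, equivalently $\mu_1^+=\mu_2^+=\emptyset$. Combined with the preceding paragraph this proves (a), and the same argument with $\Lambda_-$ and the second color proves (b).

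Finally, for (c), if $\rho$ is unipotent then its semisimple label $s=1$, so $V^*_{x+1}(s)=0$ and the $\Lambda_-$ component is trivial; in particular $F^-$ acts trivially on $\rho$ and the notions of cuspidal (in the Harish-Chandra sense) and $F^+$-cuspidal coincide, so (c) follows from (a). No serious obstacle is expected: the only substantive observation is the non-interference of the $q^{\mathbb{Z}}$ and $-q^{\mathbb{Z}}$ poles, which is immediate from the hypothesis that $K$ has characteristic zero and $q$ has infinite order in $K^\times$.
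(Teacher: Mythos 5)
Your proposal is correct and follows essentially the same route as the paper, which deduces the corollary directly from Lemma \ref{Lem:cuspidal} (cuspidality of $\Lambda_\pm$ characterizes $F^\pm$-cuspidality), the factorization in Theorem \ref{TheoremC}(c), and Remark \ref{bubbleempty}. Your explicit check that the factors $\bbO^{(q,+)}$ and $\bbO^{(q,-)}$ cannot cancel, because their zeros and poles lie in the disjoint sets $q^{\bbZ}$ and $-q^{\bbZ}$, is a worthwhile detail that the paper leaves implicit (though it appears in the proof of Theorem C).
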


\medskip

\section{Theta correspondence and decategorification}\label{chp:thetaforclassical}
In this section, we will use the theta correspondence to
determine all colored weight functions for irreducible modules which are both $F^+$-cuspidal and $F^-$-cuspidal.
This gives a complete answer to the action of the Kac-Moody algebra $\frakg=\fraks\frakl'_{I_+}\oplus\fraks\frakl'_{I_-}$ on the Grothendieck group $[KG_\bullet\mod]$.
\smallskip

\subsection{Theta correspondence}\label{howe}\hfill\\

In this section, we will recall some basic results for the theta correspondence.
\smallskip

\subsubsection{Definition}\label{subsec:defthetacorr}
Let $V$ be a vector space over $\bbF_q$ equipped with a  non-degenerate quadratic from $\langle\,\,,\,\,\rangle_V:V\times V\to \bbF_q$ and let $G(V)$ be the isometry group of $\langle\,\,,\,\,\rangle_V.$
$V$ has a Witt decomposition $$V=V_a\oplus X\oplus Y$$
where $V_a$ is anisotropic and  its orthogonal complement $V_a^{\perp}=X\oplus Y$
consists of isotropic subspaces $X$ and $Y$. to organize such spaces into Witt towers, defined as
$$\calV:=\{V_a, V_a\oplus \bbH, V_a\oplus \bbH^{\oplus2}, \cdots\}$$
a collection of spaces formed by successively adjoining copies of the hyperbolic plane $\bbH$ to a fixed anisotropic space $V_a$. Clearly, $V$ itself is an element of $\calV$. Now, let $(V, V')$ be a dual pair. We consider the following types of related Witt towers:
\begin{itemize}
\item[$($I$)$] $V$ is an orthogonal space, and $\calV'$ is the Witt towers of symplectic spaces containing $V'$;
\item[$($II$)$] $V$ is a symplectic space, and $\calV'^+$ (resp. $\calV'^-$) is the Witt towers of even-dimensional orthogonal spaces with trivial (resp. two-dimensional) anisotropic kernel which contains $V'$;

\item[$($III$)$] $V$ is a symplectic space, and $\calV'$ is the Witt towers of odd-dimensional orthogonal spaces containing $V'$.
\end{itemize}

We fix a non-trivial character $\psi:(\bbF_q,+)\to \bbC^\times$.
Let $\omega_{G,G'}=\omega_{G,G'}^\psi$ be the Weil representation of $G(V\otimes V')$ constructed in \cite{Ger}.
It has a non-negative integral decomposition
\[
\omega_{G,G'}=\sum_{\rho\in\Irr(G), \rho'\in \Irr(G')}m_{\rho,\rho'}\rho\otimes\rho'
\]
where $\Irr(G)$ denotes the set of irreducible characters of $G$.
We say that $(\rho,\rho')$ \emph{occurs} in the \emph{theta correspondence} (or \emph{Howe correspondence}) if $m_{\rho,\rho'}\neq 0$,
i.e., there is a relation
\begin{align*}
\Theta_{G,G'}=\{\,(\rho,\rho')\in\Irr(G)\times\Irr(G')\mid m_{\rho,\rho'}\neq 0\,\}
\end{align*}
between $\Irr(G)$ and $\Irr(G')$.

\subsubsection{First occurrence}\label{sub:firstoccurrence}

%
Let $G_n$ (or $G'_n$) be one of $\O_{2n+1}(q),\Sp_{2n}(q)$ and $\O_{2n}^\pm(q)$. Assume that $(G_n, G'_{n'})$ is a dual pair and $\sigma\in \Irr(G_n)$ and $\sigma'\in \Irr(G'_{n'})$. Define
$$\Theta_{G'_{n'}(\sigma)}=\{\sigma'\in \Irr(G'_{n'})|(\sigma,\sigma')\,\,\text{occurs in } \Theta_{G_n,G'_{n'}}\}$$
and
$$\Theta_{G_{n}(\sigma')}=\{\sigma\in \Irr(G_{n})|(\sigma,\sigma')\,\,\text{occurs in } \Theta_{G_n,G'_{n'}}\}.$$
It is known that $\Theta_{G'_{n'}(\sigma)}\neq \emptyset$ implies $\Theta_{G'_{n''}(\sigma)}\neq \emptyset$ for all $n''>n'.$
We say that $(\sigma,\sigma')$ occurring in  $ \Theta_{G_n,G'_{n'}}$ is a \emph{first occurrence} if $\Theta_{G'_{n'-1}}(\sigma)=\emptyset$ and $\Theta_{G_{n-1}}(\sigma')=\emptyset.$

If $(G,G')=(\Sp_{2n}(q),\O^\epsilon_{2n}(q))$,
then the unipotent characters are preserved by $\Theta_{G,G'}$ (\cite{AMR} theorem 3.5),
i.e., we can write
\begin{align*}
\omega_{G,G',1}
&=\sum_{\rho\in\calE(G,1),\ \rho'\in\calE(G',1)}m_{\rho,\rho'}\rho\otimes\rho', \\
\Theta_{G,G',1} &=\Theta_{G,G'}\cap(\calE(G,1)\times\calE(G',1))
\end{align*}
where $\omega_{G,G',1}$ denotes the unipotent part of $\omega_{G,G'}$.
For the theta correspondence on unipotent characters,
the following theorem is well-known, due to Adams and Moy.

\begin{theorem}[\cite{AM}]\label{Thm:AM}

Let $\rho^{\Sp}_t$ be the unique unipotent cuspidal module of $\Sp_{2t(t+1)}(q)$ as defined in \S \ref{sub:unipotent} and $\pi_t^{\alpha}$ and $\pi_t^{\beta}$ (following the notation of \cite{AM}) are the unipotent cuspidal modules of $\O^{\epsilon_t}_{2t^2}(q)$, where $\epsilon_t=(-1)^t.$ Then we have the following:
\begin{itemize}

\item[$(a)$] $(\pi_t^{\alpha}, \rho_{t}^{\Sp})$ first occurs in the correspondence for the pair $(\O^{\epsilon_t}_{2t^2}(q),\Sp_{2t(t+1)}(q))$
 and
\item[$(b)$] $(\pi_t^{\beta},\rho^{\Sp}_{t-1})$ first occurs in the correspondence for the pair $(\O^{\epsilon_t}_{2t^2}(q),\Sp_{2t(t-1)}(q)).$

\end{itemize}
\end{theorem}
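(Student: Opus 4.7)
The plan is to reduce the statement to the unipotent part of the Weil representation and then invoke the symbol-theoretic description of the unipotent theta correspondence for the dual pair $(\Sp_{2n}(q),\O^{\epsilon}_{2m}(q))$. First I would use the fact, already cited in the excerpt (following Aubert--Michel--Rouquier), that unipotent characters are stable under this theta correspondence; hence both first-occurrence statements live entirely inside the unipotent summand $\omega_{G,G',1}$ of the Weil representation, and it is enough to analyze the pairing of unipotent characters recorded by that summand. The cuspidal symbol attached to $\rho^{\Sp}_t$ has rank $t(t+1)$ and defect $\pm(2t+1)$, while the two cuspidals $\pi_t^{\alpha},\pi_t^{\beta}$ of $\O^{\epsilon_t}_{2t^2}(q)$ are parametrized by the two cuspidal symbols of rank $t^2$ and defect $\pm 2t$; all these data come from the classification recalled in \S\ref{sub:unipotent}.

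Next, I would bring in the explicit combinatorial description of the unipotent theta correspondence at the level of symbols: if $(\rho_{\Lambda},\rho_{\Lambda'})$ occurs in the $(\Sp,\O^\epsilon)$-pair, then $\Lambda'$ is obtained from $\Lambda$ by an explicit ``boxing'' operation on beta-sets which shifts the defect in a controlled way, and ``first occurrence'' translates into a minimality condition on the resulting symbols (no proper sub-symbol on either side survives the boxing). The cuspidal symbol for $\rho^{\Sp}_t$ is $\binom{2t,2t-1,\dots,0}{-}$ (up to shift), and the boxing rule produces exactly the two cuspidal symbols of defect $\pm 2t$: one of rank $t^2$ in $\O^{\epsilon_t}_{2t^2}(q)$ and one of rank $(t+1)^2$ in $\O^{\epsilon_{t+1}}_{2(t+1)^2}(q)$. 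Pairing this with the analogous boxing starting from $\rho^{\Sp}_{t-1}$ yields the other rank-$t^2$ cuspidal symbol of defect $\mp 2t$ on the orthogonal side. Minimality on the symplectic side (no smaller symplectic cuspidal first-occurs with either of $\pi^\alpha_t,\pi^\beta_t$) is automatic, since below $\Sp_{2t(t-1)}$ there is no unipotent cuspidal in the orthogonal tower of anisotropic kernel matching $\epsilon_t$.

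The main obstacle is the labeling: $\pi_t^{\alpha}$ and $\pi_t^{\beta}$ are \emph{a priori} interchangeable, and one must check that the convention adopted matches Adams--Moy's. To pin this down I would either compute the dimension of each cuspidal by Lusztig's degree formula from its symbol of defect $\pm 2t$, and compare against the multiplicities produced by $\omega_{G,G',1}$ paired with the cuspidals of $\Sp_{2t(t\pm 1)}(q)$, or else proceed by induction on $t$: at $t=1$ the two cuspidals of $\O^{-}_{2}(q)$ are explicitly identifiable (one is the unit character, one is the non-trivial character of order $2$ coming from $\GL_1(q^2)$), and the theta correspondence $(\O^-_2,\Sp_2)$ and $(\O^-_2,\Sp_0)$ can be written out by hand; propagating the labeling upwards via a known recursion (e.g.\ the ``tensor with a sign twist and go up one row'' rule in the orthogonal Witt tower, together with conservation) then matches $\pi_t^{\alpha}\leftrightarrow\rho^{\Sp}_t$ and $\pi_t^{\beta}\leftrightarrow\rho^{\Sp}_{t-1}$ for every $t$, establishing $(a)$ and $(b)$.
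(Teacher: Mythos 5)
The paper does not prove this statement; it is quoted verbatim from Adams--Moy \cite{AM} as an external input, so there is no internal proof to compare against. Your proposal is therefore an attempted reconstruction of Adams--Moy's result, and as such it runs into one substantive issue. The central tool you invoke --- the symbol-level ``boxing'' description of the unipotent theta correspondence for $(\Sp_{2n}(q),\O^{\epsilon}_{2m}(q))$, together with the equivalence of ``first occurrence'' with a combinatorial minimality condition on symbols --- is precisely the content of the Aubert--Michel--Rouquier / Pan theorems cited elsewhere in the paper (e.g.\ Theorem \ref{Thm:unipotenttheta} and Proposition \ref{prop:panevensmplec}). Those results are established by Harish-Chandra induction from the cuspidal case, and the cuspidal base case is exactly Theorem \ref{Thm:AM}. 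Using the full symbol-level description to re-derive the cuspidal first occurrences is therefore circular, or at best inverts the actual order of deduction. Adams--Moy's own argument is more primitive: they combine the explicit multiplicity-free structure of the Weil representation with Lusztig's degree formula for the unipotent cuspidals and a conservation-type counting argument, which lets them determine the first occurrences of $\rho^{\Sp}_t$ and $\rho^{\Sp}_{t-1}$ without presupposing the symbol combinatorics.

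Your strategy for disambiguating $\pi^{\alpha}_t$ from $\pi^{\beta}_t$ (degree comparison, or a $t=1$ base case plus propagation up the Witt tower) is sound in spirit and close to what is actually done. If you want a non-circular proof along your lines, the cleanest fix is to replace the ``boxing'' step by the direct degree computation: the two cuspidal symbols of $\O^{\epsilon_t}_{2t^2}(q)$ have defects $\pm 2t$, Lusztig's formula gives their degrees explicitly, and comparing these against the degree of $\rho^{\Sp}_t$ (resp.\ $\rho^{\Sp}_{t-1}$) multiplied by the appropriate power of $q$ forced by the Weil representation pins down which orthogonal cuspidal pairs with which symplectic one. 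The assertion that minimality on the symplectic side is ``automatic'' also needs an argument: one must rule out either $\pi^{\alpha}_t$ or $\pi^{\beta}_t$ occurring with some non-cuspidal $\sigma$ in a smaller symplectic group, and this is handled in \cite{AM} by the conservation relation rather than simply by the absence of smaller cuspidals.
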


\begin{remark}
Recall that in \S \ref{sub:unipotent}, we label the two unipotent cuspidal modules of $\O^{\epsilon_t}_{2t^2}(q)$ by $\rho_t^{\O_{\mathrm{even}}}$ and $\rho_{-t}^{\O_{\mathrm{even}}}$. In Theorem \ref{Thm:unipotenttheta} (2), we will examine this parametrization in relation to the one introduced by Adams and Moy.
\end{remark}
\smallskip

\subsubsection{Modified Lusztig's Jordan decomposition}\label{sub:Jordanfortheta}

Let $G$ be one of $\Sp_{2n}(q),$ $\SO_{2n+1}(q)$ and $\O^\epsilon_{2n}(q).$ For $s\in G^*,$ in \S \ref{sub:Jordan}, we define $G^{(+)}$, $G^{(-)}$ and $G^{(\star)}$ such that $C_{G^*}(s)\simeq  G^{(+)}\times G^{(-)}\times G^{(\star)}$.
We now define 
\begin{align}
\begin{split}
 \underline{G}^{(+)}
&=\begin{cases}
G^{(+)},\,&\text{ if $G$ is an orthogonal group;}\\
(G^{(+)})^*,\,&\text{ if $G$ is a symplectic group.}
\end{cases}
\end{split}
\end{align}

Then
\begin{equation}
(\underline{G}^{(+)},G^{(-)})=\begin{cases}
(\Sp_{2n^{(+)}}(q),\Sp_{2n^{(-)}}(q)), & \text{if $G=\SO_{2n+1}(q)$};\\
(\Sp_{2n^{(+)}}(q),\O^{\epsilon^{(-)}}_{2n^{(-)}}(q)), & \text{if $G=\Sp_{2n}(q)$};\\
(\O^{\epsilon^{(+)}}_{2n^{(+)}}(q),\O^{\epsilon^{(-)}}_{2n^{(-)}}(q)), & \text{if $G=\O^\epsilon_{2n}(q)$}
\end{cases}
\end{equation}
for some non-negative integers $n^{(+)},n^{(-)}$ depending on $s$,
and some $\epsilon^{(+)},\epsilon^{(-)}\in \{\pm1\}$ (see \S \ref{sub:Jordan} (\ref{equ:+-})).
\smallskip

Then a \emph{modified Lusztig's Jordan decomposition} defined in \cite{P3,P4}
\begin{equation}\label{modifiedjordan}
\underline{\mathcal{J}}_s\colon \mathcal{E}(G,s)\rightarrow\mathcal{E}(\underline{G}^{(+)}\times G^{(-)}\times G^{(\star)},1)
\end{equation}
can be written as
\begin{equation}
\underline{\mathcal{J}}_s(\rho)=\rho^{(+)}\otimes\rho^{(-)}\otimes\rho^{(\star)}
\end{equation}
where $\rho^{(\dag)}\in\mathcal{E}(G^{(\dag)},1)$ for $\dag\in \{-,\star\}$ or $\mathcal{E}(\underline{G}^{(\dag)},1)$ for $\dag=+$.
\smallskip

Similar to Lusztig correspondence, combining $\mathcal{L}_1$ for $\underline{G}^{(+)}\times G^{(-)}\times G^{(\star)}$
and the inverse of $\underline{\calJ}_s$ in (\ref{modifiedjordan}):
\[
\mathcal{S}_{\underline{G}^{(+)}}\times\mathcal{S}_{G^{(-)}}\times\mathcal{S}_{G^{(*)}}
\rightarrow \mathcal{E}(\underline{G}^{(+)}\times G^{(-)}\times G^{(\star)},1)
\rightarrow \mathcal{E}(G,s^{(+)}\times s^{(-)}\times s^{(\star)})
\]
we obtain a bijection
\begin{align}
	\begin{split}
		\underline{\mathcal{L}}_s\colon\mathcal{S}_{\underline{G}^{(+)}}\times
		\mathcal{S}_{G^{(-)}}\times\mathcal{S}_{G^{(\star)}} &\rightarrow\mathcal{E}(G,s) \\
		(\Lambda_+,\Lambda_-,\lambda_\star) &\mapsto  \rho_{\Lambda_+,\Lambda_-,\lambda_\star}.
	\end{split}
\end{align}
Such a bijection $\underline{\calL}_s$ is  called a \emph{modified Lusztig parametrization}. Since $\calE(\Sp_{2n}(q),1)$ and $\calE(\SO_{2n+1}(q),1)$ are parametrized by the same Lusztig symbols $\calS_{\SO_{2n+1}}=\calS_{\Sp_{2n}}$ via a unique way (see \cite[Proposition 4.9]{P4}),  the modified Lusztig parametrization coincides with Lusztig parametrization defined in \S \ref{sub:parametrization}.
\smallskip

\subsubsection{Pan's results}

Pan in \cite{P3}  proved the commutativity (up to a twist of the sign
character) between modified Lusztig's Jordan decomposition and theta correspondence.
\smallskip

For the symplectic-even orthogonal dual pairs, we have the following proposition.
\begin{proposition}[\cite{P3}]\label{prop:panevensmplec} \label{Prop:symmplecticeven}
Let $(G,G')=(\Sp_{2n}(q),\O^\epsilon_{2n'}(q))$,
and let $\rho\in\mathcal{E}(G,s)$ and $\rho'\in\mathcal{E}(G',s')$ for some semisimple elements $s\in G^*$
and $s'\in (G'^*)^0$.
Let $\underline{\calJ}_s,\underline{\calJ}_{s'}$ be any modified Lusztig's Jordan decomposition.
Write $\underline{\calJ}_s(\rho)=\rho^{(+)}\otimes\rho^{(-)}\otimes\rho^{(\star)}$ and
$\underline{\calJ}_{s'}(\rho')=\rho'^{(+)}\otimes\rho'^{(-)}\otimes\rho'^{(\star)}$.
Then $(\rho,\rho')$ occurs in $\Theta_{G,G'}$ if and only if the following conditions hold:
\begin{itemize}
\item $s^{(\star)}=s'^{(\star)}$ (up to conjugation), and $\rho^{(\star)}=\rho'^{(\star)}$;

\item $G^{(-)}\simeq G'^{(-)}$, and $\rho^{(-)}=\rho'^{(-)}$ or $\rho^{(-)}=\rho'^{(-)}\cdot \sgn$;

\item $(\rho^{(+)},\rho'^{(+)})$ or $(\rho^{(+)}, \rho'^{(+)}\cdot \sgn)$ occurs in $\Theta_{\underline{G}^{(+)},\underline{G}'^{(+)},1}$.
\end{itemize}
\end{proposition}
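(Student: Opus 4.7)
The plan is to reduce the statement, via see-saw dual pair arguments, to the unipotent theta correspondence, and then invoke the results of \cite{AMR} on each factor of the centralizer. I would begin from the primary decomposition of $s\in G^*$ (\S\ref{sub:conjugacyclass}), $V^*=V^*_{x-1}(s)\oplus V^*_{x+1}(s)\oplus\bigoplus_{\Gamma\in\calF_1\cup\calF_2}V^*_\Gamma(s)$, together with the analogous decomposition for $s'\in(G'^*)^0$. These induce the centralizer decompositions $C_{G^*}(s)\cong\underline{G}^{(+)}\times G^{(-)}\times G^{(\star)}$ and $C_{(G'^*)^0}(s')\cong\underline{G}'^{(+)}\times G'^{(-)}\times G'^{(\star)}$, and the three conditions in the conclusion are indexed by these three pieces. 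The goal of the reduction is to show that each piece contributes an independent constraint to the occurrence $(\rho,\rho')\in\Theta_{G,G'}$.

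The key technical tool is Kudla's see-saw pair construction applied to an orthogonal splitting $V'=V'_1\perp V'_2$ of the defining space of $G'$ chosen to respect the eigenspace decomposition of $s'$. Iterating the see-saw identity along the spectrum of $s'$ expresses the multiplicity $m_{\rho,\rho'}$ as a product of multiplicities in smaller theta correspondences, one for each primary component. For the $(\star)$ piece, $G^{(\star)}$ is a product of general linear or unitary groups, and the type II theta correspondence there preserves Lusztig series and unipotent supports: this forces $s^{(\star)}=s'^{(\star)}$ up to conjugation and $\rho^{(\star)}=\rho'^{(\star)}$. For the $(-)$ piece, a direct computation of the Weil character on the $(-1)$-eigenspaces shows that the correspondence restricts (up to the sign character) to the identity on unipotent characters, yielding $G^{(-)}\simeq G'^{(-)}$ together with $\rho^{(-)}=\rho'^{(-)}$ or $\rho^{(-)}=\rho'^{(-)}\cdot\sgn$.

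The main obstacle will be the $(+)$ piece. There one must show that the residual correspondence, obtained after peeling off the $(\star)$ and $(-)$ contributions, coincides (up to a sign character twist on $\underline{G}'^{(+)}$) with the genuine unipotent theta correspondence $\Theta_{\underline{G}^{(+)},\underline{G}'^{(+)},1}$ studied in \cite{AMR}. The difficulty is that the dual pair $(\underline{G}^{(+)},\underline{G}'^{(+)})$ is generally of a different type than the original $(G,G')$ (for instance a symplectic-symplectic pair instead of a symplectic--even-orthogonal pair), so the reduction is not formal. One must carefully compare Srinivasan-type expansions of $\omega_{G,G'}$ restricted to $C_{G^*}(s)\times C_{(G'^*)^0}(s')$ against the unipotent Weil character of $(\underline{G}^{(+)},\underline{G}'^{(+)})$, using the Deligne--Lusztig formalism and Lusztig's theory of almost characters \cite{Lu84}. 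Throughout, the sign-character ambiguity stemming from Theorem \ref{Thm:Panuniformproj} (the disconnectedness of $\O^\epsilon_{2n}$) must be tracked consistently, and this bookkeeping is the subtlest aspect of the proof.
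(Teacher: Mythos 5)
First, note that the paper does not prove this proposition at all: it is imported verbatim from Pan's work \cite{P3} (see also \cite{P1,P2,P4}), so there is no internal proof to compare yours against. Your proposal should therefore be judged as a free-standing attempt to reprove Pan's theorem.

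As such, it is a plausible road map but not a proof, and the gap is concentrated exactly where you acknowledge it. A see-saw identity applied to an orthogonal splitting $V'=V'_1\perp V'_2$ relates multiplicities in $\Theta_{G,G'}$ to multiplicities for the see-saw partners, but it does not by itself ``factor'' $m_{\rho,\rho'}$ as a product over the primary components of $s$ and $s'$, and in particular it cannot deliver the \emph{if and only if} statement: see-saw gives adjunction-type equalities of Hom-spaces, from which one extracts inequalities and branching relations, not the clean product structure over $(+)$, $(-)$ and $(\star)$ that the proposition asserts. The actual arguments in \cite{AMR} and \cite{P1,P2,P3} do not proceed this way; they start from Srinivasan's explicit decomposition of the uniform projection $\omega^{\sharp}_{G,G'}$ of the Weil character into Deligne--Lusztig virtual characters $R^{\bfG}_{\bfT}\otimes R^{\bfG'}_{\bfT'}$, transport this through the (modified) Lusztig parametrization to pin down which Lusztig series can pair with which, and then upgrade the uniform-projection statement to a statement about genuine irreducible characters by exploiting compatibility of $\Theta_{G,G'}$ with Harish-Chandra induction and an induction on cuspidal support (ultimately anchored by the Adams--Moy computation of first occurrences for unipotent cuspidal representations, Theorem~\ref{Thm:AM}). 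Your sketch never produces the Weil-character decomposition that drives all three bullet points, and it explicitly defers the $(+)$-factor comparison with $\Theta_{\underline{G}^{(+)},\underline{G}'^{(+)},1}$ rather than carrying it out.

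A secondary issue: for the $(-)$-piece you claim ``a direct computation of the Weil character on the $(-1)$-eigenspaces'' yields the identity-up-to-$\sgn$ statement. This is not a local computation on an eigenspace of $s'$ (the Weil representation is attached to $V\otimes V'$, not to eigenspace decompositions of dual-group elements); the matching $G^{(-)}\simeq G'^{(-)}$ with $\rho^{(-)}=\rho'^{(-)}$ or $\rho'^{(-)}\cdot\sgn$ again comes out of the Lusztig-series bookkeeping applied to the uniform projection, together with the sign ambiguities recorded in Theorem~\ref{Thm:Panuniformproj}. If you want a self-contained proof you should follow the $\omega^{\sharp}$-decomposition route; otherwise the honest move is to cite \cite{P3} as the paper does.
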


Now we  consider the  case of the symplectic-odd orthogonal dual pairs.
For $\rho'\in\Irr(\O_{2n'+1}(q))$,
since  $-\id$ is in the center of $\O_{2n'+1}(q)$,
we know that $\rho'(-\id)=\epsilon_{\rho'}\rho'(\id)$ where $\epsilon_{\rho'}=\pm1$.
It is clear that if $\rho'\in\mathcal{E}(G',s')_{\epsilon'}$, then $\epsilon_{\rho'}=\epsilon'$.
Similarly for $\rho\in\Irr(\Sp_{2n}(q))$,
we have $\rho(-\id)=\epsilon_\rho\rho(\id)$ where $\epsilon_\rho=\pm1$.

\begin{lemma}
Let $G=\Sp_{2n}(q)$ and $s\in G^*=\SO_{2n+1}(q)$ be a semi-simple element. If $\rho\in \calE (G,s)$, then $\rho(-\id)=\sp(s)\cdot \rho(\id),$ where $\sp$ is the spinor  character of $G^*.$
\end{lemma}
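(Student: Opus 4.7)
The plan is to use the fact that $-\id$ lies in the center $Z(G)^F = \{\pm\id\}$ of $G = \Sp_{2n}(q)$ and that all characters in a fixed Lusztig series share the same central character. First I would observe that by Schur's lemma, $\rho(-\id) = \omega_\rho(-\id)\,\rho(\id)$ where $\omega_\rho$ is the central character of $\rho$, and that $\omega_\rho$ depends only on the series $\calE(G,s)$, not on the individual $\rho$. Hence it suffices to compute $\omega_\rho(-\id)$ for any convenient $\rho \in \calE(G,s)$, e.g. any constituent of a Deligne--Lusztig character $R^{\bfG}_{\bfT^*,s}$ for an $F^*$-stable maximal torus $\bfT^*$ containing $s$.

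The main ingredient is the standard formula for the central character of a Deligne--Lusztig virtual character (see, e.g., the discussion in Digne--Michel): for $z \in Z(G)^F$ and $\theta \in \widehat{\bfT^F}$ corresponding to $s \in \bfT^{*F^*}$, one has $R^{\bfG}_{\bfT}(\theta)(zg) = \theta(z)\,R^{\bfG}_{\bfT}(\theta)(g)$, and the restriction $\theta|_{Z(G)^F}$ coincides with the image of $s$ under the canonical surjection $G^{*F^*} \twoheadrightarrow (G^*/[G^*,G^*])^{F^*}$ composed with the duality isomorphism $(G^*/[G^*,G^*])^{F^*} \simto \widehat{Z(G)^F}$. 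For $G = \Sp_{2n}$, $G^* = \SO_{2n+1}$, and $[G^*,G^*] = \Omega_{2n+1}$, so this identification gives a character $\chi$ of $\SO_{2n+1}(q)$ of order dividing $2$, which is nontrivial since $Z(\Sp_{2n}(q)) = \{\pm\id\}$ is of order $2$. Therefore $\chi$ must be the unique nontrivial character of $\SO_{2n+1}(q)/\Omega_{2n+1}(q)$, which is precisely the spinor character $\sp$.

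Putting these together, $\omega_\rho(-\id) = \sp(s)$, whence $\rho(-\id) = \sp(s)\rho(\id)$, as required.

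The main obstacle I anticipate is pinning down the identification of the character $\chi$ on $\SO_{2n+1}(q)$ arising from the duality with the spinor character $\sp$ as defined in \S\ref{sub:spin}; this is essentially a normalization check. One concrete way around this would be to test on a single explicit element: since both $\chi$ and $\sp$ are the unique nontrivial order-$2$ characters of $\SO_{2n+1}(q)$ (this group being perfect modulo $\Omega_{2n+1}(q)$ of index $2$ for $q$ odd), uniqueness forces $\chi = \sp$ without any further computation.
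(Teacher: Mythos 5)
Your argument is, in substance, the one behind the references the paper cites (the paper's proof is simply ``See the proof of [DL, Corollary 1.22] (see also [GM, Proposition 2.2.20])''), so you have reconstructed the argument behind the citation: the central character of $\calE(G,s)$ is obtained by evaluating at $s$ a canonical linear character of $G^{*}$, and for $G=\Sp_{2n}(q)$ that character is the spinor norm.

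One point of care in the way you phrase the duality: $(\bfG^*/[\bfG^*,\bfG^*])^{F^*}$ is trivial for the semisimple group $\bfG^*=\bfSO_{2n+1}$, so the identification cannot be stated in those terms. The correct formulation is in terms of linear characters of the finite group $G^{*F^*}=\SO_{2n+1}(q)$ that are trivial on the image of $\Spin_{2n+1}(q)$, equivalently on $\Omega_{2n+1}(q)$, which for $q$ odd is the derived subgroup of the finite group $\SO_{2n+1}(q)$; the canonical isomorphism sends $-\id\in Z(\bfG)^F$ to a nontrivial such character. Your fallback observation --- that $\SO_{2n+1}(q)$ has a unique index-$2$ subgroup for $q$ odd, hence a unique nontrivial order-$2$ character, which must therefore be $\sp$ --- is exactly the right way to bypass the normalization issue, so the proof goes through.
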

\begin{proof}
See the proof of \cite[Corollary 1.22]{DL} (see also \cite[Proposition 2.2.20]{GM}).
\end{proof}

\begin{proposition}[\cite{P3}]\label{prop:orthsym}
Let $(G,G')=(G(V),G(V'))=(\Sp_{2n}(q),\O_{2n'+1}(q))$,
and let $\rho\in\mathcal{E}(G,s)$ and $\rho'\in\mathcal{E}(G',s')_{\epsilon'}$ for some semisimple elements $s\in G^*$
and $s'\in (G'^0)^*$, and some $\epsilon'=\pm$.
Write $\underline{\calJ}_s(\rho)=\rho^{(+)}\otimes\rho^{(-)}\otimes\rho^{(\star)}$,
$\underline{\calJ}_{s'}(\rho'|_{G'^0})=\rho'^{(+)}\otimes\rho'^{(-)}\otimes\rho'^{(\star)}$
where $\underline{\calJ}_s,\underline{\calJ}_{s'}$ are the modified Lusztig's Jordan decompositions.
Then $(\rho,\rho')$ occurs in $\Theta_{G,G'}$ if and only if
\begin{itemize}
\item $s^{(\star)}=-s'^{(\star)}$ (up to conjugation), and $\rho^{(0)}=\rho'^{(0)}$;

\item $ \underline{G}^{(+)}\simeq G'^{(-)}$ and $\rho^{(+)}=\rho'^{(-)}$;

\item $(\rho^{(-)},\rho'^{(+)})$ or $(\rho^{(-)}\cdot\sgn ,\rho'^{(+)})$ occurs in $\Theta_{G^{(-)},\underline{G}'^{(+)},1}$;

\item $\sp(s)=\epsilon'$.
\end{itemize}

\end{proposition}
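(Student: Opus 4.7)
The overall strategy is to reduce this disconnected-target statement to the connected symplectic--special orthogonal case of Proposition~\ref{prop:panevensmplec} (which we may invoke, since we are analyzing the same paper) together with a central-character argument that forces the spinor sign identity $\sp(s)=\epsilon'$. First I would use the decomposition $\O_{2n'+1}(q)\cong \SO_{2n'+1}(q)\times \langle -\id\rangle$ to write $\rho'=\widetilde{\rho}'\boxtimes \mathrm{sgn}^{\epsilon'}_{\bbZ/2\bbZ}$ with $\widetilde{\rho}':=\rho'|_{G'^0}$, so that all of the Jordan-type data on the $\O_{2n'+1}(q)$-side is carried by $\widetilde\rho'\in \mathcal{E}(\SO_{2n'+1}(q),s')$, while the sign $\epsilon'$ records the scalar by which $-\id$ acts on $\rho'$. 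The occurrence condition $(\rho,\rho')\in\Theta_{G,G'}$ then decouples as: $(\rho,\widetilde\rho')$ occurs in $\Theta_{G,G'^0}$, \emph{and} the central character of $\omega_{G,G'}$ on the subgroup $\langle\,(\id,-\id)\,\rangle$ is compatible with $(\rho(\id),\epsilon'\rho'(\id))$.

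Next I would handle the $\SO$-side correspondence $(\rho,\widetilde\rho')$ by mimicking the proof of Proposition~\ref{prop:panevensmplec}. Both $G=\Sp_{2n}(q)$ and $\SO_{2n'+1}(q)$ are in duality with, respectively, $\SO_{2n+1}(q)$ and $\Sp_{2n'}(q)$; consequently, under the modified Jordan decomposition the ``$(+)$'' (trivial-eigenvalue) piece on one side pairs with the ``$(-)$'' (eigenvalue $-1$) piece on the other side, because the dualizing involution $x\mapsto x^{-1}$ on semisimple classes sends the $(x-1)$-primary part of $s\in\SO_{2n+1}$ to the $(x+1)$-primary part of the corresponding element in $\Sp_{2n'}$. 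This explains the three conditions on $(s^{(\star)},s'^{(\star)})$, on the $(-)$-parts, and on the $(+)$-parts of the proposition. To make this decoupling rigorous, I would run a see-saw dual pair argument: the centralizer decomposition $C_{G^*}(s)=\underline{G}^{(+)}\times G^{(-)}\times G^{(\star)}$ (and analogously for $s'$) produces an embedding of a smaller product dual pair inside $(G,G'^0)$, and restricting $\omega_{G,G'^0}$ to this smaller pair factors into the Weil representations of the three matched factors. The $\star$-part is a $\GL$/$\GU$-correspondence where matching of unipotent parts is forced once the semisimple parts coincide, the $(-)$-part is a same-rank symplectic/symplectic correspondence up to a determinant twist, and the $(+)$-part is the symplectic--even orthogonal unipotent correspondence whose ambiguity is again a sign twist; combining these gives the first three bullets.

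Finally, for the spinor condition, I would compute the scalar $\omega_{G,G'}(\id,-\id_{V'})$. The element $-\id_{V'}$ lies in the center of $\O(V')$ and acts on $V\otimes V'$ as $-\id$, so its image in $\Sp(V\otimes V')$ is central; by the explicit Weil representation formula of \cite{Ger} this scalar action on the $\rho$-isotypic component is determined by the central character of $\rho$ on $-\id_{V}\in\Sp_{2n}(q)$, which by the lemma preceding the proposition equals $\sp(s)$. On the other hand, the same element acts on $\rho'$ by $\epsilon'$, so occurrence of $(\rho,\rho')$ forces $\sp(s)=\epsilon'$, and conversely once this central compatibility holds the $\SO$-correspondence from the previous step extends to $\O$ in exactly the way claimed. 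The main obstacle is the last step: one must be careful to track how the Weil representation's central character depends on the splitting of the metaplectic cover over the dual pair, since a priori this scalar could involve an extra $\psi$-dependent root of unity; the point is that for the full orthogonal group $\O_{2n'+1}(q)$ the splitting is canonical, and the computation reduces to the classical formula that identifies the scalar with the spinor norm of~$s$.
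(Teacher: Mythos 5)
First, a point of calibration: the paper does not prove this proposition at all. It is quoted from Pan's work \cite{P3} (just as Proposition \ref{prop:panevensmplec} is), and the authors use it as a black box in the proof of Theorem \ref{Thm:weightfunctions}. So there is no internal proof to compare against; you are reconstructing a cited external result. With that said, your architecture is the right one in outline: splitting $\O_{2n'+1}(q)\cong\SO_{2n'+1}(q)\times\langle-\id\rangle$, reducing occurrence to the connected pair plus a central-character compatibility, and deriving $\sp(s)=\epsilon'$ from the fact that $(\id_V,-\id_{V'})$ and $(-\id_V,\id_{V'})$ have the same image in $\Sp(V\otimes V')$, combined with the lemma $\rho(-\id)=\sp(s)\rho(\id)$ stated just before the proposition, is exactly how the fourth bullet arises.

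Two substantive problems remain. First, your stated mechanism for the $(+)\leftrightarrow(-)$ swap is wrong: the involution $x\mapsto x^{-1}$ fixes both the $(x-1)$- and $(x+1)$-primary parts, so it cannot exchange them. The swap, and likewise the condition $s^{(\star)}=-s'^{(\star)}$, comes from the quadratic-character twist built into the odd-orthogonal Weil representation (the $\chi_{V,V'}=\zeta$ of case (III) in \S\ref{chp:thetaforclassical}, equivalently the $(\zeta\circ\det)^{\dim V/2}$ factor in the modified Weil representation), which translates the dual-side parameter $s'$ to $-s'$; multiplication by $-1$, not inversion, interchanges the eigenvalue-$1$ and eigenvalue-$(-1)$ blocks. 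Second, a see-saw restriction of $\omega_{G,G'^0}$ to the product of centralizer factors can only produce one implication (occurrence of matched components implies occurrence of the pair, up to multiplicity bookkeeping); the converse direction of the ``if and only if'', and in particular the precise shape of the third bullet with its $\sgn$-twist ambiguity in the unipotent $\Sp$--$\O^{\pm}_{\mathrm{even}}$ correspondence, requires the character-level decomposition of the uniform projection of the Weil character into Deligne--Lusztig characters (Srinivasan, Aubert--Michel--Rouquier) together with the symbol combinatorics of the unipotent correspondence. That computation is the actual content of Pan's proof and is not supplied by the sketch.
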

\medskip

\subsection{Kac-Moody action on Grothendieck groups}\hfill\\

In this section, we will obtain a complete description of the action of Kac-Moody algebra $\fraks\frakl'_{I_+}\oplus \fraks\frakl'_{I_-}$ on the Grothendieck group of $KG_\bullet\mod$.
\smallskip

\subsubsection{Ma--Qiu--Zou's Lemma}
In \cite[\S 1.8]{MQZ}, Ma, Qiu and Zou introduced the concept of ``theta cuspidal.'' We now recall their definition. Let
\begin{align}
	\chi_{V,V'}=\begin{cases}  1& \text{ if in case $($I$)$ or $($II$)$;}\\
		\zeta& \text{ if in case $($III$)$.}
		\end{cases}
		\end{align}
For each $\sigma\in \Irr(G_n),$
define $$c(\sigma)=\max\{k\in \bbN|\Hom_{\GL_k}({^*}R^{G_n}_{L_{n-k,k}}(\sigma), \chi_{V,V'}\circ \sgn_{\GL_k})\neq 0\}$$
Clearly if $\sigma$ is a cuspidal representation of $G_n$, then $c(\sigma)=0$. In general, we call a representation $\sigma\in \Irr(G_n)$ \emph{theta cuspidal} (with respect to the dual pairs listed in \S \ref{subsec:defthetacorr})
when $c(\sigma) = 0.$ In particular, when $k=1$, we have the isomorphism of vector spaces \begin{align}\label{thetaformulaofE}
	\Hom_{\GL_1}({^*}R^{G_n}_{L_{n-1,1}}(\sigma), \chi_{V,V'})\cong E^{\epsilon}(\sigma)
	\end{align} which follows from the biadjointness of the functors $(E^\pm,F^\pm)$ and  isomorphisms (\ref{formulaofF}) in \S \ref{subsec:functors}, where $\epsilon=+$ in the case $($I$)$ and $($II$)$ and $\epsilon=-$ in the case $($III$).$
We will show in the following lemma that the notion of theta-cuspidal coincides with that of $F^+$-cuspidal or $F^-$-cuspidal.
\begin{lemma}\label{Lem:thetacuspidal}
\begin{itemize}
\item[$(a)$]For cases $(${\rm I}$)$ and $(${\rm II}$)$, $\sigma\in \Irr(G(V))$ is theta cuspidal if and only if it is $F^+$-cuspidal.
\item[$(b)$]
For case $(${\rm III}$)$, $\sigma\in \Irr(G(V))$ is theta cuspidal if and only if it is $F^-$-cuspidal.
\end{itemize}
\end{lemma}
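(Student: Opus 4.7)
The plan is to reduce the theta-cuspidal condition (vanishing of $\Hom_{\GL_k}({}^*R^{G_n}_{L_{n-k,k}}(\sigma),\chi_{V,V'}\circ\sgn_{\GL_k})$ for all $k\ge 1$) to the corresponding $k=1$ condition via Frobenius reciprocity, and then identify the $k=1$ condition with $F^\epsilon$-cuspidality through the isomorphism \eqref{thetaformulaofE}. Throughout, $\epsilon=+$ in cases (I), (II) and $\epsilon=-$ in case (III), which is exactly the pairing between $\chi_{V,V'}\in\{1,\zeta\}$ and the idempotent $e^{\epsilon}_n$ used to define $E^\epsilon$, $F^\epsilon$.

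First, the ``only if'' direction is immediate. If $\sigma$ is theta-cuspidal, then specializing to $k=1$ in the definition of $c(\sigma)$ gives $\Hom_{\GL_1}({}^*R^{G_n}_{L_{n-1,1}}(\sigma),\chi_{V,V'})=0$ because $\sgn_{\GL_1}=1$. By \eqref{thetaformulaofE} this forces $E^\epsilon(\sigma)=0$, and since $F^\epsilon$ is right adjoint to $E^\epsilon$, this is equivalent to $\sigma$ being $F^\epsilon$-cuspidal in the sense of the paper.

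For the ``if'' direction, I assume $E^\epsilon(\sigma)=0$ and must show $c(\sigma)=0$ for all $k\ge 1$. Let $T_k\cong\GL_1^k$ be the diagonal torus of $\GL_k$. Harish-Chandra restriction in stages gives
\[
{}^*R^{\GL_k}_{T_k}\bigl({}^*R^{G_n}_{L_{n-k,k}}(\sigma)\bigr)\;\cong\;{}^*R^{G_n}_{L_{n-k,1^k}}(\sigma),
\]
and the character $\chi_{V,V'}\circ\sgn_{\GL_k}$ appears as a direct summand of $R^{\GL_k}_{T_k}(\chi_{V,V'}^{\otimes k})$ (both are one-dimensional, with the same restriction to $T_k$). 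Hence Frobenius reciprocity yields an injection
\[
\Hom_{\GL_k}\bigl({}^*R^{G_n}_{L_{n-k,k}}(\sigma),\,\chi_{V,V'}\circ\sgn_{\GL_k}\bigr)\;\hookrightarrow\;\Hom_{T_k}\bigl({}^*R^{G_n}_{L_{n-k,1^k}}(\sigma),\,\chi_{V,V'}^{\otimes k}\bigr).
\]
Iterating \eqref{thetaformulaofE} together with transitivity of Harish-Chandra restriction identifies the right-hand side with $(E^\epsilon)^k(\sigma)$ (as a $G_{n-k}$-module). Since $E^\epsilon$ is exact and $E^\epsilon(\sigma)=0$, the iterate $(E^\epsilon)^k(\sigma)$ vanishes for every $k\ge 1$, so the injection forces the original $\GL_k$-$\Hom$ to vanish as well. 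Thus $c(\sigma)=0$, proving $\sigma$ is theta-cuspidal.

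The main subtlety I expect to be careful about is the Frobenius reciprocity step: one must verify that $\chi_{V,V'}\circ\sgn_{\GL_k}$ really is a constituent of $R^{\GL_k}_{T_k}(\chi_{V,V'}^{\otimes k})$ (this uses that both are linear characters with matching restriction to the torus, which holds because $\chi_{V,V'}^{\otimes k}$ is Weyl-invariant), and that the transitivity identification of the iterated restriction with iterates of $E^\epsilon$ is compatible with the chosen idempotents $e^\epsilon_\bullet$ at each stage. Once these bookkeeping points are checked, the argument is purely a combination of adjunction and exactness.
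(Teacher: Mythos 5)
Your proof is correct and follows essentially the same route as the paper's: the $k=1$ case via the isomorphism \eqref{thetaformulaofE} for the ``only if'' direction, and for the converse the transitivity of Harish-Chandra restriction plus Frobenius reciprocity to identify the relevant $\Hom$-space as a subspace of $(E^\epsilon)^k(\sigma)$, using that $\chi_{V,V'}\circ\sgn_{\GL_k}$ is a constituent of $R^{\GL_k}_{\GL_1^{\times k}}(\chi_{V,V'}^{\otimes k})$. Your explicit verification of that last constituent claim (matching linear characters on the torus) is a welcome detail that the paper leaves implicit.
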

\begin{proof}This follows from the definitions  and the fact that $\GL_n(q)$ has no irreducible quadratic unipotent cuspidal module unless $n=1$. Indeed, by isomorphisms (\ref{thetaformulaofE}), if $\sigma$ is theta cuspidal, then $c(\sigma)=0$, which implies $$\label{formulaofE}\Hom_{\GL_1}({^*}R^{G_n}_{L_{n-1,1}}(\sigma), \chi_{V,V'})\cong E^{\epsilon}(\sigma)=0,$$ hence $\sigma$ is $F^\epsilon$-cuspidal. Conversely, if $\sigma$ is $F^\epsilon$-cuspidal, meaning that  $E^\epsilon(\sigma)=0$, we have \begin{align*}
0=(E^\epsilon)^k(\sigma)&\cong\Hom_{\GL_1^{\times k}}({^*}R^{G_n}_{L_{n-k,1^k}}(\sigma), \chi_{V,V'}^{\otimes k})\\&\cong \Hom_{\GL_k}({^*}R^{G_n}_{L_{n-k,k}}(\sigma), R_{\GL_1^{\times k}}^{\GL_k}(\chi_{V,V'}^{\otimes k})),
\end{align*}
for all $k>0.$ Here, the last isomorphism follows from the transitivity of Harish-Chandra restriction functors $${^*}R^{G_n}_{L_{n-k,1^k}}\cong {^*}R^{L_{n-k,k}}_{L_{n-k,1^k}}\circ {^*}R^{G_n}_{L_{n-k,k}}$$ and Frobenius reciprocity.
On the other hand, since the quadratic unipotent character $\chi_{V,V'}\circ \sgn_{\GL_k}$ is a constituent of $R_{\GL_1^{\times k}}^{\GL_k}(\chi_{V,V'}^{\otimes k})$, this forces $$\Hom_{\GL_k}({^*}R^{G_n}_{L_{n-k,k}}(\sigma), \chi_{V,V'}\circ \sgn_{\GL_k})=0$$ for all $k> 0$.
Consequently, $c(\sigma)=0,$ i.e., $\sigma$ is theta cuspidal.
\end{proof}

\begin{lemma}[\cite{MQZ}]\label{Lemma:MQZ}
 Let $\sigma\in \Irr(G(V))$ be theta cuspidal for a relative Witt towers $\calV'$(or $\calV'^\pm$).
\begin{itemize}
\item[$(1)$] In case $(${\rm I}$)$,  assume that $\sigma'\in \Irr(G(V'))$ (resp.
$\widetilde{\sigma}'\in \Irr(G(\widetilde{V}')) $) is the first occurrence of $\sigma$
(resp. of $\sigma\cdot \sgn$) with respect to relative Witt towers $\calV'$, then
$X^+(\sigma)$ acts on $F^+(\sigma)$ by the following quadratic relation
$$(X^+(\sigma)-\epsilon_{V'}\cdot q^{\frac{\dim(V')}{2}-\lfloor\frac{\dim (V)}{2}\rfloor})
(X^+(\sigma)+\epsilon_{V'}\cdot q^{\frac{\dim(\widetilde{V}')}{2}-\lfloor\frac{\dim (V)}{2}\rfloor})=0,$$
where $\epsilon_{V'}=\zeta(-1)^{\frac{\dim(V')}{2}}.$
\item[$(2)$] In case $(${\rm II}$)$, assume that $\sigma'^+\in \Irr(G(V'^+))$
(resp. $\sigma'^-\in \Irr(G(V'^-)) $) is the first occurrence of $\sigma$
with respect to relative Witt towers $\calV'^+$ (resp. $\calV'^-$),
then $X^+(\sigma)$ acts on $F^+(\sigma)$ by the following quadratic relation
$$(X^+(\sigma)-q^{\frac{\dim (V'^+)-\dim (V)}{2}})
(X^+(\sigma)+q^{\frac{\dim (V'^-)-\dim (V)}{2}})=0.$$

\item[$(3)$] In case $(${\rm III}$)$,
let $\alpha=\gamma_\psi/\sqrt{\zeta(-1)q}$,
where $\gamma_\psi=\Sigma_{x\in \bbF_q}\psi(\frac{1}{2}x^2)$ is the Gauss sum.
Assume that $\sigma'\in \Irr(G(V'))$ (resp. $\widetilde{\sigma}'\in \Irr(G(\widetilde{V}')) $) is the first occurrence of $\sigma$ (resp. of $\sigma^c$) with respect to relative Witt towers $\calV'$, then $X^-(\sigma)$ acts on $F^-(\sigma)$ by the following quadratic relation
    $$(X^-(\sigma)-\alpha\cdot q^{\lfloor\frac{\dim (V')}{2}\rfloor-\frac{\dim(V)}{2}})
    (X^-(\sigma)+\alpha\cdot q^{\lfloor\frac{\dim (\widetilde{V}')}{2}\rfloor-\frac{\dim(V)}{2}})=0.$$

\end{itemize}
\end{lemma}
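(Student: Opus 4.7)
The plan is to reduce the computation to identifying the two eigenvalues of $X^\epsilon(\sigma)$ acting on $F^\epsilon(\sigma)$, where $\epsilon=+$ in cases (I) and (II) and $\epsilon=-$ in case (III). By Lemma \ref{Lem:thetacuspidal}, the hypothesis that $\sigma$ is theta cuspidal is equivalent to $\sigma$ being $F^\epsilon$-cuspidal. Proposition \ref{Prop:dim2} then yields $\dim_K \End(F^\epsilon(\sigma)) = 2$ and the quadratic relation
$$
X^\epsilon(\sigma)^2 \;=\; \gamma \cdot X^\epsilon(\sigma) \;-\; (t^\epsilon)^2 \cdot \id_{F^\epsilon(\sigma)}
$$
for some $\gamma \in K$. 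In particular, $F^\epsilon(\sigma)$ has exactly two irreducible summands $\sigma_1$, $\sigma_2$, the two eigenvalues $\alpha_1,\alpha_2$ of $X^\epsilon(\sigma)$ satisfy $\alpha_1 \alpha_2 = (t^\epsilon)^2$, and $\alpha_1/\alpha_2 = -q^c$ where $q^c = \dim(\sigma_1)/\dim(\sigma_2)$.

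Next I would identify the two summands $\sigma_1,\sigma_2$ with specific irreducible modules that appear in the first occurrences of $\sigma$. Using formula (\ref{formulaofF}), $F^\epsilon(\sigma) = R^{G_{n+1}}_{L_{n,1}}(\sigma \boxtimes \chi_{V,V'})$ is a Harish-Chandra induction from the Levi $L_{n,1} \cong G_n \times \GL_1(q)$. The compatibility of the theta correspondence with Jacquet functors (Kudla's filtration, cf.\ \cite{MQZ}) together with the first-occurrence hypothesis allows one to match each irreducible summand $\sigma_i$ with exactly one of the first-occurrence partners $\sigma'$ and $\widetilde{\sigma}'$ (or $\sigma'^+$ and $\sigma'^-$ in case (II)). The theta cuspidal assumption is exactly what forces this identification to be clean: no lower-dimensional theta lifts contaminate the Jacquet module.

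Once the summands are identified, the ratio $q^c = \dim(\sigma_1)/\dim(\sigma_2)$ is read off from the dimension formula for first occurrences in the finite theta correspondence, which expresses $\dim(\sigma')$ as a product of $\dim(\sigma)$ with a ratio of group orders determined by $\dim V$ and $\dim V'$. Inserting the value of $q^c$ into Proposition \ref{Prop:dim2}(b) determines $\gamma^2$, and combining with $\alpha_1 \alpha_2 = (t^\epsilon)^2$ (recall $(t^+)^2 = -q^{-1}$ for $\O_{2n+1}(q), \Sp_{2n}(q)$ and $(t^+)^2 = -1$ for $\O^\pm_{2n}(q)$, with the analogous values for $(t^-)^2$) pins down the two roots up to an overall sign.

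The main obstacle is fixing the signs and the Gauss sum factors. The sign $\epsilon_{V'} = \zeta(-1)^{\dim(V')/2}$ in case (I) comes from comparing our normalization of $X^+$ (via the lift $\dot{x}_{r+1}$ in \S\ref{sub:lift-refl} and the scalar $\beta$ in (\ref{for:normalization})) to the normalization of the Weil representation restricted to the orthogonal-symplectic dual pair. In case (III), the extra factor $\alpha = \gamma_\psi/\sqrt{\zeta(-1) q}$ encodes precisely the metaplectic twist that relates $F^-$ (which involves the Legendre symbol $\zeta$) to the odd-orthogonal theta lift, and its computation is a direct Gauss sum calculation. This sign tracking is the most delicate step, but once performed it yields exactly the quadratic relations stated. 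Throughout, the cleanest route is the one in \cite{MQZ}, which systematically uses the $\Sp$-intertwiner of the Weil representation to transport the $X^\epsilon$-action through the theta correspondence.
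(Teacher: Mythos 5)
Your outline diverges from the paper's proof, and the divergence exposes a gap. The paper does not reconstruct the eigenvalues from Proposition \ref{Prop:dim2} and a dimension ratio; it proves the lemma by identifying $X^\epsilon(\sigma)$ with an explicit scalar multiple of the ``unnormalized Hecke operator'' $T_l$ of \cite[\S 3.2]{MQZ} (namely $X^\epsilon(\sigma)=q^{-\lfloor\dim V/2\rfloor}T_l$ in cases (I)--(II) and $\sqrt{\zeta(-1)}\,q^{-(\dim V+1)/2}T_l$ in case (III), with $l=1$), invoking \cite[Lemmas 3.4, 3.5, Prop.\ 3.6]{MQZ} for the quadratic relation satisfied by $T_l$, and accounting for the discrepancy between the modified Weil representation used in \cite{MQZ} and G\'erardin's $\omega^\psi_{G,G'}$, which is exactly where the sign $\epsilon_{V'}$ in case (I) comes from.

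The gaps in your route are the two steps you treat as routine. First, the claim that $q^c=\dim(\sigma_1)/\dim(\sigma_2)$ is ``read off from the dimension formula for first occurrences'' is unsubstantiated: the summands $\sigma_1,\sigma_2$ of $F^\epsilon(\sigma)$ are representations of $G(V\oplus\bbH)$, in the \emph{same} Witt tower as $V$, whereas the first-occurrence data lives in the dual towers. There is no direct matching of $\sigma_i$ with $\sigma'$ or $\widetilde\sigma'$; what is true is a statement about eigenvalues of the Hecke operator on theta-lift-detecting subspaces, and establishing it requires the Kudla-filtration/see-saw computation of \cite{MQZ} in full --- at which point one has reproved their Lemmas 3.4--3.5 rather than bypassed them. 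So your argument never actually produces the exponents $\tfrac{\dim V'}{2}-\lfloor\tfrac{\dim V}{2}\rfloor$, etc. Second, even granting the ratio, the constraints $\alpha_1\alpha_2=(t^\epsilon)^2$ and $\alpha_1/\alpha_2=-q^c$ determine the pair of roots only up to a global sign, and that sign (together with the factor $\alpha=\gamma_\psi/\sqrt{\zeta(-1)q}$ in case (III)) is the entire nontrivial content of the lemma. You defer it to ``comparing normalizations'' and ``a direct Gauss sum calculation'' without performing either, and then recommend falling back on \cite{MQZ} --- which is the paper's proof, minus the one computation the paper actually supplies (the scalar relating $X^\epsilon(\sigma)$ to $T_l$ and the $\epsilon_{V'}$ correction). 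As written, the proposal does not establish the stated quadratic relations.
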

\begin{proof}

This lemma is exactly a special case of \cite[Lemmas 3.4, 3.5 or Proposition 3.6]{MQZ} with $l=1.$ Note that in \cite{MQZ}, they  use modified Weil representation $\omega_{V,V'}$ rather than $\omega^\psi_{G,G'}$ defined in \cite{Ger}:
\begin{align}
\omega_{V,V'}=\begin{cases}
((\zeta\circ\det_{G(V)})^{\frac{1}{2}\dim(V')}\otimes 1_{G(V')})\otimes \omega^\psi_{G,G'}& \text{in case $($I$)$;}\\
(1_{G(V)}\otimes (\zeta\circ\det_{G(V)})^{\frac{1}{2}\dim(V)})\otimes \omega^\psi_{G,G'}& \text{in cases $($II$)$ and $($III$)$,}
\end{cases}
\end{align}
so for case $(1)$, there is an extra sign $\epsilon_{V'}.$
\smallskip

Assume $G_n=G(V)$. For any $F^\epsilon$-cuspidal module $\sigma\in \Irr(KG_n)$, the evaluation map $\phi_{F^\epsilon}:\End(F^\epsilon)\to \End_{KG_\bullet\mod}(F^\epsilon(\sigma))=\End_{KG_{n+1}}(R_{L_{n,1}}^{G_{n+1}}(\sigma\otimes \zeta_{\epsilon}))$ is surjective and  $X^\epsilon\in \End(F^\epsilon)$ maps to $X^\epsilon(\sigma)\in \End_{KG_{n+1}}(R_{L_{n,1}}^{G_{n+1}}(\sigma\otimes \zeta_{\epsilon}))$, where $\zeta_{\epsilon}=1$ if $\epsilon=+$ and $\zeta_{\epsilon}=\zeta$ if $\epsilon=-$.
Then $X^\epsilon(\sigma)$ is identified with a scalar multiple of ``unnormalized Hecke operator'' $T_{l}$ defined in \cite[\S3.2]{MQZ} for the case $l=1$. In fact,
\begin{align}
X^\epsilon(\sigma)=\begin{cases} q^{-\lfloor\frac{1}{2}\dim(V)\rfloor}T_{l}& \text{in cases $($I$)$ and $($II$)$ with $\epsilon=+$; }\\
\sqrt{\zeta(-1)}\,q^{-\frac{\dim(V)+1}{2}}T_{l}& \text{in case $($III$)$ with $\epsilon=-$.}
\end{cases}
\end{align}
 Then the lemma follows from \cite[Proposition 3.6]{MQZ} immediately.
\end{proof}
\begin{remark}Note that the Gauss sum $\gamma_{\psi}=\pm \sqrt{\zeta(-1)q}$, and thus
$\alpha\in\{\pm 1\}$.
Consequently, Lemma \ref{Lemma:MQZ} and Proposition \ref{Prop:extrasymm} together imply that the sets $I_+$ and $I_-$ are both equal to $q^\bbZ \sqcup -q^\bbZ$, which aligns with the result stated in Proposition \ref{Prop:I+-}.
\end{remark}

\subsubsection{Unipotent case}

Before dealing with the general case, we first deal with the unipotent case.
\begin{theorem}\label{Thm:unipotenttheta}
\begin{itemize}
\item[$(a)$]
If $G_\bullet=\Sp_{2\bullet}(q),$ then
  $$\bbO^+(u)(\rho^{\Sp}_{t})=(u-(-q)^{t})(u-(-q)^{-1-t}).$$
\item[$(b)$]
Let $G_\bullet=\O^\pm_{2\bullet}(q)$ and $t\in \bbN $, 
then
 $$\rho^{\O_{\mathrm{even}}}_{t}=\pi_{t}^{\beta}\quad \text{and}\,\,\quad \rho^{\O_{\mathrm{even}}}_{-t}=\pi_{t}^{\alpha}.$$

\end{itemize}
\end{theorem}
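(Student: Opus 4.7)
The plan is to apply the Ma--Qiu--Zou quadratic relations recorded in Lemma \ref{Lemma:MQZ} in tandem with the Adams--Moy first-occurrence description from Theorem \ref{Thm:AM}. Both $\rho^{\Sp}_t$ (for part (a)) and $\pi_t^\alpha, \pi_t^\beta$ (for part (b)) are unipotent cuspidal, hence both $F^+$- and $F^-$-cuspidal by Lemma \ref{Lem:cuspidal}. Consequently Lemma \ref{Lemma:minimalpoly} identifies $\bbO^+(u)(\sigma)$ with the minimal polynomial of $X^+(\sigma)$ on $F^+(\sigma)$, which is precisely what Lemma \ref{Lemma:MQZ} computes in terms of first occurrences.

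For (a), I would take $V=\bbF_q^{2t(t+1)}$ symplectic, place $\sigma=\rho^{\Sp}_t$ inside it, and apply case (II) of Lemma \ref{Lemma:MQZ}. The two first occurrences are supplied by Theorem \ref{Thm:AM}: $(\pi_t^\alpha,\rho^{\Sp}_t)$ at $(\O^{\epsilon_t}_{2t^2},\Sp_{2t(t+1)})$ and $(\pi_{t+1}^\beta,\rho^{\Sp}_t)$ at $(\O^{\epsilon_{t+1}}_{2(t+1)^2},\Sp_{2t(t+1)})$. Since $\epsilon_{t+1}=-\epsilon_t$, exactly one of these sits in the Witt tower $\calV'^+$ (trivial anisotropic kernel) and the other in $\calV'^-$ (two-dimensional anisotropic kernel); the assignment flips between $t$ even and $t$ odd. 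Substituting $\dim V'^\pm\in\{2t^2,2(t+1)^2\}$ into Lemma \ref{Lemma:MQZ}(2) gives the quadratic relation for $X^+(\rho^{\Sp}_t)$, and a brief parity case-check shows that the two possible products combine into the single expression $(u-(-q)^t)(u-(-q)^{-1-t})$.

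For (b), I start from the observation that the two unipotent cuspidals $\pi_t^\alpha,\pi_t^\beta$ of $\O^{\epsilon_t}_{2t^2}(q)$ are exchanged by twisting with $\sgn$, so in particular $\pi_t^\beta\cdot\sgn=\pi_t^\alpha$. I would apply Lemma \ref{Lemma:MQZ}(1) (case I, $V=\bbF_q^{2t^2}$ orthogonal) to $\sigma=\pi_t^\beta$: the first occurrence of $\pi_t^\beta$ is $\Sp_{2t(t-1)}$ by Theorem \ref{Thm:AM}(b), while the first occurrence of $\pi_t^\alpha=\pi_t^\beta\cdot\sgn$ is $\Sp_{2t(t+1)}$ by Theorem \ref{Thm:AM}(a). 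The sign $\epsilon_{V'}=\zeta(-1)^{t(t-1)}$ equals $1$ since $t(t-1)$ is always even. The resulting minimal polynomial of $X^+(\pi_t^\beta)$ has the form $(u-q^{a})(u+q^{-a})$ for a specific $a\in\{t,-t\}$. Comparing against the normalization in Table \ref{diag:label}, namely $\bbO^+(u)(\rho^{\O_{\text{even}}}_{t_+})=(u-q^{t_+})(u+q^{-t_+})$, and invoking the injectivity of the $(q,\epsilon)$-residue function (Corollary \ref{cor:injective}) pins down $\pi_t^\beta=\rho^{\O_{\text{even}}}_t$, whence $\pi_t^\alpha=\rho^{\O_{\text{even}}}_{-t}$.

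The principal obstacle will be the careful reconciliation of sign conventions and labels. One must track (i) which orthogonal Witt tower houses each first occurrence as the parity of $t$ varies, (ii) the sign constant $\epsilon_{V'}$ in Lemma \ref{Lemma:MQZ}(1), and (iii) the precise match between Adams--Moy's labels $\pi_t^\alpha,\pi_t^\beta$ and our parametrization $\rho^{\O_{\text{even}}}_{\pm t}$. Packaging the parity-dependent outputs into the unified formula involving $(-q)^t$ in (a) is the most delicate step; beyond that, the whole argument reduces to direct substitution into known formulas.
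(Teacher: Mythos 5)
Your proposal follows the paper's proof essentially verbatim: both parts combine the Adams--Moy first occurrences (Theorem \ref{Thm:AM}) with the Ma--Qiu--Zou quadratic relations (Lemma \ref{Lemma:MQZ}) and identify the resulting minimal polynomial with $\bbO^+(u)(-)$ via Lemma \ref{Lemma:minimalpoly}, exactly as the paper does. The only place you stop short is in (b), where you leave the exponent $a\in\{t,-t\}$ undetermined; it must actually be computed by substituting the dimensions $2t(t-1)$ and $2t(t+1)$ into Lemma \ref{Lemma:MQZ}(1) (as the paper does), since both values of $a$ are a priori consistent with the normalization of Table \ref{diag:label} and injectivity of the residue function alone cannot decide between $\rho^{\O_{\mathrm{even}}}_{t}$ and $\rho^{\O_{\mathrm{even}}}_{-t}$.
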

\begin{proof}$(a)$
We consider the symplectic-even orthogonal dual pair. Write $\epsilon_k=(-1)^k$. Let $\sigma=\rho^{\Sp}_{t}\in \Irr(\Sp_{2(t^2+t)}(q))$ be the unique irreducible unipotent cuspidal module.
By Theorem \ref{Thm:AM}, we know that $\sigma$ first occurs in the correspondences
for both dual pairs $$(\Sp_{2(t^2+t)}(q), \O^{\epsilon_t}_{2t^2}(q))\quad \text{and}\,\,
\quad
(\Sp_{2(t^2+t)}(q), \O^{\epsilon_{t+1}}_{2(t+1)^2}(q)).$$
By Lemma \ref{Lemma:MQZ}$(2)$, we know that $X^+(\sigma)$ acts on $F^+(\sigma)$ satisfying
 $$(X^+(\sigma)-(-q)^{t})(X^+(\sigma)-(-q)^{-1-t})=0.$$ Then by Lemma \ref{Lemma:minimalpoly},
 $(a)$ follows.

 \smallskip

$(b)$  We consider even orthogonal-symplectic dual pair. Let $$\sigma=
\pi^{\beta}_{t}\in \Irr(\O^{\epsilon_t}_{2t^2}(q)),$$
then $\sigma\cdot \sgn=\pi^{\alpha}_{t}$.
We know that $\sigma=\pi^{\beta}_{t}$ first occurs in the correspondence
for  dual pair $(\O^{\epsilon_t}_{2t^2}(q),\Sp_{2t(t-1)}(q))$
and $\sigma\cdot\sgn=\pi^{\alpha}_{t}$ first occurs in the correspondence
for  dual pair $(\O^{\epsilon_t}_{2t^2}(q),\Sp_{2t(t+1)}(q)).$
Note that $\frac{\dim(V')}{2}$ is an even number, so $\epsilon_{V'}=\zeta(-1)^{\frac{\dim(V')}{2}}=1.$
By Lemma \ref{Lemma:MQZ} $(1)$, we know that $X^+(\sigma)$ acts on $F^+(\sigma)$ satisfying
 $$(X^+(\sigma)-q^{t})(X^+(\sigma)+q^{-t})=0.$$
 Then by Lemma \ref{Lemma:minimalpoly},
 we know $\pi_{t}^{\beta}=\rho^{\O_{\mathrm{even}}}_{t}$. A similar argument implies  $\pi_{t}^{\alpha}=\rho^{\O_{\mathrm{even}}}_{-t}$.
\end{proof}
\begin{remark}
Note that, for the case of unipotent cuspidal modules of $\Sp_{2n}(q)$,
this result coincides with \cite[Theorem 6.5]{DVV2}.
The above theorem gives a new proof of characteristic 0. Using unitary-unitary theta correspondence \cite[Proposition 3.6]{MQZ}, a similar argument also applies to the unitary groups $\GU_n(q)$ case, which gives a new proof of \cite[Theorem 4.12]{DVV}.
\end{remark}
\begin{remark}\label{connections}Conversely, if we know the colored weight function $\bbO^+(u)(\rho)$ for unipotent cuspidal modules $\rho$, then by Lemma \ref{Lemma:MQZ}, we can determine the two first occurrences of $\rho$ via $\bbO^+(u)(\rho)$. In particular, Dudas, Varagnolo and Vasserot \cite[Theorem 6.5]{DVV2} proved that for symplectic groups $\Sp_{2n}(q),$  we have $\bbO^+(u)(\rho^{\Sp}_{t})=(u-(-q)^{t})(u-(-q)^{-1-t})$,  then by Lemma \ref{Lemma:MQZ}, one can show that $\rho^{\Sp}_{t}$ first occurs in the theta correspondence for the dual pairs $(\Sp_{2t(t+1)}(q),\O^{\epsilon_{t+1}}_{2(t+1)^2}(q))$ and $(\Sp_{2t(t+1)}(q),\O^{\epsilon_t}_{2t^2}(q)),$ which gives a new proof of Theorem \ref{Thm:AM}. Moverover, by \cite[Theorem 1.3]{MQZ}, one can recover the symplectic-even orthogonal theta correspondence for unipotent characters.
\end{remark}

\subsubsection{General case}

Now we compute $\bbO^+(u)(-)$ and $\bbO^-(v)(-)$ for all irreducible modules which are both
$F^+$-cuspidal and $F^-$-cuspidal.
\smallskip

We first introduce a lemma to compute the spinor character for semisimple elements of orthogonal groups. Let $G=G(V)$ be an orthogonal group and $s\in G$ be a semisimple element. Let $s=\prod\limits_{\Gamma\in \calF}s_\Gamma$ be a primary decomposition of $s$. Recall that we denote by $\eta_\Gamma=\eta_\Gamma(s)$ the type of $V_\Gamma=V_\Gamma(s)$, $\delta_\Gamma$ the reduced degree of $\Gamma$, and $m_\Gamma=m_\Gamma(s)$ the multiplicity of $(\Gamma)$ in $s_\Gamma$. Suppose that  $\alpha$ is a root of $\Gamma$. Let $\sigma:\calF_1\cup\calF_2\to \{\pm1\}$ be defined by
\begin{align*}
\sigma(\Gamma)=\begin{cases}1&\text{if $\alpha\in (\bbF^\times_{q^{\delta_\Gamma}})^2$ and $\Gamma\in \calF_1$};\\
1&\text{if $\alpha^{(q^{\delta_\Gamma}+1)/2}=1$ and $\Gamma\in \calF_2$};\\
-1&\text{otherwise}.
\end{cases}
\end{align*}
\begin{lemma}\label{spinor}Let $G=G(V)$ be an orthogonal group. If $s\in G^0,$ we have
\begin{align}
\sp(s)=\begin{cases}\zeta(-1)^{\frac{\dim (V_{x+1})}{2}}\cdot \prod\limits_{\Gamma\in \calF_1\cup\calF_2,\sigma(\Gamma)\neq 1}(-1)^{m_\Gamma(s)} &\text{if  $\eta_{x+1}(s)=1$};\\
-\zeta(-1)^{\frac{\dim (V_{x+1})}{2} }\cdot\prod\limits_{\Gamma\in \calF_1\cup\calF_2,\sigma(\Gamma)\neq 1}(-1)^{m_\Gamma(s)}&\text{if  $\eta_{x+1}(s)=-1$}.
\end{cases}
\end{align}
\end{lemma}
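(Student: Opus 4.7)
The plan is to reduce the computation to each primary factor of $s$ by multiplicativity of the spinor norm, and then handle each factor by an explicit reflection decomposition or a Wall-type formula. More precisely, $\sp$ is the restriction to $G^0$ of $\zeta\circ\theta$, where $\theta\colon\O(V)\to\bbF_q^\times/(\bbF_q^\times)^2$ is the spinor norm. Since $\theta$ is a group homomorphism and the primary decomposition $V=\bigoplus_{\Gamma\in\calF}V_\Gamma(s)$ is orthogonal with $s=\prod_\Gamma s_\Gamma$, extending each $s_\Gamma$ by the identity on the orthogonal complement gives
\[
\sp(s)=\prod_{\Gamma\in\calF}\sp(s_\Gamma),
\]
so it suffices to treat the four cases $\Gamma=x-1$, $\Gamma=x+1$, $\Gamma\in\calF_1$, and $\Gamma\in\calF_2$ separately.

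Next I will dispatch the two polynomial factors in $\calF_0$. The case $\Gamma=x-1$ is trivial since $s_{x-1}=\id_{V_{x-1}}$ and therefore $\sp(s_{x-1})=1$. For $\Gamma=x+1$, I choose an orthogonal basis $\{e_1,\ldots,e_{m_{x+1}}\}$ of $V_{x+1}$ and write $-\id_{V_{x+1}}=r_{e_1}\cdots r_{e_{m_{x+1}}}$ as a product of reflections, so that $\theta(s_{x+1})\equiv\prod_i\langle e_i,e_i\rangle\equiv\disc(V_{x+1})\pmod{(\bbF_q^\times)^2}$. Since $\disc(V_{x+1})$ equals $(-1)^{m_{x+1}/2}$ when $\eta_{x+1}=1$ and $(-1)^{m_{x+1}/2}\delta$ when $\eta_{x+1}=-1$, this factor contributes precisely $\eta_{x+1}\cdot\zeta(-1)^{\dim V_{x+1}/2}$.

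Now for the elliptic case $\Gamma\in\calF_1\cup\calF_2$: since $s_\Gamma$ has no eigenvalue $\pm 1$ on $V_\Gamma$, the operator $1-s_\Gamma$ is invertible, and Wall's formula for the spinor norm of such elements yields
\[
\theta(s_\Gamma)\equiv\Gamma(1)^{m_\Gamma}\pmod{(\bbF_q^\times)^2},
\]
the correction factor being a square because $\dim V_\Gamma=d_\Gamma m_\Gamma$ is even. For $\Gamma\in\calF_2$, the factorization $\Gamma=\Gamma_0\Gamma_0^*$ with $\Gamma_0$ having roots $\alpha,\alpha^q,\ldots,\alpha^{q^{\delta_\Gamma-1}}$ in $\bbF_{q^{\delta_\Gamma}}$ gives $\Gamma(1)=N_{\bbF_{q^{\delta_\Gamma}}/\bbF_q}(1-\alpha)\cdot N_{\bbF_{q^{\delta_\Gamma}}/\bbF_q}(1-\alpha^{-1})$, which simplifies modulo squares to $N_{\bbF_{q^{\delta_\Gamma}}/\bbF_q}(\alpha)$; for $\Gamma\in\calF_1$, the Galois identity $\alpha^{q^{\delta_\Gamma}}=\alpha^{-1}$ coming from self-reciprocity gives $\Gamma(1)=N_{\bbF_{q^{2\delta_\Gamma}}/\bbF_q}(1-\alpha)$, again reducible to an analogous test quantity in $\bbF_q^\times$. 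In each subcase one verifies that this quantity is a square in $\bbF_q^\times$ exactly when $\sigma(\Gamma)=1$, giving $\sp(s_\Gamma)=1$; otherwise it is a non-square and $\sp(s_\Gamma)=(-1)^{m_\Gamma}$. Multiplying the contributions from all $\Gamma$ produces the stated formula.

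The main obstacle is the last translation step in the elliptic case: matching the square class of $N_{\bbF_{q^{\delta_\Gamma}}/\bbF_q}(\alpha)$, respectively $N_{\bbF_{q^{2\delta_\Gamma}}/\bbF_q}(1-\alpha)$, in $\bbF_q^\times/(\bbF_q^\times)^2$ with the explicit combinatorial definition of $\sigma(\Gamma)$. This demands a Galois-theoretic computation with the cyclic structure of $\bbF_{q^{\delta_\Gamma}}^\times$ and careful tracking of whether $\alpha$ lies in the index-two subgroup of squares (for $\calF_1$) or satisfies the condition $\alpha^{(q^{\delta_\Gamma}+1)/2}=1$ (for $\calF_2$), which is precisely where the definition of $\sigma$ arises.
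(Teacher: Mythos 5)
The paper's own proof of this lemma is a one-line citation to \cite[Proposition 16.30]{CE04}, so you are proposing a genuinely different route: a direct computation via multiplicativity of the spinor norm over the primary decomposition, explicit reflection/discriminant bookkeeping for the $x\pm 1$ factors, and a Wall--Zassenhaus-type formula for the elliptic factors. The reduction to primary factors and the treatment of $V_{x-1}$ and $V_{x+1}$ are correct (the latter via $\theta(-\id_{V_{x+1}})\equiv\disc(V_{x+1})$, which indeed produces the stated $\pm\zeta(-1)^{\dim V_{x+1}/2}$). The trade-off is clear: your route gives a self-contained argument whereas the paper outsources the whole thing to CE04, but your route requires carrying out the elliptic-factor square-class computation which CE04 has already done.

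That computation, however, is precisely where your proof is incomplete, and you say so yourself: the final paragraph acknowledges that matching the square class of $\Gamma(1)^{m_\Gamma}$ (equivalently of the relevant norms) with the combinatorial definition of $\sigma(\Gamma)$ is ``the main obstacle'' and then leaves it as an assertion. This is not a detail to be deferred --- it is where all the content of the lemma lives. Moreover, the intermediate simplifications are stated too loosely to survive that final step: you invoke Wall's formula in the form $\theta(s_\Gamma)\equiv\det(1-s_\Gamma)=\Gamma(1)^{m_\Gamma}$ up to a power of $2$, but testing this on $g=-\id$ (where $1-g=2\id$ is invertible and $\det(1-g)$ is a square on an even-dimensional space) shows the formula must also carry a $\disc(V_\Gamma)$ factor, which you drop. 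Similarly, in the $\calF_2$ case the identity $1-\alpha^{-1}=-(1-\alpha)/\alpha$ gives
\begin{equation*}
\Gamma(1)=(-1)^{\delta_\Gamma}\,N_{\bbF_{q^{\delta_\Gamma}}/\bbF_q}(1-\alpha)^2\big/N_{\bbF_{q^{\delta_\Gamma}}/\bbF_q}(\alpha)\equiv(-1)^{\delta_\Gamma}N_{\bbF_{q^{\delta_\Gamma}}/\bbF_q}(\alpha)\pmod{(\bbF_q^\times)^2},
\end{equation*}
not simply $N_{\bbF_{q^{\delta_\Gamma}}/\bbF_q}(\alpha)$ as you claim; the missing $(-1)^{\delta_\Gamma}$ is a non-square when $\delta_\Gamma$ is odd and $q\equiv3\pmod4$. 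These two omissions might well cancel against each other, but that cancellation is exactly the kind of bookkeeping that has to be exhibited, together with the case split in the $\calF_1$ computation (where $N_{\bbF_{q^{2\delta_\Gamma}}/\bbF_{q^{\delta_\Gamma}}}(1-\alpha)=-(1-\alpha)^2/\alpha$ also produces a sign), before one can claim agreement with $\sigma(\Gamma)$. As it stands, the proof is an outline whose hardest step is missing.
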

\begin{proof}This is \cite[Proposition 16.30]{CE04}.
\end{proof}

 Now we define some constants that occur in colored weight functions.
Let $\rho=\rho_{t_+,t_-,\lambda_\star,(\epsilon)}$ be both $F^+$-cuspidal and $F^-$-cuspidal, where $\lambda_\star=\prod\limits_{\Gamma\in \calF_1\cup \calF_2}\lambda_\Gamma.$
 For $G=\Sp_{2n}(q)$ and $\rho=\rho_{t_+,t_-,\lambda_\star},$ we define
 \begin{align}\eta(\rho)=\eta_{t_+,t_-,\lambda_\star}=
 (-1)^{t_++t_-}\cdot \prod\limits_{\Gamma\in \calF_1}(-1)^{|\lambda_\Gamma|}.
 \end{align}
For $G=\O_{2n+1}(q)$ and $\rho=\rho_{t_+,t_-,\lambda_\star,\epsilon}$, we define
\begin{align}\eta^+(\rho)=\eta^+_{t_+,t_-,\lambda_\star,\epsilon}=
\epsilon\cdot(-1)^{t_+}\cdot\prod_{\Gamma\in \calF_1\cup\calF_2,\sigma(\Gamma)\neq1}(-1)^{|\lambda_\Gamma|},
\end{align} and
\begin{align}
\eta^-(\rho)=\eta^-_{t_+,t_-,\lambda_\star,\epsilon}=
\epsilon\cdot (-1)^{t_-}\cdot\prod_{\Gamma\in \calF_1\cup\calF_2,\sigma(\Gamma)\neq 1}(-1)^{|\lambda_\Gamma|}.
\end{align}

Note that $\eta(\rho),\eta^\delta(\rho)\in \{\pm1\}$ for $\delta\in \{\pm\}.$ Then we have the following theorem.

\begin{theorem}\label{Thm:weightfunctions}
Let $\rho=\rho_{t_+,t_-,\lambda_\star,(\epsilon)}$ be both $F^+$-cuspidal and $F^-$-cuspidal.
\begin{itemize}

\item[$(a)$]
If $G_\bullet=\Sp_{2\bullet}(q),$ then
 \begin{itemize}
\item[$(a1)$] $\bbO^+(u)(\rho_{t_+,t_-,\lambda_\star})=(u-\eta_{t_+,t_-,\lambda_\star}\cdot q^{t_+})
    (u+\eta_{t_+,t_-,\lambda_\star}\cdot q^{-1-t_+}),$
\item[$(a2)$]$\bbO^-(v)(\rho_{t_+,t_-,\lambda_\star})=
(v-q^{t_-})(v+q^{-t_-}).$
\end{itemize}
\item[$(b)$]
If $G_\bullet=\O^\pm_{2\bullet}(q)$, then
\begin{itemize}
\item[$(b1)$] $\bbO^+(u)(\rho_{t_+,t_-,\lambda_\star})=(u-
    q^{t_+})(u+q^{-t_+}),$
\item[$(b2)$]$\bbO^-(v)(\rho_{t_+,t_-,\lambda_\star})=(v-q^{t_-})(v+q^{-t_-}).$
\end{itemize}

\item[$(c)$]If $G_\bullet=\O_{2\bullet+1}(q)$, then
\begin{itemize}
\item[$(c1)$] $\bbO^+(u)(\rho_{t_+,t_-,\lambda_\star,\epsilon})=
    (u-\eta^+_{t_+,t_-,\lambda_\star,\epsilon}\cdot q^{t_+})(u+ \eta^+_{t_+,t_-,\lambda_\star,\epsilon}\cdot q^{-1-t_+}),$
\item[$(c2)$]$\bbO^-(v)(\rho_{t_+,t_-,\lambda_\star,\epsilon})=
(v-\eta^-_{t_+,t_-,\lambda_\star,\epsilon}\cdot q^{t_-})(v+ \eta^-_{t_+,t_-,\lambda_\star,\epsilon}\cdot q^{-1-t_-}).$
\end{itemize}
\end{itemize}
\end{theorem}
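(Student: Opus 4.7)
The plan is to separate the cases that are tautological from our labelling from those that require new input. By the construction of Table~\ref{diag:label}, both formulas in (b) and the $\bbO^-$ formula (a2) hold by definition of the parametrization $\rho_{t_+,t_-,\lambda_\star}$; what genuinely needs proof are (a1) for $\Sp_{2n}(q)$ and both (c1), (c2) for $\O_{2n+1}(q)$. By Lemma~\ref{Lemma:minimalpoly}, each such function equals the monic minimal polynomial $m_\rho^\epsilon(u)$ of the corresponding dot on $F^\epsilon(\rho)$, and by Corollary~\ref{cor:minimalpoly} this polynomial is already known to have the shape $(u-\epsilon'q^{t_+})(u+\epsilon' q^{-1-t_+})$ (and analogously for $\bbO^-$). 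Only the overall signs $\eta,\eta^+,\eta^-\in\{\pm1\}$ remain to be identified.

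The main tool will be the theta correspondence. By Lemma~\ref{Lem:thetacuspidal}, being both $F^+$- and $F^-$-cuspidal coincides with being theta cuspidal with respect to the two relevant Witt towers (cases~(II) and (III) for $\Sp_{2n}(q)$, cases~(I) and (II) in the orthogonal settings). For such $\rho$, Lemma~\ref{Lemma:MQZ} expresses the minimal polynomial of $X^\epsilon(\rho)$ as $(X-a)(X+b)$, where $a,b$ are explicit in terms of $\dim V'$ and $\dim\widetilde V'$ of the first-occurrence spaces in the two towers, a prefactor $\epsilon_{V'}=\zeta(-1)^{\dim(V')/2}$ in case~(I), and a Gauss-sum sign $\alpha=\gamma_\psi/\sqrt{\zeta(-1)q}\in\{\pm1\}$ in case~(III). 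Thus the problem reduces to computing the pair of first-occurrence dimensions of $\rho$ in each tower.

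To compute these first occurrences, I would invoke Pan's Propositions~\ref{prop:panevensmplec} and \ref{prop:orthsym}, which reduce theta correspondence to the unipotent case through the modified Jordan decomposition $\underline{\calJ}_s(\rho)=\rho^{(+)}\otimes\rho^{(-)}\otimes\rho^{(\star)}$. Concretely, $(\rho,\sigma')$ occurs if and only if the $\star$-parts match (so $s^{(\star)}=\pm s'^{(\star)}$), the $(-)$-parts match up to a possible $\sgn$-twist, and the unipotent $(+)$-parts occur in the unipotent theta correspondence (also up to a possible $\sgn$-twist). Since $\rho^{(+)}$ in our setting is a unipotent $F^+$- (resp.\ $F^-$-) cuspidal character of type $\rho_{t_+}^{\Sp}$, $\rho_{\pm t_+}^{\O_{\mathrm{even}}}$, or $\rho_{t_+}^{\O_{\mathrm{odd}}}$, its two first occurrences are precisely the ones computed in Theorem~\ref{Thm:unipotenttheta} together with Adams--Moy (Theorem~\ref{Thm:AM}). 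Feeding the resulting dimensions back into Lemma~\ref{Lemma:MQZ} recovers the $q$-powers $q^{t_\pm}$ and $q^{-1-t_\pm}$ (or $q^{-t_\pm}$) predicted by Corollary~\ref{cor:minimalpoly}.

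The final step is the sign bookkeeping, which is the main technical obstacle. Four sources contribute: (i) the factor $\epsilon_{V'}$ in Lemma~\ref{Lemma:MQZ}(1); (ii) the Gauss-sum sign $\alpha$ in Lemma~\ref{Lemma:MQZ}(3); (iii) the possible $\sgn$-twists allowed by Pan's theorems, which flip $(-1)^{|\lambda_\Gamma|}$ for each $\Gamma$ where $C_\Gamma(s)$ has non-trivial determinant character (that is, for $\Gamma\in\calF_1$ in the $\Sp_{2n}(q)$ case, and more generally for the $\Gamma$'s singled out by the function $\sigma$ in the $\O_{2n+1}(q)$ case); and (iv) the $(-q)^{t_+}$ appearing in Theorem~\ref{Thm:unipotenttheta}(a), which produces the base factor $(-1)^{t_\pm}$ in the symplectic setting. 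For $\O_{2n+1}(q)$, Proposition~\ref{prop:orthsym} additionally imposes $\sp(s)=\epsilon$, and the spinor value is made explicit by Lemma~\ref{spinor} as $\zeta(-1)^{\dim V_{x+1}/2}\prod_{\sigma(\Gamma)\neq 1}(-1)^{m_\Gamma(s)}$, with $m_\Gamma(s)=|\lambda_\Gamma|$. The hard part of the proof is verifying that, once all four contributions are multiplied, the anisotropic-kernel parity, the Gauss-sum phase, the spinor parity, and the unipotent base-case sign conspire to produce exactly $\eta, \eta^+, \eta^-$ as defined; in particular, that $\calF_2$-multiplicities drop out in case~(a) (because the corresponding $C_\Gamma(s)=\GL_{m_\Gamma}(\pm q^{\delta_\Gamma})$ carry no relevant sign twist) while surviving via $\sigma(\Gamma)$ in case~(c).
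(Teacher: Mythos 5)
Your overall strategy for (a1) and (c1) matches the paper's: (a2) and (b) are indeed definitional from the labelling in Table~\ref{diag:label}, and for (a1), (c1) the paper likewise combines Pan's compatibility results (Propositions~\ref{Prop:symmplecticeven} and \ref{prop:orthsym}), the unipotent first occurrences of Adams--Moy and Theorem~\ref{Thm:unipotenttheta}, the type/spinor computations (Lemma~\ref{spinor}), and Lemma~\ref{Lemma:MQZ}(1)--(2) to pin down the signs. However, there is a genuine gap in your treatment of (c2). The theta-correspondence machinery of Lemma~\ref{Lemma:MQZ} only ever computes $X^{+}$ on an orthogonal group (case (I)) or $X^{\pm}$ on a symplectic group (cases (II)/(III)); there is no dual-pair case in which $V$ is odd orthogonal and the relevant character $\chi_{V,V'}$ is the Legendre symbol, so $\bbO^{-}(v)(-)$ on $\O_{2n+1}(q)$ is not directly accessible by this route. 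Your invocation of the case-(III) Gauss-sum sign $\alpha$ is therefore misplaced: case (III) governs $F^{-}$ on the \emph{symplectic} member of the pair (which is the definitional statement (a2)), and $\alpha$ never enters the argument.

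The missing idea is the extra symmetry of Proposition~\ref{Prop:extrasymm}: one has $\bbO^{-}(v)(\rho)=\bbO^{+}(v)(\sp\cdot\rho)$, which converts (c2) into an instance of (c1) for the character $\rho_{t_-,t_+,(-\lambda)_\star,\sp(-\id)\cdot\epsilon}$ (the spinor twist swaps the $(+)$ and $(-)$ parts of the Jordan decomposition by Theorem~\ref{Thm:Jordan}). One must then verify the sign identity $\eta^{-}_{t_+,t_-,\lambda_\star,\epsilon}=\eta^{+}_{t_-,t_+,(-\lambda)_\star,\sp(-\id)\cdot\epsilon}$, which requires computing $\sp(-\id)=\zeta(-1)^{(\dim V-1)/2}$ in terms of the $\delta_\Gamma|\lambda_\Gamma|$ and comparing $\prod_{\sigma(-\Gamma)\neq 1}(-1)^{|\lambda_\Gamma|}$ with $\prod_{\sigma(\Gamma)\neq 1}(-1)^{|\lambda_\Gamma|}$ via $\sp(s^{(\star)})=\sp(-s^{(\star)})\sp(-\id)$. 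Without this reduction your plan has no route to (c2), and the sign $\eta^{-}$ cannot be obtained by the bookkeeping you describe.
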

\begin{proof}
We first prove $(a1)$
by using the theta correspondence of dual pair  $(G,G')=(\Sp_{2n}(q),\O^\pm_{2n'}(q)).$
 Let $(\rho, \rho'^+)$ (resp.  $(\rho,\rho'^-)$) be the first occurrence of dual pair $(G,G')=(G(V), G(V'^+))$ (resp. $(G(V), G(V'^-))$) with respect to $\calV'^+$ (resp. $\calV'^-$). Assume that $\rho\in \calE(G,s)$ and $\underline{\calJ}_s(\rho)=\rho^{(+)}\otimes \rho^{(-)}\otimes \rho^{(\star)}.$
 Then $s\in (G^*)^0=G_0(V^*)$ determines a decomposition $$V^*=V^*_{x-1}\oplus V^*_{x+1}\oplus\bigoplus_{\Gamma\in \calF_1 \cup \calF_2}V^*_{\Gamma}$$ such that $$(\underline{G}^{(+)})^*=G_0(V^*_{x-1}),\quad G^{(-)}=G(V^*_{x+1}),\quad G^{(\star)}=\prod\limits_{\Gamma\in \calF_1 \cup \calF_2}C_{G(V^*_{\Gamma})}(s_\Gamma),$$
 where $V^*_{x-1}$ and $V^*_{x+1}$ are both orthogonal spaces.
 \smallskip

 Let $V_{x-1}$ be a symplectic space such that $G(V_{x-1})=\underline{G}^{(+)}=G_0(V^*_{x-1})^*.$
Then we have $\dim(V_{x-1})=\dim(V^*_{x-1})-1.$
\smallskip

Similarly, for $\epsilon\in \{\pm\}$, assume that $\rho'^\epsilon\in \calE(G',s')$ and  $\underline{\calJ}_{s'}(\rho'^\epsilon)=(\rho'^\epsilon)^{(+)}\otimes (\rho'^\epsilon)^{(-)}\otimes (\rho'^\epsilon)^{(\star)}.$ Then $s' \in G'^*=G((V'^\epsilon)^*)$ determines the decomposition $$(V'^\epsilon)^*=(V'^\epsilon)^*_{x-1}\oplus (V'^\epsilon)^*_{x+1}\oplus\bigoplus_{\Gamma\in \calF_1 \cup \calF_2}(V'^\epsilon)^*_{\Gamma}$$
such that $$\underline{G}'^{(+)}=G((V'^\epsilon)^*_{x-1}),\quad G'^{(-)}=G((V'^\epsilon)^*_{x+1}),\quad G'^{(\star)}=\prod\limits_{\Gamma\in \calF_1 \cup \calF_2}C_{G((V'^\epsilon)^*_{\Gamma})}(s'_{\Gamma}),$$
where $(V'^\epsilon)_{x-1}^*$ and $(V'^\epsilon)_{x+1}^*$ are both orthogonal spaces.
\smallskip

In fact, by \cite[Lemma 3.15]{P4} and Theorem \ref{Thm:Panuniformproj},
we have $\underline{\calJ}_s(\rho)=\rho^{\Sp}_{t_+}\otimes\rho^{\O_{\mathrm{even}}}_{\pm t_-}\otimes \rho_{\lambda_{\star}}$, i.e., $\rho^{(+)}=\rho^{\Sp}_{t_+}$ and $\rho^{(-)}=\rho^{\O_{\mathrm{even}}}_{ t_-}$ or $\rho^{\O_{\mathrm{even}}}_{- t_-}$ and $\rho^{(\star)}=\rho_{\lambda_{\star}}.$
By the commutativity of theta correspondence and modified Lusztig's Jordan decomposition (see Proposition \ref{Prop:symmplecticeven}), we must have \begin{itemize}
\item  $V^*_{\Gamma}\cong (V'^\epsilon)^*_{\Gamma}$,  $s_\Gamma=s'_\Gamma$ for any $\Gamma\in \calF_1 \cup \calF_2$ and $\rho^{(\star)}=(\rho'^\epsilon)^{(\star)}$;

\item $V^*_{x+1}\cong (V'^\epsilon)^*_{x+1}$ and $\rho^{(-)}=(\rho'^\epsilon)^{(-)}$ or $\rho^{(-)}=(\rho'^\epsilon)^{(-)}\cdot \sgn$;

\item $\rho^{(+)}\otimes(\rho'^\epsilon)^{(+)}$ or $\rho^{(+)}\otimes ((\rho'^\epsilon)^{(+)}\cdot \sgn)$ first occurs in $\omega_{\underline{G}^{(+)},\underline{G}'^{(+)},1}$ for the symplectic-even orthogonal theta dual pair $(V_{x-1},(V'^\epsilon)^*_{x-1})$.
\end{itemize}
In other words, we must have
\begin{itemize}
\item $(\rho'^\epsilon)^{(\star)}=\rho_{\lambda_{\star}}$,
\item $(\rho'^\epsilon)^{(-)}=\rho^{\O_{\mathrm{even}}}_{\pm t_-}$,
\item if $\eta((V'^\epsilon)^*_{x-1})=\epsilon_{t_+}$, then $(\rho'^\epsilon)^{(+)}=\rho^{\O_{\mathrm{even}}}_{\pm t_+}$ and $(\rho'^{-\epsilon})^{(+)}=\rho^{\O_{\mathrm{even}}}_{\pm (t_+-1)}$ and if $\eta((V'^\epsilon)^*_{x-1})=\epsilon_{t_+-1}$, then
    $(\rho'^\epsilon)^{(+)}=\rho^{\O_{\mathrm{even}}}_{\pm (t_+-1)}$ and $(\rho'^{-\epsilon})^{(+)}=\rho^{\O_{\mathrm{even}}}_{\pm t_+}$,
    \end{itemize} where $\epsilon_{k}=(-1)^{k}$ for $k=t_+$ and $t_+-1$.
\smallskip

Note that $\rho^{\O_{\mathrm{even}}}_{\pm t_-}\in
\Irr(\O^{\epsilon_{t_-}}_{2(t_-)^2}),$
$\rho^{\O_{\mathrm{even}}}_{\pm t_+}\in
\Irr(\O^{\epsilon_{t_+}}_{2(t_+)^2})$ and $\rho^{\O_{\mathrm{even}}}_{\pm(t_+-1)}
\in \Irr(\O^{\epsilon_{t_+-1}}_{2(t_+-1)^2}).$
Morover, we have $\epsilon=\eta(V'^\epsilon)=\eta((V'^\epsilon)^*)$, which is equal to $$\eta((V'^\epsilon)^*_{x-1})\eta((V'^\epsilon)^*_{x+1})
\eta(\bigoplus_{\Gamma\in \calF_1\cup\calF_2}(V'^\epsilon)^*_\Gamma).$$
Since $\eta((V'^\epsilon)^*_{x-1})=\epsilon_{t_+}$ or $\epsilon_{t_+-1}$, $\eta((V'^\epsilon)^*_{x+1})=\epsilon_{t_-}=(-1)^{t_-}$ and $$\eta(\bigoplus_{\Gamma\in \calF_1\cup\calF_2}(V'^\epsilon)^*_\Gamma)=\prod\limits_{\Gamma\in \calF_1}(-1)^{m_\Gamma(s')}=\prod\limits_{\Gamma\in \calF_1}(-1)^{m_\Gamma(s)}=\prod\limits_{\Gamma\in \calF_1}(-1)^{|\lambda_\Gamma|},$$
 we know that if $\eta((V'^\epsilon)^*_{x-1})=\epsilon_{t_+}$, then $\epsilon=\eta_{t_+,t_-,\lambda_\star},$ and if $\eta((V'^\epsilon)^*_{x-1})=\epsilon_{t_+-1}$, then $\epsilon=\eta_{t_+-1,t_-,\lambda_\star}.$
\smallskip

 On the other hand, we know  that  $\dim(V^*_{x+1})=\dim((V'^\epsilon)^*_{x+1})$ and $\dim(\bigoplus\limits_{\Gamma\in \calF_1\cup\calF_2} V^*_{\Gamma})=\dim(\bigoplus\limits_{\Gamma\in \calF_1\cup\calF_2}(V'^\epsilon)^*_\Gamma)$, so
 \begin{align*}
\dim(V)-\dim(V'^\epsilon)&=\dim(V^*)-1-\dim((V'^\epsilon)^*)\\
 &=\big(\dim(V^*_{x-1})+\dim(V^*_{x+1})+\sum\limits_{\Gamma\in \calF_1\cup\calF_2}\dim(V^*_\Gamma)\big)-1\\
 &-\big(\dim((V'^\epsilon)^*_{x-1})+\dim((V'^\epsilon)^*_{x+1})+
 \sum\limits_{\Gamma\in \calF_1\cup\calF_2}\dim((V'^\epsilon)^*_\Gamma)\big)\\
 &=\dim(V^*_{x-1})-\dim((V'^\epsilon)^*_{x-1})-1\\
 &=\dim(V_{x-1})-\dim((V'^\epsilon)^*_{x-1}).
 \end{align*}
Then  $(a1)$ follows from Lemma \ref{Lemma:MQZ} $(2)$.
 \smallskip

Note that $(a2)$ and $(b)$  hold true by the definitions.
\smallskip

$(c)$  By extra symmetries (see Proposition \ref{Prop:extrasymm}),  we have
 $$\bbO^-(v)(\rho_{t_+,t_-,\lambda_\star,\epsilon})=
 \bbO^+(v)(\sp\cdot\rho_{t_+,t_-,\lambda_\star,\epsilon})=
 \bbO^+(v)(\rho_{t_-,t_+,(-\lambda)_\star, \sp(-\id)\cdot\epsilon}),$$
  where $(-\lambda)_{\star}$ is defined by $(-\lambda)_\star=\prod\limits_{\Gamma\in \calF_1\cup\calF_2}\lambda_{-\Gamma}$ and $-\Gamma$ is defined by $(-\Gamma)(x)=(-1)^{\deg(\Gamma)}\cdot \Gamma(-x)$.
  By definition, $$\eta^+_{t_-,t_+,(-\lambda)_{\star},\sp(-\id)\cdot \epsilon}
  =\sp(-\id)\cdot \epsilon\cdot (-1)^{t_-}\cdot\prod_{\Gamma\in \calF_1\cup\calF_2,\sigma(-\Gamma)\neq1}(-1)^{|\lambda_\Gamma|}.$$
Assume that $\rho\in \calE(G,s)$ and $\underline{\calJ}_s(\rho|_{G^0})=\rho^{(+)}\otimes \rho^{(-)}\otimes \rho^{(\star)}.$ Then $s\in (G^*)^0=G(V^*)$ determines a decomposition \begin{align}\label{decomp}V^*=V^*_{x-1}\oplus V^*_{x+1}\oplus\bigoplus_{\Gamma\in \calF_1 \cup \calF_2}V^*_{\Gamma}.\end{align}
We have that $\dim(V^*)=\dim(V^*_{x-1})+\dim(V^*_{x+1})+\sum\limits_{\Gamma\in \calF_1\cup\calF_2}\dim(V^*_{x+1})$, $\dim(V^*_{x-1})=2t_+(t_++1)$, $\dim(V^*_{x+1})=2t_-(t_-+1)$ and $\dim(V^*_\Gamma)=2\delta_\Gamma m_\Gamma(s)=2\delta_\Gamma|\lambda_\Gamma|,$
so
$$\sp(-\id)=\zeta(-1)^{\frac{\dim{V}-1}{2}}=
\zeta(-1)^{\frac{\dim{V^*}}{2}}=\prod_{\Gamma\in \calF_1\cup\calF_2}\zeta(-1)^{\delta_\Gamma |\lambda_\Gamma|}.$$
On the other hand, under the decomposition (\ref{decomp}), $s=s^{(+)}\times s^{(-)}\times s^{(\star)}$. We consider $s^{(\star)}\in \O^\pm((V^*)^{(\star)})$, where $(V^*)^{(\star)}=\bigoplus_{\Gamma\in \calF_1\cup\calF_2}V^*_{\Gamma}$,  then we have $\sp(s^{(\star)})=\sp(-s^{(\star)})\sp(-\id_{(V^*)^{(\star)}})$. So we have $$\prod_{\Gamma\in \calF_1\cup\calF_2,\sigma(-\Gamma)\neq 1}(-1)^{|\lambda_\Gamma|}=\prod_{\Gamma\in \calF_1\cup\calF_2,\sigma(\Gamma)\neq 1}(-1)^{|\lambda_\Gamma|}\cdot \prod_{\Gamma\in \calF_1\cup\calF_2}\zeta(-1)^{\delta_\Gamma |\lambda_\Gamma|},$$
i.e., $\eta^-_{t_+,t_-,\lambda_{\star}, \epsilon}=\eta^+_{t_-,t_+,(-\lambda)_{\star},\sp(-\id)\cdot \epsilon}.$
Hence if we can prove $(c1)$, then $(c2)$
 follows by the definition.
  \smallskip

Now we prove $(c1)$. We use the theta correspondence of dual pair $(G,G')=(\O_{2n+1}(q),\Sp_{2n'}(q)).$
Recall that $s\in (G^*)^0=G(V^*)$ determines a decomposition
$$V^*=V^*_{x-1}\oplus V^*_{x+1}\oplus\bigoplus_{\Gamma\in \calF_1\cup\calF_2}V^*_{\Gamma}$$
such that $$G^{(+)}=G(V^*_{x-1}),\quad G^{(-)}=G(V^*_{x+1}),\quad G^{(\star)}=\prod\limits_{\Gamma\in \calF_1\cup\calF_2}C_{G(V^*_{\Gamma})}(s_\Gamma),$$
where both $V^*_{x-1}$ and $V^*_{x+1}$ are symplectic spaces.
\smallskip

Suppose that $(\rho,\rho')$ is the first occurrence of dual pair $(G,G')=(G(V),G(V'))$ with respect to $\calV'.$ Similarly, assume that $\rho'\in \calE(G',s')$ and  $\underline{\calJ}_{s'}(\rho')=\rho'^{(+)}\otimes \rho'^{(-)}\otimes \rho'^{(\star)}.$ Then $s' \in G'^*=G_0(V'^*)$ determines the decomposition
$$V'^*=V'^*_{x-1}\oplus V'^*_{x+1}\oplus\bigoplus_{\Gamma\in \calF_1\cup\calF_2}V'^*_{\Gamma}$$
where $V'^*_{x-1}$ is an odd-dimensional orthogonal space and $V'^*_{x+1}$ is an even-dimensional orthogonal space.
Let $V'_{x-1}$ be the symplectic space such that $G(V'_{x-1})$ is the dual group of $G_0(V'^*_{x-1})$, i.e., $G_0(V'^*_{x-1})^*=G(V'_{x-1}).$
By proposition \ref{prop:orthsym},  we must have \begin{itemize}
\item  $V^*_{\Gamma}\cong V'^*_{-\Gamma}$ and $s_\Gamma=-s'_{-\Gamma}$ for any $\Gamma\in \calF_1 \cup \calF_2$ and $\rho^{(\star)}=\rho'^{(\star)}$ where $-\Gamma$ is defined by $(-\Gamma)(x)=(-1)^{\deg(\Gamma)}\cdot\Gamma(-x)$;

\item $V^*_{x+1}\cong V'_{x-1}$ and $\rho^{(-)}=\rho'^{(+)}$;

\item $\rho^{(+)}\otimes\rho'^{(-)}$ or $\rho^{(+)}\otimes \rho'^{(-)}\cdot \sgn$ first occurs in $\omega_{G^{(+)},G'^{(-)},1}$ for the symplectic-even orthogonal theta dual pair $(V^*_{x-1},V'^*_{x+1})$,
\item $\epsilon=\sp(s').$
\end{itemize}
Let $(\rho\cdot\sgn,\widetilde{\rho}')$ be the first occurrence of dual pair $(G(V),G(\widetilde{V}'))$ with respect to $\calV'.$ So we must have
\begin{itemize}
\item $\rho'^{(\star)}=\widetilde{\rho}'^{(\star)}=\rho_{(-\lambda)_{\star}}$, where $(-\lambda)_{\star}$ is defined by $(-\lambda)_\star=\prod\limits_{\Gamma\in \calF_1\cup\calF_2}\lambda_{-\Gamma},$
\item $\rho'^{(+)}=\widetilde{\rho}'^{(+)}=\rho^{\Sp}_{t_-}$,
\item if $\eta(V'^*_{x+1})=\epsilon_{t_+}$, then $\rho'^{(-)}=\rho^{\O_{\mathrm{even}}}_{\pm t_+}$
    and $\widetilde{\rho}'^{(-)}=\rho^{\O_{\mathrm{even}}}_{\pm (t_+-1)}$
    and if $\eta(V'^*_{x+1})=\epsilon_{t_+-1}$, then
    $\rho'^{(-)}=\rho^{\O_{\mathrm{even}}}_{\pm (t_+-1)}$
    and $\widetilde{\rho}'^{(-)}=\rho^{\O_{\mathrm{even}}}_{\pm t_+}$
    ,

\item $\epsilon=\sp(s').$
    \end{itemize}

Moreover, by Lemma \ref{spinor} and $|\lambda_\Gamma|=m_\Gamma(s)=m_{-\Gamma}(s')$,  if $\eta(V'^*_{x+1})=\epsilon_{t_+}$, then $$\epsilon=\sp(s')=\epsilon_{t_+}\cdot\zeta(-1)^{t_+}\cdot\prod_{\Gamma\in \calF_1\cup\calF_2,\sigma(-\Gamma)\neq 1}(-1)^{|\lambda_\Gamma|}$$ and if $\eta(V'^*_{x+1})=\epsilon_{t_+-1}$, then
 $$\epsilon=\sp(s')=\epsilon_{t_+-1}\cdot \zeta(-1)^{t_+}\cdot \prod_{\Gamma\in \calF_1\cup\calF_2,\sigma(-\Gamma)\neq 1}(-1)^{|\lambda_\Gamma|}.$$

We consider $s'^{(\star)}\in \O^\pm((V'^*)^{(\star)})$, where $(V'^*)^{(\star)}=\bigoplus_{\Gamma\in \calF_1\cup\calF_2}V'^*_{\Gamma}$,  then we have $\sp(s'^{(\star)})=\sp(-s'^{(\star)})\sp(-\id_{(V'^*)^{(\star)}})$. So we have $$\prod_{\Gamma\in \calF_1\cup\calF_2,\sigma(-\Gamma)\neq 1}(-1)^{|\lambda_\Gamma|}=\prod_{\Gamma\in \calF_1\cup\calF_2,\sigma(\Gamma)\neq 1}(-1)^{|\lambda_\Gamma|}\cdot \prod_{\Gamma\in \calF_1\cup\calF_2}\zeta(-1)^{\delta_\Gamma |\lambda_\Gamma|}.$$

On the other hand, we know $\dim(V^*_{x+1})=\dim(V'_{x-1})$ and $\dim(\bigoplus\limits_{\Gamma\in \calF_1\cup\calF_2} V^*_{\Gamma})=\dim(\bigoplus\limits_{\Gamma\in \calF_1\cup\calF_2}V'^*_\Gamma)$, so by a similar argument as in $(a)$, we have $$\frac{\dim(V')}{2}-\lfloor\frac{\dim(V)}{2}\rfloor=\frac{\dim(V')}{2}-\frac{\dim(V)-1}{2}
=\frac{\dim(V'^*_{x+1})}{2}-\frac{\dim(V^*_{x-1})}{2}.$$
\smallskip

Furthermore, $\epsilon_{V'}=\zeta(-1)^{\dim{V'}/2}=\zeta(-1)^{t_+}\cdot\prod\limits_{\Gamma\in \calF_1\cup\calF_2}\zeta(-1)^{\delta_\Gamma |\lambda_\Gamma|}.$
By Lemma \ref{Lemma:MQZ} $(1)$, $(c1)$ follows.
\end{proof}

Finally, we get the following Table \ref{diag:final}, which is a summary of Theorem \ref{Thm:weightfunctions}.
 \begin{center}
\tiny{\begin{table}[h]\caption{Colored weight functions of  modules $\rho$ which are both $F^+$-cuspidal
and $F^-$-cuspidal}\label{diag:final}
\begin{tabular}{cccc}
 \hline
  &$\O_{2\bullet+1}(q)$&$\Sp_{2\bullet}(q)$ &$\O^{\pm}_{2\bullet}(q)$\\\hline
  \text{  }&$\rho=\rho_{t_+,t_-,\lambda_\star, \varepsilon}$&
$\rho=\rho_{t_+,t_-,\lambda_\star}$&
$\rho=\rho_{t_+,t_-,\lambda_\star}$\\
  \text{ }&$t_\pm\in \bbN,\varepsilon\in\{\pm1\}$&
$t_+\in\bbN,t_-\in \bbZ$&
$ t_\pm\in \bbZ$\\\hline
  $\bbO^+(u)(\rho):$&$(u-\eta^+(\rho)q^{t_+})(u+\eta^+(\rho)q^{-1-t_+})$& $(u- \eta(\rho)q^{t_+})(u+\eta(\rho)q^{-1-t_+})$  &$(u- q^{t_+})(u+ q^{-t_+})$\\\hline
   $\bbO^-(v)(\rho):$&$(v-\eta^-(\rho)q^{t_-})
   (v+ \eta^-(\rho)q^{-1-t_-})$& $(v- q^{t_-})(v+ q^{-t_-})$
    &$(v- q^{t_-})(v+ q^{-t_-})$\\\hline
\end{tabular}
\end{table}}
\end{center}
\begin{remark}Specifically, when considering quadratic unipotent modules (i.e., $\lambda_{\star}=\emptyset$), the corresponding signs appear notably simpler. For the symplectic group  $\Sp_{2n}(q)$, we have $\eta(\rho_{t_+,t_-})=(-1)^{t_++t_-}.$
 For the odd orthogonal group $\O_{2n+1}(q)$, we have $\eta^+(\rho_{t_+,t_-,\epsilon})=\epsilon\cdot(-1)^{t_+}$ and  $\eta^-(\rho_{t_+,t_-,\epsilon})=\epsilon\cdot (-1)^{t_-}$.
Via the restriction functor $\Res^{\O_{n+1}}_{\SO_{2n+1}}$, one can show that this gives a new proof of \cite[Theorem 5.17]{LLZ} for quadratic unipotent modules of $\SO_{2n+1}(q)$.
\end{remark}

\begin{remark}
Note that the theta correspondence depends on the choices of the non-trivial character $\psi:(\bbF_q,+)\to \bbC^\times.$  However, the colored weight functions are independent of choices of $\psi$ and the theta correspondence.
\end{remark}

\smallskip

\subsubsection{Kac-Moody action on  Grothendieck groups}
The
ambiguity in the determination of colored weight functions for all irreducible modules that are simultaneously
$F^+$-cuspidal and $F^-$-cuspidal, as originally stated in Theorem \ref{TheoremC}, has now been successfully resolved.
As a consequence, we have the following theorem.
 \begin{theorem}\label{thm:final}
 The pairs $(s^+_1,s^+_2)$ and $(s^-_1,s^-_2)$ in Theorem \ref{TheoremC} are given in Table \ref{diag:final} by  $\bbO^+(u)(\rho)=(u-q^{s^+_1})(u+q^{s^+_2})$ and  $\bbO^-(v)(\rho)=(v-q^{s^-_1})(v+q^{s^-_2})$.
\end{theorem}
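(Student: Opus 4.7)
The plan is to combine Theorem \ref{TheoremC} with the explicit formulas collected in Theorem \ref{Thm:weightfunctions}. Recall that in Theorem \ref{TheoremC} the integers $(s_1^+, s_2^+)$ and $(s_1^-, s_2^-)$ are by definition the exponents appearing in the factorisations
\begin{equation*}
\bbO^+(u)(\rho) = (u - q^{s_1^+})(u + q^{s_2^+}), \qquad \bbO^-(v)(\rho) = (v - q^{s_1^-})(v + q^{s_2^-}),
\end{equation*}
which exist by Corollary \ref{cor:minimalpoly} together with Proposition \ref{Prop:I+-}. Thus the content of Theorem \ref{thm:final} is purely that of reading off the two exponents from the explicit values in Table \ref{diag:final}.

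For $G_n = \O^{\pm}_{2n}(q)$ the matching is immediate: comparing $\bbO^+(u)(\rho) = (u - q^{t_+})(u + q^{-t_+})$ with $(u - q^{s_1^+})(u + q^{s_2^+})$ forces $(s_1^+, s_2^+) = (t_+, -t_+)$, and similarly $(s_1^-, s_2^-) = (t_-, -t_-)$. For $G_n = \Sp_{2n}(q)$ and $G_n = \O_{2n+1}(q)$ a case distinction on the sign $\eta(\rho)$ (respectively $\eta^+(\rho)$ and $\eta^-(\rho)$) is needed. For example in the symplectic case, if $\eta(\rho) = +1$ then $\bbO^+(u)(\rho) = (u - q^{t_+})(u + q^{-1-t_+})$, giving $(s_1^+, s_2^+) = (t_+, -1-t_+)$; if instead $\eta(\rho) = -1$ one rewrites $\bbO^+(u)(\rho) = (u - q^{-1-t_+})(u + q^{t_+})$, yielding $(s_1^+, s_2^+) = (-1-t_+, t_+)$. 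The $\bbO^-$ factor and the odd orthogonal case are handled identically using $\eta^{\pm}(\rho)$.

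I expect no real obstacle: the hard work has already been done in Theorem \ref{TheoremC} (where the Fock space parametrisation of $[(KG_\bullet, \rho)\mod]$ was read off from the Kac-Moody action) and in Theorem \ref{Thm:weightfunctions} (where theta correspondence, via Lemma \ref{Lemma:MQZ} and Theorem \ref{Thm:AM}, was used to pin down the signs $\eta, \eta^{\pm}$). The only thing one must be careful about is a consistent sign convention: the Fock space appearing in Theorem \ref{TheoremC} has charge tuple $(q^{s_1^{\pm}}, -q^{s_2^{\pm}})$, so $s_1^{\pm}$ is always the exponent of the root lying in $q^{\bbZ}$ and $s_2^{\pm}$ the exponent of the root in $-q^{\bbZ}$. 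Once this bookkeeping is fixed, the theorem is a direct inspection of Table \ref{diag:final}.
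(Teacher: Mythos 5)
Your proposal is correct and matches the paper's own (implicit) argument: the paper simply states Theorem \ref{thm:final} as "a consequence" of Theorem \ref{Thm:weightfunctions} (whose conclusions are collected in Table \ref{diag:final}), with the content being precisely the bookkeeping you describe — identify each root in $q^{\bbZ}$ with $q^{s_1^{\pm}}$ and each root in $-q^{\bbZ}$ with $-q^{s_2^{\pm}}$, after the case split on the signs $\eta(\rho)$, $\eta^{\pm}(\rho)$.
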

 Hence colored weight functions of all irreducible modules are determined explicitly and the action of Kac-Moody algebra $\frakg=\fraks\frakl'_{I_+}\oplus\fraks\frakl'_{I_-}$
on the Grothendieck group $[KG_\bullet\mod]$ is completely determined.
\smallskip

%


Question: Under the parametrization via our complete invariants, how can the finite theta correspondence be explicitly described? For unipotent modules within the symplectic-even orthogonal dual pair, this is addressed by Theorem \ref{Thm:unipotenttheta}.   Pan \cite{P4}  demonstrated that the modified Lusztig's Jordan decomposition for a classical group can be rendered unique provided it is required to be compatible with parabolic induction and a specific finite theta correspondence with a fixed non-trivial character $\psi.$ Wang \cite{Wa} showed that, when $q$ is sufficiently large, there exists a unique choice of the modified Lusztig correspondence that is compatible with parabolic induction and the theta correspondence for all dual pairs. This question is essentially equivalent to exploring the relationship between colored weight functions and modified Lusztig's Jordan decompositions defined by Pan \cite[\S8,\S9]{P4} and  Wang \cite[Theorem 6.1]{Wa}.

\subsection*{Acknowledgements}
We are deeply grateful to Olivier Dudas, Xin Huang, Yanjun Liu, Sergio David C\'{i}a Luvecce, Jiajun Ma and Zhicheng Wang
for inspiring discussions. We also thank Rapha\"{e}l Rouquier for his helpful comments. The authors express their gratitude to the anonymous referee for their thorough review of the manuscript and for providing valuable feedback that significantly improved its readability.
\smallskip

P.L. and P.S. acknowledge the support by NSFC (No. 12225108). P.S. also acknowledge the support of the New Cornerstone Science Foundation through the Xplorer Prize.
J.Z. acknowledges the support by National Key R$\&$D Program of China (No. 2020YFE0204200) and NSFC (No. 11631001).


\begin{thebibliography}{20}







\bibitem{AM} {\sc J. Adams and A. Moy},
 Unipotent representations and reductive dual pairs over finite
fields, \emph{Trans. Amer. Math. Soc. \bf340}, (1993), 309--321.

\bibitem{AMR} {\sc A.M. Aubert, J. Michel and R. Rouquier},
Correspondance de Howe pour les groupes reductifs sur les corps finis, \emph{Duke Math. J. \bf83},  (1996), 353--397.





\bibitem{BDR17} {\sc C. Bonnaf\'e, J.F. Dat and  R. Rouquier},
Derived categories and Deligne-Lusztig varieties II,
\emph{Ann. of Math. (2)  \bf185}  (2017), 609--670.



 \bibitem{BoRo03}
 {\sc  C. Bonnaf\'e and R. Rouquier},
Cat\'egories d\'eriv\'ees et vari\'et\'es de Deligne-Lusztig,
\emph{Publ. Math. Inst. Hautes \'{E}tudes Sci. \bf97} (2003), 1--59.



\bibitem{broue1990} {\sc M. Brou\'{e}}, Isom\'{e}tries parfaites, types de blocs, cat\'{e}gories
  d\'{e}riv\'{e}es, \emph{Ast\'{e}risque  \bf181-182} (1990), 61--92.






\bibitem{B16} {\sc J. Brundan},
On the definition of Kac-Moody 2-category,
\emph{Math. Ann. \bf 364}
(2016), 53--372.
\bibitem{B18}
J. Brundan,
On the definition of Heisenberg category,
\emph{ Alg. Comb. \bf 1} (2018), 523--544.


\bibitem{BK3} {\sc J. Brundan and A. Kleshchev},
Blocks of cyclotomic Hecke algebras and Khovanov-Lauda algebras,
\emph{Invent. Math. \bf178}
(2009), 451--484.

\bibitem{BSW1}
{\sc J. Brundan, A. Savage and B. Webster},
On the definition of quantum Heisenberg category,
\emph{ Alg. Numb. Theory \bf 14} (2020), 275--321.

\bibitem{BSW2} {\sc J. Brundan, A. Savage and B. Webster},
Heisenberg and Kac-Moody categorification,\emph{ Selecta Mathematica \bf26} (2020), 74.

\bibitem{BSW3}
{\sc J. Brundan, A. Savage and B. Webster},
The degenerate Heisenberg category and its Grothendieck ring,
arxiv{1812.03255v2}, to apper in \emph{Annales scientifiques de l'\'Ecole}.


\bibitem{CE04} {\sc M. Cabanes and M. Enguehard},
\emph{Representation Theory of Finite Reductive Groups},
New Mathematical Monographs, 1.
Cambridge University Press, Cambridge, 2004.

\bibitem{Car}{\sc  R.W. Carter},
\emph{Finite Groups of Lie Type. Conjugacy Classes and Complex Characters,}
Wiley-Interscience, 1985.



\bibitem{CR} {\sc J. Chuang and R. Rouquier},
Derived equivalences for symmetric groups and $\mathfrak{sl}_2$-categorification,
\emph{Ann. of Math. (2) \bf167}
(2008), 245--298.



\bibitem{DL} {\sc P. Deligne and G. Lusztig},
Representations of reductive groups over finite fields,
\emph{Ann. of Math. (2) \bf103}
(1976), 103--161.

\bibitem{DM} {\sc F. Digne and J. Michel},
\emph{Representations of Finite Groups of Lie Type},
London Mathematical Society Student Texts, 2,
Cambridge University Press, Cambridge, 1991.








\bibitem{DN}{\sc O. Dudas and E. Norton},
Decomposition numbers for the principal $\Phi_{2n}$-block of $\Sp_{4n}(q)$ and $\SO_{4n+1}(q)$, arXiv:2102.06055, to appear in \emph{Annales de l'Institut Fourier}.

\bibitem{DN2}{\sc O. Dudas and E. Norton},
Decomposition numbers for unipotent blocks with small $\fraks\frakl_2$-weight in finite classical groups, arXiv:2311.16939.

\bibitem{DVV2}{\sc  O. Dudas, M. Varagnolo and E. Vasserot},
Categorical actions on unipotent representations of finite classical groups,
in  \emph{Categorification and Higher Representation Theory}, 41--104,
Contemp. Math., 683, Amer. Math. Soc., Providence, RI, 2017.

\bibitem{DVV} {\sc O. Dudas, M. Varagnolo and E. Vasserot},
Categorical actions on unipotent representations of finite unitary groups,
\emph{Publ. Math. Inst. Hautes \'{E}tudes Sci.  \bf129}  (2019), 129--197.





\bibitem{FS82} {\sc P. Fong and B. Srinivasan},
 The blocks of finite general linear and unitary groups,
\emph{Invent. Math. \bf69} (1982), 109--153.

\bibitem{FS89} {\sc P. Fong and B. Srinivasan},
The blocks of finite classical groups,
\emph{J. Reine Angew. Math. \bf396}
 (1989), 122--191.

\bibitem{FS90} {\sc P. Fong and B. Srinivasan},
Brauer trees in classical groups,
\emph{J. Algebra \bf131} (1990), 179--225.



\bibitem{Ge93}
{\sc M. Geck},
Basic sets of Brauer characters of finite groups
of Lie type II. \emph{J. London Math. Soc.
\bf 47} (1993), 255--268.



%
\bibitem{GH91} {\sc M. Geck and G. Hiss},
 Basic sets of Brauer characters of finite groups of Lie type,
 \emph{J. Reine Angew. Math. \bf418} (1991), 173--188.





\bibitem{GJ} {\sc M. Geck and N. Jacon},
\emph{Representations of Hecke Algebras at Roots of Unity,
 Algebra and Applications, 15},
Springer-Verlag London, Ltd., London, 2011.

\bibitem{GM} {\sc M. Geck and G. Malle}, \emph{The character theory of finite groups of Lie type: a guided tour, 187}, Cambridge University Press, 2020.






\bibitem{Ger}
{\sc P. G\'erardin},  Weil representations associated to finite fields, \emph{J. Algebra \bf 46} (1977), 54--101.


\bibitem{Go} {\sc I. Grojnowski},
Affine $\widehat{\mathfrak{sl}}_p$ controls the representation theory of the symmetric group and related Hecke algebras,
 arXiv:math/9907129.







\bibitem{HL80} {\sc  R.B. Howlett and G.I. Lehrer},
Induced cuspidal representations and generalized Hecke rings,
\emph{Invent. Math. \bf58}
(1980), 37--64.


\bibitem{H}{\sc R. Howe}, Invariant theory and duality for classical groups over finite fields with applications to their singular representation theory, preprint.

\bibitem{HL}{\sc  X. Huang, P. Li and Y. Shen},
 Symmetric product of Heisenberg categories and Kac-Moody category, preprint.

\bibitem{james1978}
{\sc G.~James},
\emph{The Representation Theory of the Symmetric Groups},
Springer-Verlag, 1978.


\bibitem{Kac} {\sc V. Kac},
\emph{Infinite Dimensional Lie Algebras}, 3rd ed., Cambridge University Press, 1990.

\bibitem{KK12} {\sc S.J. Kang and M. Kashiwara},
Categorification of highest weight modules via Khovanov-Lauda-Rouquier algebras,
\emph{Invent. Math. \bf190}
(2012), 699--742.

\bibitem{Ker} {\sc S. Kerov}, Anisotropic Young diagrams and symmetric Jack functions, \emph{Functional Analysis and Its Applications \bf 34} (2000), 41--51



\bibitem{K}
{\sc M.~Khovanov},
Heisenberg algebra and a graphical calculus,
\emph{ Fund. Math. \bf 225} (2014), 169--210.

\bibitem{KL09}
{\sc M.~Khovanov and A. Lauda},
A diagrammatic approach to categorification of quantum groups
  I,
\emph{Represent. Theory \bf13} (2009), 309--347.


\bibitem{LLZ}{\sc P. Li, Y. Liu and J. Zhang},
Categorical actions and derived equivalences for finite odd-dimensional orthogonal groups, arXiv:2111.03698v3

\bibitem{LPZ}{\sc P. Li, P. Shan and J. Zhang},
Categorical action for finite classical groups and its applications: math positive characteristic, preprint.

\bibitem{LS}
{\sc A. Licata and A. Savage},
Hecke algebras, finite general linear groups, and Heisenberg
categorification,
\emph {Quantum Topology \bf 4} (2013), 125--185.


\bibitem{LW}{\sc D. Liu and Z. Wang},
Remarks on the theta correspondence over finite fields,
 \emph{Pacific Journal of Mathematics, \bf 306(2)} (2020), 587--609.


\bibitem{LS-depth0}{\sc J. Lust and S. Stevens}, On depth zero L‐packets for classical groups \emph{Proceedings of the London Mathematical Society \bf 121(5)} (2020), 1083--1120.

\bibitem{L77} {\sc G. Lusztig},
Irreducible representations of finite classical groups,
\emph{Invent. Math. \bf43} (1977), 125--175.

\bibitem{Lu84}
{\sc G. Lusztig}, \emph{Characters of Reductive Groups over a Finite Field},
  Annals of Mathematics Studies, 107. Princeton University Press, Princeton,
  NJ, 1984.

\bibitem{Lu88}
{\sc G. Lusztig}, On the representations of reductive groups with disconnected centre. \emph{ Ast\'erisque \bf 168} (1988), 157--166.





%
%
%
\bibitem{MS18}
{\sc M. Mackaay and A. Savage},
Degenerate cyclotomic {H}ecke algebras and higher level {H}eisenberg
  categorification, \emph{ J. Algebra \bf 505} (2018), 150--193.

%

\bibitem{MQZ}{\sc J. Ma, C. Qin and J. Zou},
Generic Hecke algebra and theta correspondence over finite fields,  \emph{Adv. in Math. \bf 438} (2024), 109444.


\bibitem{N}{\sc E. Norton},
On modular Harish-Chandra series of finite unitary groups,
\emph{Represent. Theory \bf 24} (2020), 483--524.

\bibitem{P1} {\sc S.-Y. Pan},
Uniform projection of the Weil character,
\emph{Adv. in Math. \bf 381}, (2021), 107624.


\bibitem{P2} {\sc S.-Y. Pan},
Howe correspondence of unipotent characters for a finite symplectic/even-orthogonal dual pair, arXiv:1901.00623, to appear in \emph{Amer. J. Math.}



\bibitem{P3} {\sc  S.-Y. Pan},
Lusztig Correspondence and Howe correspondence for finite reductive dual pairs, arXiv:1906.01158.

\bibitem{P4} {\sc  S.-Y. Pan},
On the unicity and the ambiguity of Lusztig parametrizations for finite classical groups,
arXiv:2206.04900


\bibitem{P5} {\sc S.-Y. Pan},
Generalized preservation principle in finite theta correspondence,
arXiv:2207.06688, to appear in \emph{Int. Math. Res. Not.}






\bibitem{R08} {\sc R. Rouquier}, 2-Kac-Moody algebras, arXiv:0812.5023v1.





\bibitem{S} {\sc B. Srinivasan},
Weil representations of finite classical groups,
\emph{Inventiones mathematicae, \bf 51(2)} (1979), 143--153.


%



\bibitem{TV}
{\sc V. Turaev and A. Virelizier},
\emph{Monoidal Categories and Topological Field Theory},
 Progress in Mathematics 322, Birkh\"auser/Springer, 2017.

\bibitem{U} {\sc D. Uglov},
Canonical bases of higher-level $q$-deformed Fock spaces and Kazhdan-Lusztig
polynomials, in \emph{Physical combinatorics (Kyoto 1999)},
249--299, Birkh\"auser, Boston, 2000.

\bibitem{W} {\sc J.L. Waldspurger},
Une conjecture de lusztig pour les groupes classiques,
\emph{M\'{e}m. Soc. Math. Fr. (N.S.)  \bf96}, 2004.

\bibitem{Wa}{\sc Z.Wang} Lusztig's Jordan decomposition and a finite field instance of relative Langlands duality, arXiv:2406.17514

\end{thebibliography}
\end{document}